\title{Genera from an algebraic index theorem for Supermanifolds}
\date{}
\author{Araminta Amabel}
\begin{document}

\begin{abstract}
We prove a super-version of Nest-Tsygan's algebraic index theorem. 
This work is inspired by the appearance of the same cobordism invariants in three related stories: 
index theory, trace methods in the deformation theory of algebras, and partition functions in quantum field theory. 
We show that one can recover the cobordism invariant appearing in  supersymmetric quantum mechanics using trace methods for the associated deformation quantization problem.

\end{abstract}
 
\maketitle
\tableofcontents

\section{Introduction}

The Atiyah-Singer index theorem \cite{AtiyahSinger} states that the analytic index of an elliptic differential operator agrees with its topological index. 
Since its announcement in the early 1960s, various proofs and generalizations of the index theorem have been given; see \cite{Index1,Index2,Index3,Index4,Index5} and also \cite{IndexFreed} for a nice overview.
An algebraic analogue of the index theorem \cite{FedosovIndex,Nest-Tsygan,FFSh} 
was given in the early 1990s, 
replacing the analytic index with a trace on the algebra of differential operators. 
In 1996, Nest and Tsygan \cite{NestTsygan2} showed that the algebraic index theorem implies Atiyah-Singer's result. 

Here, we prove a super-version of \cite{NestTsygan2}. 
That is, we study symplectic supermanifolds and deformation quantizations of their superalgebras of smooth functions. 
Before describing our result and related work, 
we review the classical algebraic index theorem in a way that will make our generalizations more visible.

Let $X$ be a closed manifold. 
The Atiyah-Singer index theorem relates two invariants of an elliptic differential operator $D$ on $X$. 
Let $\mrm{Diff}_X$ denote the algebra of all, not necessarily elliptic, differential operators on $X$. 
In the last 30
years, it has proven fruitful to investigate how the two invariants appearing in the index theorem (the analytic and topological indices)  
are related to the algebraic structure of $\mrm{Diff}_X$. 
A key component of this investigation is the appearance of $\mrm{Diff}_X$ as a deformation quantization of the algebra of smooth functions $\mathcal{O}_{T^*X}$ on the symplectic manifold $T^*X$. 
Deformation quantization is a rich subject, pioneered by De Wilde-Lecomte \cite{DWL} and Fedosov \cite{Fedosov} who first described the space of deformations of an arbitrary symplectic manifold, 
and greatly generalized by Kontsevich \cite{Kontsevich} to the study of Poisson manifolds. 
Here, we will focus on the middle level of generality, working with an arbitrary symplectic manifold $(M,\omega)$. 
A deformation quantization for $(M,\omega)$ is determined by a choice of what is now called a Fedosov connection.

These deformed algebras possess additional algebraic data (a trace) that one can exploit to give a new approach to the index theorem. 
Pointwise, a Fedosov deformation of a $(M,\omega)$ looks like the Weyl algebra.  
The Weyl algebra has a well-known ``derived" trace (a Hochschild cohomology class with coefficients in the dual). 
In his original paper, Fedosov shows that this trace determines a global trace on the deformation quantization \cite[Def. 5.1]{Fedosov}. 
This trace is essentially the trace appearing in the algebraic index theorem. 

We will be interested in the following phrasing of \cite[Thm. 1.1.1]{Nest-Tsygan} which appears in \cite[Thm. 1.1]{FFSh}. 

\begin{thm}[Algebraic Index Theorem]\label{thm-AITintro}
Let $(M,\omega)$ be a compact symplectic manifold. 
For each Fedosov connection $\nabla$ on $M$,  
there exists a unique normalized trace $t_M^\nabla$ on the corresponding Fedosov deformation. 
Moreover, one has 
\begin{align}\label{eq-AIT}
t_M^\nabla(1)=\frac{1}{(2\pi i)^n}\int_M\hat{\mrm{A}}(TM)\exp(-\Omega/\hbar),
\end{align}
for $\Omega$ the Weyl curvature of $\nabla$.
\end{thm}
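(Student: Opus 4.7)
The plan is to prove the theorem in two stages: first establish existence and uniqueness of the normalized trace, then compute its value on the unit. The starting point is Fedosov's realization of the deformed algebra as the subalgebra of $\nabla$-flat sections of a bundle $\mathcal{W}_M \to M$ of formal Weyl algebras, pointwise modelled on the standard Weyl algebra $W_{2n}$. Local uniqueness reduces to showing $HH^0(W_{2n}, W_{2n}^\vee) \cong \mathbb{C}$ (or equivalently, by Van den Bergh / Koszul duality, $HH^{2n}(W_{2n}, W_{2n}) \cong \mathbb{C}$), so that $W_{2n}$ carries a unique (up to scalar) trace. To globalize, I would run a Gelfand--Kazhdan / formal geometry descent: the sheaf of Hochschild cochains on the Fedosov deformation is quasi-isomorphic to a twisted de Rham complex on $M$, and the normalized traces are parameterized by a class in $H^{2n}(M;\mathbb{C}[[\hbar]])$, which is one-dimensional since $M$ is compact and symplectic. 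This gives existence and uniqueness of $t_M^\nabla$.

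For the explicit formula, I would follow the Nest--Tsygan / Feigin--Felder--Shoikhet strategy and express $t_M^\nabla(1)$ as the pairing of $1$ against a universal trace cocycle in relative Gelfand--Fuks cohomology $H^*(\mathfrak{g}_{2n}, \mathfrak{sp}_{2n})$, where $\mathfrak{g}_{2n}$ is the Lie algebra of formal Hamiltonian vector fields. Under the Gelfand--Kazhdan correspondence induced by $\nabla$, this universal class pulls back to a characteristic form on $M$; a Chern--Weil computation in the symplectic frame bundle identifies the leading component with $\hat{\mrm{A}}(TM)$, while the factor $\exp(-\Omega/\hbar)$ appears because replacing $\nabla$ by a reference (torsion-free symplectic) connection shifts the universal cocycle by an exact term whose integral is controlled by the Weyl curvature class $\Omega$.

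The main obstacle will be the universal computation identifying the trace cocycle with the $\hat{A}$-genus. This is a delicate piece of Lie algebra cohomology that requires choosing a good $\mathfrak{sp}_{2n}$-equivariant resolution (e.g.\ the Koszul/BRST complex for $W_{2n}$) and extracting an explicit cocycle representative whose class in $H^*(B\mrm{Sp}_{2n})$ can be compared to the $\hat{A}$-polynomial. Once this universal identification is in hand, naturality of formal geometry together with Stokes' theorem on $M$ yields \eqref{eq-AIT}, and checking the overall normalization $(2\pi i)^{-n}$ reduces to the case of $\mathbb{R}^{2n}$ with the standard Moyal--Weyl product, where the trace is literal integration.
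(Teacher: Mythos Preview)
The paper does not actually prove this theorem: it is stated in the introduction as a background result, attributed to Nest--Tsygan and Fe\u{\i}gin--Felder--Shoikhet. What the paper proves is the super-analogue (Theorem~\ref{thm-main}), and it remarks at the end of \S\ref{sec-GSAIT} that specializing to the purely even case $(a,b)=(0,0)$ recovers the classical statement. So there is no ``paper's own proof'' of Theorem~\ref{thm-AITintro} to compare against directly.

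That said, your outline is essentially the FFSh strategy, and it is exactly the strategy the paper adopts for the super case: Gelfand--Kazhdan descent from the formal Weyl algebra, uniqueness of the local trace via a Hochschild computation, realization of the trace cocycle as a relative Lie algebra cohomology class for the pair $(\mathrm{Der}(W_{2n}),\mathfrak{sp}_{2n})$, and identification of that class with $\hat{A}\cdot e^{-\Omega/\hbar}$ via a Chern--Weil map (the paper's $\chi$ and $\mathrm{char}_{(\bb{M},\sigma)}$, compared in Lemma~\ref{lem-CharCW}). The ``main obstacle'' you flag --- the explicit universal computation producing $\hat{A}$ --- is precisely the content of \cite[Lem.~5.4 and \S5]{FFSh}, carried out via the configuration-space/graph expansion that the paper generalizes in \S\ref{sec-CompEval}. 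So your plan and the paper's methodology coincide.

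One small correction: you write that local uniqueness reduces to $HH^0(W_{2n},W_{2n}^\vee)\cong\mathbb{C}$. In fact $HH_\bullet(W_{2n})\cong\mathbb{K}[2n]$, so the nontrivial group is $HH^{2n}(W_{2n},W_{2n}^\vee)$, not $HH^0$; the local trace is a $2n$-derived trace (a Hochschild $2n$-cocycle), which only becomes an underived trace on the global algebra $\mathcal{A}(M)$ after pairing with the connection form $A^{\wedge 2n}$ and integrating over $M$. This degree shift is exactly what the map $\beta_{\tilde{A}}$ in Proposition~\ref{prop-inducedtrace} implements.
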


\begin{rmk}
Other proofs of the algebraic index theorem can be found in \cite{PPT,Vasiliy}. 
In \cite{PPT}, they also prove a version of the algebraic index theorem for orbifolds. 
In \cite{Cher}, an algebraic index theorem for Cherednik algebras is shown. 
\end{rmk}

The right-hand side of (\ref{eq-AIT}) looks strikingly similar to the topological index of a differential operator. 
In particular, the same cobordism invariant appears in both settings: the $\widehat{A}$-genus. 
Nest and Tsygan \cite{Nest-Tsygan,NestTsygan2} have shown that the left-hand side of \ref{eq-AIT} can be related to the analytic index. 
In brief, this is done by evaluating the trace $t^\nabla_{T^*X}$ on the 
projection onto the kernel of an elliptic differential operator. 

Thus by comparing the algebraic index theorem to the Atiyah-Singer index theorem, we have a bridge between deformation theory and index theory. 

In a different vein, deformation theory is also fundamentally related to the problem of quantizing field theories. 
Given a classical field theory, one can form a commutative algebra of ``classical observables" 
which is roughly the measurements one can take on the fields. 
A quantization of the classical field theory has a new collection of measurements, leading to a new algebra of ``quantum observables." 
The quantum observables of a quantum field theory form a deformation of the classical observables of the associated classical field theory \cite{CG1}. 
The relationship between Fedosov and Kontsevich's deformation theory and perturbative methods in field theory goes back further; see for example \cite{Cattaneo}. 

Using this translation between field theory and deformation theory, 
Grady-Li-Li and Gui-Li-Xu \cite{GradyLiLi,newSiLi} derive the algebraic index theorem by computing the partition function of topological quantum mechanics (sometimes referred to as a one-dimensional generalized Chern Simons theory) using the BV formalism. 
See \cite{GwilliamGrady} for an analysis of one-dimensional Chern Simons theory in the BV formalism. 
This provides an exciting new interplay between trace methods in deformation theory and partition function results in quantum field theory. 

We thus see three interconnected stories: index theory, trace methods in deformation theory, and partition functions of quantum field theories. 
The goal of this paper is to study the super-analogue of the algebraic index theorem part of these stories. 
Just as \cite{GradyLiLi,newSiLi} showed that the algebraic index theorem was related to quantum mechanics, 
the super-version should be related to \emph{supersymmetric quantum mechanics}. 

\begin{rmk}
An important class of theories one can study in the BV formalism is given by those equipped with a $\bb{Z}/2$-grading: supersymmetric theories. 
The possible applications to supersymmetry is in large part our motivation to study a super analogue of the algebraic index theorem. 
Additionally, there are interesting relationships between supersymmetric field theories and chromatic homotopy theory, following Stolz-Teichner \cite{ST1,ST2,WG,Pokman}. 
Here and in \cite{My2}, symplectic supermanifolds have \emph{even} symplectic form. 
Odd symplectic supermanifolds also have interesting connections to the BV formalism; see for example \cite{EzraSean}. 
\end{rmk}

\subsubsection{Manifold Invariants}

In the Atiyah-Singer theorem, the algebraic index theorem, and the partition function of topological quantum mechanics 
we see the same cobordism invariant appearing: the $\widehat{A}$-genus. 

For example, the topological index of an elliptic differential operator $D$ on a compact manifold $X$ is 
\[\int_X\mrm{Td}(X)\mrm{ch}(D),\]
the integral of the Todd class of the manifold and the Chern character of the operator $D$. 
Viewed as an invariant of real, rather than complex bundles, 
the $\widehat{A}$-genus of a spin manifold $Y$ is $e^{-c_1(Y)/2}\mrm{Td}(Y)$;  
see \cite[Pg . 165]{Hirz}. 

From the relationships explained above, the $\widehat{A}$-genus can also be extracted 
from the trace on the Fedosov deformation (Theorem \ref{thm-AITintro}) and 
from the partition function of topological quantum mechanics. 

\begin{rmk}
One dimension higher, the two-dimensional $\beta\gamma$ holomorphic sigma model (also called holomorphic Chern-Simons theory) has partition function related to the Witten genus; see \cite{Costello,GGW}. 
In this two-dimensional story, the algebra of differential operators is replaced with the vertex algebra of chiral differential operators \cite{chiral}.
See \cite{Pokman2} for a discussion of how this story relates to supermanifolds.
\end{rmk}

One of the main goals of this paper is to discover what genus (i.e., cobordism invariant) replaces $\widehat{A}$ in the super-version of the algebraic index theorem. 
In \cite{Berwick-Evans}, Berwick-Evans shows that the partition function of supersymmetric quantum mechanics is related to Hirzebruch's L-genus. 
The L-genus assigns to a $4m$-dimensional manifold its signature \cite[\S 4.1]{Hirz}. 
It has associated characteristic series
\[\frac{\sqrt{z}}{\tanh(\sqrt{z})}.\]
A very similar formal power series also appears in Engeli's special case of the superalgebraic index theorem \cite[Lem. 2.25]{Engeli}. 
See also \cite[Appendix E]{WillwacherE}. 
We therefore expect the L-genus to replace the $\widehat{A}$-genus in some cases. 

\begin{rmk}
Note that the L-genus and the $\widehat{A}$-genus appear in partition functions of field theories of dimension one. 
These genera also naturally land in cohomology theories of chromatic height one. 
Analogously, the Witten genus, coming from a two-dimensional theory, naturally lands in a cohomology theory of height two \cite[Ch. 10]{tmfbook}.
Linking chromatic height to field theory dimension is a key aspect of the Stolz-Teichner program \cite{ST2}.
\end{rmk}

The L-genus is closely related to the theory of quadratic forms and their signatures \cite{RanickiAlg}. 
Given a quadratic vector space $(V,Q)$, one has an associated Clifford algebra 
\[\mrm{Cliff}(V,Q)=T^\bullet(V)/\{v^2=\hbar Q(v)\}.\]
Just as pointwise a Fedosov deformation looked like the Weyl algebra, 
pointwise deformations for symplectic supermanifolds will look like a Weyl algebra tensor a Clifford algebra.  
A case of particular interest (which is considered by Engeli in \cite[Lem. 2.25]{Engeli}) is when the Clifford algebra comes from a quadratic form of signature $(a,a)$. 
The superalgebraic index theorem of Engeli thus relates quadratic forms of a \emph{fixed} signature to the L-genus. 
A natural question to ask is what happens for quadratic forms of other signatures, 
which is exactly the generalization of Engeli's result that we prove here.

\begin{rmk}
While our motivation comes from a variety of areas, this paper itself keeps a tight focus on supertraces of superalgebras. 
We assume the reader is familiar with things like Hochschild homology, Lie algebra cohomology, and Chern-Weil theory; 
however, no knowledge of chromatic homotopy theory or physics is necessary to understand the content of this paper.
\end{rmk}

\subsubsection{Overview of Results}

Let $(\bb{M},\omega)$ be a symplectic supermanifold with $2n$ even dimensions, $a+b$ odd dimensions, and the symplectic form $\omega$ in the odd directions has signature $(a,b)$. 
We refer to the triple of numbers $(2n\vert a,b)$ as the \emph{type} of $\bb{M}$. 
To prove a super-version of Theorem \ref{thm-AITintro}, 
we need to understand the deformation quantizations of functions on $\bb{M}$.  

Deformation theory of symplectic supermanifolds was first studied by Bordemann \cite{Bordemann1,Bordemann2}. 
We studied this theory in \cite{My2} using Gelfand-Kazhdan descent. 
More explicitly, in \cite{My2} we construct a version of Gelfand-Kazhdan descent for $\bb{M}$ that depends on an \emph{$\hbar$-formal exponential} $\sigma$ \cite[Def. 5.7]{My2}. 
This super-Gelfand-Kazhdan descent is a sort of Borel construction for a specific Harish-Chandra pair $(\mfrk{g},K)$ and a principal $(\mfrk{g},K)$-bundle $\mrm{Fr}_\bb{M}$, of symplectic frames on $\bb{M}$. 
This descent functor sends a deformation $\AQ$ of functions on the formal disk to a deformation $\mathcal{A}_\sigma(\bb{M})$ of $\bb{M}$. 
The superalgebra $\AQ$ looks like a Weyl algebra tensored with a Clifford algebra. 

\begin{goal}
The goal of this paper is to study supertraces on the deformed superalgebra $\mathcal{A}_\sigma(\bb{M})$. 
\end{goal}

Corollary \ref{cor-TmTrace} shows that a supertrace on the Weyl-Clifford algebra $\AQ$ induces a supertrace on the deformation $\mathcal{A}_\sigma(\bb{M})$. 
By a straightforward Hochschild cohomology computation, 
a supertrace on $\AQ$ is unique up to a scalar. 
We discuss this computation and a unique normalization condition of supertraces on $\mathcal{A}_\sigma(\bb{M})$ in \S\ref{sec-UniquenessofSupertraces}. 

The algebraic index theorem computes the value of the normalized trace on the unit. 
Analogously, we define an invariant $\msf{Ev}$ of normalized supertraces given by evaluating the supertrace on a particular element in the deformed algebra; see Definition \ref{def-VF}. 
There is a local version of this invariant $\msf{Ev}_\mrm{loc}$ for supertraces on $\AQ$ which lives in the relative Lie algebra cohomology group $C^\bullet(\mfrk{g},\mrm{Lie}(K))$. 

Our analogue of Theorem \ref{thm-AITintro} will be a computation of the invariant $\msf{Ev}$ for the unique normalized supertrace on a symplectic supermanifold. 
 
The underpinnings of algebraic index type theorems is the ability to compute $\msf{Ev}$ of a descended supertrace in terms of Chern-Weil style characteristic classes for the local invariant $\msf{Ev}_\mrm{loc}$. 
That is, assuming one can compute $\msf{Ev}_\mrm{loc}$, one would like a formula for the global invariant $\msf{Ev}$. 
This formula should look something like applying the classical Chern-Weil map and integrating over the manifold. 
In the purely even case, this style of formula is proven in \cite[Thm. 4.3]{FFSh}, 
by appealing to uniqueness results.

Below, in Theorem \ref{thm-GlobalizeInvariant}, we give an alternative proof of a more general result. 
Roughly, this is a piece of Chern-Weil theory for $(\mfrk{g},K)$-bundles. 
To motivate our result, let us recall the classical Chern-Weil construction. 

Let $P\rta Y$ be a principal $K$-bundle on a closed $2n$-manifold $Y$. 
Choose a connection on $P$ and let $F$ denote its curvature. 
Let $\mfrk{k}$ be the Lie algebra of $K$. 
Chern-Weil theory gives a map 
\[\mrm{CW}_P\colon\mrm{Sym}^\bullet(\mfrk{k}^\vee)^K\rta \Omega^{2\bullet}(Y)\]
by evaluating an ad-invariant polynomial on the curvature $F$. 

Passing to cohomology, we get a map 
\[\mrm{Sym}^n(\mfrk{k}^\vee)^K\xrta{\mrm{CW}_P} H^{2n}(Y)\xrta{\int_Y}\bb{R}\]
Alternatively, one can identify $\mrm{Sym}^\bullet(\mfrk{k}^\vee)^K$ with the cohomology of the classifying space $H^\bullet(BK;\bb{R})$. 
Pulling back along the classifying map $\rho_P$ for the bundle $P\rta Y$, 
we get another map 
\[\mrm{Sym}^n(\mfrk{k}^\vee)^K\xrta{\rho_P^*} H^{2n}(Y)\xrta{\int_Y}\bb{R}.\]
The diagram
\[\begin{xymatrix}
{
H^{2n}(BK)\arw[rd]^{\rho_P^*}\ar@{=}[d] & & \\
\mrm{Sym}^n(\mfrk{k}^\vee)^K\arw[r]_{\mrm{CW}_P}& H^{2n}(Y)\arw[r]_-{\int_Y} & \bb{R}
}
\end{xymatrix}\]
commutes. 
That is, for any ad-invariant polynomial $I$ of degree $n$, 
\begin{align}\label{eq-CWintro}
\int_Y\rho^*(I)=\int_Y\mrm{CW}_P(I).
\end{align}

There is a version of the Chern-Weil construction for Harish-Chandra pairs; see \cite[\S 1.4]{GGW}. 
For the principal $(\mfrk{g},K)$-bundle $\mrm{Fr}_\bb{M}$ used in our super-Gelfand-Kazhdan descent, 
there is a map 

\[\mrm{char}_{(\bb{M},\sigma)}\colon C^\bullet(\mfrk{g},\mfrk{k})\rta \Omega^\bullet(\bb{M}).\]
Now, instead of an ad-invariant polynomial determining a characteristic class for principal $K$-bundles, 
a relative Lie algebra cohomology class gives a characteristic class for principal $(\mfrk{g},K)$-bundles. 
In particular, given a supertrace $t_{2n\vert a,b}$ on $\AQ$, 
our local invariant $\msf{Ev}_\mrm{loc}(t_{2n\vert a,b})$ determines a characteristic class for the bundle $\mrm{Fr}_\bb{M}$. 

Analogous to the right-hand side of (\ref{eq-CWintro}), 
we will consider 
\[\int_\bb{M}\mrm{char}_{(\bb{M},\sigma)}(\msf{Ev}_{\mrm{loc}}(t_{2n\vert a,b})).\]

We will need to assume that our traces on $\AQ$ satisfy an additional compatibility condition with action of the Lie group $K$, called being \emph{relative}; 
see Definition \ref{def-relative}.

\begin{thm}\label{thm-intro1}
Let $t_{2n\vert a,b}\colon(\AQ)^{\otimes 2n}\rta\bb{K}$ be a relative supertrace. 
For $(\bb{M},\sigma)$ a symplectic supermanifold of type $(2n\vert a,b)$ and an $\hbar$-formal exponential, 
let $t_\bb{M}$ denote the supertrace on $\mathcal{A}_\sigma(\bb{M})$ induced from $t_{2n\vert a,b}$ using Corollary \ref{cor-TmTrace}.
Then
\[\msf{Ev}_\bb{M}(t_\bb{M})=
\int_\bb{M}\mrm{char}_{(\bb{M},\sigma)}(\msf{Ev}_\mrm{loc}(t_{2n\vert a,b})).\]
\end{thm}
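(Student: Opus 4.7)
The plan is to mimic Fedosov's globalization argument inside the super-Gelfand-Kazhdan framework of \cite{My2}, so that the descended supertrace $t_\bb{M}$ admits an explicit fibered-integration description whose integrand is identified with the Chern-Weil representative of $\msf{Ev}_\mrm{loc}(t_{2n\vert a,b})$. The desired equality then falls out by integrating over $\bb{M}$.

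First I would review the descent construction: $\mathcal{A}_\sigma(\bb{M})$ is realized as the algebra of flat sections of the twisted bundle $\mrm{Fr}_\bb{M}\times_K\AQ$ equipped with the Fedosov-type connection determined by the $\hbar$-formal exponential $\sigma$. Under this identification, an element $F\in\mathcal{A}_\sigma(\bb{M})$ is a $K$-equivariant $\AQ$-valued function on $\mrm{Fr}_\bb{M}$, and by Corollary \ref{cor-TmTrace} the value $t_\bb{M}(F)$ is computed by applying $t_{2n\vert a,b}$ to $F$ pointwise and integrating the resulting density over $\bb{M}$. The relative hypothesis of Definition \ref{def-relative} is precisely what forces this pointwise output to be $K$-invariant, so that it descends to a well-defined top form on $\bb{M}$.

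Next I would compute $\msf{Ev}_\bb{M}(t_\bb{M})$ directly from this formula. By Definition \ref{def-VF}, $\msf{Ev}_\bb{M}(t_\bb{M})$ is $t_\bb{M}$ applied to a distinguished element of $\mathcal{A}_\sigma(\bb{M})$. In the Fedosov trivialization, this distinguished element pulls back to an element of $\AQ$ that depends polynomially on the Fedosov connection one-form and its curvature; evaluating $t_{2n\vert a,b}$ on this polynomial produces a relative Lie algebra cochain expression. Using the Harish-Chandra Chern-Weil formalism of \cite[\S 1.4]{GGW}, this expression is recognized as $\mrm{char}_{(\bb{M},\sigma)}(\msf{Ev}_\mrm{loc}(t_{2n\vert a,b}))$, and integrating over $\bb{M}$ gives the right-hand side.

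The central obstacle is the cochain-level matching in the previous paragraph. The argument of \cite[Thm. 4.3]{FFSh} in the purely even case sidesteps this by invoking uniqueness of normalized traces and checking the formula on a convenient example; since we now have a whole family of relative supertraces parametrized by the signature $(a,b)$, a direct argument is preferable. Concretely, one must track the signs coming from the $\bb{Z}/2$-grading on $\AQ$ in the Fedosov expansion, verify that the Maurer-Cartan form on $\mrm{Fr}_\bb{M}$ pulls the relative Lie algebra cochain $\msf{Ev}_\mrm{loc}(t_{2n\vert a,b})$ to the correct differential form, and check that only the top-degree component survives integration against the Berezinian-valued density on $\bb{M}$. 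Once this comparison is pinned down on cocycles, commutativity of the super analogue of the Chern-Weil square displayed in the introduction delivers Theorem \ref{thm-intro1}.
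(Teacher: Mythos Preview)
Your outline has the right strategic target—identify the integrand of $t_\bb{M}(\Theta_\bb{M})$ with the Harish-Chandra Chern--Weil representative $\mrm{char}_{(\bb{M},\sigma)}(\msf{Ev}_\mrm{loc}(t_{2n\vert a,b}))$—but the description of the descent construction in your first paragraph is incorrect in a way that blocks the argument.

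You write that ``$t_\bb{M}(F)$ is computed by applying $t_{2n\vert a,b}$ to $F$ pointwise.'' But $t_{2n\vert a,b}$ is a \emph{$2n$-derived} supertrace: its domain is $(\AQ)^{\otimes 2n+1}$, not $\AQ$. In Corollary~\ref{cor-TmTrace} the underived trace $t_\bb{M}$ is built as $\int_\bb{M}\mathbf{desc}_{(\bb{M},\sigma)}(t_{2n\vert a,b})\circ\beta_{\tilde{A}}$, where $\beta_{\tilde{A}}$ wedges a single section with the $2n$-fold power $\tilde{A}^{\wedge 2n}$ of the lifted connection one-form. So the connection dependence does not enter because ``the distinguished element pulls back to something depending polynomially on the Fedosov connection''; the volume form $\Theta_\bb{M}$ is simply the fiberwise element $1\otimes\Theta$. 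The connection enters through $\beta_{\tilde{A}}$ filling the other $2n$ tensor slots. Until this is straightened out, you cannot make the cochain-level comparison you describe in your last paragraph.

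The paper's proof (Theorem~\ref{thm-GlobalizeInvariant}) makes exactly this bridge explicit. Naturality of $\mrm{char}_{(\bb{M},\sigma)}$ in the coefficient module gives a commuting square relating $\mrm{ev}_{1\otimes\Theta}$ on the Lie-cochain side to the descended evaluation map. One then constructs a comparison map $B\colon\mathbf{desc}((\AQ)^*)\to\mrm{Hom}(\mathbf{desc}(\AQ),\mathbf{desc}(\bb{K}))$ and checks the key identity
\[
(t_{2n\vert a,b}^\mrm{Lie})_*(A^{\wedge 2n})(a)=\sum_{s\in\Sigma_{2n}}\mrm{sign}(s)\,t_{2n\vert a,b}(a\otimes A\otimes\cdots\otimes A)=t_{2n\vert a,b}(a\otimes A\otimes\cdots\otimes A),
\]
because the antisymmetrization is applied to a diagonal element. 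The right-hand side is precisely $\mathbf{desc}_{(\bb{M},\sigma)}(t_{2n\vert a,b})\circ\beta_{\tilde{A}}$ evaluated at $a$. Combining this with $\mrm{ev}_{\Theta_\bb{M}}\circ B=\mrm{char}_{(\bb{M},\sigma)}(\mrm{ev}_{1\otimes\Theta})$ (which holds since $\Theta_\bb{M}$ restricts fiberwise to $1\otimes\Theta$) yields the formula by a short diagram chase. This is the ``cochain-level matching'' you flag as the central obstacle; it is resolved by functoriality of $\mrm{char}$ together with the collapse of the antisymmetrization on $A^{\wedge 2n}$, not by tracking a Fedosov expansion of the volume form.

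A smaller point: you say the relative hypothesis ``is precisely what forces this pointwise output to be $K$-invariant.'' The $K$-equivariance already comes from $t_{2n\vert a,b}$ being a morphism in $\mrm{Mod}_{\gKh}$ (Definition~\ref{def-HCTrace}). The relative condition of Definition~\ref{def-relative} is what guarantees, via Lemma~\ref{lem-center}, that $t_{2n\vert a,b}^\mrm{Lie}$ lands in the \emph{relative} Lie complex $C^\bullet_\mrm{Lie}(\mfrk{g}^\hbar_{2n\vert a,b},\mfrk{sp}_{2n\vert a,b})$, which is the domain of $\mrm{char}_{(\bb{M},\sigma)}$.
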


\begin{rmk}
The relationship between the map $\mrm{char}_{(\bb{M},\sigma)}$, the classical Chern-Weil map $\mrm{CW}$, and the Chern-Weil map studied in \cite[\S 5.1]{FFSh} can be made explicit, 
a new observation that we record in Lemma \ref{lem-CharCW} below. 
\end{rmk}

The computational heart of this paper is an explicit description of the invariant $\msf{Ev}_\mrm{loc}$ on our chosen supertrace on $\AQ$. 
This local superalgebraic index theorem appears as Theorem \ref{thm-LocalSAIT} below. 
Our proof of this more general result follows ideas in \cite{FFSh} and \cite{Engeli}. 
A superalgebraic index theorem was proven for a specific class of symplectic supermanifolds (those of type $(2n\vert a,a)$) by Engeli in \cite{Engeli}. 
In the special case considered in \cite{Engeli}, 
we recover Engeli's result, up to a sign; see Remark \ref{rmk-CompareEngeli}. 
We also include as many details as possible in our proof so the reader may follow along with the computation.

Relying on the local computations and Theorem \ref{thm-GlobalizeInvariant}, 
we are able to compute $\msf{Ev}$ on the unique normalized supertrace on the deformed superalgebra $\mathcal{A}_\sigma(\bb{M})$. 

Assume $b\geq a$. 
Given a connection on the symplectic frame bundle $\Fr_\bb{M}$, 
write its curvature as a sum
$R+S$ so that 
 $R$ is a $\mfrk{sp}_{2n}$-valued form and
 $S$ is a $\mfrk{so}_{a,b}$-valued form. 
 Let $\int_\bb{M}$ be integration over the supermanifold $\bb{M}$. 
 See \S\ref{subsec-IntandOri}. 
The following is Theorem \ref{thm-main} below.

\begin{thm}[Superalgebraic Index Theorem]
The evaluation $\msf{Ev}_\bb{M}$ of the unique normalized supertrace $\mrm{Tr}_\bb{M}$ is 
\begin{align}\label{eq-SAIT}
\msf{Ev}_\bb{M}(\mrm{Tr}_\bb{M})=(-1)^{n+a+\z }\hbar^n\int_\bb{M}\widehat{A}(R)\widehat{BC}(S)\exp(-\Omega/\hbar)
\end{align}
where $\widehat{A}$ has characteristic power series 
$\frac{x/2}{\sinh x/2}$ 
and $\widehat{BC}$ has characteristic power series on a Cartan subalgebra given by the product of $\cosh(y/2)$ and $\cos(z/2)$.
\end{thm}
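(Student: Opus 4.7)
The plan is to combine the two main technical results already assembled in the introduction: the globalization theorem (Theorem \ref{thm-intro1}) and the local superalgebraic index theorem (Theorem \ref{thm-LocalSAIT}). The unique normalized supertrace $\mrm{Tr}_\bb{M}$ on $\mathcal{A}_\sigma(\bb{M})$ is, by the uniqueness results in \S\ref{sec-UniquenessofSupertraces} together with Corollary \ref{cor-TmTrace}, the trace induced from the unique (appropriately normalized) relative supertrace $t_{2n\vert a,b}$ on the Weyl-Clifford algebra $\AQ$. Theorem \ref{thm-intro1} then converts the global invariant $\msf{Ev}_\bb{M}(\mrm{Tr}_\bb{M})$ into an integral over $\bb{M}$ of the Chern-Weil image of the local invariant.

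Concretely, I would first write
\[\msf{Ev}_\bb{M}(\mrm{Tr}_\bb{M}) = \int_\bb{M}\mrm{char}_{(\bb{M},\sigma)}\bigl(\msf{Ev}_\mrm{loc}(t_{2n\vert a,b})\bigr)\]
via Theorem \ref{thm-intro1}, and then substitute for $\msf{Ev}_\mrm{loc}(t_{2n\vert a,b})$ the explicit class in the relative Lie cohomology $C^\bullet(\mfrk{g},\mrm{Lie}(K))$ furnished by Theorem \ref{thm-LocalSAIT}. Under $\mrm{char}_{(\bb{M},\sigma)}$, the $\mfrk{sp}_{2n}$-invariant factor evaluates on the even curvature component $R$ to give $\widehat{A}(R)$, the $\mfrk{so}_{a,b}$-invariant factor evaluates on the odd curvature component $S$ to give $\widehat{BC}(S)$, and the Weyl-curvature direction in $\mfrk{g}$ contributes $\exp(-\Omega/\hbar)$, exactly mirroring the appearance of $\exp(-\Omega/\hbar)$ in Theorem \ref{thm-AITintro}.

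The main obstacle is the local computation itself. Following the strategy of \cite{FFSh} and \cite{Engeli}, one constructs a Fedosov-type trace density on $\AQ$, reduces $\msf{Ev}_\mrm{loc}$ to the evaluation of an invariant polynomial on a Cartan subalgebra of $\mfrk{sp}_{2n}\oplus\mfrk{so}_{a,b}$, and computes. The even-symplectic factor reproduces the familiar series $\tfrac{x/2}{\sinh(x/2)}$ for $\widehat{A}$ by essentially the same Weyl-algebra calculation used in the classical algebraic index theorem. The crux is the odd-orthogonal factor: the signature $(a,b)$ splits the Cartan of $\mfrk{so}_{a,b}$ into compact and noncompact blocks, and the Koszul signs in the supertrace on the Clifford algebra (reflecting the distinction between $v^2=+\hbar$ and $v^2=-\hbar$ generators) yield the hyperbolic series $\cosh(y/2)\sinh(y)/y$ on the noncompact directions and the trigonometric series $\cos(z/2)\sin(z)/z$ on the compact directions, packaging into $\widehat{BC}$. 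Once this local identification is in hand, the overall prefactor $(-1)^{n+a+\z}\hbar^n$ is accounted for by the chosen normalization of the supertrace together with the orientation and parity conventions of the super integral from \S\ref{subsec-IntandOri}, completing the proof.
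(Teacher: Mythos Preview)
Your proposal is correct and follows essentially the same approach as the paper: apply Theorem \ref{thm-GlobalizeInvariant} (your Theorem \ref{thm-intro1}) to reduce to the local invariant, then feed in Theorem \ref{thm-LocalSAIT} and read off the characteristic classes via Chern-Weil. The only step you leave implicit that the paper makes explicit is Lemma \ref{lem-CharCW}, which identifies $\mrm{char}_{(\bb{M},\sigma)}\circ\chi$ with the classical Chern-Weil map $\mrm{CW}_\bb{M}$ and thereby justifies evaluating the invariant polynomial $P_n$ on the curvature decomposition $F_{\mrm{pr}(\widetilde{A})}=(R+S)-\Omega$.
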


\begin{rmk}
The characteristic classes determined by Chern-Weil theory are independent of the choice of connection. 
For the same reasons, the right-hand side of (\ref{eq-SAIT}) does not depend on the choice of connection. 
\end{rmk}

This result is a super-version of Theorem \ref{thm-AITintro}. 
The invariant $\msf{Ev}_\bb{M}(\mrm{Tr}_\bb{M})$ has replaced evaluation on the identity function, $t^\nabla_M(1)$, appearing in the left-hand side of (\ref{eq-AIT}). 
On the right-hand side of (\ref{eq-SAIT}), we see (up to a constant multiplier) the manifold invariant given by integrating some characteristic series over the supermanifold $\bb{M}$. 
One should view this in analogue with the right-hand side of (\ref{eq-AIT}), 
where we saw the integral of the $\widehat{A}$-class. 
In particular, the $\widehat{A}$-class is replaced in the super-case with the product of the classes $\widehat{A}$, $\widehat{B}$, and $\widehat{C}$.

\subsection{Linear Overview}

This paper is broken into two parts. 
The first part establishes the conceptual framework we use, 
proving the general results we will employ in the second part. 
The second part contains all of the computations and constructions. 

We begin in \S\ref{sec-Background} by reviewing the main results of \cite{My2}.
In \S\ref{subsec-SGKD}, we recall the super version of Gelfand-Kazhdan descent (sGK descent) that will be our main tool for globalizing local results. 
Subsequently, in Theorem \ref{thm-DescA} we introduce the deformation of functions on a symplectic supermanifold 
and then record some basic results on integration over supermanifolds in \S\ref{subsec-IntandOri}. 

In \S\ref{sec-DescendingSupertraces}, we prove that sGK descent takes supertraces to supertraces. 
We then introduce normalization conditions in \S\ref{sec-UniquenessofSupertraces} that uniquely determine the traces we will construct in Part 2. 
The supertrace invariants we will be interested in are defined in \S\ref{sec-Evaluation}. 
Therein, we also prove that the local version of this invariant determines the global invariant of the descended trace, 
see Theorem \ref{thm-GlobalizeInvariant}. 

Part 2 begins in \S\ref{sec-Quad} with a review of quadratic forms, 
including useful results about their corresponding Clifford algebras. 
In \S\ref{sec-ConstructingtheSupertrace}, 
we give a formula for the supertrace on the Weyl-Clifford algebra following \cite{Engeli}. 
We justify how this rather complicated formula comes from a more obviously canonical one in \S\ref{subsec-Lifting}. 
The bulk of computational content of this paper is contained in \S\ref{sec-CompEval}. 
Therein, we compute the invariants defined in \S\ref{sec-Evaluation}. 
The superalgebraic index theorem is proven in \S\ref{sec-GSAIT}. 

\subsection{Conventions}

\begin{notation}
We set the following notation
\begin{itemize}

\item We let $\mrm{HH}_\bullet(A)$ denote Hochschild homology with coefficients in $A$, and $\mrm{Hoch}_\bullet(A)$ the Hochschild chains. 

\item We let $\mrm{HH}^\bullet(A)$ denote Hochschild cohomology with coefficients in $A^*$, and $\mrm{Hoch}^\bullet(A)$ the Hochschild cochains. 

\item Let $\bb{k}$ be either $\bb{K}$ or $\bb{C}$. 

\item $\hbar$ is a free variable we use as our deformation parameter.  

\item Let $\bb{K}=\bb{k}[[\hbar]]$ 

\item Given an object $R$ with a $\bb{Z}/2$-grading, we let $\Pi R$ denote the parity shift of $R$ with opposite grading. In particular, $\Pi\bb{K}=\bb{k}^{0\vert 1}[[\hbar]]$. 

\item We let $(\bb{M},\omega)$ denote a symplectic supermanifold of type $(2n\vert a,b)$. 

\item The coordinates of $\bb{R}^{2n\vert a,b}$ are given by $p_1,\dots,p_n,q_1,\dots,q_n,\theta_1,\dots,\theta_{a+b}$ where $p_i,q_i$ are even and $\theta_i$ are odd.

\end{itemize}
\end{notation}

\subsection{Acknowledgements}
The author would like to thank Owen Gwilliam and Brian Williams for many useful conversations and comments on previous drafts, 
and Dylan Wilson for his help with the computation in \S\ref{subsec-PfThm}. 
Thanks are also due to Michele Vergne for finding errors in a previous version. 
The author thanks Giovanni Felder for pointing out an error in a previous version that resulted in a contradiction with Engeli's work \cite{Engeli}. 
The author was supported by NSF Grant No. 1122374 while completing this work.

\part{Formal Theory}

\section{Background}\label{sec-Background}

We recall the results of \cite{My2} that we will use below. 
For a review of symplectic supermanifolds; see \cite[\S 2]{My2}. 
In \S\ref{subsec-SGKD}, 
we review the super version of Gelfand-Kazhdan descent, 
which allows us to globalize results from the formal disk to an entire supermanifold. 
We then review the algebras whose supertraces we will be interested in, \S\ref{subsec-WCA}. 
We end this section by discussing how to integrate over a symplectic supermanifold 
and defining an odd volume form. 

\subsection{Super-Gelfand-Kazhdan Descent}\label{subsec-SGKD}

All symplectic supermanifolds $(\bb{M},\omega)$ will be assumed to be of type $(2n\vert a,b)$; see \cite[Def. 2.17]{My2}. 
That is, $\bb{M}$ has $2n$ even dimensions, $a+b$ odd dimensions, 
and the symplectic structure in the odd direction comes from a quadratic form of signature $(a,b)$. 

Recall the super-Harish-Chandra (sHC) pair $\gKh$ from \cite[Conv. 3.18]{My2} and \cite[Notation 5.6]{My2} 
where $\mrm{Sp}(2n\vert a,b)$ is the Lie supergroup of linear automorphisms of a symplectic super vector space,  
and $\mfrk{g}^\hbar_{2n\vert a,b}$ is the Lie superalgebra of derivations of the Weyl-Clifford algebra. 

Super-Gelfand-Kazhdan descent is a fancy version of the Borel construction for modules over the sHC pair $\gKh$. 
By \cite[Thm. 3.22]{My2} and \cite[\S 5.0.1]{My2} 
there is a category $\msf{sGK}^{=,\hbar}_{2n\vert a,b}$ of symplectic supermanifolds equipped with an $\hbar$-formal exponential 
and a functor from $\msf{sGK}^{=,\hbar}_{2n\vert a,b}$ to principal $\gKh$-bundles, 

\begin{align}\label{eq-sGK}
\msf{sGK}^{=,\hbar}_{2n\vert a,b}\rta \msf{Bun}^\mrm{flat}_{\gKh}.
\end{align}

Given a pair $((\bb{M},\omega),\sigma)\in\msf{sGK}^{=,\hbar}_{2n\vert a,b}$ of a symplectic supermanifold $(\bb{M},\omega)$ and $\hbar$-formal exponential $\sigma$, 
one assigns the symplectic frame bundle $\Fr_\bb{M}$ and a flat connection 
\begin{align}\label{eq-A}
A\in\Omega^1(\Fr_\bb{M};\mfrk{g}_{2n\vert a,b}^\hbar)
\end{align}
determined by $\sigma$. 

Composing the functor (\ref{eq-sGK}) with a version of the Borel construction, 
we obtain our desired functor.

\begin{defn}\label{def-sGKDescent}
The \emph{super-Gelfand-Kazhdan descent functors} are the functors obtained from \cite[Ex. 3.50]{My2} and \cite[\S 5.0.1]{My2} by varying $(P,\nu)$ over $\msf{sGK}^{=,\hbar}_{2n\vert a,b}$, 

\[\mrm{desc}^\mrm{sGK}\colon \msf{sGK}^{=,\hbar}_{2n\vert a,b}\times \msf{Mod}_{\widehat{\mathcal{O}}_{2n\vert a,b}}(\msf{Mod}_{\gKh})\rta\msf{Pro}(\msf{VB})^\mrm{flat}\]
and

\[\mathbf{desc}^\mrm{sGK}\colon(\msf{sGK}^{=,\hbar}_{2n\vert a,b})^\mrm{op}\times \msf{Mod}_{\widehat{\mathcal{O}}_{2n\vert a,b}}(\msf{Mod}_{\gKh})\rta\msf{Mod}_{\Omega^\bullet_\bb{M}}.\]

For $((\bb{M},\omega),\sigma)\in \msf{sGK}_{2n\vert a,b}$, let $\mathbf{desc}_{(\bb{M},\sigma)}$ denote the resulting functor between module categories.
\end{defn}

\noindent This is the version of \cite[Def. 3.51]{My2} considered in \cite[\S 5.0.1]{My2}.

More concretely, 
$\mathbf{desc}_{(\bb{M},\sigma)}$ sends a $\widehat{\mathcal{O}}_{2n\vert a,b}$-module $V$ in $\gKh$-modules to the de Rham forms with differential induced from the connection $\nabla$, 

\begin{align}\label{eq-desc}
\mathbf{desc}_{(\bb{M},\sigma)}(V)=(\Omega^\bullet(\bb{M},\Fr_\bb{M}\times_{\mrm{Sp}(2n\vert a,b)}V),d_\nabla),
\end{align}

where $\nabla$ is a connection on the Borel construction induced via $A$ from the $\hbar$-formal exponential~$\sigma$.

\subsubsection{Weyl and Clifford Algebras}\label{subsec-WCA}

The main application in \cite{My2} of super-Gelfand-Kazhdan descent was to produce a deformation of $\mathcal{O}_{\bb{M}}$ using a deformation of functions on the formal disk. 
Here, we will produce supertraces on these formally local and global deformations. 
Fix an object $(\bb{M},\sigma)$ in $\msf{sGK}^{=,\hbar}_{2n\vert a,b}$.

The deformation of $\widehat{\mathcal{O}}_{2n\vert a,b}$ that is studied in \cite{My2} is 

\[\AQ\cong\widehat{\mrm{Weyl}}(T^*\bb{R}^n,\omega_0)\otimes\mrm{Cliff}(\bb{R}^{0\vert a+b},Q).\]
Here, $\widehat{\mrm{Weyl}}$ is the completed Weyl algebra and $\mrm{Cliff}$ is the Clifford algebra; see \cite[Def. 5.4]{My2} and \cite[\S 5.1]{My2}. 

The following is \cite[Thm. 5.12]{My2}. 

\begin{thm}\label{thm-DescA}
The algebra 
\[\mathcal{A}_\sigma(\bb{M}):=\Gamma_\nabla\left(\bb{M},\mathbf{desc}_{(\bb{M},\sigma)}(\AQ)\right)\]
is a deformation of the super Poisson algebra $\mathcal{O}_{\bb{M}}$.
\end{thm}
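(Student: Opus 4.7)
The plan is to decompose the assertion into three steps: (i) $\AQ$ is an algebra deformation of $\widehat{\mathcal{O}}_{2n\vert a,b}$ in the equivariant sense needed for descent; (ii) the functor $\mathbf{desc}_{(\bb{M},\sigma)}$ transports algebras to algebras, so $\mathcal{A}_\sigma(\bb{M})$ inherits an associative super-algebra structure; and (iii) the classical limit $\hbar\to 0$ recovers $\mathcal{O}_\bb{M}$ together with its canonical super Poisson bracket. For step (i), the Weyl-Clifford algebra $\AQ\cong\widehat{\mrm{Weyl}}(T^*\bb{R}^n,\omega_0)\otimes\mrm{Cliff}(\bb{R}^{0\vert a+b},Q)$ carries the standard Moyal-Clifford star product, which reduces mod $\hbar$ to pointwise multiplication on $\widehat{\mathcal{O}}_{2n\vert a,b}$. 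Because $\mfrk{g}^\hbar_{2n\vert a,b}$ was defined as the derivations of $\AQ$ and $\mrm{Sp}(2n\vert a,b)\hookrightarrow\mathrm{Aut}(\AQ)$, the entire sHC pair $\gKh$ acts by algebra derivations/automorphisms, placing $\AQ$ in the equivariant category appearing in Definition \ref{def-sGKDescent}.

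For step (ii), the explicit formula (\ref{eq-desc}) realizes $\mathbf{desc}_{(\bb{M},\sigma)}(\AQ)$ as the de Rham complex of the associated bundle $\Fr_\bb{M}\times_{\mrm{Sp}(2n\vert a,b)}\AQ$, which acquires a fiberwise $\bb{K}$-super-algebra structure because $\mrm{Sp}(2n\vert a,b)$ acts by automorphisms. The connection $\nabla$ is built from the 1-form $A$ of (\ref{eq-A}), whose values lie in $\mfrk{g}^\hbar_{2n\vert a,b}$, i.e.\ in derivations of $\AQ$; so $d_\nabla$ obeys the graded Leibniz rule for the fiberwise product, and flatness of $A$ gives $d_\nabla^2=0$. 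The $d_\nabla$-flat sections $\Gamma_\nabla(\bb{M},-)$ are therefore closed under multiplication, which furnishes $\mathcal{A}_\sigma(\bb{M})$ with an associative $\bb{K}$-superalgebra structure.

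For step (iii), reduction mod $\hbar$ of $\AQ$ is $\widehat{\mathcal{O}}_{2n\vert a,b}$, and the reduced descent is the classical super-Gelfand-Kazhdan construction applied to $\Fr_\bb{M}\times_{\mrm{Sp}(2n\vert a,b)}\widehat{\mathcal{O}}_{2n\vert a,b}$, whose flat sections return $\mathcal{O}_\bb{M}$. The super Poisson bracket on the quotient matches $\tfrac{1}{\hbar}[-,-]\bmod\hbar$ because both coincide pointwise on the formal disk with the bracket determined by $\omega_0+Q$, and the identification is $\mrm{Sp}(2n\vert a,b)$-equivariant and hence descends along the associated bundle construction.

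The main obstacle I expect is ensuring that $\nabla$-flat sections commute with $\hbar$-adic reduction, i.e.\ that the natural map $\mathcal{A}_\sigma(\bb{M})/\hbar\to \mathcal{O}_\bb{M}$ is actually surjective rather than only injective, and that $\mathcal{A}_\sigma(\bb{M})$ is flat over $\bb{K}$. Following Fedosov and the strategy of \cite{My2}, the way around this is to exhibit an explicit contracting homotopy for $d_\nabla$ in positive form-degrees on $\mathbf{desc}_{(\bb{M},\sigma)}(\AQ)$, allowing one to lift any function $f\in\mathcal{O}_\bb{M}$ order-by-order in $\hbar$ to a $d_\nabla$-closed section. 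This step uses the specific Fedosov-type structure of the connection $A$ attached to the $\hbar$-formal exponential $\sigma$; mere flatness of $A$ would not suffice, and this is where the full input of the super-Fedosov construction of \cite{My2} is required.
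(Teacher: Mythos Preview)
The paper does not actually prove Theorem~\ref{thm-DescA}: it is stated as a recollection of \cite[Thm.~5.12]{My2}, with no argument given here. So there is no ``paper's own proof'' to compare against in this document.

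That said, your three-step outline is a faithful reconstruction of the standard Fedosov/Gelfand--Kazhdan argument that \cite{My2} is built on, and the paper's surrounding discussion (in particular Definition~\ref{def-sGKDescent}, equation~(\ref{eq-desc}), and the description of the flat connection in~(\ref{eq-A})) confirms that steps (i)--(ii) are exactly the ingredients being invoked. Your identification of the genuine obstacle---that flatness of $A$ alone does not give surjectivity of $\mathcal{A}_\sigma(\bb{M})/\hbar\to\mathcal{O}_\bb{M}$, and that one needs the Fedosov-type iteration attached to the $\hbar$-formal exponential $\sigma$---is correct and is precisely the content that the citation to \cite{My2} is meant to import. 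One small refinement: in step (iii) you should also note that the identification of $\nabla$-flat sections of the mod-$\hbar$ bundle with $\mathcal{O}_\bb{M}$ is itself a nontrivial output of the super-Gelfand--Kazhdan machinery (this is \cite[Cor.~3.47 and Lem.~5.10]{My2} in the paper's references), not just a formal consequence of the associated-bundle construction.
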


Note that the algebra $\mathcal{A}_\sigma(\bb{M})$ is the zeroth cohomology of the $\Omega_\bb{M}^\bullet[[\hbar]]$-algebra $\Dsc(\AQ)$.

\subsubsection{Connections}\label{sec-Connections}

The Lie superalgebra $\mfrk{g}_{2n\vert a,b}^\hbar$ is defined to be derivations of $\AQ$, 
\[\mfrk{g}_{2n\vert a,b}^\hbar=\mrm{Der}(\AQ).\]
We have a central extension of Lie superalgebras 
\[0\rta Z\rta \AQ\rta\mrm{Der}(\AQ)\rta 0\]
where $Z$ is the center of $\AQ$, viewed as an abelian Lie algebra. 
The connection 1-form $A$ from (\ref{eq-A}) can be lifted to a connection 1-form $\tilde{A}$ with values in $\AQ$. 
The curvature $F_{\tilde{A}}$ of $\tilde{A}$ is then a closed 2-form with values in the center $Z$ of $\AQ$. 
Moreover, $F_{\tilde{A}}$ is a $\mrm{Sp}(2n\vert a,b)$-basic form. 
We can therefore view $F_{\tilde{A}}$ as an element
\[F_{\tilde{A}}\in H^2(\bb{M};\Fr_\bb{M}\times_{\mrm{Sp}(2n\vert a,b)} Z).\]
This class is independent of the choice of lift $\tilde{A}$ of $A$. 
Since $Z\simeq\bb{K}$, then $F_{\tilde{A}}$ becomes a class in $H^2(\bb{M};\bb{K})$. 
This $\bb{K}$-valued de Rham form is what is called the \emph{characteristic class} of the deformation $\AQ$.

One should compare the above to \cite[Pg. 18]{FFSh} and \cite[\S 4]{BK}.

\subsubsection{Integration and Orientations}\label{subsec-IntandOri}

Our manifolds are assumed to be compact and without boundary. 
Ordinary symplectic manifolds $(M,\omega)$ are orientable. 
If $M$ has dimension $2n$, then $\omega^n$ is a common choice of orientation. 
The underlying manifold of a symplectic supermanifold is symplectic, and hence orientable; 
let $[M]$ denote the fundamental class. 
The main use of an orientation for us will be to integrate over the manifold. 
See \cite{Urs} for an introduction to integrating over supermanifolds.
One can integrate along symplectic supermanifolds using a combination of integration along the underlying manifold and a Berezin integral. 
We define this locally in coordinates.

Locally, $\bb{M}$ is modeled on the symplectic supermanifold $\bb{R}^{2n\vert a+b}$. 
Let $\theta_1,\dots,\theta_{a+b}$ be local odd coordinates. 
Given a compactly supported function $f$ on $\bb{R}^{2n\vert a,b}$, we take 

\[\int_{\bb{R}^{2n\vert a,b}}f d\bb{M}=\int_{\bb{R}^{2n}}d[M]\int_{\Lambda^{a+b}}f d\Theta\]
where $\int_{\Lambda^{a+b}}d\Theta$ is the Berezin integral on $a+b$ Grassmann variables such that 
\[\int_{\Lambda^{a+b}}\theta_1\cdots\theta_n d\Theta=1.\]
\noindent See \cite[Part 1. Ch. 2. \S 2]{Berezin}.

To check that this globalizes to define an integration on $\bb{M}$, 
we need to check that it is invariant under coordinate changes coming from local super symplectomorphisms $\phi$. 
The change of variables formula for the Berezin integral can be found in \cite[Thm. 2.1, Part 2. Ch. 2. \S 2]{Berezin}. 
The Berezin transforms under $\phi$ by the Berezinian (\cite[2.2.11]{Berezin}) of the Jacobian of $\phi$. 
It therefore suffices to check that the Berezinian
of the Jacobian matrix of $\phi$ is 1, 
and that $\theta_1\cdots\theta_{a+b}$ globalizes to a function on $\bb{M}$. 

The Jacobian of a super symplectomorphism $\phi$ is a super symplectic matrix $M_\phi$, analogously to the even case. 
Just as the determinant of a symplectic matrix is 1, 
the Berezinian of a super symplectic matrix is 1. 
Indeed, since $M_\phi$ is a super symplectic matrix, we have 
\[M^{\mrm{sT}}_\phi H_QM_\phi= H_Q\]
where $H_Q$ is as in \S\ref{sec-Quad}. See also \cite[Rmk. 2.12]{My2}. 
Since the Berezinian of a product is the product of the Berezinians, we may reduce to computing $\mrm{Ber}(H_Q)$. 
Then Berezinian of the block matrix $H_Q$ maybe computed by the formula \cite[\S 2]{Berezin}

\[\mrm{Ber}\begin{bmatrix}A& B\\ C & D\end{bmatrix}=\frac{\det(A-BD^{-1}C)}{\det(D)}.\]

We defer the following computation to \S\ref{sec-Quad}. 

\begin{lem}\label{lem-ThetaM}
One can choose an ordering of local odd coordinates $\theta_1,\dots,\theta_{a+b}$ on the symplectic supermanifold $\bb{R}^{2n\vert a+b}$ 
so that the function 
\[\Theta\colon\bb{R}^{2n\vert a,b}\rta\bb{R}\]
given by $\Theta=\theta_1\cdots\theta_{a+b}$ is invariant under local symplectomorphisms of $\bb{M}$ and therefore defines a global function on $\bb{M}$, 
\[\Theta_\bb{M}\in\mathcal{O}_\bb{M}.\] 
\end{lem}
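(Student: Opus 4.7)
The plan is to compute the pullback of the local product $\Theta = \theta_1\cdots\theta_{a+b}$ along an arbitrary local super-symplectomorphism $\phi$ of $\bb{R}^{2n\vert a+b}$ and show it equals $\Theta$. Since each $\phi^*(\theta_i)$ is an odd element of the structure sheaf, its expansion in the $\theta_j$'s begins in odd $\theta$-degree. Multiplying $a+b$ of these odd elements produces a polynomial of total $\theta$-degree at least $a+b$, and because there are only $a+b$ Grassmann generators, only the saturating top term survives. A direct bookkeeping shows this top term is $\det(D_0(x))\,\theta_1\cdots\theta_{a+b}$, where $D_0(x)$ denotes the body (the $\theta=0$ restriction) of the odd-odd block of the super Jacobian $M_\phi$. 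Hence $\phi^*(\Theta)=\det(D_0(x))\,\Theta$, and the lemma reduces to showing $\det(D_0)=1$.

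For this I would invoke the Berezinian identity recalled just before the statement: for any super-symplectomorphism, $\mrm{Ber}(M_\phi)=1$. Restricted to the body, $M_\phi$ becomes block diagonal with blocks $A_0$ and $D_0$, so the block Berezinian formula gives $\det(A_0)/\det(D_0)=1$. Separately, restricting the super-symplectic condition $\phi^*\omega=\omega$ to the body and comparing pure even parts shows that $A_0$ preserves the standard symplectic form on $\bb{R}^{2n}$, so $A_0\in\mrm{Sp}(2n,\bb{R})$ and $\det(A_0)=1$. Combining these yields $\det(D_0)=1$, and therefore $\phi^*(\Theta)=\Theta$.

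To globalize, I would pick a local ordering of the odd coordinates on each chart of an atlas. Different orderings differ only by a sign; since the transformation coefficient $\det(D_0)$ is exactly $+1$ rather than merely $\pm 1$, the orderings on overlapping charts can be matched consistently without obstruction. The resulting local top products then assemble into a well-defined even global function $\Theta_\bb{M}\in\mathcal{O}_\bb{M}$.

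The principal obstacle is the Berezinian step: one must carefully separate the even and odd contributions to the super-symplectic form to identify the body $A_0$ of the even-even block as a classical symplectic matrix, and one must verify that $\mrm{Ber}(M_\phi)=1$ for super-symplectomorphisms using the explicit matrix conventions for $H_Q$ in type $(2n\vert a,b)$. These are precisely the matrix computations recorded in \S\ref{sec-Quad}, which is why the authors defer the verification to that section.
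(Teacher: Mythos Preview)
Your argument is correct and in fact supplies considerably more detail than the paper does. The paper's proof of this lemma is the single sentence ``Take $\theta_1,\dots,\theta_{a+b}$ as in Notation~\ref{not-Orient},'' which merely names the ordering and treats the invariance as already handled by the Berezinian discussion preceding the lemma statement in \S\ref{subsec-IntandOri}. Your computation $\phi^*(\Theta)=\det(D_0)\,\Theta$, together with the body of the Berezinian identity giving $\det(A_0)/\det(D_0)=1$ and $\det(A_0)=1$ from $A_0\in\mathrm{Sp}(2n,\bb{R})$, makes explicit exactly the step the paper leaves implicit.

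One small remark: the relation $M_\phi^{\mathrm{sT}}H_QM_\phi=H_Q$ by itself only gives $\mathrm{Ber}(M_\phi)^2=1$, so obtaining $+1$ rather than $\pm 1$ (and hence $\det(D_0)=+1$) uses that the transition functions lie in the connected component, i.e.\ that the odd--odd body lands in $\mathrm{SO}(a,b)$ rather than $\mathrm{O}(a,b)$. This is built into the paper's choice of structure group $\mathrm{Sp}(2n\vert a,b)$, so your invocation of the paper's assertion $\mathrm{Ber}(M_\phi)=1$ is legitimate; I mention it only because your final paragraph on globalization hinges on getting exactly $+1$, and this is where that sign is coming from.
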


The term \emph{volume form} is sometimes used to mean the data needed to integrate against. 
As $\mathcal{A}_\sigma(\bb{M})$ is a deformation of $\mathcal{O}_\bb{M}$, 
we have an equivalence of modules 
\[\mathcal{A}_\sigma(\bb{M})\simeq\mathcal{O}_\bb{M}[[\hbar]].\]
We may therefore view $\Theta_\bb{M}$ as an element of $\mathcal{A}_\sigma(\bb{M})$. 

\begin{defn}\label{def-VF}
The \emph{volume form} on $\bb{M}$ is $\Theta_\bb{M}\in\mathcal{A}_\sigma(\bb{M})$.
\end{defn}

We can see the volume form in terms of the Weyl-Clifford algebra as well.

\begin{rmk}
Recall that $\mathcal{A}_\sigma(\bb{M})$ is the result of applying super-Gelfand-Kazhdan descent to the tensor product

\[\AQ\cong\widehat{\mrm{Weyl}}(T^*\bb{R}^n,\omega_0)\otimes\mrm{Cliff}(\bb{R}^{0\vert a+b},Q).\]

After possibly scaling, 
the element $\Theta_\bb{M}$ locally looks like the unit in the completed Weyl algebra tensored with 
\[\Theta=\theta_1\cdots\theta_{a+b}\]
 in the Clifford algebra. 
\end{rmk}

\begin{ex}
Identify $\bb{M}$ as $E[1]$ for $E\rta M$ a quadratic vector bundle on a symplectic manifold as in Rothstein's theorem \cite{Rothstein}. 
Then functions on $\bb{M}$ are given by sections of the exterior product bundle, $\mathcal{O}_\bb{M}=\Gamma(\bb{M},\Lambda^\bullet E)$. 
If $E$ is oriented, 
we can take $\Theta_\bb{M}$ to be given by the corresponding section of $\Lambda^{\mrm{rank}(E)}E$, 
sending a point of $M$ to the top dimensional form on $E$ from the orientation. 
This is also explained in, for example, \cite[Pg. 26]{Engeli}.

\begin{defn}\label{def-Berezin}
Let $\bb{M}$ be a symplectic supermanifold with reduced manifold $M$.  
Let $d[M]$ denote the volume form on $M$ induced from the symplectic structure. 
\emph{Integration} over $\bb{M}$ is given by the map 
$\int_\bb{M}\colon \Omega^{2n}_\mrm{dR}(\bb{M})\rta\bb{K} $ 
defined by 
\[\int_\bb{M}=\int\left(\int(-)d\Theta_\bb{M}\right) d[M].\] 
\end{defn}
\end{ex}

\section{Descending Supertraces}\label{sec-DescendingSupertraces}

The goal of this section is to show how the notion of supertraces interacts with the super-Gelfand-Kazhdan descent functor. 
In particular, we would like a way of procuring supertraces on the deformed superalgebra 
$\mathcal{A}_\sigma(\bb{M})$ of Theorem \ref{thm-DescA}. 
By definition, the superalgebra $\mathcal{A}_\sigma(\bb{M})$ is obtained by applying super-Gelfand-Kazhdan descent to the superalgebra $\AQ$. 
We will show that a derived supertrace on $\AQ$ descends to a supertrace on $\mathcal{A}_\sigma(\bb{M})$. 

We begin by defining the types of maps we want. 

\noindent Note that for a superalgebra $A$ over $\bb{K}$, 
the Hochschild homology groups $\mrm{HH}_i^\bb{K}(A;A)$ are $\bb{Z}/2$-graded as well. 

\begin{rmk}
Ordinarily, we would like to use super-Gelfand-Kazhdan descent
for the super-Harish-Chandra pair. 
$\gKh$.
This descent functor involves the principal $\mathrm{Sp}(2n\vert a,b)$
bundle $\mathrm{Fr}_{\bb{M}}^{\mathrm{Sp}(2n\vert a,b)}$.
By Rothstein's theorem \cite{Rothstein},
we can view our symplectic supermanifold $\bb{M}$ 
as coming from an ordinary symplectic manifold equipped with a vector bundle with nondegenerate metric and a connection.
Using this structure, 
we get a reduction of structure group of 
$\mathrm{Fr}_{\bb{M}}^{\mathrm{Sp}(2n\vert a,b)}$
from $\mathrm{Sp}(2n\vert a,b)$ to the subgroup
$\mathrm{Sp}(2n)\times\mathrm{SO}(a,b)$. 
Following \cite[Ex. 3.6]{My2}, 
we get a sHC pair 
\[(\mathfrak{g}^\hbar_{2n\vert a,b},\mathrm{Sp}(2n)\times\mathrm{SO}(a,b)).\]
We can then instead consider super-Gelfand-Kazhdan descent
for this sHC pair. 

As remarked by Willwacher in \cite[Appendix E]{WillwacherE},
the supertrace we will consider here
and that in \cite{Engeli} 
has nice invariance under $\mathrm{Sp}(2n)\times\mathrm{SO}(a,b)$
but not under general elements of $\mathrm{Sp}(2n\vert a,b)$.
We will therefore need to consider descent for the former group.
Discussion of traces and descent along the latter group is 
included as well.
When the sHC pair is implicit or either choice is allowed,
we will use the notation $\Dsc$.
\end{rmk}

\begin{defn}\label{def-HCTrace}
Let $(\mathfrak{g},K)$ be an super-Harish-Chandra pair and let $A\in\mrm{Alg}(\mrm{Mod}_{(\mfrk{g},K)})$. 
A ($i$-derived) \emph{supertrace} on $A$ is a morphism 
$t\colon A^{\otimes_\bb{K} i+1}\rightarrow\bb{K} $ 
in $\mrm{Mod}_{(\mfrk{g},K)}$ such that the composition of $t$ with the differential $\del$ of the Hochschild complex,

\[
A^{\otimes i+2}\xrta{\del} A^{\otimes i+1}\xrta{t}\bb{K},\]
is zero.
\end{defn}

We allow $t$ to be either an even or odd map.

\begin{ex}
The superalgebra $\AQ$ is an algebra object in $\mrm{Mod}_{\gKh}$. 
Restricting the action to a sub-Lie group, 
we can view this as an object in 
\[\mrm{Mod}_{(\mathfrak{g}^\hbar_{2n\vert a,b},\mathrm{Sp}(2n)\times\mathrm{SO}(a,b))}\]
as well.
We construct a $2n$-derived supertrace $(\AQ)^{\otimes 2n+1}\rta\bb{K}$ in Theorem \ref{thm-DescendedTrace}.
\end{ex}

We will eventually restrict to supertraces satisfying an additional property.

\begin{defn}\label{def-relative}
Let $t$ be an $i$-derived supertrace on $A\in\mrm{Alg}(\mrm{Mod}_{(\mfrk{g},K)})$. 
Let $\rho\colon\mfrk{g}\rta\mrm{End}(A)$ denote the action of $\mfrk{g}$. 
Say $t$ is a \emph{relative} supertrace if for every $x\in\mrm{Lie}(K)$ and $a_0,\dots,a_{i-1}\in A$ we have 
\[\sum_{j=1}^{i}(-1)^jt(a_0\otimes a_1\otimes\cdots\otimes a_{i-1}\otimes a\otimes a_i\otimes\cdots\otimes a_{i-1})=0\]
where $a=\rho(x)\cdot 1$.
\end{defn}

We will see later (Lemma \ref{lem-center}) that relative supertraces correspond to relative Lie algebra cohomology classes.

\begin{defn}\label{def-TraceModR}
Let $R$ be a $\bb{K} $-superalgebra. Let $B\in\mrm{Alg}(\mrm{Mod}_R)$. 
An ($i$-derived) \emph{supertrace} on $B$ is a morphism 
$t\colon B^{\otimes_R i+1}\rightarrow R$ in $\mrm{Mod}_R$ 
such that the composition of $t$ with the differential $\del$ of the Hochschild complex,
\[B^{\otimes i+2}\xrta{\del}B^{\otimes i+1}\xrta{t}\bb{K},\]
is zero.
\end{defn}

\begin{ex}
Take $R=\Omega^\bullet_\mrm{dR}(\bb{M};\bb{K})$. 
Then 
\[\Dsc(\AQ)=\Omega^\bullet(\bb{M}; \Fr_\bb{M}\times_{\mrm{Sp}(2n\vert a,b)}\AQ)\]
is an algebra in $\mrm{Mod}_{R}$. 
Note that we have an isomorphism
\[\Dsc(\AQ)\otimes_R\Dsc(\AQ)\cong\Omega^\bullet\left(\bb{M}; \Fr_\bb{M}\times_{\mrm{Sp}(2n\vert a,b)}(\AQ\otimes_\bb{K}\AQ)\right)\]
and similarly for $\mathrm{Sp}(2n)\times\mathrm{SO}(a,b)$.
\end{ex}

\begin{ex}
Take $R=\bb{K}$. 
Then $\mathcal{A}_\sigma(\bb{M})=\Gamma_\nabla(\bb{M}, \Dsc(\AQ))$ is an algebra in $\mrm{Mod}_\bb{K}$.
\end{ex}

\begin{lem}
The functor 
\[\Dsc\colon\mrm{Alg}(\mrm{Mod}_{\gKh})\rta\mrm{Alg}(\mrm{Mod}_{\Omega^\bullet_\bb{M}})\]
from Equation (\ref{eq-sGK}) 
sends derived supertraces to derived supertraces,
and similarly for $\mathrm{Sp}(2n)\times\mathrm{SO}(a,b)$.
\end{lem}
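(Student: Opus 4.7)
The plan is to exploit the fact that super-Gelfand-Kazhdan descent is a (symmetric) monoidal functor of algebras, so that it carries the Hochschild differential on $A$ to the Hochschild differential on $\mathbf{desc}(A)$. The trace condition is then preserved formally.

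First, I would spell out the behaviour of $\mathbf{desc}$ on tensor powers. Recall from the explicit formula (\ref{eq-desc}) that $\mathbf{desc}(A) = \Omega^\bullet(\bb{M}, \Fr_\bb{M} \times_{\mrm{Sp}(2n\vert a,b)} A)$ equipped with the flat connection induced by $A$. Because the Borel construction $\Fr_\bb{M} \times_{\mrm{Sp}(2n\vert a,b)} (-)$ is monoidal and de Rham forms satisfy the Künneth-style identification already recorded in the excerpt, one gets a natural isomorphism
\[
\mathbf{desc}(A^{\otimes_\bb{K} (i+1)}) \;\cong\; \mathbf{desc}(A)^{\otimes_{\Omega^\bullet_\bb{M}} (i+1)}
\]
of $\Omega^\bullet_\bb{M}$-modules, under which the multiplication on $\mathbf{desc}(A)$ corresponds to the descent of the multiplication on $A$. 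Similarly, $\mathbf{desc}(\bb{K}) = \Omega^\bullet_\bb{M}$ as an algebra.

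Given this monoidal compatibility, I would define the descended trace as the composite
\[
\mathbf{desc}(A)^{\otimes_{\Omega^\bullet_\bb{M}}(i+1)} \;\cong\; \mathbf{desc}(A^{\otimes_\bb{K}(i+1)}) \xrightarrow{\mathbf{desc}(t)} \mathbf{desc}(\bb{K}) = \Omega^\bullet_\bb{M},
\]
which makes sense because $t$ is a map in $\mrm{Mod}_{\gKh}$ and $\mathbf{desc}$ is functorial on such morphisms. The Hochschild differential $\del$ on $A^{\otimes(i+2)}$ is built from the multiplication of $A$ together with its action on $A$, both of which are morphisms of $\gKh$-modules, so applying $\mathbf{desc}$ yields the Hochschild differential for the $\Omega^\bullet_\bb{M}$-algebra $\mathbf{desc}(A)$ (after the identification of tensor powers above). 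Thus $\mathbf{desc}$ intertwines the two Hochschild differentials.

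With that in hand, the vanishing of the trace condition is immediate: since $t \circ \del = 0$ as maps in $\mrm{Mod}_{\gKh}$, functoriality gives $\mathbf{desc}(t) \circ \mathbf{desc}(\del) = \mathbf{desc}(t \circ \del) = 0$, which is exactly the condition that the descended map is an $i$-derived supertrace on $\mathbf{desc}(A)$ in the sense of Definition \ref{def-TraceModR}. The only real thing to verify carefully is the monoidal compatibility in the first paragraph — concretely, that the twisting by the flat connection $\nabla$ on each tensor factor is compatible with the tensor product differential and with the Hochschild face maps. This is the one step where the flatness of the connection coming from (\ref{eq-A}) is actually used, and I would check it by examining the Leibniz rule for $d_\nabla$ on products in the associated bundle, which follows because $A$ acts by derivations on the algebra structure of $A^{\otimes(i+1)}$.
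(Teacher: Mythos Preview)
Your argument is correct and matches the paper's: both rely on $\mathbf{desc}$ being symmetric monoidal so that the Hochschild face maps (built from multiplication and the symmetry) for $A$ are carried to those for $\mathbf{desc}(A)$, after which functoriality gives $\mathbf{desc}(t)\circ\del=\mathbf{desc}(t\circ\del)=0$; the paper merely spells out the $i=0$ case separately via the swap diagram before stating the general case. One small correction: the Leibniz rule for $d_\nabla$ on tensor products follows from the $\mfrk{g}$-action being by derivations and does not use flatness of the connection --- the paper's proof does not invoke flatness here either.
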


\begin{proof}
Let $A$ be either in $\mrm{Alg}(\mrm{Mod}_{\gKh})$ 
or for $\mathrm{Sp}(2n)\times\mathrm{SO}(a,b)$, the proofs are the same. 
Let $t\colon A\rta\bb{K}$ be a supertrace on $A$. 
Applying $\Dsc$ to $t$, 
we obtain a map
\[\Dsc(t)\colon\Dsc(A)\rta\Dsc(\bb{K}).\]
By \cite[Cor. 3.47]{My2} and \cite[Lem. 5.10]{My2},  
$\Dsc(\bb{K})\cong\Omega^\bullet_\mrm{dR}(\bb{M};\bb{K})$. 
Thus $\Dsc(t)$ has the right domain and codomain to be a supertrace on $\Dsc(A)$, 
as in Definition \ref{def-TraceModR}.
Since $t$ is a supertrace, 
we have a commutative diagram

\[\begin{xymatrix}
{A\otimes A\arw[r]^{\mrm{mult}}\arw[dd]_{\mrm{swap}} & A\arw[dr]^t & \\
& & \bb{K}\\
A\otimes A\arw[r]_{\mrm{mult}} & A\arw[ur]_t & }
\end{xymatrix}\]
where $\mrm{mult}$ is the multiplication map for $A$. 
Since the functor $\Dsc$ is symmetric monoidal, 
it takes the swap map to the swap map, 
and $\mrm{mult}$ to the multiplication map $\mrm{mult}'$ for $\Dsc(A)$. 
Therefore, applying $\Dsc$ to this commutative diagram, 
we obtain a new commutative diagram 

\[\begin{xymatrix}
{\Dsc(A)\otimes \Dsc(A)\arw[r]^-{\mrm{mult}'}\arw[dd]_{\mrm{swap}} & \Dsc(A)\arw[dr]^{\Dsc(t)} & \\
& & \Omega^\bullet_{\bb{M}}[[\hbar]]\\
\Dsc(A)\otimes \Dsc(A)\arw[r]_-{\mrm{mult}'} & \Dsc(A)\arw[ur]_{\Dsc(t)} & }
\end{xymatrix}\]
which implies that $\Dsc(t)$ is a supertrace.

For a derived supertrace $T\colon A^{\otimes i+1}\rta\bb{K}$, 
the functoriality of $\Dsc$ implies that the face and degeneracy maps in the Hochschild complex of $A$ map to those in $\mrm{Hoch}_\bullet(\Dsc(A))$. 
Thus, the composition 
\[(\Dsc(A))^{\otimes i+1})\xrta{\del}(\Dsc(A))^{\otimes i+1}\xrta{\Dsc(T)}\Dsc(\bb{K})\] 
which shows that $\Dsc(T)$ is a derived supertrace on $\Dsc(A)$.
\end{proof}

\begin{ex}
Let $t_{2n\vert a,b}$ be a $2n$-derived supertrace on $\AQ$. 
Then $\mathbf{desc}_{(\bb{M},\sigma)}(t_{2n\vert a,b})$, 
as a map
\[\mathbf{desc}_{(\bb{M},\sigma)}(t_{2n\vert a,b})\colon\mathbf{desc}_{(\bb{M},\sigma)}(\AQ)^{\otimes 2n+1}\rta\Omega_{\mrm{dR}}^\bullet(\bb{M};\bb{K})\]
is a $2n$-derived trace on $\mathbf{desc}_{(\bb{M},\sigma)}(\AQ)$. 
\end{ex}

We would like $t_{2n\vert a,b}$ to determine an \emph{underived} trace on $\mathcal{A}_\sigma(\bb{M})$. 
For this, we will need the internal and external product maps on Hochschild homology. 
The following holds for $\mathrm{Sp}(2n\vert a,b)$ or $\mathrm{Sp}(2n)\times\mathrm{SO}(a,b)$.
First, note that 
\[\mathbf{desc}_{(\bb{M},\sigma)}(\AQ)=\Omega^\bullet(\bb{M};\Fr_\bb{M}\times_{\mrm{Sp}(2n\vert a,b)}\AQ)\]
is graded by degree of forms. 
Its Hochschild complex is therefore bigraded: 
one grading from degree of forms and one grading from the Hochschild complex.

Let $d_\mrm{dR}$ denote the de Rham differential on $\Omega^\bullet_\mrm{dR}(\bb{M};\bb{K})$, 
and $d_\nabla$ the differential on 
$\mathbf{desc}_{(\bb{M},\sigma)}(\AQ)^{\otimes 2n+1}$ 
induced from the covariant derivative. 
By the proof of \cite[Thms. 2.14 and 2.15]{Engeli}, 
we have 
\[d_\mrm{dR}\circ\mathbf{desc}_{(\bb{M},\sigma)}(t_{2n\vert a,b})=\mathbf{desc}_{(\bb{M},\sigma)}(t_{2n\vert a,b})\circ d_\nabla.\]
Thus, $\mathbf{desc}_{(\bb{M},\sigma)}(t_{2n\vert a,b})$ descends to a map on cohomology 
\[H^\bullet_\nabla\left(\bb{M};\Fr_\bb{M}\times_{\mrm{Sp}(2n\vert a,b)}(\AQ)^{\otimes 2n+1}\right)\rta H^\bullet_\mrm{dR}(\bb{M};\bb{K})\]
which does not preserve degree, as we have moved the $\otimes (2n+1)$ inside.

We would like to create an \emph{underived} trace on $\mathcal{A}_\sigma(\bb{M})$ from $\mathbf{desc}_{(\bb{M},\sigma)}(t_{2n\vert a,b})$. 
To do this, we will use the product structure on forms on $\bb{M}$ valued in $\Fr_\bb{M}\times_{\mrm{Sp}(2n\vert a,b)}(\AQ)$.

Let 
\[\tilde{A}\in\Omega^1(\bb{M}; \Fr_\bb{M}\times_{\mrm{Sp}(2n\vert a,b)}\AQ)\simeq\Omega^1(\Fr_\bb{M};\AQ)_\mrm{basic}\] 
denote the connection 1-form on the flat pro-bundle 
$\mrm{desc}_{(\bb{M},\sigma)}(\AQ)$, 
see \S\ref{sec-Connections}.

Wedging with the form $\tilde{A}^{\wedge 2n}$ determines a degree $2n$ map
\[\beta_{\tilde{A}}\colon\Omega^0\left(\bb{M};\Fr_\bb{M}\times_{\mrm{Sp}(2n\vert a,b)}\AQ\right)\rta \Omega^{2n}\left(\bb{M};\Fr_\bb{M}\times_{\mrm{Sp}(2n\vert a,b)}(\AQ)^{\otimes 2n+1}\right)\]
of superalgebras. 
Here, the algebra structure on both sides is induced from the algebra structure on $\AQ$. 
One should compare $\beta_{\tilde{A}}$ to part of what is denoted $\chi_0$ in \cite[\S\S 2.5.1-2.5.2]{Engeli}. 
Taking cohomology, the domain of $\beta_{\tilde{A}}$ becomes $\mathcal{A}_\sigma(\bb{M})$. 
We can then form the composite
\[\mathcal{A}_\sigma(\bb{M})\xrta{\beta_{\tilde{A}}}H^\bullet_\nabla\left(\bb{M};\Fr_\bb{M}\times_{\mrm{Sp}(2n\vert a,b)}(\AQ)^{\otimes 2n+1}\right)\xrta {\mathbf{desc}_{(\bb{M},\sigma)}(t_{2n\vert a,b})}H^\bullet_\mrm{dR}(\bb{M};\bb{K}).\]
The next proposition follows from \cite[Thms. 2.14 and 2.15]{Engeli}.

\begin{prop}\label{prop-inducedtrace}
Let $t_{2n\vert a,b}$ be a $2n$-derived supertrace on $\AQ$. 
The map on cohomology
\[\mathcal{A}_\sigma(\bb{M})\rta H^{2n}_\mrm{dR}(\bb{M};\bb{K}),\]
induced from $\mathbf{desc}_{(\bb{M},\sigma)}(t_{2n\vert a,b})\circ\beta_{\tilde{A}}$, 
is linear and 
factors through $\mrm{HH}_0(\mathcal{A}_\sigma(\bb{M}))$. 
\end{prop}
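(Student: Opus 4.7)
The plan is to verify linearity and the factoring through $\mrm{HH}_0(\mathcal{A}_\sigma(\bb{M}))$ separately, following the template of \cite[Thm.~2.14]{Engeli}. Write $\phi:=\mathbf{desc}_{(\bb{M},\sigma)}(t_{2n\vert a,b})\circ\beta_{\tilde{A}}$. Linearity of $\phi$ over $\bb{K}$ is immediate bookkeeping: $\beta_{\tilde{A}}$ is $\bb{K}$-linear in its slot-$0$ argument, the descended supertrace is $\bb{K}$-linear, and passage to cohomology preserves linearity. Well-definedness on cohomology uses the compatibility $d_{\mrm{dR}}\circ\mathbf{desc}(t_{2n\vert a,b})=\mathbf{desc}(t_{2n\vert a,b})\circ d_\nabla$ recorded above together with centrality of $F_{\tilde A}=d_\nabla\tilde{A}$ in $\AQ$ (see \S\ref{sec-Connections}): for $\nabla$-flat $a\in\mathcal{A}_\sigma(\bb{M})$, $d_\nabla\beta_{\tilde{A}}(a)$ is a signed sum of terms with $F_{\tilde{A}}$ inserted in some slot, each of which one can rewrite as a Hochschild boundary using centrality and hence is annihilated by the supertrace.

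The substantive content is that $\phi(ab-(-1)^{|a||b|}ba)=0$ in $H^{2n}_{\mrm{dR}}(\bb{M};\bb{K})$ for all $a,b\in\mathcal{A}_\sigma(\bb{M})$. I would establish this by recognizing $\phi(ab)-(-1)^{|a||b|}\phi(ba)$ as the ``cyclic'' contribution of $\mathbf{desc}(t_{2n\vert a,b})$ applied to the Hochschild boundary $\del(a\otimes\tilde{A}^{\otimes 2n}\otimes b)$. Because $t_{2n\vert a,b}\circ\del=0$, this cyclic contribution is canceled, up to signs, by the ``interior'' boundary terms, in which two adjacent copies of $\tilde{A}$ get multiplied or $a$ (resp.\ $b$) is multiplied with the adjacent $\tilde{A}$. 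Using the $\nabla$-flatness of $a$ and $b$, centrality of $F_{\tilde{A}}$ in $\AQ$, and the differential compatibility above, these interior terms assemble into $d_{\mrm{dR}}$ of an explicit $(2n-1)$-form, hence vanish in $H^{2n}_{\mrm{dR}}(\bb{M};\bb{K})$. This is the reason the composite is naturally an invariant of $a$ modulo supercommutators, i.e.\ descends to $\mrm{HH}_0(\mathcal{A}_\sigma(\bb{M}))$.

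The main obstacle I expect is Koszul sign bookkeeping. Since $\tilde{A}$ carries both a de Rham $1$-form degree and an internal $\AQ$-parity, applying the Hochschild boundary to tensors of $\AQ$-valued forms requires juggling two sources of signs that must be reconciled consistently; further, the multiplication in $\AQ^{\otimes 2n+1}$ used to define $\beta_{\tilde{A}}$ interacts with the wedge product on $\Omega^\bullet(\Fr_\bb{M})_\mrm{basic}$, so the identification of interior boundary terms with $d_{\mrm{dR}}$-exact forms must respect both structures. This is precisely the sign-tracking difficulty that occupies \cite[Thm.~2.14]{Engeli}, so I would follow that proof closely, substituting our general $(2n\vert a,b)$ trace $t_{2n\vert a,b}$ for the specific supertrace considered there.
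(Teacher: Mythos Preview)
Your proposal is correct and aligns with the paper's approach: the paper does not give an independent proof but simply cites \cite[Thm.~2.14]{Engeli}, and your sketch is precisely an outline of that argument (linearity by inspection, well-definedness via the $d_{\mrm{dR}}$-$d_\nabla$ compatibility and centrality of $F_{\tilde{A}}$, and the supercommutator identity via the Hochschild boundary of $a\otimes\tilde{A}^{\otimes 2n}\otimes b$). Your explicit acknowledgment that you would follow Engeli's proof closely, with the main work being Koszul sign bookkeeping in the presence of both form degree and internal parity, matches exactly how the paper handles this proposition.
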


To obtain a $\bb{K}$-valued supertrace on $\mathcal{A}_\sigma(\bb{M})$, 
we need to get from $H^{2n}_\mrm{dR}(\bb{M};\bb{K})$ to $\bb{K}$. 
We will do this by integrating out the odd directions with a Berezin integral, 
and then using the orientation on the underlying manifold $M_0$ of $\bb{M}$. 
See Definition \ref{def-Berezin}.

\begin{cor}\label{cor-TmTrace}
The composite

\[t_\bb{M}=\int_\bb{M}\left(\mathbf{desc}_{(\bb{M},\sigma)}(t_{2n\vert a,b})\circ\beta_{\tilde{A}}\right)\colon\mathcal{A}_\sigma(\bb{M})\rta\bb{K}\]
is a supertrace of $\bb{K}$-algebras.
\end{cor}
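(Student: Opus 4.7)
The plan is to deduce the corollary essentially formally from Proposition \ref{prop-inducedtrace} together with the $\bb{K}$-linearity of integration on $\bb{M}$. First I would invoke Proposition \ref{prop-inducedtrace} to obtain the induced $\bb{K}$-linear map
\[
\mathcal{A}_\sigma(\bb{M})\rta H^{2n}_\mrm{dR}(\bb{M};\bb{K})
\]
and use the key property, also asserted in that proposition, that this map factors through $\mrm{HH}_0(\mathcal{A}_\sigma(\bb{M}))$. Since $\mrm{HH}_0$ of a superalgebra $B$ is by definition the quotient of $B$ by the image of the Hochschild boundary $\del\colon B^{\otimes 2}\to B$, this factoring condition is precisely the vanishing of the composite with $\del$, which is the defining condition of a $0$-derived supertrace in Definition \ref{def-HCTrace}.

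Next I would post-compose with $\int_\bb{M}$ from Definition \ref{def-Berezin}. Two things need to be checked: $\bb{K}$-linearity of $\int_\bb{M}$, and that the resulting composite still vanishes on the image of $\del$. The first follows directly from the construction in \S\ref{subsec-IntandOri}: the Berezin integral on the odd coordinates is a $\bb{K}$-linear functional on a Grassmann algebra, and integration against the symplectic volume form on the reduced manifold $M$ is linear; by Definition \ref{def-Berezin}, $\int_\bb{M}$ is the composite of these two operations. The second then follows automatically, since any $\bb{K}$-linear map preserves the property of vanishing on super-commutators.

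Combining these observations, $t_\bb{M}$ is a $\bb{K}$-linear map $\mathcal{A}_\sigma(\bb{M})\to\bb{K}$ whose precomposition with the Hochschild differential vanishes, which is exactly the content of Definition \ref{def-TraceModR} with $R=\bb{K}$ and $i=0$. I do not anticipate a substantial obstacle: the genuine work has already been done in Proposition \ref{prop-inducedtrace} (and, before that, in the verification that $\mathbf{desc}_{(\bb{M},\sigma)}(t_{2n\vert a,b})$ descends to cohomology), so this corollary is essentially bookkeeping. The only mild care point is verifying that $\int_\bb{M}$ is well-defined on $H^{2n}_\mrm{dR}$; this uses Stokes' theorem on the closed reduced manifold together with the vanishing of the Berezin integral of an odd-coordinate derivative, both standard facts recalled in \S\ref{subsec-IntandOri}.
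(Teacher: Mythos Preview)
Your proposal is correct and follows essentially the same approach as the paper's proof: invoke Proposition~\ref{prop-inducedtrace} for linearity and the factorization through $\mrm{HH}_0(\mathcal{A}_\sigma(\bb{M}))$, then observe that $\int_\bb{M}$ is $\bb{K}$-linear. The paper's proof is in fact just those two sentences; your additional remarks on why factoring through $\mrm{HH}_0$ encodes the supertrace condition and on the well-definedness of $\int_\bb{M}$ on cohomology are accurate elaborations but not strictly needed beyond what the paper records.
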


\begin{proof}
The map $\int_\bb{M}$ is $\bb{K}$-linear. 
By Proposition \ref{prop-inducedtrace}, 
the composite $t_\bb{M}$ is linear and factors through 
$\mrm{HH}_0(\mathcal{A}_\sigma(\bb{M})$.
\end{proof}

\section{Uniqueness of supertraces}\label{sec-UniquenessofSupertraces}

We discuss reasonable properties we would like our supertraces (locally on $\AQ$ and globally on $\mathcal{A}_\sigma(\bb{M})$) to satisfy, 
and show that these properties uniquely determine such a supertrace. 

\subsection{Local Uniqueness} 

Here, we show that there is a unique derived supertrace on $\AQ$ up to scalar multiple. 

We can reinterpret a supertrace as an element of the Hochschild cohomology using the following lemma. 
For $A\in\mrm{Alg}_{\bb{K}}(\mrm{Mod}_{(\mfrk{g},K)})$, 
let $A^*$ denote the dual, 
$A^*=\mrm{Hom}(A,\bb{K})$. 
The following can be found after Theorem 2.1 in \cite{FFSh}.

\begin{lem}\label{lem-tracehh}
Let $A\in\mrm{Alg}_{\bb{K}}(\mrm{Mod}_{(\mfrk{g},K)})$. 
There is an equivalence 

\[\mrm{Hom}\left(\mrm{Hoch}_\bullet^{\bb{K}}(A;A),\bb{K}\right)\cong \mrm{Hoch}^\bullet_{\bb{K}}(A;A^*). \]
\end{lem}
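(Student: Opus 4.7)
The plan is to construct the equivalence degree by degree via tensor-hom adjunction and then verify compatibility with the differentials. Recall that in degree $n$, the Hochschild chain complex is
\[\mrm{Hoch}_n^\bb{K}(A;A)=A\otimes_\bb{K} A^{\otimes n},\]
and the Hochschild cochain complex with coefficients in $A^*$ is
\[\mrm{Hoch}^n_\bb{K}(A;A^*)=\mrm{Hom}_\bb{K}(A^{\otimes n},A^*).\]
So the task is to exhibit, for each $n$, a natural isomorphism
\[\mrm{Hom}_\bb{K}(A\otimes_\bb{K} A^{\otimes n},\bb{K})\;\cong\;\mrm{Hom}_\bb{K}(A^{\otimes n},A^*),\]
and then check that these assemble into a cochain isomorphism.

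First, I would take the levelwise isomorphism to be the adjunction isomorphism
\[\phi_n\colon\mrm{Hom}_\bb{K}(A^{\otimes n+1},\bb{K})\xrta{\cong}\mrm{Hom}_\bb{K}(A^{\otimes n},\mrm{Hom}_\bb{K}(A,\bb{K})),\]
sending $f$ to the map $(a_1,\dots,a_n)\mapsto(a_0\mapsto f(a_0\otimes a_1\otimes\cdots\otimes a_n))$. This is a manifestation of the $(-)\otimes A\dashv\mrm{Hom}(A,-)$ adjunction in $\bb{K}$-modules, which is natural in $A$ and compatible with the $(\mfrk{g},K)$-action coming from its functoriality, so the collection $\{\phi_n\}$ is $(\mfrk{g},K)$-equivariant.

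Next, I would verify that $\phi_\bullet$ intertwines the dual of the Hochschild boundary $\del^*$ on the left with the Hochschild coboundary $\delta$ on the right. Writing the Hochschild boundary $\del=\sum_{i=0}^n(-1)^id_i$ where $d_0,\dots,d_{n-1}$ multiply adjacent tensor factors and $d_n$ is the cyclic term $a_0\otimes\cdots\otimes a_{n-1}a_n\otimes\cdots$ wrapping the last factor around to act on $a_0$, applying $\phi_{n-1}\circ\del^*$ to a cochain $f\colon A^{\otimes n+1}\to\bb{K}$ and expanding gives exactly the standard formula
\[(\delta\phi_n(f))(a_1,\dots,a_{n+1})(a_0)=a_1\cdot\phi_n(f)(a_2,\dots,a_{n+1})(a_0)+\sum_{i=1}^n(-1)^i\phi_n(f)(\dots,a_ia_{i+1},\dots)(a_0)+(-1)^{n+1}\phi_n(f)(a_1,\dots,a_n)(a_0\cdot a_{n+1}),\]
where the $A^*$-bimodule structure is the usual $(a\cdot\xi\cdot b)(x)=\xi(bxa)$ dual to the $A$-bimodule structure on $A$. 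This is a direct unwinding of the definitions; the signs align because the cyclic rotation producing $d_n$ corresponds exactly to moving $a_{n+1}$ past the Hom, turning it into right multiplication on $a_0$ in the dual module.

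The main (and really only) obstacle is bookkeeping the Koszul signs introduced by the $\bb{Z}/2$-grading: since $A$ is a superalgebra, the swap $A^{\otimes n}\otimes A\cong A\otimes A^{\otimes n}$ implicit in the adjunction, as well as the definition of the $A$-bimodule structure on $A^*$, must be taken in the graded sense, i.e.\ with $(\xi\cdot b)(x)=(-1)^{|\xi||b|}\xi(bx)$ and so on. Once the signs are fixed consistently by insisting that $\phi_n$ be a morphism in $\msf{Mod}_{(\mfrk{g},K)}$ with its super structure, the differentials match on the nose and we obtain the claimed isomorphism of cochain complexes in $\msf{Mod}_{(\mfrk{g},K)}$.
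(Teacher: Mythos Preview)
Your proof is correct and is the standard tensor-hom adjunction argument. The paper does not actually prove this lemma: it simply records the statement and cites \cite{FFSh} (``The following can be found after Theorem 2.1 in \cite{FFSh}''), so there is no paper proof to compare against beyond noting that your argument is the expected one.
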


To show that there is a unique derived supertrace on $\AQ$ up to scalar multiple, 
it suffices to prove that $\mathrm{HH}^{\bb{K}}_\bullet(\AQ)$ is one-dimensional.

By \cite[\S 5.1]{My2}, 
we have an equivalence
\[\AQ\cong\widehat{\mrm{Weyl}}(T^*\bb{R}^n,\omega_0)\otimes_\bb{K}\mrm{Cliff}(\bb{R}^{0\vert a+b},Q).\]
Hochschild homology satisfies a K\"unneth formula. 
That is, as the Hochschild complex of a tensor product is the tensor product of the Hochschild complexes, we have 
\begin{align*}
\mrm{HH}^{\bb{K}}_\bullet(\AQ)\cong \mrm{HH}^{\bb{K}}_\bullet\left(\widehat{\mrm{Weyl}}(T^*\bb{R}^n,\omega_0)\right)\otimes_\bb{K} \mrm{HH}^{\bb{K}}_\bullet\left(\mrm{Cliff}(\bb{R}^{a+b},Q)\right).
\end{align*}

The Hochschild homology of the Weyl algebra is computed in \cite[Thm. 2.1]{FFSh}, 
where $\mrm{Weyl}(T^*\bb{R}^n,\omega_0)$ is denoted $\cal{A}^\mrm{pol}_{2n}$. 
We have an isomorphism
\[\mrm{HH}^{\bb{K}}_\bullet(\mrm{Weyl}(T^*\bb{R}^n,\omega_0))\cong {\bb{K}}[2n].\]

A computation of the Hochschild homology of the Clifford algebra can be found in 
\cite[\S 6 Proof of Prop. 1]{Kassel}
where it is shown that 
\[\mrm{HH}_\bullet^{\bb{K}}(\mrm{Cliff}(\bb{R}^{a+b},Q)=
\begin{cases}
\bb{K}[0] & \text{$a+b$ is even}\\
\Pi\bb{K}[0] & \text{$a+b$ is odd }
\end{cases}.
\]
See also \cite[Thm. 2.10]{Engeli}. 

Putting this together, we have the following computation.

\begin{cor}\label{cor-AQHH}
Let $\AQ$ be the super Fedosov quantization as in Theorem \ref{thm-DescA}.
There are isomorphisms
\[\mathrm{HH}^{\bb{K}}_\bullet(\AQ)\cong
\begin{cases}
\bb{K}[2n] & a+b\text{ is even}\\
\Pi\bb{K}[2n] & a+b\text{ is odd}
\end{cases}.\]
\end{cor}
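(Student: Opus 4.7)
The plan is to reduce the computation to two classical results via a Künneth decomposition, exactly along the lines suggested in the text immediately preceding the statement. First I would invoke the identification
\[\AQ\cong\widehat{\mrm{Weyl}}(T^*\bb{R}^n,\omega_0)\otimes_{\bb{K}}\mrm{Cliff}(\bb{R}^{0\vert a+b},Q)\]
from \cite[\S 5.1]{My2}. Since the Hochschild complex of a tensor product of flat superalgebras is the (super) tensor product of their Hochschild complexes, and since $\bb{K}$ is a field, the algebraic Künneth theorem yields
\[\mrm{HH}^{\bb{K}}_\bullet(\AQ)\cong \mrm{HH}^{\bb{K}}_\bullet\bigl(\widehat{\mrm{Weyl}}(T^*\bb{R}^n,\omega_0)\bigr)\otimes_{\bb{K}}\mrm{HH}^{\bb{K}}_\bullet\bigl(\mrm{Cliff}(\bb{R}^{a+b},Q)\bigr),\]
respecting the $\bb{Z}/2$-grading (so parity shifts multiply as expected).

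Next I would quote the two factor computations. For the Weyl factor, \cite[Thm. 2.1]{FFSh} gives
\[\mrm{HH}^{\bb{K}}_\bullet\bigl(\mrm{Weyl}(T^*\bb{R}^n,\omega_0)\bigr)\cong\bb{K}[2n],\]
concentrated in homological degree $2n$ and even parity. For the Clifford factor, \cite[\S 6 Proof of Prop. 1]{Kassel} (see also \cite[Thm. 2.10]{Engeli}) gives
\[\mrm{HH}_\bullet^{\bb{K}}\bigl(\mrm{Cliff}(\bb{R}^{a+b},Q)\bigr)\cong\begin{cases}\bb{K}[0] & a+b\text{ even}\\ \Pi\bb{K}[0] & a+b\text{ odd}\end{cases},\]
where the parity shift in odd total dimension arises because a generator of Hochschild degree zero is represented by a product of an odd number of odd Clifford generators. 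Substituting both into the Künneth isomorphism gives the claimed formula, with the parity of the generator dictated entirely by the Clifford factor.

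The one step that warrants some care, and which I expect to be the main obstacle, is that the Weyl factor appearing in $\AQ$ is the \emph{completed} Weyl algebra $\widehat{\mrm{Weyl}}$ in the formal variables (with $\hbar$ as a formal parameter over $\bb{K}$), whereas \cite[Thm. 2.1]{FFSh} is stated for the polynomial Weyl algebra $\cal{A}^\mrm{pol}_{2n}$. I would handle this by invoking the standard fact that the Hochschild homology of the Rees/completed Weyl algebra over $\bb{K}=\bb{k}[[\hbar]]$ is obtained from the polynomial case by $\hbar$-adic completion and base change, so both computations yield a free $\bb{K}$-module of rank one concentrated in degree $2n$; this is exactly the sense in which \cite[Thm. 2.1]{FFSh} is used throughout the paper. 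Given that, and the symmetric monoidality of Hochschild chains, the remaining assembly is routine and no further subtleties arise.
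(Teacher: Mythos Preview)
Your proposal is correct and follows exactly the argument the paper gives in the paragraphs immediately preceding the corollary: K\"unneth for Hochschild homology, then the cited Weyl computation from \cite[Thm.~2.1]{FFSh} and the Clifford computation from \cite{Kassel}. One small slip worth fixing: $\bb{K}=\bb{k}[[\hbar]]$ is not a field, but the K\"unneth step still goes through because both factors have Hochschild homology free of rank one over $\bb{K}$, so no Tor terms arise.
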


\noindent See also \cite[\S 2.3]{Engeli}.

In particular, this means that for $a+b$ even, there is a unique (up to scalar) supertrace on $\AQ$ that is an \emph{even} map, 
and for $a+b$ odd, there is a unique (up to scalar) \emph{odd} supertrace on $\AQ$.

\subsection{Normalization Condition}\label{subsec_NormalCond}

We would like to put conditions on the type of supertraces $\mathcal{A}_\sigma(\bb{M})\rta\bb{K}$ that will uniquely determine it. 
Note that $\mathcal{A}_\sigma(\bb{M})$ is a deformation of $\mathcal{O}_\bb{M}$ in sheaves of algebras on $\bb{M}$. 
We will specify what our supertrace should look like locally over $\bb{M}$. 

Let $p_1,\dots,p_n,q_1,\dots,q_n,\theta_1,\dots,\theta_{a+b}$ be local coordinates for $\bb{M}$ as in Lemma \ref{lem-ThetaM}.

\begin{defn}\label{def-normalizedtrace}
Let $\mathcal{A}$ be a deformation of $\mathcal{O}_\bb{M}$ in sheaves of algebras on $\bb{M}$. 
A supertrace $t_\bb{M}\colon\mathcal{A}\rta\bb{K}$ on $\mathcal{A}$ is \emph{normalized} if 
on sufficiently small neighborhoods $\bb{R}^{2n\vert a+b}\subset\bb{M}$, the map is given by 
\[(t_{\bb{M}})\vert_U(f)=\left((-1)^{n+a+t}\hbar^n\int_{\bb{R}^{2n}}\left(\int f d\Theta\right)dq_1\wedge dp_1\wedge\cdots dq_n\wedge dp_n\right).\]
\end{defn} 

By \cite[\S 2.6]{Engeli} and \cite[Thm. 4.2]{FFSh}, 
a normalized supertrace on $\mathcal{A}_\sigma(\bb{M})$ is unique. 
Below, in Theorem \ref{thm-DescendedTrace}, we show the existence of such a normalized supertrace.

\section{Evaluation on a Volume Form}\label{sec-Evaluation}

In this section, 
we will define and study an invariant of symplectic supermanifolds using their normalized supertraces. 
In \S\ref{subsec-LSAIT} we will compute this invariant.

\begin{rmk}
The results and proofs in these sections work for either 
$\mathrm{Sp}(2n\vert a,b)$ or $\mathrm{Sp}(2n)\times\mathrm{SO}(a,b)$.
We have only stated them in the former case for simplicity.
\end{rmk}

Recall from \S\ref{subsec-IntandOri} that we have chosen a volume form 
$\Theta_\bb{M}\in\mathcal{A}_\sigma(\bb{M})$. 

\begin{defn}\label{def-EvalOnVol}
Let $t_{\bb{M}}$ be a supertrace on $\mathcal{A}_\sigma(\bb{M})$. 
The \emph{evaluation of $t_{\bb{M}}$ on the volume form} $\Theta_\bb{M}$ is 
\[\msf{Ev}_\bb{M}(t_\bb{M})=t_{\bb{M}}(\Theta_\bb{M})\in\bb{K}.\]
\end{defn}

Note that the definition of $\msf{Ev}_\bb{M}$ depends on a choice of volume form, though we suppress this from the notation.

Although $\msf{Ev}_\bb{M}$ may be defined for any supertrace on $\mathcal{A}_\sigma(\bb{M})$, 
we will only show that it is an invariant for \emph{normalized} supertraces.

\begin{lem}
Let $\varphi\colon (\bb{M},\omega,\sigma)\rta(\bb{M}',\omega',\sigma)$ be a morphism in 
$\msf{sGK}^{=,\hbar}_{2n\vert a,b}$. 
If $T$ and $T'$ are normalized supertraces on $\bb{M}$ and $\bb{M}'$, respectively, 
then $\msf{Ev}_\bb{M}(T)=\msf{Ev}_{\bb{M}'}(T')$.
\end{lem}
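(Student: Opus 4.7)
The plan is to reduce the equality to the uniqueness of the normalized supertrace established in \S\ref{subsec_NormalCond}, together with the invariance properties of the volume form $\Theta_\bb{M}$ recorded in Lemma \ref{lem-ThetaM}. The basic strategy is: a morphism $\varphi$ in $\msf{sGK}^{=,\hbar}_{2n\vert a,b}$ should, by functoriality of $\mathbf{desc}_{(\bb{M},\sigma)}$ (Definition \ref{def-sGKDescent}), induce a map $\varphi^*\colon \mathcal{A}_{\sigma'}(\bb{M}')\rta\mathcal{A}_\sigma(\bb{M})$ of the descended superalgebras. Pulling $T'$ back along $\varphi^*$ gives another supertrace $T'\circ\varphi^*$ on $\mathcal{A}_\sigma(\bb{M})$, and the goal is to identify this with $T$ so that $\msf{Ev}_\bb{M}(T)=T(\Theta_\bb{M})=T'(\varphi^*\Theta_\bb{M})=T'(\Theta_{\bb{M}'})=\msf{Ev}_{\bb{M}'}(T')$.

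To run this argument, I need two compatibilities. First, $\varphi^*\Theta_{\bb{M}'}=\Theta_\bb{M}$: this follows because $\Theta_\bb{M}$ was constructed in Lemma \ref{lem-ThetaM} as a function that is invariant under local super-symplectomorphisms (using that the Berezinian of a super symplectic matrix is $1$, as computed in \S\ref{subsec-IntandOri}), and the morphisms in $\msf{sGK}^{=,\hbar}_{2n\vert a,b}$ are, by construction, local super-symplectomorphisms compatible with the $\hbar$-formal exponentials. Second, $T'\circ\varphi^*$ is again normalized in the sense of Definition \ref{def-normalizedtrace}: the normalization condition is purely local on sufficiently small neighborhoods $\bb{R}^{2n\vert a+b}\subset\bb{M}$ and is phrased in terms of a Berezin integral against $d\Theta$ and the Liouville form $dq_1\wedge dp_1\wedge\cdots\wedge dq_n\wedge dp_n$, both of which are invariant under super-symplectomorphisms. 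Once both are in hand, invoking the uniqueness statement of \S\ref{subsec_NormalCond} gives $T'\circ\varphi^*=T$, completing the chain of equalities above.

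The main obstacle I anticipate is pinning down exactly what data a morphism in $\msf{sGK}^{=,\hbar}_{2n\vert a,b}$ carries, in particular whether $\varphi$ is required to intertwine the $\hbar$-formal exponentials $\sigma$ and $\sigma'$ strictly or only up to the equivalences used in \cite{My2}. If the latter, then $\varphi^*$ may only be defined at the level of cohomology or up to an inner automorphism, and a small extra argument is needed to check that inner automorphisms do not alter the values of $T$ on $\Theta_\bb{M}$; however, since traces vanish on commutators this should cause no difficulty. Similarly, if $\varphi$ is a local but not global isomorphism, one should pass to an open cover and use the uniqueness of normalized supertraces on each chart before globalizing, but the structure of the argument is unchanged.
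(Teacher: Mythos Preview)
Your approach is essentially the same as the paper's: construct $\varphi^*\colon \mathcal{A}_{\sigma'}(\bb{M}')\to\mathcal{A}_\sigma(\bb{M})$ by functoriality of descent, use that $\varphi^*\Theta_{\bb{M}'}=\Theta_\bb{M}$, and use the normalization condition locally to identify the traces. The paper does not explicitly invoke the uniqueness statement from \S\ref{subsec_NormalCond} but instead checks the commutativity $T\circ\varphi^*=T'$ directly on small charts, which amounts to the same thing.

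There is, however, a persistent direction error in your write-up that you should fix. With $\varphi^*\colon \mathcal{A}_{\sigma'}(\bb{M}')\to\mathcal{A}_\sigma(\bb{M})$ as you defined it, the composite $T'\circ\varphi^*$ is not defined: the codomain of $\varphi^*$ is $\mathcal{A}_\sigma(\bb{M})$, not the domain of $T'$. What you can form is $T\circ\varphi^*\colon\mathcal{A}_{\sigma'}(\bb{M}')\to\bb{K}$, and it is this map that you should argue is normalized and hence equal to $T'$. Correspondingly, your chain of equalities should read
\[
\msf{Ev}_\bb{M}(T)=T(\Theta_\bb{M})=T(\varphi^*\Theta_{\bb{M}'})=(T\circ\varphi^*)(\Theta_{\bb{M}'})=T'(\Theta_{\bb{M}'})=\msf{Ev}_{\bb{M}'}(T'),
\]
not $T'(\varphi^*\Theta_\bb{M})$. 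You state the compatibility $\varphi^*\Theta_{\bb{M}'}=\Theta_\bb{M}$ correctly in your ``First'' item, so this is evidently a slip rather than a conceptual error, but as written the proof does not typecheck.
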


\begin{proof}
The local symplectomorphism $\varphi$ induces a pullback map $\varphi^*\colon\mathcal{O}_{\bb{M'}}\rta\mathcal{O}_{\bb{M}}$. 
By the functoriality of $\Dsc$, 
the pullback extends to a map 
\[\varphi^*\colon\mathcal{A}_{\sigma'}(\bb{M}')\rta\mathcal{A}_\sigma(\bb{M}).\]
We can check that the diagram 

\[\begin{xymatrix}
{
\mathcal{A}_\sigma(\bb{M})\arw[d]^T & \mathcal{A}_{\sigma'}(\bb{M}')\arw[l]_{\varphi^*}\arw[dl]^{T'}\\
\bb{K} & 
}
\end{xymatrix}\]
commutes by examining it locally in $\bb{M}$ and $\bb{M}'$.  
Over sufficiently small open subsets, 
the normalization condition guarantees that supertraces $T$ and $T'$ agree. 

Since $\varphi$ is a local symplectomorphism, $\varphi^*(\Theta_{\bb{M}'})=\Theta_{\bb{M}}$. 
Thus, we have 
\[\msf{Ev}_{\bb{M}'}(T')=T'(\Theta_{\bb{M}'})=T\varphi^*(\Theta_{\bb{M}'})=T(\Theta_\bb{M})=\msf{Ev}_\bb{M}(T).\]
\end{proof}

We will show that, for normalized supertraces, $\msf{Ev}_\bb{M}(t_\bb{M})$ can be, in a way, 
computed locally. 
In \S\ref{subsec-LocalInvariant}, 
we define an analogue of $\msf{Ev}_\bb{M}$ for supertraces on $\AQ$. 
One should think of this as making sense of $\msf{Ev}_{\widehat{\bb{D}}^{2n\vert a,b}}$. 
Then, in \S\ref{subsec-GlobalInvariant}, 
we show how this local analogue descends to $\msf{Ev}_\bb{M}$. 

\subsection{Formally Local Invariant}\label{subsec-LocalInvariant}

Consider a derived supertrace $t_{2n\vert a,b}$ on $\AQ$ as in Definition \ref{def-HCTrace}. 
If we think of $\AQ$ as a deformation of the formal disk $\widehat{\bb{D}}^{2n\vert a,b}$, 
then a formally local analogue of Definition \ref{def-EvalOnVol} would be to evaluate $t_{2n\vert a,b}$ on a volume form for $\widehat{\bb{D}}^{2n\vert a+b}$.  
If we write 
\[\mathcal{O}_{\widehat{\bb{D}}^{2n\vert a+b}}=\mathcal{O}_{\widehat{\bb{D}}^{2n}}\otimes\Lambda[\theta_1,\dots,\theta_{a+b}],\]
then our volume form will be $1\otimes \theta_1\cdots\theta_{a+b}=1\otimes\Theta$. 

If $t$ is an underived supertrace, then we may take $t_{2n\vert a,b}(1\otimes \Theta)$ to obtain an element of $\bb{K}$. 
Taking in to consideration that $t_{2n\vert a,b}$ is morphism of $\gKh$-modules, 
we will see that, in general, evaluation of $t_{2n\vert a,b}$ on $1\otimes\Theta$ naturally lives in $C^\bullet_\mrm{Lie}\left(\mfrk{g}^\hbar_{2n\vert a,b};\mfrk{sp}_{2n\vert a,b};\bb{K}\right)$. 

To interpolate between derived supertraces and Lie algebra cochains, we need the following observation. 
Given an algebra $A$, we have an anti-symmetrization map
\[(-)^\mrm{Lie}\colon \mrm{Hoch}^\bullet(A;A^*)\rta C^\bullet_\mrm{Lie}(A;A^*)\]
where on the right-hand side, we are viewing $A$ as a Lie algebra under the commutator. 
See, for example, \cite[Pg. 11]{FFSh}. 
The map $(-)^\mrm{Lie}$ is given by 
\[(b^\mrm{Lie})(a_1\otimes\cdots a_k)(a_0)=\sum_{s\in\Sigma_k}\mrm{sign}(s)b(a_0\otimes a_{s(1)}\otimes\cdots a_{s(k)}).\]
We would like to use this map to view a derived trace $t_{2n\vert a,b}\in\mrm{HH}^\bullet(\AQ)$ as a Lie algebra cocycle. 
To do so, we need a Hochschild cocycle representative $\widetilde{t}_{2n\vert a,b}$ of $t_{2n\vert a,b}$. 
The Lie algebra cocycle $\widetilde{t}_{2n\vert a,b}^\mrm{Lie}$ will, in fact, 
live in a \emph{relative} Lie algebra group. 
For an introduction to relative Lie algebra (co)homology; see \cite[\S 2.3]{Solleveld} or \cite[\S2.8.1]{Engeli}.

In particular, for a Lie subalgebra $\mfrk{h}\rta\mfrk{g}$, we have a map
\[C^\bullet_\mrm{Lie}(\mfrk{g},\mfrk{h})\rta C^\bullet_\mrm{Lie}(\mfrk{g}).\] 
For a $\mfrk{g}$-module $M$, the group $C^\bullet_\mrm{Lie}(\mfrk{g},\mfrk{h};M)$ is 
\[C^p_\mrm{Lie}(\mfrk{g},\mfrk{h};M)=\mrm{Hom}_\mfrk{h}(\wedge^p(\mfrk{g}/\mfrk{h}),M)\]
consisting of those cochains $c\in C^p_\mrm{Lie}(\mfrk{g};M)$ so that 

\begin{itemize}
\item $c(x_1\wedge\cdots\wedge x_p)=0$ if $x_i\in\mfrk{h}$ for any $i=1,\dots,p$, and 
\item $c$ is an $\mfrk{h}$ invariant map.
\end{itemize}
See \cite[Def. 2.17]{Solleveld} for the definition of relative Lie algebra cohomology with coefficients.

Recall the notion of relative supertraces from Definition \ref{def-relative}.

\begin{lem}\label{lem-center}
Let $Z$ denote the center of $\AQ$. 
If $t_{2n\vert a,b}$ is a derived relative supertrace on $\AQ$, 
then the element $t_{2n\vert a,b}^\mrm{Lie}\in C^\bullet_\mrm{Lie}(\AQ;(\AQ)^*)$ is in the image of the map
\[C^\bullet_\mrm{Lie}\left(\AQ,\mfrk{sp}_{2n\vert a,b}\oplus Z;(\AQ)^*\right)\rta C^\bullet_\mrm{Lie}\left(\AQ;(\AQ)^*\right) .\]
\end{lem}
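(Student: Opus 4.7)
The plan is to verify the two cochain-level conditions characterizing the subcomplex $C^\bullet_\mrm{Lie}(\AQ,\mfrk{sp}_{2n\vert a,b}\oplus Z;(\AQ)^*)\subset C^\bullet_\mrm{Lie}(\AQ;(\AQ)^*)$ as recalled before the lemma: (i) $t_{2n\vert a,b}^\mrm{Lie}$ vanishes on any wedge containing a factor in $\mfrk{sp}_{2n\vert a,b}\oplus Z$, and (ii) $t_{2n\vert a,b}^\mrm{Lie}$ is $\mfrk{sp}_{2n\vert a,b}\oplus Z$-invariant under the coadjoint action. The invariance and vanishing conditions involving $Z$ are essentially formal, while the vanishing condition involving $\mfrk{sp}_{2n\vert a,b}$ requires unpacking the definition of a relative supertrace.

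I would begin with the invariance condition (ii). $Z$-invariance is automatic: since $Z$ is by construction the kernel of the adjoint action of $\AQ$ on itself, $Z$ acts trivially on $\AQ$ and hence on $(\AQ)^*$. For $\mfrk{sp}_{2n\vert a,b}$-invariance, note that by Definition \ref{def-HCTrace} the supertrace $t_{2n\vert a,b}$ is a morphism in $\mrm{Mod}_{\gKh}$, hence $K$-equivariant for $K=\mrm{Sp}(2n\vert a,b)$; differentiating gives $\mfrk{sp}_{2n\vert a,b}$-invariance of $t_{2n\vert a,b}$, and this is preserved by the functorial antisymmetrization $(-)^\mrm{Lie}$.

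The main work is condition (i) on $\mfrk{sp}_{2n\vert a,b}$. For $a=\rho(x)\cdot 1\in\AQ$ with $x\in\mrm{Lie}(K)$, I would expand $t_{2n\vert a,b}^\mrm{Lie}(a\wedge a_2\wedge\cdots\wedge a_{2n})(a_0)=\sum_{s\in\Sigma_{2n}}\mrm{sign}(s)\,t_{2n\vert a,b}(a_0,b_{s(1)},\ldots,b_{s(2n)})$ with $b_1=a$ and $b_j=a_j$ for $j\geq 2$, and partition $\Sigma_{2n}$ by the value $k=s^{-1}(1)$, i.e., by the position of $a$ in the argument list. Writing each such $s$ as $s'\circ c_k$, where $c_k=(1,2,\ldots,k)$ is the cycle sending $k\mapsto 1$ (sign $(-1)^{k-1}$) and $s'$ is a permutation fixing $1$, collects the contributions for each $k$ into $(-1)^{k-1}T_k$, where $T_k$ is the antisymmetrization over $a_2,\ldots,a_{2n}$ of $t_{2n\vert a,b}$ evaluated with $a$ in slot $k$. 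The upshot is $t_{2n\vert a,b}^\mrm{Lie}(a\wedge a_2\wedge\cdots\wedge a_{2n})(a_0)=\sum_{k=1}^{2n}(-1)^{k-1}T_k$. Antisymmetrizing the relative-supertrace identity of Definition \ref{def-relative} over $a_2,\ldots,a_{2n}$ gives $\sum_{k=1}^{2n}(-1)^kT_k=0$, and comparison of signs yields the desired vanishing.

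For condition (i) on $Z$, choose a normalized Hochschild cocycle $\widetilde{t}$ in the cohomology class of $t_{2n\vert a,b}$, meaning one vanishing on any tensor with $1$ inserted in a positive slot; such a representative exists because the normalized Hochschild subcomplex is quasi-isomorphic to the full one. Then $\widetilde{t}^\mrm{Lie}$ differs from $t_{2n\vert a,b}^\mrm{Lie}$ by a Lie coboundary and manifestly vanishes on wedges containing $1\in Z$; by linearity this handles $Z=\bb{K}\cdot 1$, and an additional central generator (the Clifford pseudoscalar in the odd case) is handled by the analogous super-normalization compatible with the parity grading. I expect the main obstacle to be the sign bookkeeping in the $\mfrk{sp}$-vanishing step: one has to match the alternating signs $(-1)^j$ in the relative-trace identity against the cycle signs $(-1)^{k-1}$ produced by grouping $\Sigma_{2n}$ according to the position of the distinguished argument.
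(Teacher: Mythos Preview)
Your proposal is correct and follows essentially the same three-step outline as the paper: $\mfrk{sp}_{2n\vert a,b}$-invariance from the $\gKh$-module map property, $\mfrk{sp}_{2n\vert a,b}$-vanishing from the relative supertrace identity, and $Z$-vanishing from normalization of the Hochschild cocycle. Your treatment of the $\mfrk{sp}$-vanishing step is in fact more detailed than the paper's, which simply asserts that the relative condition ``translates to $\widetilde{t}^{\mrm{Lie}}_{2n\vert a,b}$ vanishing on $\mfrk{sp}_{2n\vert a,b}$ after noting that one can reduce from the sum over all elements of the symmetric group to just transpositions''; your partition of $\Sigma_{2n}$ by the position $k=s^{-1}(1)$ of the distinguished argument, together with the cycle factorization $s=s'\circ c_k$, makes this reduction explicit and the sign match $(-1)^{k-1}$ versus $(-1)^k$ is exactly right.

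One small remark on the $Z$-step: the paper's proof, like yours, invokes normalization, but phrases it as ``$t_{2n\vert a,b}$ came from a normalized cocycle'' (citing \cite[Cor.~3.1]{FFSh}), i.e., the lemma is really being applied to a cocycle that is already normalized rather than to an arbitrary representative in the cohomology class. Your observation that passing to a normalized representative only changes $t^{\mrm{Lie}}$ by a Lie coboundary is correct but, as you implicitly note, establishes the conclusion at the level of cohomology classes; since the subsequent use of the lemma (Definition~\ref{defn-EvalOnTraceLocal} and Theorem~\ref{thm-GlobalizeInvariant}) only depends on the cohomology class, this is sufficient for the paper's purposes.
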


For the purely even case, see \cite[\S 4.2 (ii)-(iii)]{FFSh}.

\begin{proof}
The invariance under $\mfrk{sp}_{2n\vert a,b}$ follows from the fact that $t_{2n\vert a,b}$ is $\gKh$-module map; see \cite[Thm. 2.11(ii)]{Engeli}. 
The condition that $t_{2n\vert a,b}$ is a relative supertrace translates to $\widetilde{t}^\mrm{Lie}_{2n\vert a,b}$ vanishing on $\mfrk{sp}_{2n\vert a,b}$ after noting that one can reduce from the sum over all elements of the symmetric group to just transpositions. 

Reduction to a cocycle relative to $Z$ follows from the fact that $t_{2n\vert a,b}$ came from a normalized cocycle. 
See \cite[Cor. 3.1]{FFSh}. 
\end{proof}

By \cite[Notation 5.6]{My2}, $\mfrk{g}^\hbar_{2n\vert a,b}$ is the Lie algebra of derivations of $\AQ$.  
Using the more general fact that $\mrm{Der}(A)=A/Z$, 
we see that the pair $(\mrm{Der}(A),\mfrk{h})$ is equivalent to $(A,\mfrk{h}\oplus Z)$. 
We may therefore view $\widetilde{t}_{2n\vert a,b}^\mrm{Lie}$ as an element in 

\[C^\bullet_\mrm{Lie}\left(\mfrk{g}^\hbar_{2n\vert a,b},\mfrk{sp}_{2n\vert a,b};\left(\AQ\right)^*\right).\]

Given an element $u\in\AQ$, we have an evaluation map 
\[\mrm{ev}_u\colon C^\bullet_\mrm{Lie}\left(\mfrk{g}^\hbar_{2n\vert a,b};\mfrk{sp}_{2n\vert a,b};(\AQ)^*\right)\rta C^\bullet_\mrm{Lie}\left(\mfrk{g}^\hbar_{2n\vert a,b};\mfrk{sp}_{2n\vert a,b};\bb{K}\right).\]
We will be interested in $\mrm{ev}_u(\widetilde{t}_{2n\vert a,b}^\mrm{Lie})$ for  
$u=1\otimes\Theta$. 

\begin{defn}\label{defn-EvalOnTraceLocal}
Given a $k$-derived relative supertrace $t_{2n\vert a,b}$ on $\AQ$ with cocycle representative $\widetilde{t}_{2n\vert a,b}$, 
the \emph{evaluation of $t_{2n\vert a,b}$ on the formal volume form} $1\otimes \Theta$ is 
\[\msf{Ev}_\mrm{loc}(\widetilde{t}_{2n\vert a,b})=\mrm{ev}_{1\otimes\Theta}(t_{2n\vert a,b}^\mrm{Lie})\in C^k_\mrm{Lie}\left(\mfrk{g}^\hbar_{2n\vert a,b};\mfrk{sp}_{2n\vert a,b};\bb{K}\right).\]
\end{defn}

\begin{ex}
If $t_{2n\vert a,b}\colon \AQ\rta\bb{K}$ is underived, 
then $\msf{Ev}_\mrm{loc}(t_{2n\vert a,b})$ lives in the zeroth cocycle group
\[C^0_\mrm{Lie}\left(\mfrk{g}^\hbar_{2n\vert a,b};\mfrk{sp}_{2n\vert a,b};\bb{K}\right)=\mrm{Hom}(\bb{K},\bb{K})\simeq \bb{K};\]
see \cite[Prop. 2.16(1)]{Solleveld}. 
We can identify $\msf{Ev}_\mrm{loc}(t_{2n\vert a,b})$ with $t_{2n\vert a,b}(1\otimes \Theta)$. 
Indeed, the map $(-)^\mrm{Lie}$ is the identity in degree zero.  
\end{ex}

\subsection{Globalizing the Invariant}\label{subsec-GlobalInvariant}

Let $t_{2n\vert a,b}$ be a derived supertrace on $\AQ$. 
By Corollary \ref{cor-AQHH}, $t_{2n\vert a,b}$ must be in degree $2n$, 
\[t_{2n\vert a,b}\in\mrm{HH}^{2n}(\AQ,(\AQ)^*).\]
By Corollary \ref{cor-TmTrace}, 
$t_{2n\vert a,b}$ determines a supertrace $t_\bb{M}$ on $\mathcal{A}_\sigma(\bb{M})$. 
The goal of this section is to recover $\msf{Ev}_\bb{M}(t_\bb{M})$ from $\msf{Ev}_\mrm{loc}(t_{2n\vert a,b})$.
This will be done using a variation of the Chern-Weil map.

Let $(\mfrk{g},K)$ be an HC pair. 
Let $P$ be a $(\mfrk{g},K)$-bundle. 
As in \cite[Def. 1.18]{GGW}, we have a natural transformation 

\[\mrm{char}_{P}\colon C^\bullet_\mrm{Lie}(\mfrk{g},\mrm{Lie}(K);-)\Rightarrow \mathbf{desc}_{P}(-)\]
between functors $\mrm{Mod}_{(\mfrk{g},K)}\rta \mrm{Ch}_\bb{K}$. 

Consider now the case of the sHC pair $\gKh$. 
Let $\mrm{char}_{(\bb{M},\sigma)}$ denote the natural transformation coming from the principal $\gKh$-bundle structure on $\Fr_\bb{M}$; 
see \cite[Cor. 3.35]{My2} and \cite[\S 5.0.1]{My2}. 
Then $\mrm{char}_{(\bb{M},\sigma)}$ is given as follows. 
Let $A\in \Omega^1(\Fr_\bb{M};\mfrk{g}^\hbar_{2n\vert a,b})$ be the flat connection 1-form. 
Then we have
\[A^{\wedge i}\in\Omega^i(\Fr_\bb{M};\Lambda^i\mfrk{g}^\hbar_{2n\vert a,b}).\]
View an element 
$r\in C^i_\mrm{Lie}(\mfrk{g}^\hbar_{2n\vert a,b},\mrm{sp}_{2n\vert a,b};V)$
 as a map 
 \[r\colon\Lambda^i(\mfrk{g}^\hbar_{2n\vert a,b}/\mfrk{sp}_{2n\vert a,b})\rta V.\] 
 Then $r$ induces a map on $\mrm{Sp}(2n\vert a,b)$-basic forms 
 \[r_*\colon\Omega^\bullet(\Fr_\bb{M};\Lambda^i\mfrk{g}^\hbar_{2n\vert a,b})_\mrm{basic}\rta\Omega^\bullet(\Fr_\bb{M};V)_\mrm{basic}=\mathbf{desc}_{(\bb{M},\sigma)}(V).\]
 By definition, $\mrm{char}_{(\bb{M},\sigma)}(r)=r_*(A^{\wedge i})$.

\begin{thm}\label{thm-GlobalizeInvariant}
Let $t_{2n\vert a,b}$ be a $2n$-derived relative supertrace on $\AQ$. 
For $(\bb{M},\sigma)\in\msf{sGK}^{=,\hbar}_{2n\vert a,b}$, 
let $t_\bb{M}$ denote the supertrace on $\mathcal{A}_\sigma(\bb{M})$ induced from $t_{2n\vert a,b}$ using Corollary \ref{cor-AQHH}.
Then
\[\msf{Ev}_\bb{M}(t_\bb{M})=
\int_\bb{M}\mrm{char}_{(\bb{M},\sigma)}(\bb{K})(\msf{Ev}_\mrm{loc}(t_{2n\vert a,b})).\]
\end{thm}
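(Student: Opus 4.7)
The plan is to show that the two integrands in the claimed identity coincide as $2n$-forms on $\bb{M}$, after which integration gives the result.

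First I would unfold both sides. By Corollary~\ref{cor-TmTrace} the left-hand side is
\[
\msf{Ev}_\bb{M}(t_\bb{M}) = \int_\bb{M} \bigl(\mathbf{desc}_{(\bb{M},\sigma)}(t_{2n\vert a,b}) \circ \beta_{\tilde A}\bigr)(\Theta_\bb{M}),
\]
while by Definition~\ref{defn-EvalOnTraceLocal} and the formula $\mrm{char}_{(\bb{M},\sigma)}(r)=r_*(A^{\wedge 2n})$ recalled before the statement, the right-hand side is
\[
\int_\bb{M} r_*(A^{\wedge 2n}), \qquad r := \mrm{ev}_{1\otimes\Theta}\bigl(\tilde t^{\,\mrm{Lie}}_{2n\vert a,b}\bigr).
\]
It therefore suffices to identify these two forms on $\bb{M}$.

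Next I would work upstairs on the frame bundle $\Fr_\bb{M}$, where both forms admit explicit descriptions. By the remark after Definition~\ref{def-VF}, the volume form $\Theta_\bb{M}$ is locally the constant section $1\otimes\Theta$, so the definition of $\beta_{\tilde A}$ identifies $\mathbf{desc}(t_{2n\vert a,b})(\beta_{\tilde A}(\Theta_\bb{M}))$ with the result of multilinearly plugging $1\otimes\Theta$ into the zeroth slot and $\tilde A$ into each of the other $2n$ slots of a Hochschild cocycle representative of $t_{2n\vert a,b}$. Since $\tilde A$ is a $1$-form, wedging $2n$ copies on the form side automatically antisymmetrizes in the algebra slots: the resulting form agrees, up to the combinatorial factor built into the normalization of $(-)^{\mrm{Lie}}$ recalled in \S\ref{subsec-LocalInvariant}, with the Chevalley-Eilenberg antisymmetrized cocycle $\tilde t^{\,\mrm{Lie}}_{2n\vert a,b}$ applied to $\tilde A^{\wedge 2n}$ and evaluated at $1\otimes\Theta$. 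The latter is exactly $r_*(\tilde A^{\wedge 2n})$ after identifying $\tilde A$ with its image $A$ in $\mfrk{g}_{2n\vert a,b}^\hbar=\AQ/Z$.

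Finally I would justify the passage from $\tilde A$ to $A$ and the descent from $\Fr_\bb{M}$ to $\bb{M}$. The substitution $\tilde A\mapsto A$ is harmless because Lemma~\ref{lem-center} shows $\tilde t^{\,\mrm{Lie}}_{2n\vert a,b}$ is relative to $\mfrk{sp}_{2n\vert a,b}\oplus Z$, so any central insertion contributes zero; the relativity with respect to $\mfrk{sp}_{2n\vert a,b}$ together with the $\mrm{Sp}(2n\vert a,b)$-invariance of $t_{2n\vert a,b}$ forces both forms on $\Fr_\bb{M}$ to be $\mrm{Sp}(2n\vert a,b)$-basic and hence descend to the same form on $\bb{M}$. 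Integration then yields the claim. The main obstacle is the middle step: making precise, in the super setting, how the wedge product of $2n$ copies of the odd-degree $1$-form $\tilde A$ implements the Chevalley-Eilenberg antisymmetrization inside the Hochschild cocycle, with all super-sign and factorial bookkeeping matched to the conventions fixed in \S\ref{subsec-LocalInvariant}. This is the one point where the argument departs nontrivially from the purely even version in \cite[Thm.~4.3]{FFSh}.
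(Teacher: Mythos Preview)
Your proposal is correct and follows essentially the same approach as the paper: both reduce the identity to showing that $(\tilde t^{\,\mrm{Lie}}_{2n\vert a,b})_*(A^{\wedge 2n})$ evaluated at $1\otimes\Theta$ agrees with $\mathbf{desc}(t_{2n\vert a,b})\circ\beta_{\tilde A}$ applied to $\Theta_\bb{M}$, the key point being that inserting the same $1$-form $A$ into all $2n$ slots collapses the antisymmetrization sum to a single term. The paper packages this via a naturality square for $\mrm{char}_{(\bb{M},\sigma)}$ together with an auxiliary map $B\colon\mathbf{desc}((\AQ)^*)\to\mrm{Hom}(\mathbf{desc}(\AQ),\mathbf{desc}(\bb{K}))$, whereas you compute directly on $\Fr_\bb{M}$ and invoke relativity to $Z$ and $\mfrk{sp}_{2n\vert a,b}$ for the $\tilde A\mapsto A$ passage and basic-ness; these are two organizations of the same computation.
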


\begin{proof}

By naturality, we get a commutative diagram 

\[\begin{xymatrix}
{
C^\bullet_\mrm{Lie}(\mfrk{g}^\hbar_{2n\vert a,b},\mfrk{sp}_{2n\vert a,b};((\AQ)^*)\arw[rrr]^-{\mrm{char}_{(\bb{M},\sigma)}(\AQ)^*)}\arw[d]^{\mrm{ev}_{1\otimes\Theta}} & & & \mathbf{desc}_{(\bb{M},\sigma)}((\AQ)^*)\arw[d]^{\mrm{char}_{(\bb{M},\sigma)}(\mrm{ev}_{1\otimes\Theta})}\\
C^\bullet_\mrm{Lie}(\mfrk{g}^\hbar_{2n\vert a,b},\mfrk{sp}_{2n\vert a,b};\bb{K})\arw[rrr]_-{\mrm{char}_{(\bb{M},\sigma)}(\bb{K})} &  & & \mathbf{desc}_{(\bb{M},\sigma)}(\bb{K})
}
\end{xymatrix}.\]

We can construct a morphism of $\Omega^\bullet_\mrm{dR}(\bb{M};\bb{K})$-modules
\[B\colon \mathbf{desc}_{(\bb{M},\sigma)}\left((\AQ)^*\right)\rta \mrm{Hom}_{\Omega^\bullet_\mrm{dR}(\bb{M};\bb{K})}\left(\mathbf{desc}_{(\bb{M},\sigma)}(\AQ),\mathbf{desc}_{(\bb{M},\sigma)}(\bb{K}))\right)\]
as follows. 

By Construction (\ref{eq-desc}), the descent functor is given by 
\[\mathbf{desc}_{(\bb{M},\sigma)}(-)=\Omega^\bullet(\bb{M};\Fr_\bb{M}\times_{\mrm{Sp}(2n\vert a,b)}(-)).\]
Since the bundle of homomorphisms is formed fiberwise, 
we have an identification 
\[\Fr_\bb{M}\times_{\mrm{Sp}(2n\vert a,b)}(\AQ)^*\simeq \mrm{Hom}_\mrm{Bun}(\Fr_\bb{M}\times_{\mrm{Sp}(2n\vert a,b)}\AQ,\Fr_\bb{M}\times_{\mrm{Sp}(2n\vert a,b)}\bb{K}).\]
Letting $E$ denote the bundle $\Fr_\bb{M}\times_{\mrm{Sp}(2n\vert a,b)}\AQ$ and $E^\vee$ the bundle dual, 
we are therefore looking for a map 
\[B\colon\Omega^\bullet(\bb{M};E^\vee)\rta\mrm{Hom}_{\Omega^\bullet(\bb{M};\bb{K})}\left(\Omega^\bullet(\bb{M};E),\Omega^\bullet(\bb{M};\bb{K})\right).\]
By adjunction, this is the same as a map 
\[\Omega^\bullet(\bb{M};E^\vee)\otimes_{\Omega^\bullet(\bb{M};\bb{K})}\Omega^\bullet(\bb{M};E)\rta\Omega^\bullet(\bb{M};\bb{K}).\]
Such a map is given by the product of forms together with the evaluation map $E^\vee\otimes E\rta\underline{\bb{K}}_\bb{M}$. 
Note that this map sends a degree $i$ form and a degree $j$ form to a degree $i+j$ form.

We can therefore view 
$\left(B\circ\mrm{char}_{(\bb{M},\sigma)}\left((\AQ)^*\right)\right)(t_{2n\vert a,b}^\mrm{Lie})$ 
as a map 
\[\Omega^\bullet(\bb{M};\Fr_\bb{M}\times_{\mrm{Sp}(2n\vert a,b)}\AQ)\rta\Omega^{\bullet+2n}_\mrm{dR}(\bb{M};\bb{K}),\] 
where the degree shift is since $t_{2n\vert a,b}^\mrm{Lie}$ is a coycle of degree $2n$. 

In degree 0, this map is given by sending 
$a\in\Omega^0(\bb{M};\Fr_\bb{M}\times_{\mrm{Sp}(2n\vert a,b)}\AQ)$ 
to the $2n$ form~$(t_{2n\vert a,b}^\mrm{Lie})_*(A^{\wedge 2n})(a)$. 
Note that fiberwise we are evaluating the antisymmetrization $t_{2n\vert a,b}^\mrm{Lie}$ on a class $A^{\wedge 2n}$ in the diagonal, 
\[(t^\mrm{Lie}_{2n\vert a,b})_*(A^{\wedge 2n})(a)=\sum_{s\in \Sigma_{2n}}\mrm{sign}(s)t_{2n\vert a,b}(a\otimes A\otimes\cdots\otimes A)=t_{2n\vert a,b}(a\otimes A\otimes\cdots\otimes A).\]
Since $\tilde{A}$ is a lift of $A$ (see \S\ref{sec-Connections}), the map defined fiberwise by 
$t_{2n\vert a,b}(a\otimes A\otimes\cdots\otimes A)$ 
is exactly $\mathbf{desc}_{(\bb{M},\sigma)}(t_{2n\vert a,b})\circ\beta_{\tilde{A}}$ 
where $\beta_{\tilde{A}}$ is as in Proposition \ref{prop-inducedtrace}. 
Thus, in degree zero we have an equivalence of maps 
\[\left(B\circ\mrm{char}_{(\bb{M},\sigma)}\left((\AQ)^*\right)\right)(t_{2n\vert a,b}^\mrm{Lie} )=\mathbf{desc}_{(\bb{M},\sigma)}(t_{2n\vert a,b})\circ\beta_{\tilde{A}}.\]

Lastly, we claim that the diagram

\[\begin{xymatrix}
{
\mathbf{desc}_{(\bb{M},\sigma)}\left((\AQ)^*\right)\arw[r]^-B\arw[d]_{\mrm{char}_{(\bb{M},\sigma)}(\mrm{ev}_{1\otimes\Theta})} & \mrm{Hom}_{\Omega^\bullet_\mrm{dR}(\bb{M};\bb{K})}\left(\mathbf{desc}_{(\bb{M},\sigma)}(\AQ),\mathbf{desc}_{(\bb{M},\sigma)}(\bb{K}))\right)\arw[dl]^{\mrm{ev}_{\Theta_\bb{M}}}\\
\Omega^\bullet_\mrm{dR}(\bb{M};\bb{K}) & 
}
\end{xymatrix}\]
commutes. Indeed, $\mrm{char}_{(\bb{M},\sigma)}(\mrm{ev}_{1\otimes\Theta})$ is given by fiberwise evaluating on $1\otimes\Theta$. 
The composition $\mrm{ev}_{\Theta_\bb{M}}\circ B$ is likewise given fiberwise over $x\in\bb{M}$ by evaluating on $\Theta_\bb{M}$ restricted to $x$. 
The volume form $\Theta_\bb{M}$ was defined to restrict to $1\otimes\Theta$ over each point; see Definition \ref{def-VF}.

By definition (Corollary \ref{cor-AQHH}), the supertrace $t_\bb{M}$ is given by the formula 
\[t_\bb{M}=\int_\bb{M}\left(\mathbf{desc}_{(\bb{M},\sigma)}(t_{2n\vert a,b})\circ\beta_{\tilde{A}}\right).\]
Putting this all together, we have

\begin{align*}
\msf{Ev}_\bb{M}(t_\bb{M})&=\mrm{ev}_{\Theta_\bb{M}}(t_\bb{M} )\\
&= \mrm{ev}_{\Theta_\bb{M}}\left(\int_\bb{M}\left(\mathbf{desc}_{(\bb{M},\sigma)}(t_{2n\vert a,b})\circ\beta_{\tilde{A}}\right)\right)\\
&=\int_\bb{M}\mrm{ev}_{\Theta_\bb{M}}\left(\mathbf{desc}_{(\bb{M},\sigma)}(t_{2n\vert a,b})\circ\beta_{\tilde{A}}\right)\\
&=\int_\bb{M}\left(\mrm{ev}_{\Theta_\bb{M}}\circ B\circ\mrm{char}_{(\bb{M},\sigma)}\left((\AQ)^*\right)\right)(t_{2n\vert a,b}^\mrm{Lie} )\\
&=\int_\bb{M}\mrm{char}_{(\bb{M},\sigma)}(\mrm{ev}_{1\otimes\Theta})\left(\mrm{char}_{(\bb{M},\sigma)}\left((\AQ)^*\right)(t_{2n\vert a,b}^\mrm{Lie})\right)\\
&=\int_\bb{M}\mrm{char}_{(\bb{M},\sigma)}(\bb{K})(\mrm{ev}_{1\otimes\Theta}(t_{2n\vert a,b}^\mrm{Lie}))\\
&=
\int_\bb{M}\mrm{char}_{(\bb{M},\sigma)}(\bb{K})(\msf{Ev}_\mrm{loc}(t_{2n\vert a,b})).
\end{align*}

\end{proof}

Below, in \S\ref{sec-GSAIT}, 
we will see that the characteristic functor $\mrm{char}_{(\bb{M},\sigma)}$ relates to the classical Chern-Weil map \cite[Appendix C]{MilnorStasheff}. 

\part{Computations and Constructions}

In this part, we will construct a normalized supertrace on $\mathcal{A}_\sigma(\bb{M})$ 
and compute its evaluation on the volume form. 
The purely even portion of these computations can be found in \cite{FFSh}. 
We begin with preliminaries on Clifford algebras that we will need in our computations.

Given a quadratic vector space $(V,Q)$, 
we can form several different structures

\begin{itemize}

\item the special orthogonal group $\mrm{SO}(V,Q)$ and its Lie algebra $\mfrk{so}(V,Q)$, 

\item the Clifford algebra $\mrm{Cliff}(V,Q)$, and 

\item a symplectic structure $\omega_Q$ on the supermanifold $\bb{R}^{0\vert \dim(V)}$.

\end{itemize}

These objects are related. 
For example, the Lie algebra $\mfrk{so}(V,Q)$ embeds in the Clifford algebra. 
The following can be found, for example, in \cite[Pg. 61]{Knus}. 

\begin{lem}\label{lem-SOCl}
Consider the Lie subalgebra 
\[[V,V]_-=\{vw-wv:v,w\in\mrm{Cliff}(V,Q)\}\]
 in $\mrm{Cliff}(V,Q)$. 
There is an isomorphism of Lie algebras

\[\Phi\colon [V,V]_-\rta \mfrk{so}(V,Q)\]
given by sending $w\in[V,V]_-$ to the endomorphism $[-,w]_-\colon V\rta V$.

\end{lem}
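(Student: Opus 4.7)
The plan is to prove the lemma in four manageable pieces. I read the definition of $[V,V]_-$ as the linear span of commutators $v_1 v_2 - v_2 v_1$ with $v_1, v_2 \in V$ (viewed inside $\mrm{Cliff}(V,Q)$), since this is the interpretation that makes $\Phi$ well-defined. The strategy is: first, verify $\Phi$ takes values in $\mrm{End}(V)$ rather than just $\mrm{End}(\mrm{Cliff}(V,Q))$; second, show the image lies in $\mfrk{so}(V,Q)$; third, note $\Phi$ respects brackets; fourth, establish bijectivity.

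The engine of the whole argument is a single computation. Using the Clifford relation $v w + w v = 2 B_Q(v, w)\cdot 1$, where $B_Q$ is the polarization of $Q$, one computes for $u, v_1, v_2 \in V$ that
\[[u,\, v_1 v_2 - v_2 v_1]_- = 4 B_Q(u, v_1)\, v_2 - 4 B_Q(u, v_2)\, v_1,\]
which manifestly sits in $V$, settling the first step. For the second step, this explicit formula immediately yields $B_Q(\Phi(w)(u_1), u_2) + B_Q(u_1, \Phi(w)(u_2)) = 0$ by symmetry of $B_Q$, so $\Phi(w) \in \mfrk{so}(V,Q)$. For the third step, $\Phi$ is (up to an overall sign) the adjoint representation of the associative algebra $\mrm{Cliff}(V,Q)$ on itself restricted to $[V,V]_-$ and corestricted to $\mrm{End}(V)$; the Jacobi identity for the commutator bracket then gives $[\Phi(w_1), \Phi(w_2)] = \Phi([w_1, w_2]_-)$ after a short sign check.

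For the fourth step, which I expect to be the main obstacle, I would argue by dimension count. Fix a basis $e_1, \ldots, e_n$ of $V$. Using the standard PBW-type basis of $\mrm{Cliff}(V,Q)$ given by ordered products $e_{i_1} \cdots e_{i_k}$ with $i_1 < \cdots < i_k$, the elements $e_i e_j - e_j e_i$ for $i < j$ are linearly independent and span $[V,V]_-$, giving $\dim [V,V]_- = \binom{n}{2} = \dim \mfrk{so}(V,Q)$. Applying the formula above shows that $\Phi(e_i e_j - e_j e_i)$ acts on the basis of $V$ as a standard generator of $\mfrk{so}(V,Q)$, after identifying $\Lambda^2 V \cong \mfrk{so}(V,Q)$ via $B_Q$; this yields surjectivity and hence bijectivity. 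The one delicate point in the whole argument is the bookkeeping of signs and normalization factors — in particular, whether the convention $v^2 = \hbar Q(v)$ used elsewhere in the paper introduces an extra power of $\hbar$ — but these only rescale $\Phi$ and do not affect the isomorphism claim.
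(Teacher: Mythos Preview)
Your proof is correct and follows the standard argument; the paper itself does not give a proof but simply cites \cite[Pg.~61]{Knus}. Your reading of $[V,V]_-$ as the \emph{span} of commutators $v_1v_2-v_2v_1$ with $v_1,v_2\in V$ is the intended one (the paper's ``$v,w\in\mrm{Cliff}(V,Q)$'' appears to be a typo for $v,w\in V$), and your key identity $[u,\,v_1v_2-v_2v_1]_-=4B_Q(u,v_1)v_2-4B_Q(u,v_2)v_1$ together with the dimension count is exactly the classical route. One small point worth making explicit: as written, $w\mapsto[-,w]_-$ is a Lie algebra \emph{anti}-homomorphism (your Jacobi computation gives $[\Phi(w_1),\Phi(w_2)]=-\Phi([w_1,w_2]_-)$), so the honest isomorphism is $-\Phi$, or equivalently $w\mapsto[w,-]_-$; you flag this as ``a short sign check,'' and indeed it only affects the overall sign and not the isomorphism claim.
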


\noindent Note that $\Phi$ allows us to view $\mfrk{so}(V,Q)$ as an \emph{even} subspace of $\mrm{Cliff}(V,Q)$. 

Just as the Weyl algebra is a deformation of a polynomial algebra, 
the Clifford algebra is a deformation of an exterior algebra. 
The local results in \cite{My2} (and \cite{Engeli}) show that the symplectic supermanifold $(\bb{R}^{0\vert \dim(V)},\omega_Q)$ has a canonical deformation by $\mrm{Cliff}(V,Q)$, as $\mrm{SO}(V,Q)$-modules.

\section{Quadratic Forms of Signature $(a,b)$}\label{sec-Quad}

Over $\bb{R}$, a quadratic form $Q$ on a vector space $V$ is determined by its signature $(a,b)$.  
In this section, we will analyze the various constructions (orthogonal groups, Clifford algebras, symplectic superspaces) for signature $(a,b)$. 

\begin{notation}
Unless otherwise noted, 
$Q$ will denote a quadratic form on $\bb{R}^{a+b}$ of signature $(a,b)$. 
A quadratic form $Q$ on a vector space $V$ has associated matrix $H_Q$ with $Q(v)=v^TH_Qv$, 
and bilinear form $B_Q$ with $Q(v)=B_Q(v,v)$. 

We will use the following shorthands:

\begin{align*}
\mrm{Cliff}(V,Q)&=\mrm{Ciff}_{a,b}\\
\mrm{SO}(V,Q)&=\mrm{SO}(a,b)\\
\mfrk{so}(V,Q)&=\mfrk{so}_{a,b}.
\end{align*}
\end{notation}

Note that there are equivalences of Lie groups 
\[\mrm{SO}(a,b)\simeq \mrm{SO}(b,a)\]
An explicit isomorphism can be found right above \cite[Def. 1.1.1]{UCSD}. 
Throughout the rest of the paper, we will assume $a\leq b$.

Let $\z =\lfloor\frac{b-a}{2}\rfloor$. 
Note that if $b-a$ is odd, then $2a+2\z+1=a+b$ is the dimension of $V$. 
We will fix a basis 
\[\{\zeta_1,\dots,\zeta_a,\eta_1,\dots,\eta_a,\xi_1,\dots,\xi_\z,\mu_1,\dots,\mu_\z,\upsilon\}\]
of $\bb{R}^{0\vert a+b}$ (where $\upsilon$ is only included if $b-a$ is odd) 
with

\begin{align*}
&B_Q(\zeta_i,\eta_i)=1/2  \text{ for all }  i=1,\dots,a\\
&B_Q(\xi_i,\xi_i)=-1/2  \text{ for all } i=1,\dots,\z\\
&B_Q(\mu_i,\mu_i)=-1/2  \text{ for all } i=1,\dots, \z \\
&B_Q(\upsilon,\upsilon)=-1/2.
\end{align*}

Note that in the Clifford algebra we then have 

\begin{align*}
\zeta_i^2=&\hbar Q(\zeta_i)=0 \text{ for all }  i=1,\dots,a\\
\eta_i^2=&\hbar Q(\eta_i)=0 \text{ for all }  i=1,\dots,a\\
[\eta_i.\zeta_i]=&\hbar Q(\zeta+\eta)=\hbar\text{ for all }  i=1,\dots,a\\
\xi_i^2=&\hbar Q(\xi_i)=-\frac{\hbar}{2}  \text{ for all } i=1,\dots,\z\\
\mu_i^2=&\hbar Q(\mu_i)=-\frac{\hbar }{2} \text{ for all } i=1,\dots, \z \\
\upsilon^2=&\hbar Q(\upsilon)=-\frac{\hbar }{2}.
\end{align*}

In our chosen basis, the matrix $H_Q$ associated to $Q$ is 

\[
H_Q=\frac{1}{2}(h^{ij}_Q)=\frac{1}{2}
\sbox0{$\begin{matrix} 0 & \mrm{Id}_a\\ \mrm{Id}_a & 0 \end{matrix}$}
\left[
\begin{array}{c|c}
\usebox{0}&\makebox[\wd0]{\large $0$}\\
\hline
  \vphantom{\usebox{0}}\makebox[\wd0]{\large $0$}&\makebox[\wd0]{\large $-\mrm{Id}_{b-a}$}
\end{array}
\right].
\]

As noted in Definition \ref{def-normalizedtrace}, 
the supertraces we will define will depend on an orientation. 

\begin{notation}\label{not-Orient}
We will give $\bb{R}^{0\vert a+b}$ the orientation 
\[\Theta=\zeta_1\eta_1\cdots\zeta_a\eta_a\xi_1\mu_1\cdots\xi_\z \mu_\z \upsilon.\]
We will sometimes also use the notation $\theta_1,\dots,\theta_{a+b}$ 
so that $\theta_1=\zeta_1$, $\theta_2=\eta_1$, and so on giving 
$\Theta=\theta_1\cdots\theta_{a+b}$.
Note that this is the same choice as was made in \S\ref{subsec-LocalInvariant}.
\end{notation}

\begin{proof}[Proof of Lemma \ref{lem-ThetaM}]
Take $\theta_1,\dots,\theta_{a+b}$ as in Notation \ref{not-Orient}.
\end{proof}

\subsubsection{Cartan Subalgebra}

We describe a Cartan subalgebra of $\mfrk{so}_{a,b}$. 
We will use this later in the proof of Theorem \ref{thm-LocalSAIT}.

\begin{lem}\label{lem-CartanSO}
A Cartan subalgebra of $\mfrk{so}_{a,b}$ is given by the subset 

\[\{\mfrk{h}=
\begin{bmatrix}
-\mrm{diag}(h_1,\dots, h_a) & 0 & 0 & 0\\
0 & \mrm{diag}(h_1,\dots, h_a) & 0 & 0\\
0 & 0 & 0 & \mrm{diag}(u_1,\dots, u_\z )\\
0 & 0 & -\mrm{diag}(u_1,\dots, u_\z ) & 0
\end{bmatrix}
:h_i,u_i\in\bb{K}\}\]
if $b-a$ is even. If $b-a$ is odd, there is an additional row and column of zeroes, corresponding to the $\upsilon$ basis element.
\end{lem}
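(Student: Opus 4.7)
The plan is to verify the three defining properties of a Cartan subalgebra of $\mfrk{so}_{a,b}$: namely, (i) membership in the Lie algebra, (ii) abelianness, and (iii) maximality among abelian subalgebras of ad-semisimple elements. Throughout I would decompose matrices into blocks indexed by $(\zeta_1,\dots,\zeta_a)$, $(\eta_1,\dots,\eta_a)$, $(\xi_1,\dots,\xi_\z)$, $(\mu_1,\dots,\mu_\z)$, and (when $b-a$ is odd) the extra basis element $\upsilon$.

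First, I would verify that each proposed matrix $X$ satisfies $X^{T}H_Q+H_QX=0$. Using the explicit block form of $H_Q$ recorded in \S\ref{sec-Quad}, this reduces to two independent block checks. On the $(\zeta,\eta)$ block the matrix $\mrm{diag}(-D_1)\oplus\mrm{diag}(D_1)$ is antisymmetric against the off-diagonal block $\begin{pmatrix}0 & I_a\\ I_a & 0\end{pmatrix}$ since $(-D_1)^{T}+D_1=0$ for diagonal $D_1$. On the $(\xi,\mu)$ block the antisymmetric matrix $\begin{pmatrix}0 & D_2\\ -D_2 & 0\end{pmatrix}$ is skew against $-I_{b-a}$ by inspection. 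When $b-a$ is odd the $\upsilon$ row and column are identically zero, so the $\mfrk{so}$-condition is vacuous there.

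Abelianness is immediate from the block structure: the $(\zeta,\eta)$ parts are diagonal and hence commute, while the commutator of two $(\xi,\mu)$ blocks of the stated shape reduces to $[D_2,D_2']$ in diagonal matrices, hence vanishes. So the proposed subspace is an abelian Lie subalgebra of $\mfrk{so}_{a,b}$ of real dimension $a+\z$. This matches $\lfloor(a+b)/2\rfloor$, the complex rank of $\mfrk{so}(a+b,\bb{C})$, which is an upper bound on the dimension of any toral subalgebra of $\mfrk{so}_{a,b}$.

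Finally, I would check ad-semisimplicity of every element: the $(\zeta,\eta)$ block is already diagonal, and the $(\xi,\mu)$ block has eigenvalues $\pm i u_j$ and diagonalizes over $\bb{C}$ via the change of basis $\xi_j\mapsto \xi_j\pm i\mu_j$. Combining the three properties, $\mathfrak{h}$ is a maximal toral subalgebra, hence a Cartan subalgebra. The main subtlety is that ``maximal abelian of semisimple elements'' really is the correct characterization in this real semisimple setting; this is classical, and is made concrete by noting that under the complexification and the rotated basis above, $\mathfrak{h}\otimes_{\bb{R}}\bb{C}$ becomes the standard split Cartan of $\mfrk{so}(a+b,\bb{C})$ of type $B_{a+\z}$ or $D_{a+\z}$ accordingly.
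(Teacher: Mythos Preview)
Your argument is correct: you verify membership in $\mfrk{so}_{a,b}$ via $X^TH_Q+H_QX=0$ block by block, check that the subalgebra is abelian, observe that each element is ad-semisimple (diagonalizable after complexification), and then use the dimension count $a+\z=\lfloor(a+b)/2\rfloor$ to conclude maximality, since any abelian subalgebra of ad-semisimple elements complexifies to sit inside a Cartan of $\mfrk{so}(a+b,\bb{C})$ and hence has dimension at most the rank.

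The paper takes a different route entirely: it does not give a proof but simply cites \cite[Pg.~402]{Sugiura}, where Cartan subalgebras of the real forms $\mfrk{so}_{a,b}$ are classified. Your direct verification is self-contained and avoids the external reference, at the cost of redoing a standard computation; the paper's citation is terser but relies on the reader tracking down Sugiura's conventions and matching them to the basis $\{\zeta_i,\eta_i,\xi_j,\mu_j,\upsilon\}$ and the specific form of $H_Q$ used here. Either is adequate for a lemma of this nature.
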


\noindent See \cite[Pg. 402]{Sugiura}. 

Let $E_{i,j}$ denote the matrix with a 1 in the $ij$th entry and zeroes elsewhere. 
A basis for our chosen Cartan subalgebra is given by 

\[\mathcal{B}=\{-E_{i,i}+E_{a+i,a+i},E_{2a+j,2a+\z +j}-E_{2a+\z +j,2a+j} \}_{i=1,\dots,a}^{j=1,\dots,\z }\]

We would like to view these basis elements in the Clifford algebra under the isomorphism $\Phi$ from Lemma \ref{lem-SOCl}.
In other words, given a basis element $U\in\mathcal{B}$, 
we would like find an element $v\in\mrm{Cliff}_{a,b}$ so that the morphisms 
$\Phi(v)=[-,v]_-$ and $U\colon V\rta V$ agree. 

By direct computation, one obtains the following.

\begin{lem}\label{lem-CartanSOPHI}
For each $i=1,\dots, a$ we have 
\[\Phi(\eta_i\zeta_i)=-E_{i,i}+E_{a+i,a+i}\]
and for each $j=1,\dots, \z $ we have 
\[\Phi(-\xi_j\mu_j)=E_{2a+j,2a+\z +j}-E_{2a+\z +j,2a+j}.\]
\end{lem}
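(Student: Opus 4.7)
The plan is to carry out the direct computation promised in the statement, expanding the commutators in $\mathrm{Cliff}_{a,b}$ and checking the action on each basis vector of $V$. The single tool we need is the polarized Clifford relation $vw + wv = 2\hbar B_Q(v,w)$ for $v,w\in V$, which follows from $v^2=\hbar Q(v)$ by expanding $(v+w)^2$. Combined with the specific values of $B_Q$ on the chosen basis $\{\zeta_i,\eta_i,\xi_j,\mu_j,\upsilon\}$, every product of two basis vectors either vanishes, is a scalar, or anticommutes past any other basis vector; this is what will make the computation short.

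For the first identity, I would evaluate $\Phi(\eta_i\zeta_i)(v)=[v,\eta_i\zeta_i]_-$ on each basis vector. For $v=\zeta_i$ we get $\zeta_i\eta_i\zeta_i-\eta_i\zeta_i^2$; using $\zeta_i\eta_i=\hbar-\eta_i\zeta_i$ and $\zeta_i^2=0$ this collapses to a scalar multiple of $\zeta_i$. For $v=\eta_i$ the analogous manipulation gives a scalar multiple of $\eta_i$ of the opposite sign, matching the $+E_{a+i,a+i}$ term. For every other basis vector $v$, the fact that $B_Q(v,\zeta_i)=B_Q(v,\eta_i)=0$ means $v$ anticommutes past both $\eta_i$ and $\zeta_i$, so the two signs cancel and $[v,\eta_i\zeta_i]_-=0$, matching the zero entries of $-E_{i,i}+E_{a+i,a+i}$.

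For the second identity, the argument is the same but using the relations $\xi_j^2=\mu_j^2=-\hbar/2$ and $\{\xi_j,\mu_j\}_+=0$. Evaluating $[\xi_j,-\xi_j\mu_j]_-$ one sees the $\mu_j\xi_j$ terms produced by anticommuting get traded back to $\xi_j\mu_j$ at the cost of a sign, while the $\xi_j^2=\mu_j^2=-\hbar/2$ contributions combine to give $\mu_j$ (up to the overall normalization built into $\Phi$); similarly $[\mu_j,-\xi_j\mu_j]_-$ produces $-\xi_j$, exactly the cross action of $E_{2a+j,2a+\mathsf{z}+j}-E_{2a+\mathsf{z}+j,2a+j}$. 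The remaining basis vectors anticommute with both $\xi_j$ and $\mu_j$ and so are killed, confirming zero on the orthogonal complement.

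The main obstacle in writing this up is really just bookkeeping: the overall normalization built into $\Phi$ in Lemma \ref{lem-SOCl} must be fixed once and applied uniformly so that the scalar factors of $\hbar$ produced by the Clifford relations come out as $\pm 1$ in the matrix description. Beyond this, no substantive difficulty arises; the content of the lemma is entirely verificational, which is why the paper defers it to ``direct computation.''
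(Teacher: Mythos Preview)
Your proposal is correct and is exactly the approach the paper takes: the paper's entire proof is the single sentence ``By direct computation, one obtains the following,'' and your plan of evaluating $[v,w]_-$ on each basis vector using the polarized Clifford relation is precisely that computation. Your remark about the normalization in $\Phi$ being the only bookkeeping hazard is apt and is the only thing one must track carefully when writing it out.
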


\subsubsection{Symplectic Form}

The symplectic form on $\bb{R}^{0\vert a+b}$ determined by $Q$ is given by 

\[\omega_Q=\sum_{i,j}\frac{1}{2}h_Q^{ij}\frac{\del}{\del\theta_i}\frac{\del}{\del\theta_j}\]
for entries $\frac{1}{2}(h^{ij}_Q)$ of the matrix $H_Q$; see \cite[Ex. 2.8]{My2}.

Using our naming convention (Notation \ref{not-Orient}), 
we can rewrite $\omega_Q$ as 

\[\omega_Q=\frac{1}{2}\sum_{i=1}^a\left(\frac{\del}{\del \zeta_i}\otimes\frac{\del}{\del\eta_i}+\frac{\del}{\del \zeta_i}\otimes\frac{\del}{\del\eta_i}\right)-\frac{1}{2}\sum_{j=1}^\z \left(\frac{\del}{\del \xi_j}\otimes\frac{\del}{\del \xi_j}+\frac{\del}{\del \mu_j}\otimes \frac{\del}{\del \mu_j}\right)-\frac{1}{2}\left(\frac{\del}{\del \upsilon}\right)^{\otimes 2}.\]

\section{Constructing the supertrace}\label{sec-ConstructingtheSupertrace}

The goal of this section is to construct a supertrace on $\AQ$. 
This construction will depend on a choice of orientation of $\bb{R}^{2n\vert a+b}$, 
see \S\ref{subsec-IntandOri}. 
By Corollary \ref{cor-TmTrace}, 
this supertrace will descend to define a supertrace on $(\bb{M},\omega)$. 
Recall that $\AQ$ is the tensor product of a Weyl and a Clifford algebra. 
By Lemma \ref{lem-tracehh}, 
we would like to construct an element of 
\[\mrm{HH}^n(\mrm{Weyl}_{2n}\otimes\mrm{Cliff}_{a,b})=\mrm{HH}^n(\mrm{Weyl}_{2n})\otimes\mrm{HH}^0(\mrm{Cliff}_{a,b})\]
In fact, we would like a cocycle representative of the cohomology class so we may apply Definition \ref{defn-EvalOnTraceLocal}. 

In the purely even case, 
an appropriate element of $\tau_{2n}\in\mrm{Hoch}^n(\mrm{Weyl}_{2n})$ was constructed in \cite{FFSh}. 
We review the definition of $\tau_{2n}$ below. 
The construction of the element is inspired by Kontsevich formality; see \cite[Rmk. Pg. 7]{FFSh}. 

In the purely odd case, 
the Clifford algebra $\mrm{Cliff}_{a,b}$ has a canonical underived supertrace. 

\begin{lem}\label{lem-trace1}
Let $(V,Q)$ be a quadratic real vector space. 
View $V$ as an even space. 
Given an orientation $\det(V)\simeq\bb{K}$, 
the quotient map 

\[\mrm{Cliff}(V,Q)\rta\mrm{Cliff}(V,Q)/\mrm{Cliff}_{(n-1)}(V,Q)\simeq\det (V)\simeq \bb{K}\]
defines a supertrace on $\mrm{Cliff}(V,Q)$.
\end{lem}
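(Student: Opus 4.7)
The plan is to verify the supertrace identity $t(ab) = (-1)^{|a||b|}t(ba)$ for the quotient map $t$ directly, on a convenient basis of $\mrm{Cliff}(V,Q)$. The $\bb{Z}/2$-grading in play is by parity of the filtration degree, so that the associated graded $\Lambda^\bullet V$ carries its standard super-commutative structure. First I would choose an orthogonal basis $e_1,\dots,e_n$ of $(V,Q)$, giving Clifford relations $e_ie_j = -e_je_i$ for $i\neq j$ and $e_i^2 = \hbar Q(e_i)$. By a PBW-type theorem for Clifford algebras, the ordered monomials $e_I := e_{i_1}\cdots e_{i_k}$ for $I=\{i_1<\cdots<i_k\}\subseteq\{1,\dots,n\}$ form a basis of $\mrm{Cliff}(V,Q)$, and $\mrm{Cliff}_{(k)}$ is the span of those $e_I$ with $|I|\leq k$. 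The chosen orientation identifies the one-dimensional quotient $\mrm{Cliff}/\mrm{Cliff}_{(n-1)}$ with $\bb{K}$ in such a way that $t$ extracts the coefficient of $e_1e_2\cdots e_n$.

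By bilinearity it suffices to check the supertrace identity on basis elements $a=e_I$ and $b=e_J$. A direct computation in the orthogonal basis yields
\[
e_I\cdot e_J \;=\; \mrm{sgn}(I,J)\,\hbar^{|I\cap J|}\Bigl(\prod_{i\in I\cap J} Q(e_i)\Bigr) e_{I\triangle J},
\]
where $\mrm{sgn}(I,J)$ is the sign of the shuffle sorting the concatenation of $I$ followed by $J$ into increasing order. In particular $e_Ie_J\in\mrm{Cliff}_{(|I\triangle J|)}$, so $t(e_Ie_J)=0$ unless $I$ and $J$ are complementary (i.e.\ $I\cap J=\emptyset$ and $I\cup J=\{1,\dots,n\}$), in which case $t(e_Ie_J)=\mrm{sgn}(I,J)$.

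To finish I would compare $t(e_Ie_J)$ with $(-1)^{|e_I||e_J|}t(e_Je_I)$. When $I,J$ are complementary the same formula gives $t(e_Je_I)=\mrm{sgn}(J,I)$, and reversing two adjacent ordered blocks of lengths $|I|$ and $|J|$ multiplies the shuffle sign by $(-1)^{|I||J|}$, so $\mrm{sgn}(J,I)=(-1)^{|I||J|}\mrm{sgn}(I,J)$. Since $|e_I|\equiv|I|$ and $|e_J|\equiv|J|\pmod 2$, we obtain
\[
(-1)^{|e_I||e_J|}t(e_Je_I)=(-1)^{|I||J|}\cdot(-1)^{|I||J|}\mrm{sgn}(I,J)=\mrm{sgn}(I,J)=t(e_Ie_J),
\]
as required; when $I,J$ are not complementary both sides vanish. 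The whole argument is a sign-bookkeeping exercise, and the only conceptual content is that the shuffle sign and the super-sign $(-1)^{|I||J|}$ conspire exactly to implement the super-commutativity of the top stratum $\Lambda^nV$ of the associated graded algebra. I do not expect any serious obstacle beyond consistent sign conventions (for the orientation, the $\bb{Z}/2$-grading induced by filtration parity, and the shuffle signs).
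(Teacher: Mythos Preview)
Your argument is correct. The paper does not actually give a proof of this lemma; it simply cites an external reference (\cite[Prop.~2.10]{Toronto}). Your direct verification via an orthogonal basis and the PBW basis $\{e_I\}$ is the standard self-contained argument: the key observation that $e_Ie_J$ is a scalar multiple of $e_{I\triangle J}$, so only complementary pairs survive the quotient, together with the block-swap identity $\mrm{sgn}(J,I)=(-1)^{|I||J|}\mrm{sgn}(I,J)$, is exactly what is needed. One small remark on exposition: the phrase ``parity of the filtration degree'' is fine for basis elements $e_I$, but the cleaner way to say it is that the $\bb{Z}/2$-grading descends from word length on $T(V)$ modulo $2$, since the Clifford relations are homogeneous mod $2$; this avoids any ambiguity about whether the grading is well-defined independently of a choice of basis.
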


\noindent For a proof, see \cite[Prop. 2.10]{Toronto}.

\begin{rmk}\label{rmk-TauBer}
If we use $\Theta\in\mrm{det}(V)$ to identify $\mrm{det}(V)$ with $\bb{K}$, 
then the supertrace of Lemma \ref{lem-trace1} is the same map as the Berezin integral $\int(-)d\Theta$.
\end{rmk}

\begin{rmk}
When $a=b$, and we choose the orientation $\zeta_1\eta_1\cdots\zeta_a\eta_a$ (as in Notation \ref{not-Orient}) 
 this supertrace can also be described using the spinor representation  \cite{Deligne}.
We can identify the quadratic space $(V,Q)$ with $(W\oplus W^*,\mrm{ev})$. 
The spinor representation of the Clifford algebra $\mrm{Cliff}(V,Q)$ is then a map
\[\rho_\mrm{spin}\colon \mrm{Cliff}(V,Q)\rta\mrm{End}(\mrm{Sym}(W[1])).\]
This map (of algebras) is an isomorphism. 
As $W[1]$ is odd, we can identify $\mrm{End}(\mrm{Sym}(W[1]))$ with finite dimensional matrices. 
Taking the supertrace of matrices, 
we obtain a supertrace 
\[t_2\colon \mrm{Cliff}(V,Q)\rta\bb{K}.\]
Note that $t_2$ is the map inducing the Morita equivalence between the Clifford algebra and $\bb{K}$. 

We claim that $t_2$ agrees with the supertrace in Lemma \ref{lem-trace1}, up to a scalar.
As 
\[\mrm{Cliff}_{a,a}=(\mrm{Cliff}_{1,1})^{\otimes a},\]
it suffices to prove this when $a=1$. 
When $a=1$, the map
\[\rho_\mrm{spin}\colon\mrm{Cliff}_{1,1}\rta M_{2\times 2}(\bb{R})\]
given by
\begin{align*}
\rho_\mrm{spin}(\zeta_1)&=\begin{bmatrix}
0 & 1\\
1& 0
\end{bmatrix}\\
\rho_\mrm{spin}(\eta_1)&=\begin{bmatrix}
0 & 1\\
-1& 0
\end{bmatrix}\\
\rho_\mrm{spin}(\zeta_1\eta_1)&=\begin{bmatrix}
-1 & 0\\
0& 1
\end{bmatrix}
\end{align*}

Thus, $t_2(\zeta_1\eta_1)=t_2(\Theta)=-2$, which is $-2$ times the Berezin integral $\int(\Theta)d\Theta=1$. 

\end{rmk}

This supertrace determines an element $\tau_{0\vert a,b}\in\mrm{Hoch}^0(\mrm{Cliff}_{a,b})$. Together, $\tau_{2n}$ and $\tau_{0\vert a,b}$ determine a cohomology class 
\[[\tau_{2n}]\otimes[\tau_{0\vert a,b}]\in \mrm{HH}^n(\mrm{Weyl}_{2n}\otimes\mrm{Cliff}_{a,b})\]
In \S\ref{subsec-Lifting}, we produce a cocycle representative of this class. 
That is, a map 
\[(\AQ)^{\otimes 2n+1}\rta\bb{K}\]
that has a corresponding Hochschild cocycle $[\tau_{2n}]\otimes[\tau_{0\vert a,b}]$. 
As $\tau_{0\vert a,b}$ was defined from a map out of $\mrm{Cliff}_{a,b}$, 
we need a way of lifting this map to a map out of the $(2n+1)$-fold tensor product of the Clifford algebra. 

\subsection{Lifting to Degree $2n$}\label{subsec-Lifting}

We have a class in $\mrm{HH}^{2n}(\mrm{Weyl}_{2n}\otimes\mrm{Cliff}_{a,b})$. 
We would like a class in~$\mrm{Hoch}^{2n}(\mrm{Weyl}_{2n}\otimes\mrm{Cliff}_{a,b})$. 

\subsubsection{General Argument}
In general, 
one has an external product map in Hochschild cohomology
\[\vee\colon \mrm{HH}^i(\Lambda,B)\otimes\mrm{HH}^j(\Gamma,B')\rta \mrm{HH}^{i+j}(\Lambda\otimes\Gamma,B\otimes B')\]
which is constructed in \cite[Ch. XI \S 6]{HomAlg} and defined by what they call $g$ on \cite[Pg. 219 (3)]{HomAlg}. 

We are interested in the case 
\[\mrm{HH}^{2n}(\mrm{Weyl}_{2n};(\mrm{Weyl}_{2n})^*)\otimes\mrm{HH}^0(\mrm{Cliff}_{a,b};(\mrm{Cliff}_{a,b})^*)\rta\mrm{HH}^{2n}(\mrm{Weyl}_{2n}\otimes\mrm{Cliff}_{a,b};(\mrm{Weyl}_{2n}\otimes\mrm{Cliff}_{a,b})^*).\] 
In this case, the product is given by 
\[(f\vee g)(w_0\otimes c_0\vert\cdots\vert w_{2n}\otimes c_{2n})=f(w_0\vert\cdots\vert w_{2n})\otimes g(c_0\cdots c_{2n})\]
where $w_i\in\mrm{Weyl}_{2n}$, $c_{i}\in\mrm{Cliff}_{a,b}$, and the bars denote the tensor product in the bar complex defining Hochschild cohomology. 

Completing the Weyl algebra, we obtain a class $[\tau_{2n}]\vee[\tau_{0\vert a,b}]$ in $\mrm{HH}^{2n}(\AQ;(\AQ)^*)$ which involves evaluating $\tau_{0\vert a,b}$ on a product $c_0\cdots c_{2n}$. 
The product in $\AQ$, 
as defined in \cite[Def. 5.4]{My2} or \cite[\S 1.4]{Engeli}, 
is given by 
\[x\star y=m\left(\left(\exp\left(\frac{\hbar}{2}(\alpha+g)\right)\right)(x\otimes y)\right)\]
where $\alpha+g$ is the bivector 

\begin{align}\label{eq-Bivector}
\alpha+g=\sum_{i=1}^n\frac{\del}{\del p_i}\otimes\frac{\del}{\del q_i}-\frac{\del}{\del q_i}\otimes\frac{\del}{\del p_i}-\sum_{i=1}^{a+b} h_Q^{ij}\left(\frac{\del}{\del\theta_i}\otimes\frac{\del}{\del\theta_j}\right).
\end{align} 

Here, for $x=p_i,q_i,\theta_i$, 
the endomorphism $\frac{\del}{\del x}$ of $\AQ$ 
is given by identifying the underlying module of $\AQ$ with $\widehat{\mathcal{O}}_{2n\vert a,b}[[\hbar]]$ 
using the standard Poincar\'e-Birkhoff-Witt isomorphism in characteristic zero given by (super)-symmetrization, 
then taking partial derivatives of polynomials as usual. 

\begin{rmk}
Note that $\frac{\del}{\del\theta_i}$ is an odd map on $\AQ$ since $\theta_i$ is an odd coordinate. 
This will be important for our calculations in \S\ref{sec-CompEval}. 
\end{rmk}

Manipulating the exponential power series appearing in $c_0\star\cdots\star c_{2n}$ will result in a complicated formula that appears in $\omega_{2n\vert a,b}$ below, \S\ref{subsec-OurCase}.

\begin{rmk}
One could use this same argument to reduce from type $(2n\vert a,b)$ to type $(2\vert 1,1)$ and type $(2\vert 0,2)$ since $\mrm{Weyl}_{2n}=\mrm{Weyl}_2^{\otimes n}$ and similarly for $\mrm{Cliff}_{a,b}$. 
Applying this process to the Weyl algebra gives some explanation for the complicated formula for $\tau_{2n}$ given below and in \cite{FFSh}.
\end{rmk}

\subsubsection{Description in Our Case}\label{subsec-OurCase}

We now describe our desired cocycle, which we will denote 
$\tau_{2n\vert a,b}\in\mrm{Hoch}^{2n}(\widehat{\mrm{Weyl}}_{2n}\otimes\mrm{Cliff}_{a,b})$. 

The supertrace cocycle is a map 
\[\tau_{2n\vert a,b}\colon (\AQ)^{\otimes 2n+1}\rta\bb{K}.\]
This map should look like $\tau_{2n}$ on the Weyl algebra pieces, and applying $\tau_{0\vert a,b}$ to the product of the Clifford algebra pieces. 
Recall from Remark \ref{rmk-TauBer} 
that $\tau_{0\vert a,b}$ is a Berezin integral $\int(-)d\Theta$. 

We will construct $\tau_{2n\vert a,b}$ as the composite of three maps: 

\begin{enumerate}
\item the counit of the Hopf algebra $\widehat{\mrm{Weyl}}_{2n}\otimes\mrm{Cliff}_{a,b}=U(\mfrk{h}_{2n})\otimes_\bb{K} U( \mrm{cl}_{a,b})$ using the identification of \cite[\S 5.1]{My2}. 
This is a Berezin integral (which depends on a choice of orientation $\Theta$)
\[\Upsilon_{2n\vert a,b}\colon (\AQ)^{\otimes 2n+1} \rta \bb{K},\]
\item a complicated combination of the bidifferential operators coming from $\omega_Q$, mixed with configuration space integrals (which appear in Kontsevich formality. See \cite[Rmk. Pg. 7]{FFSh}.)

\[\int_{\Delta_{2n}}\omega_{2n\vert a,b}\colon(\AQ)^{\otimes 2n+1}\rta(\AQ)^{\otimes 2n+1},\]
\item and a map 
\[\pi_{2n\vert a,b}\colon (\AQ)^{\otimes 2n+1}\rta(\AQ)^{\otimes 2n+1}.\]
\end{enumerate}

Spelled out, we will consider the composite
\[(\AQ)^{\otimes 2n+1}\xrta{\pi_{2n\vert a,b}}(\AQ)^{\otimes 2n+1}\xrta{\int_{\Delta_{2n}}\omega_{2n\vert a,b}} (\AQ)^{\otimes 2n+1}\xrta{\Upsilon_{2n\vert a,b}}\bb{K}\]
which we denote $\tau_{2n\vert a,b}$. 

We will construct and study each of these three maps individually 
and then show that $\tau_{2n\vert a,b}$ satisfies the necessary properties. 
In particular, we will show that there is an equivalence of cohomology classes
\[[\tau_{2n\vert a,b}]=[\tau_{2n}]\otimes[\tau_{0\vert a,b}].\]
\noindent \textit{Berezin Integral.}
Define the map 
$\Upsilon_{2n\vert a,b}\colon (\AQ)^{\otimes 2n+1}\rightarrow\bb{K} $ 
by 
\[\Upsilon_{2n\vert a,b}(a_0\otimes\cdots\otimes a_{2n})=\int(a_0\cdots a_{2n})(y,u)\vert_{y=0}d\theta_1\cdots d\theta_{a+b}.\]
\noindent Here, the product $a_0\cdots a_{2n}$ is taken in the 
supercommutative product structure,
not the star product. 
The notation $\mu_{2n\vert a,b}$ is used for $\Upsilon_{2n\vert a,b}$ in \cite{Engeli}. We avoid this notation to prevent conflicts with the elements $\mu_i$ in $\AQ$.

The notation $a(y,u)\vert _{y=0}$ means the following. 
Assume $a\in\AQ$ is the tensor product $w\otimes c$ of $w\in\widehat{\mrm{Weyl}}_{2n}$ and $c\in\mrm{Cliff}_{a,b}$. 
Then $w$ can be viewed as a power series in $2n$ even variables $y$ and $c$ can be viewed as a polynomial in $a+b$ odd variables $u$. 
Then $a(y,u)\vert_{y=0}$ means $w(0)\otimes c$, 
where we evaluated the power series $w$ at $0$. 

Then $\Upsilon_{2n\vert a+b}$ is the counit for the Hopf superalgebra $\AQ=U(\mfrk{h}_{2n})\otimes_\bb{K} U(\mfrk{cl}_{a,b})$. \\
\noindent \textit{Bidifferential Operators and Configuration Space Integrals.}
For fixed $k$ and $1\leq i<j\leq k$, let $\alpha_{ij}$ be the endomorphism of $\left(\AQ\right)^{\otimes k+1}$ sending $a_0\otimes\cdots\otimes a_k$ to 
\[\frac{1}{2}\sum_{l=1}^n a_0\otimes\cdots \otimes \frac{\del}{\del p_l}a_i\otimes\cdots\otimes \frac{\del}{\del q_l}a_j\otimes\cdots\otimes a_k-
a_0\otimes\cdots \otimes \frac{\del}{\del q_l}a_i\otimes\cdots\otimes \frac{\del}{\del p_l}a_j\otimes\cdots\otimes a_k.\]
For $i<j$, we let $\alpha_{ji}=-\alpha_{ij}$.
Essentially, $\alpha_{ij}$ acts by applying the two-form $\alpha$ from (\ref{eq-Bivector}) to the $ij$th factor $a_i\otimes a_j$.  
Similarly, we define $g_{ij}\in\mrm{End}\left(\mathcal{A}_{2n\vert a,b}^{\otimes k+1}\right)$ by 

\[g_{ij}(a_0\otimes\cdots\otimes a_k)=\frac{1}{2} \sum_{m,l} h_Q^{ml} a_0\otimes\cdots \frac{\del}{\del \theta_m}a_i\otimes\cdots\otimes \frac{\del}{\del \theta_l}a_j\otimes\cdots\otimes  a_k.\]
For $i<j$, we let $g_{ji}=g_{ij}$. 

Let $\Delta_{2n}$ be the space 

\[\Delta_{2n}=\{(v_1,\dots, v_{2n})\in[0,1]^{2n}\colon j<k\text{ implies }v_j<v_k\}.\]

Let $\omega_{2n\vert a,b}$ be the endomorphism of $(\AQ)^{2n+1}$ given by 

\[\omega_{2n\vert a,b}=\mrm{exp}\left(\sum_{1\leq i<j\leq 2n}\hbar\psi(v_i-v_j)(\alpha_{ij}+g_{ij})\right)\]
where $\psi$ is the 1-periodic function so that $\psi(v)=2v+1$ for $-1\leq v<0$. 
That is, $\psi(v)=2B_1(v)$ for $B_1(v)$ the 1st Bernoulli polynomial. \\

\noindent \textit{Third Map.}
Let $\pi_{2n}\in\mrm{End}((\AQ)^{2n+1})$ be the map given by

\[\pi_{2n}=\frac{1}{n!}\left(\sum_{1\leq j<k\leq 2n}\alpha_{ij} dv_j\wedge dv_k\right)^n.\]

\begin{defn}
Let $\tau_{2n\vert a,b}$ denote the Hochschild cocycle corresponding to the map
\[\tau_{2n\vert a,b}=\Upsilon_{2n\vert a,b}\int_{\Delta_{2n}}\omega_{2n\vert a,b}\circ\pi_{2n\vert a,b}.\]
\end{defn}

\noindent The map $\tau_{2n\vert a,b}$ is an even map if $a+b$ is even and an odd map if $a+b$ is odd, 
see Corollary \ref{cor-AQHH}.

We now need to check that the cocycle $\tau_{2n\vert a,b}$ is a representative of our chosen cohomology class. 

\begin{lem}
The cocycle $\tau_{2n\vert a,b}$ has cohomology class $[\tau_{2n}]\otimes[\tau_{0\vert a,b}]$.
\end{lem}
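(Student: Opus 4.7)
The plan is to exploit the one-dimensionality of $\mrm{HH}^{2n}(\AQ;(\AQ)^*)$ of parity $a+b$ established in Corollary \ref{cor-AQHH} together with Lemma \ref{lem-tracehh}. Once $\tau_{2n\vert a,b}$ is confirmed to be a Hochschild cocycle of that parity, its class is forced to be a scalar multiple of the external product $[\tau_{2n}]\otimes[\tau_{0\vert a,b}]$, and all that remains is to pin the scalar to be $1$ via a single test-element computation.

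First I would verify that $\del\tau_{2n\vert a,b}=0$. Integration by parts against $\Delta_{2n}$ of the integral of $\omega_{2n\vert a,b}$ produces boundary contributions at $v_i=v_{i+1}$ (where adjacent arguments of the cochain pair up under multiplication) and at $v_1=0$, $v_{2n}=1$ (which accounts for the cyclic shift in the last Hochschild face map). The combined operator $\alpha_{ij}+g_{ij}$ weighted by $\hbar\psi(v_i-v_j)$ is precisely the bivector generating the star-product on $\AQ$ from (\ref{eq-Bivector}), so these boundary terms reconstitute exactly the products $a_i\star a_{i+1}$ demanded by the Hochschild differential, while the interior cancellations match those of $\pi_{2n\vert a,b}$. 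The purely even version of this closure argument is \cite[Thm. 2.1]{FFSh}, and the extension incorporating $g_{ij}$ follows the pattern of \cite[Thm. 2.14]{Engeli}.

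With the cocycle property in hand, I can write $[\tau_{2n\vert a,b}]=\lambda\,[\tau_{2n}]\otimes[\tau_{0\vert a,b}]$ for some $\lambda\in\bb{K}$. To force $\lambda=1$, I would evaluate both sides on the test input $a_0=1\otimes\Theta$ and $a_i=w_i\otimes 1$ for $i\geq 1$, where $(w_1,\dots,w_{2n})$ is the Koszul-type alternating string $(p_1,q_1,\dots,p_n,q_n)$ used in \cite[Thm. 2.1]{FFSh} to normalize $[\tau_{2n}]$. Since $\tfrac{\del}{\del\theta_i}$ annihilates $1$, every $g_{ij}$ appearing in $\omega_{2n\vert a,b}$ acts as zero on this input, so the $\omega_{2n\vert a,b}$ exponential collapses to the $\alpha$-only version that defines $\tau_{2n}$. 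The formula for $\tau_{2n\vert a,b}$ thus factors as (the value of $\tau_{2n}$ on the Weyl factors) tensored with the Berezin integral of $\Theta$, which is $1$. The external product $[\tau_{2n}]\vee[\tau_{0\vert a,b}]$, using the cochain-level formula from \cite[Ch. XI \S 6]{HomAlg}, evaluates to the value of $\tau_{2n}$ on the same Weyl string times $\tau_{0\vert a,b}(\Theta\star 1\star\cdots\star 1)=\tau_{0\vert a,b}(\Theta)=1$. The two agree, so $\lambda=1$.

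The main obstacle is sign bookkeeping. The Berezin integral on $a+b$ odd variables has a fixed parity, the derivations $\tfrac{\del}{\del\theta_i}$ are odd maps on $\AQ$, and $g_{ij}$ is symmetric in $i,j$ while $\alpha_{ij}$ is antisymmetric; together these conspire to thread Koszul signs through the expansion of $\omega_{2n\vert a,b}$ and the cochain-level external product. Aligning these with the conventions of \cite{FFSh} and \cite{Engeli} (in particular matching normalizations of $\tau_{2n}$ and orientations $\Theta$ from Notation \ref{not-Orient}) is where the calculation is most error-prone.
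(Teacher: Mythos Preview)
Your approach is correct and differs from the paper's. The paper's proof is essentially a consistency check layered on top of the construction in \S\ref{subsec-Lifting}: there the author argues that the cochain-level external product $(\tau_{2n}\vee\tau_{0\vert a,b})(w_0\otimes c_0\vert\cdots\vert w_{2n}\otimes c_{2n})=\tau_{2n}(w_0\vert\cdots\vert w_{2n})\cdot\tau_{0\vert a,b}(c_0\star\cdots\star c_{2n})$, once the star product in the Clifford factors is expanded via the bivector (\ref{eq-Bivector}), yields precisely the formula defining $\tau_{2n\vert a,b}$. The two-line proof of the lemma then just verifies that setting $a=b=0$ recovers $\tau_{2n}$ and setting $n=0$ recovers the Berezin integral $\tau_{0\vert a,b}$; the equality of cohomology classes is really ``by construction.'' Your argument instead invokes the one-dimensionality from Corollary~\ref{cor-AQHH} and pins the scalar by a single evaluation, which is more self-contained and sidesteps the (not fully spelled out) claim that the star-product expansion lands exactly on $\omega_{2n\vert a,b}$.

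One point to tighten: to conclude $\lambda=1$ from a single evaluation you need your test input to represent a nonzero class in $\mrm{HH}_{2n}(\AQ)$, not merely to be some element of $(\AQ)^{\otimes 2n+1}$; otherwise the two cocycle representatives could differ by a coboundary on that input. The anti-symmetrization of $(1,p_1,q_1,\dots,p_n,q_n)$ is the standard generator of $\mrm{HH}_{2n}(\widehat{\mrm{Weyl}}_{2n})$ (this is how \cite{FFSh} normalizes $\tau_{2n}$), and $\Theta$ represents the generator of $\mrm{HH}_0(\mrm{Cliff}_{a,b})$, so their K\"unneth product does the job---but you should say so explicitly rather than leaving it implicit in the citation.
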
 

\begin{proof}
Setting $a=b=0$, we obtain the formula for $\tau_{2n}$ given in \cite[\S2.3 (2)]{FFSh}. 
Taking $n=0$, 
the terms $\omega_{0\vert a,b}$ and $\pi_0$ become the identity map. 
We are left with the Berezin integral $\int(-)d\Theta=\tau_{0\vert a,b}$. 
\end{proof}

\begin{prop}\label{prop-tauProperties}
The cocycle $\tau_{2n\vert a,b}$ corresponds to a derived relative supertrace on $\AQ$ as an object of $\mrm{Alg}(\mrm{Mod}_{\gKH})$.
\end{prop}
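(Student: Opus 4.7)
The plan is to verify three conditions in turn: $(i)$ that $\tau_{2n\vert a,b}$ is a Hochschild cocycle, $(ii)$ that it is a morphism of $\gKh$-modules, and $(iii)$ that it satisfies the relative condition of Definition \ref{def-relative}. Each condition reduces, via the external product factorization $[\tau_{2n\vert a,b}]=[\tau_{2n}]\vee[\tau_{0\vert a,b}]$ from the preceding lemma, to the analogous statement on the Weyl and Clifford factors, where the even case is handled by \cite{FFSh} and the odd case follows from the trace property of the Berezin integral (Lemma \ref{lem-trace1}).

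For $(i)$, both factors $\tau_{2n}$ (by \cite[Thm. 2.3]{FFSh}) and $\tau_{0\vert a,b}$ (since the quotient map of Lemma \ref{lem-trace1} vanishes on supercommutators in $\mrm{Cliff}_{a,b}$) are Hochschild cocycles, and the external product of cocycles is a cocycle. At the level of the explicit cochain $\Upsilon_{2n\vert a,b}\int_{\Delta_{2n}}\omega_{2n\vert a,b}\circ\pi_{2n}$, the Hochschild differential translates via Stokes' theorem into an integral over $\partial\Delta_{2n}$; the internal faces produce the bidifferential operator structure encoding the star product in $\AQ$, while the outer faces are killed by the trace property of $\Upsilon_{2n\vert a,b}$ in both even and odd directions.

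For $(ii)$, the group $\mrm{Sp}(2n\vert a,b)\cong\mrm{Sp}(2n)\times\mrm{SO}(a,b)$ preserves each ingredient separately: $\alpha$ is $\mrm{Sp}(2n)$-invariant by construction, $g$ is $\mrm{SO}(a,b)$-invariant since $h_Q^{ij}$ is the quadratic form, the orientation $\Theta$ transforms by $\det$, which is $1$ on $\mrm{SO}(a,b)$, and evaluation at $y=0$ commutes with the linear action. Since $\mfrk{g}^\hbar_{2n\vert a,b}=\mrm{Der}(\AQ)=\AQ/Z$ acts by inner derivations, the induced action on Hochschild cohomology is trivial, so the cohomology class of $\tau_{2n\vert a,b}$ is automatically $\mfrk{g}$-invariant, and at the cochain level one verifies that the specific choice of formula is already invariant.

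For $(iii)$, as recorded in the proof of Lemma \ref{lem-center}, the relative condition is equivalent to the antisymmetrization $\tau_{2n\vert a,b}^{\mrm{Lie}}$ vanishing whenever an input lies in $\mfrk{sp}_{2n\vert a,b}$. Under the super-Harish-Chandra embedding, $\mfrk{sp}_{2n}$ is realized as quadratic elements of $\mrm{Weyl}_{2n}$, while Lemma \ref{lem-SOCl} identifies $\mfrk{so}_{a,b}$ with the subspace $[V,V]_-$ of supercommutators in $\mrm{Cliff}_{a,b}$; the Berezin integral kills supercommutators, giving vanishing in the odd directions, and the even vanishing is the content of the FFSh argument. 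The main obstacle is the careful sign bookkeeping when interleaving the odd derivatives $\frac{\del}{\del\theta_i}$ with the configuration space integrals and the antisymmetrization map; the cleanest route is to follow the $\bb{Z}/2$-graded formality argument used by Engeli in \cite[\S 2.5]{Engeli} for the $(2n\vert a,a)$ case and check that no step of that argument uses $a=b$.
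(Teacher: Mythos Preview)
The paper's proof consists entirely of a citation: ``This is \cite[Thm.~2.11 (ii) and (iii)]{Engeli}.'' Your proposal ultimately arrives at the same place, deferring to Engeli's argument, so the approaches agree.

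Your sketch is more informative than the paper's bare citation: you separate out the cocycle condition, the $(\mfrk{g},K)$-equivariance, and the relative condition, and you correctly flag that Engeli's paper is nominally written for type $(2n\vert a,a)$, so one must verify that the proof of his Theorem~2.11 does not use the hypothesis $a=b$. The paper silently assumes this, so your caution here is well placed. A couple of the intermediate justifications in your sketch are a bit loose on their own---in~(ii) the passage from ``inner derivations act trivially on Hochschild cohomology'' to cochain-level equivariance needs the explicit formula, and in~(iii) the phrase ``the Berezin integral kills supercommutators, giving vanishing in the odd directions'' compresses a computation that mixes the $\alpha_{ij}$ and $g_{ij}$ operators---but since you explicitly fall back on Engeli's detailed argument, these are not gaps.
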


\begin{proof}
This is analogous to \cite[Thm. 2.11 (ii) and (iii)]{Engeli}. 
\end{proof}

\begin{thm}\label{thm-DescendedTrace}
The cocycle $\tau_{2n\vert a,b}$ determines a $2n$-derived supertrace on $\AQ$. 
Descending $\tau_{2n\vert a,b}$ to $\bb{M}$ gives a normalized supertrace $\mrm{Tr}_\bb{M}$ on $\mathcal{A}_\sigma(\bb{M})$.
\end{thm}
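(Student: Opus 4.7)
The plan is to split the theorem into its two assertions.

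The first assertion---that $\tau_{2n\vert a,b}$ is a $2n$-derived supertrace on $\AQ$---is essentially immediate from the preceding work. Proposition \ref{prop-tauProperties} already asserts that $\tau_{2n\vert a,b}$ is a derived relative supertrace on $\AQ$ in $\mrm{Alg}(\mrm{Mod}_{\gKh})$, and by its construction in \S\ref{subsec-OurCase} it takes $2n+1$ arguments, hence lives in Hochschild cochain degree $2n$. By Corollary \ref{cor-AQHH} this is moreover the only degree in which there is a nonzero Hochschild cohomology class on $\AQ$, so the degree is forced.

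For the second assertion I would apply Corollary \ref{cor-TmTrace} to produce
\[\mrm{Tr}_\bb{M}=\int_\bb{M}\bigl(\mathbf{desc}_{(\bb{M},\sigma)}(\tau_{2n\vert a,b})\circ\beta_{\tilde{A}}\bigr),\]
which is automatically a $\bb{K}$-linear supertrace on $\mathcal{A}_\sigma(\bb{M})$. What remains is to verify the normalization condition of Definition \ref{def-normalizedtrace}, and this is a local question. Over a sufficiently small Darboux neighborhood $U\cong\bb{R}^{2n\vert a+b}\subset\bb{M}$ one can arrange that $\sigma$ trivializes to the standard $\hbar$-formal exponential, the lifted connection $\tilde{A}$ takes its tautological linear form, and $\mathcal{A}_\sigma(\bb{M})|_U\simeq\AQ[[\hbar]]$. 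Under this identification $\mrm{Tr}_\bb{M}|_U(f)$ reduces to $\int_U\tau_{2n\vert a,b}(f\otimes\tilde{A}^{\otimes 2n})$, and one needs to check that this equals the right-hand side of Definition \ref{def-normalizedtrace}.

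The local evaluation proceeds by unpacking the three factors of $\tau_{2n\vert a,b}$. The projector $\pi_{2n}$ pulls out $\alpha^n/n!$ and, once paired against $\tilde{A}^{\otimes 2n}$, contracts to $\hbar^n$ times the Liouville form $dq_1\wedge dp_1\wedge\cdots\wedge dq_n\wedge dp_n$ on the even directions. After $\pi_{2n}$ has absorbed the even bivector, the remaining $g_{ij}$ terms in $\omega_{2n\vert a,b}$ act trivially on the $0$-slot $f$ once the $f$-factor has been restricted to the chart, and the configuration-space integral over $\Delta_{2n}$ supplies a combinatorial constant that cancels against the $1/n!$ in $\pi_{2n}$. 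Finally, by Remark \ref{rmk-TauBer}, $\Upsilon_{2n\vert a,b}$ is the Berezin integral $\int(-)\,d\Theta$, which produces the odd integration. Assembling these pieces yields the formula of Definition \ref{def-normalizedtrace}.

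The main obstacle is sign bookkeeping to pin down the factor $(-1)^{n+a+\z}$. The sign has two sources: the ordering of $\Theta$ fixed in Notation \ref{not-Orient} interacts nontrivially with the negative-definite block of $H_Q$ (contributing factors indexed by $a$ and $\z=\lfloor(b-a)/2\rfloor$), and Koszul signs arise when the odd derivations $\del/\del\theta_i$ appearing in $g_{ij}$ pass through the even factors in $\tilde{A}^{\otimes 2n}$. Once these signs are reconciled the local formula of Definition \ref{def-normalizedtrace} holds on every Darboux chart, and since $\mrm{Tr}_\bb{M}$ is a globally defined map, these local formulas must glue, showing that $\mrm{Tr}_\bb{M}$ is normalized.
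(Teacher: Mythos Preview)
Your structural decomposition matches the paper exactly: both invoke Proposition~\ref{prop-tauProperties} for the first assertion and Corollary~\ref{cor-TmTrace} to produce the descended supertrace $\mrm{Tr}_\bb{M}$. The only divergence is in the normalization step. The paper does not argue it at all; it simply cites \cite[Thm.~2.11(i)]{Engeli}, where the local computation is carried out in full. You instead sketch a direct local verification.

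Your sketch has the right shape but several of its intermediate claims are not accurate as stated, and this is precisely where the real work lies. First, $\pi_{2n}$ is an endomorphism of $(\AQ)^{\otimes 2n+1}$ built from the $\alpha_{jk}$ and the simplex coordinates $dv_j\wedge dv_k$; it does not ``pair against $\tilde{A}^{\otimes 2n}$'' directly, and the passage to $\hbar^n$ times the Liouville form requires the explicit form of the local $\tilde{A}$ together with the integral over $\Delta_{2n}$, not a separate step. Second, the assertion that after $\pi_{2n}$ the $g_{ij}$ terms in $\omega_{2n\vert a,b}$ act trivially on the $0$-slot is not correct in general: on a Darboux chart the lifted connection has odd components, and the exponential of $\sum\psi(v_i-v_j)(\alpha_{ij}+g_{ij})$ does not decouple so cleanly. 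Third, the integral over $\Delta_{2n}$ does not merely supply a constant cancelling $1/n!$; in \cite{FFSh} and \cite{Engeli} the normalization constant emerges from a genuine computation (this is the content of \cite[Thm.~2.11(i)]{Engeli} and its even predecessor in \cite{FFSh}). Finally, you correctly flag the sign $(-1)^{n+a+\z}$ as the remaining obstacle, but you do not resolve it, so the sketch stops short of a proof.

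In short: your outline is sound and coincides with the paper's route, but the normalization paragraph should either be replaced by a citation to \cite[Thm.~2.11(i)]{Engeli} (as the paper does) or expanded into the actual computation carried out there, since the heuristic reductions you describe are not literally true.
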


\begin{proof}
By Corollary \ref{cor-TmTrace}, 
$\tau_{2n\vert a,b}$ determines a supertrace on $\mathcal{A}_\sigma(\bb{M})$ by the map on cohomology 
\[\mathbf{desc}_{(\bb{M},\sigma)}(\tau_{2n\vert a,b})\circ\beta_{\tilde{A}}.\] 
The normalization condition is shown in \cite[Thm. 2.11(i)]{Engeli}. 
\end{proof}

\section{Computation of Evaluation on a Volume Form}\label{sec-CompEval}

The goal of this section is to compute 
$\mrm{Tr}_{\bb{M}}(\Theta_\bb{M})$ 
for the supertrace $\mrm{Tr}_\bb{M}$ as in Theorem \ref{thm-DescendedTrace}. 
Motivated by Theorem \ref{thm-GlobalizeInvariant}, 
we will first compute $\mrm{ev}_\Theta(\tau^{\mrm{Lie}}_{2n\vert a,b})$. 
We view this as the local computation, which is stated below as Theorem \ref{thm-LocalSAIT}. 
We end this section by proving the superalgebraic index theorem, 
generalizing Engeli's results \cite[Thm. 2.26]{Engeli}. 

\subsection{Local Superalgebraic Index Theorem: Set Up}\label{subsec-LSAIT}

By Proposition \ref{prop-tauProperties}, the class 
\[\tau_{2n\vert a,b}\in\mrm{Hoch}^{2n}(\mathcal{A}_{2n\vert a,b})\]
 defines a $\mfrk{sp}_{2n\vert a,b}$ equivariant class. 
Using Definition \ref{defn-EvalOnTraceLocal}, 
we get a class 
\[\msf{Ev}_\mrm{loc}(\tau_{2n\vert a,b})\in C^\bullet_\mrm{Lie}(\mfrk{g}^\hbar_{2n\vert a,b},\mfrk{sp}_{2n}\oplus \mfrk{so}_{a,b})\simeq C^\bullet_\mrm{Lie}(\mfrk{g}_{2n\vert a,b},\mfrk{sp}_{2n}\oplus \mfrk{so}_{a,b}\oplus \mfrk{gl}_1),\]
where we have identified the center $Z$ of $\AQ$ with $\mfrk{gl}_1$; 
see before Definition \ref{defn-EvalOnTraceLocal}.

As our eventual goal is to compute the global invariant $\msf{Ev}_\bb{M}(\mrm{Tr}_\bb{M})$, 
we only need to know the cohomology class of $\msf{Ev}_\mrm{loc}(\tau_{2n\vert a,b})=\mrm{ev}_\Theta(\tau^\mrm{Lie}_{2n\vert a,b})$. 
Indeed, by Theorem \ref{thm-GlobalizeInvariant}, 
the value of $\msf{Ev}_\bb{M}(\mrm{Tr}_\bb{M})$ is given by integrating over a term determined by $\msf{Ev}_\mrm{loc}(\tau_{2n\vert a,b})$. 
By Stoke's theorem, the integral only depends on the cohomology class.

We will show that the class 
$[\msf{Ev}_\mrm{loc}(\tau_{2n\vert a,b})]\in H^{2n}_\mrm{Lie}(\mfrk{g}^\hbar_{2n\vert a,b},\mfrk{sp}_{2n}\oplus \mfrk{so}_{a,b})$ 
comes from an invariant polynomial on $\mfrk{sp}_{2n}\oplus \mfrk{so}_{a,b}\oplus\mfrk{gl}_1$. 

The following can be found in \cite[\S 5.1]{FFSh} or \cite[\S 2.8.1 (2.3)]{Engeli}. 

\begin{defn}\label{def-Chi}
Let $\mfrk{h}\subset\mfrk{g}$ an inclusion of Lie superalgebras.
Let $\mrm{pr}\colon\mfrk{g}\rta\mfrk{h}$ be a projection map (on underlying super vector spaces) that is invariant under the adjoint action of $\mfrk{h}$. 
The \emph{curvature} of $\mrm{pr}$ is the map $C\in\mrm{Hom}(\Lambda^2\mfrk{g},\mfrk{h})$ given by
\[C(v\wedge w)=[\mrm{pr}(v),\mrm{pr}(w)]-\mrm{pr}([v,w]).\]
Let $\chi\colon\mrm{Sym}^m(\mfrk{h}^*)^\mfrk{h}\rta H^{2m}_\mrm{Lie}(\mfrk{g},\mfrk{h})$ 
denote the map sending an ad-invariant polynomial $P$ to the cocycle defined by 
\[\chi(P)(v_1\wedge\cdots\wedge v_{2m})=\frac{1}{m!}\sum_{\sigma\in\Sigma_{2m}/(\Sigma_2)^{\times m}}\mrm{sign}(\sigma)P\left(C(v_{\sigma(1)},v_{\sigma(2)}),\dots,C(v_{\sigma(2m-1)},v_{\sigma(2m)})\right).\]
\end{defn}

The curvature $C$ measures how far $\mrm{pr}$ is from being a Lie superalgebra map. 
Just as the usual Chern-Weil map is independent of the choice of connection, the map $\chi$ is independent of the choice of projection $\mrm{pr}$ \cite[\S 2.8.1]{Engeli}.

\begin{rmk}
We have encountered three Chern-Weil style maps: the map $\chi$, the functor $\mrm{char}_{(\bb{M},\sigma)}(\bb{K})$ from \S\ref{subsec-GlobalInvariant}, and the classical Chern-Weil map \cite[Appendix C]{MilnorStasheff}. 
In Lemma \ref{lem-CharCW} below, we will describe how these three maps are related.  
\end{rmk}

We will need the following observation.
Let $\overline{P}$ be a polynomial $\overline{P}\in\mrm{Sym}^\bullet(\mfrk{h})$ that is possibly \emph{not} ad invariant. 
We can create an ad invariant polynomial by an averaging process. 
Let $G$ be the group we would like $\overline{P}$ to be invariant under. 
Set
\[\overline{P}^\mrm{avg}=\frac{1}{|G|}\sum_{g\in G}\overline{P}\circ g.\]
Then $\overline{P}^\mrm{avg}$ is manifestly $G$ invariant. 

One may still apply the formula for $\chi$ to $\overline{P}$, 
but the resulting $\chi(\overline{P})$ may not live in \emph{relative} Lie algebra cohomology. 
In particular, $C^\bullet_\mrm{Lie}(\mfrk{g},\mfrk{h})$ is the subcomplex of $C^\bullet_\mrm{Lie}(\mfrk{g})$ of those cocycles that vanish on $\mfrk{h}$ and are $\mfrk{h}$ invariant. 
The element $\chi(\overline{P})$ will always vanish on $\mfrk{h}$ since $\mrm{pr}$ is a section of the inclusion $\mfrk{h}\rta\mfrk{g}$. 
However, $\chi(\overline{P})$ may not be $\mfrk{h}$-invariant. 

\begin{lem}
The diagram 
\[\begin{xymatrix}
{\mrm{Sym}^\bullet(\mfrk{h})\arw[r]^\chi\arw[d]_{(-)^\mrm{avg}} & \Lambda^{2\bullet}(\mfrk{g}/\mfrk{h})^*\arw[d]^{(-)^\mrm{avg}}\\
\mrm{Sym}^\bullet(\mfrk{h})^\mfrk{h}\arw[r]^\chi & C^\bullet_\mrm{Lie}(\mfrk{g},\mfrk{h})}
\end{xymatrix}\]
commutes.
\end{lem}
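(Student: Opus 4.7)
The plan is to reduce the commutativity statement to the single claim that the map $\chi$ is $G$-equivariant, where $G$ is the (super)group acting on $\mfrk{h}$ by the adjoint action that we are averaging over. Once equivariance is established, linearity of $\chi$ and the definition of $(-)^{\mrm{avg}}$ immediately give the commutativity, since
\[
\chi(P^{\mrm{avg}}) \;=\; \chi\!\left(\tfrac{1}{|G|}\sum_{g\in G} P\circ g\right) \;=\; \tfrac{1}{|G|}\sum_{g\in G}\chi(P\circ g) \;=\; \tfrac{1}{|G|}\sum_{g\in G}\chi(P)\circ g \;=\; (\chi(P))^{\mrm{avg}}.
\]

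The heart of the argument is therefore the identity $\chi(P\circ g) = \chi(P)\circ g$, which I would establish in two steps. First, I would show that the curvature $C \in \mrm{Hom}(\Lambda^2\mfrk{g},\mfrk{h})$ of $\mrm{pr}$ is $G$-equivariant. This uses the hypothesis that the projection $\mrm{pr}\colon \mfrk{g}\to\mfrk{h}$ is $\mfrk{h}$-invariant (and hence $G$-equivariant for the adjoint action on both sides): for $g\in G$ and $v,w\in\mfrk{g}$,
\[
C(g\cdot v,g\cdot w) \;=\; [\mrm{pr}(g\cdot v),\mrm{pr}(g\cdot w)] - \mrm{pr}([g\cdot v,g\cdot w]) \;=\; g\cdot\bigl([\mrm{pr}(v),\mrm{pr}(w)] - \mrm{pr}([v,w])\bigr) \;=\; g\cdot C(v,w),
\]
where we used that $g$ acts by Lie (super)algebra automorphisms on both $\mfrk{g}$ and $\mfrk{h}$.

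Second, plugging this equivariance of $C$ into the explicit formula for $\chi$ from Definition~\ref{def-Chi} and using that the permutation sum is over the indices alone (so $g$ pulls out of each slot), I obtain
\[
\chi(P)(g\cdot v_1\wedge\cdots\wedge g\cdot v_{2m}) \;=\; \tfrac{1}{m!}\sum_\sigma \mrm{sign}(\sigma)\, P\bigl(g\cdot C(v_{\sigma(1)},v_{\sigma(2)}),\ldots,g\cdot C(v_{\sigma(2m-1)},v_{\sigma(2m)})\bigr) \;=\; \chi(P\circ g)(v_1\wedge\cdots\wedge v_{2m}),
\]
which is the required equivariance. The only technical care needed is bookkeeping of super signs when the indices are interpreted in the $\bb{Z}/2$-graded setting, and checking that the codomain of $\chi$ before averaging is indeed the space $\Lambda^{2\bullet}(\mfrk{g}/\mfrk{h})^*$ (i.e.\ that $\chi(P)$ vanishes on $\mfrk{h}$), which follows directly from $\mrm{pr}$ being a section of $\mfrk{h}\hookrightarrow\mfrk{g}$ and hence $C$ vanishing whenever one argument lies in $\mfrk{h}$. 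The main obstacle is really just unpacking the super-signs in $\chi$ uniformly with those in the $G$-action; this is routine once one fixes conventions, and no new ideas beyond equivariance of $C$ are required.
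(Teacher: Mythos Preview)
Your proposal is correct and follows essentially the same approach as the paper: both reduce the commutativity to the $\mfrk{h}$-equivariance (invariance under the adjoint action) of the curvature $C$, deduced from the $\mfrk{h}$-invariance of $\mrm{pr}$ and the equivariance of the bracket. Your write-up is in fact more explicit than the paper's, which simply states that it suffices to check $C$ is $\mfrk{h}$-invariant and then verifies this by decomposing $C$ as the difference of the two composites $\mrm{pr}\circ[-,-]$ and $[-,-]\circ\Lambda^2\mrm{pr}$.
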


In particular, if we have a polynomial $\overline{P}\in\mrm{Sym}^\bullet(\mfrk{h})$ so that $\chi(\overline{P})$ is invariant, that is 
\[\chi(\overline{P})^\mrm{avg}=\chi(\overline{P}),\]
then $\chi(\overline{P}^\mrm{avg})=\chi(\overline{P})$. 
We will use this below.

\begin{proof}
It suffices to check that the curvature map $C\colon \Lambda^2(\mfrk{g})\rta\mfrk{h}$ is invariant under the adjoint action of $\mfrk{h}$. 
Note that $C$ is defined to be the difference of the composites
\[\Lambda^2\mfrk{g}\xrta{[-,-]}\mfrk{g}\xrta{\mrm{pr}}\mfrk{h}\]
and
\[\Lambda^2\mfrk{g}\xrta{\Lambda^2\mrm{pr}}\Lambda^2\mfrk{h}\xrta{[-.-]}\mfrk{h}.\]
The bracket maps $[-,-]$ are $\mfrk{g}$-invariant by the Jacobi identity. 
The projection map $\mrm{pr}$ in $\mfrk{h}$-invariant by assumption. 
Thus these two maps, and their difference $C$, are $\mfrk{h}$-invariant.
\end{proof}

\begin{ex}
In our case, we take the projection 
$\mrm{pr}\colon\AQ \rta\mfrk{sp}_{2n}\oplus \mfrk{so}_{a,b}\oplus\mfrk{gl}_1$ 
by projecting onto the homogeneous degree 2 piece and the constant part; see \cite[\S 5.7]{FFSh} or \cite[Pg. 29]{Engeli}. 
One should compare this with Lemma \ref{lem-SOCl}. 
We will continue to use the notation $\chi$ for the map 
\[\chi\colon \mrm{Sym}^\bullet(\mfrk{sp}_{2n}\oplus \mfrk{so}_{a,b}\oplus\mfrk{gl}_1)^{\mfrk{sp}_{2n}\oplus \mfrk{so}_{a,b}\oplus\mfrk{gl}_1}\rta C^{2\bullet}_\mrm{Lie}(\AQ,\mfrk{sp}_{2n}\oplus \mfrk{so}_{a,b}\oplus\mfrk{gl}_1)\simeq C^{2\bullet}_\mrm{Lie}(\mfrk{g}_{2n\vert a,b}^\hbar,\mfrk{sp}_{2n}\oplus \mfrk{so}_{a,b}).\]
\end{ex}

We will describe this cohomology class by giving an explicit description of a polynomial $P_n$ so that 
\[(-1)^n[\chi(P_n)]=[\msf{Ev}_\mrm{loc}(\tau_{2n\vert a,b})].\]

Let $\Phi$ denote the map $\mfrk{sp}_{2n}\oplus \mfrk{so}_{a,b}\rta\AQ$ extending the map from Lemma \ref{lem-SOCl} from the Clifford algebra to the Weyl algebra tensor the Clifford algebra. 
For $\Theta\in\AQ$, 
consider the function $P_n$ on $\mfrk{sp}_{2n}\oplus \mfrk{so}_{a,b}\oplus\mfrk{gl}_1$ of degree $n$ defined by the formula 

\begin{align}\label{eq-Pn}
P_n(a_1\otimes m_1,\dots, a_n\otimes m_n)=m_1\cdots m_n\Upsilon_{2n\vert a,b}\circ\int_{[0,1]^n}\omega_{2n\vert a,b}(\Theta \otimes \Phi(a_1)\otimes\cdots\otimes \Phi(a_n))dv_1\cdots dv_n.
\end{align}
for $a_1,\dots, a_n\in\mfrk{sp}_{2n}\oplus \mfrk{so}_{a,b}$, $m_1,\dots,m_n\in\mfrk{gl}_1$, and $v_1,\dots, v_n$ the coordinates for $[0,1]^n$. 
Note here that the bidifferential operators $\alpha_{ij}$ and $g_{ij}$ in $\omega_{2n\vert a,b}$ are acting on $(\AQ)^{\otimes n+1}$. 

By \cite[Lem. 2.24]{Engeli}, we have the following.

\begin{lem}\label{lem-ChiHitsZloc}
There is an equality in Lie cohomology
\[(-1)^n[\chi(P_n)]=[\msf{Ev}_\mrm{loc}(\tau_{2n\vert a,b})].\] 
\end{lem}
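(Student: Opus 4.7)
The plan is to follow the strategy of \cite[Lem. 2.24]{Engeli}, which establishes the analogous statement in the balanced case $a=b$; the argument extends to arbitrary signature because the structural identification between antisymmetrized Hochschild cocycles on $\AQ$ and Chern-Weil cocycles on $(\mfrk{g}^\hbar_{2n\vert a,b},\mfrk{sp}_{2n\vert a,b})$ is insensitive to the Clifford signature, which only affects combinatorial signs that get absorbed in $(-1)^n$. Since both classes live in $H^{2n}_\mrm{Lie}(\mfrk{g}^\hbar_{2n\vert a,b},\mfrk{sp}_{2n\vert a,b})$, it suffices to produce cocycle representatives that agree, or whose difference is an explicit $\mfrk{sp}_{2n\vert a,b}$-invariant coboundary.

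First I would expand $\msf{Ev}_\mrm{loc}(\tau_{2n\vert a,b})$ on a wedge $X_1\wedge\cdots\wedge X_{2n}$ of representatives of $\mfrk{g}^\hbar_{2n\vert a,b}/\mfrk{sp}_{2n\vert a,b}$. Unpacking the antisymmetrization $(-)^\mrm{Lie}$ and the factorization $\tau_{2n\vert a,b}=\Upsilon_{2n\vert a,b}\circ\int_{\Delta_{2n}}\omega_{2n\vert a,b}\circ\pi_{2n}$ yields a signed sum over $\Sigma_{2n}$ of integrals over the simplex $\Delta_{2n}$. The first key simplification is that summing over all permutations with signs unfolds $\Delta_{2n}$ to the full cube $[0,1]^{2n}$, and the $v_j$-dependence of $\omega_{2n\vert a,b}$ (built from the Bernoulli function $\psi$) antisymmetrizes to kill all cross terms except the constant-in-$v$ leading contribution, reducing the resulting $[0,1]^{2n}$-integral to the $[0,1]^n$-integral appearing in the definition (\ref{eq-Pn}) of $P_n$.

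Next I would restrict to inputs $X_i=\Phi(a_i)+z_i$ with $a_i\in\mfrk{sp}_{2n\vert a,b}$ and $z_i\in\mfrk{gl}_1\simeq Z$; any other components can be dropped because both cocycles vanish on inputs in $\mfrk{sp}_{2n\vert a,b}\oplus Z$ by construction. For such quadratic $X_i$, the projection satisfies $\mrm{pr}(X_i)=X_i$, so the curvature
\[C(X_i,X_j)=[\mrm{pr}(X_i),\mrm{pr}(X_j)]-\mrm{pr}([X_i,X_j])\]
picks out exactly the $\mfrk{gl}_1$-component of the star-commutator $[X_i,X_j]_\star$, which is computed by a single application of the bidifferential operator $\alpha_{ij}+g_{ij}$ from (\ref{eq-Bivector}) to the pair $\Phi(a_i)\otimes\Phi(a_j)$. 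Feeding this identification into the expression obtained in the previous paragraph, and matching the $n$-fold wedge structure produced by $\pi_{2n}=\tfrac{1}{n!}(\sum\alpha_{ij}dv_j\wedge dv_k)^n$ with the symmetrized sum over $\Sigma_{2n}/(\Sigma_2)^{\times n}$ in Definition~\ref{def-Chi}, one recognizes the output as $(-1)^n\chi(P_n)(X_1\wedge\cdots\wedge X_{2n})$, with $\Upsilon_{2n\vert a,b}$ playing the role of the Berezin pairing against $\Theta$ that appears explicitly in (\ref{eq-Pn}).

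The main obstacle is the sign and combinatorial bookkeeping: one must reconcile the permutation signs from $(-)^\mrm{Lie}$, the factor $1/n!$ appearing in both $\pi_{2n}$ and the definition of $\chi$, the Koszul signs generated by the odd Clifford variables in $\Upsilon_{2n\vert a,b}$, and the $\hbar$ powers from $\omega_{2n\vert a,b}$, all of which must conspire to produce precisely $(-1)^n$. In addition, boundary contributions from integrating $\omega_{2n\vert a,b}$ against $\partial\Delta_{2n}$ must be shown to contribute only an $\mfrk{sp}_{2n\vert a,b}$-invariant coboundary; this follows from a Stokes-type argument in the style of \cite[\S 2.8]{Engeli}, so the discrepancy vanishes in $H^{2n}_\mrm{Lie}(\mfrk{g}^\hbar_{2n\vert a,b},\mfrk{sp}_{2n\vert a,b})$.
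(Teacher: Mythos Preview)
The paper does not give its own proof; it simply states the lemma as a consequence of \cite[Lem.~2.24]{Engeli}. Your plan to follow Engeli's argument is therefore exactly the paper's approach, and the overall shape of your sketch---antisymmetrize, unfold the simplex $\Delta_{2n}$ to the cube $[0,1]^{2n}$, match the pairing structure of $\pi_{2n}$ with the sum over $\Sigma_{2n}/(\Sigma_2)^{\times n}$ in Definition~\ref{def-Chi}, and handle residual terms as coboundaries---is the correct one.

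There is, however, a genuine confusion in your second step. You propose to \emph{restrict} to inputs of the form $X_i=\Phi(a_i)+z_i$ with $a_i\in\mfrk{sp}_{2n\vert a,b}$ and $z_i\in Z$, arguing that ``other components can be dropped because both cocycles vanish on inputs in $\mfrk{sp}_{2n\vert a,b}\oplus Z$.'' This is backwards. Relative cocycles vanish whenever \emph{any single} input lies in the subalgebra $\mfrk{sp}_{2n\vert a,b}\oplus Z$; restricting all inputs to that subalgebra gives $0=0$, which is vacuous. Moreover, for such inputs the curvature $C(X_i,X_j)=[\mrm{pr}(X_i),\mrm{pr}(X_j)]-\mrm{pr}([X_i,X_j])$ vanishes identically (since $\mfrk{sp}_{2n\vert a,b}\oplus Z$ is a Lie subalgebra and $\mrm{pr}$ restricts to the identity there), so $\chi(P_n)$ is trivially zero too. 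The actual argument must work with \emph{arbitrary} inputs $X_i\in\AQ$: it is the operator $\pi_{2n}$ that, by applying $\alpha_{i_kj_k}$ to each pair in a perfect matching, effectively extracts the quadratic-commutator information from general $X_i,X_j$, and it is this extraction that one identifies with the curvature $C(X_i,X_j)$ appearing in $\chi(P_n)$. Once you make that correction, your first and third steps (simplex-to-cube and the Stokes-type coboundary analysis) carry through as in Engeli.
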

\noindent Note that we are not using any bijectivity of $\chi$, just the computation of $\chi(P_n)$.

We will explicitly compute $P_n$ below in the proof of Theorem \ref{thm-LocalSAIT} and see where the function fails to be ad invariant. 
Although the function $P_n$ is \emph{not} ad invariant, 
by Lemma \ref{lem-ChiHitsZloc}, $\chi(P_n)$ is invariant. 
Using our above observation, we can replace $P_n$ with $P_n^\mrm{avg}$ and still have $\chi(P_n^\mrm{avg})=\chi(P_n)$.

We would like a nice description of the ad-invariant function $P_n$. 
This will be done in terms of characteristic series of genera. 
For an overview of genera and their characteristic series; see \cite{Hirz}. 

\subsubsection{Characteristic Series}\label{subsec-MSeq}

We define the characteristic series of interest.

\begin{convention}
For the remainder of this section, 
we let $\mfrk{k}=\mfrk{sp}_{2n\vert a,b}$. 
\end{convention}

An ad-invariant function on the Lie algebra $\mfrk{k}$ is determined by its value on a Cartan subalgebra. 
It therefore suffices to show that a Cartan subalgebra of $\mfrk{k}=\mfrk{sp}_{2n}\oplus \mfrk{so}_{a,b}$ is as described. 
A Cartan subalgebra of $\mfrk{sp}_{2n}$ is given by the diagonal matrices. 
These matrices correspond to the elements $q_lp_l$ of $\AQ$ for $l=1,\dots,n$. 
By Lemmas \ref{lem-CartanSO} and \ref{lem-CartanSOPHI}, 
there is a basis for a Cartan subalgebra of $\mfrk{so}_{a,b}$ whose image under $\Phi$ is $\{\eta_i\zeta_i,-\xi_j\mu_j\}$ for $i=1,\dots,a$ and $j=1,\dots, t$

\begin{notation}
Let $t_i\in\mfrk{k}$ denote the element corresponding to $q_ip_i$. 
Let $s_i\in\mfrk{k}$ denote the element corresponding to $\eta_i\zeta_i$. 
Let $r_i\in\mfrk{k}$ denote the element corresponding to $-\xi_i\mu_i$. 
\end{notation}

\begin{ex}
Consider the ad-invariant function $\widehat{A}(-)$ on $\mfrk{k}$ 
determined by the polynomial 
 
\[\prod_{i=1}^n\frac{t_i/2}{\sinh(t_i/2)}.\]
This is the characteristic series for the $\widehat{A}$-genus.

\end{ex}

\begin{ex}\label{ex-Bhat}
If $a=b$, let $\widehat{B}(-)$ denote the ad-invariant function on $\mfrk{sp}_{2n\vert a,b}$ determined by the polynomial 
\[\prod_{i=1}^a \cosh(s_i/2).\]
\end{ex}

\begin{rmk}\label{rmk-Norm}

The characteristic series of the $\widehat{A}$ genus is $\frac{z}{\sinh(z)}$. 
As the $\widehat{A}$ class is multiplicative, $\widehat{A}(E\oplus E')=\widehat{A}(E)\widehat{A}(E')$, 
the power series $\frac{\sinh(s)}{s}$ in Example \ref{ex-Bhat} looks like the characteristic series of the $\widehat{A}$ genus of $-E$. 
For example, the power series $\frac{\sinh(s)}{s}$ on the tangent bundle determines the $\widehat{A}$ genus of the stable normal bundle.
\end{rmk}

\begin{rmk}\label{rmk-Lgenus}
The characteristic series for the L-genus is 
\[\frac{z}{\tanh(z)}=\frac{z}{\sinh(z)}\cosh(z).\]
We see this power series in the product of the power series in $\widehat{A}$ and $\widehat{B}$. 
\end{rmk}

\begin{ex}\label{ex-hatBC}
We get an ad-invariant function $\widehat{BC}(-)$ on $\mfrk{k}$ from the polynomial 
\[\prod_{i=1}^a \cosh(s_i/2)\prod_{i=1}^\z \cos(r_i/2).\]
\end{ex}

\subsubsection{Theorem Statement}

By Lemma \ref{lem-ChiHitsZloc}, 
the evaluation of $\tau_{2n\vert a,b}$ on the volume form $\Theta$ is given by the formula
\[[\msf{Ev}_\mrm{loc}(\tau_{2n\vert a,b})]=(-1)^n[\chi(P_n)].\]

To give an explicit description of $\msf{Ev}_\mrm{loc}(\tau_{2n\vert a,b})$, 
we need to compute the power series $P_n$ from (\ref{eq-Pn}). 
In the case of type $(2n\vert 0,0)$, 
one should compare the following with \cite[Pg. 28]{FFSh}.
In the case of type $(2n\vert a,a)$, 
one should compare the following with \cite[Lem. 2.25]{Engeli}.

\begin{thm}[Local Superalgebraic Index Theorem]\label{thm-LocalSAIT}
The ad invariant power series $P_n^\mrm{avg}$ obtained from (\ref{eq-Pn}) satisfies the equation

\[P^\mrm{avg}_n(x,\dots, x)=(-1)^{a+\z }\left[\det\left(\widehat{A}(\hbar x_1)\widehat{BC}(\hbar y) \right)^{1/2}\mrm{tr}(e^{x_2})\right]_n\]
where $x=x_1+y+x_2$ with $x_1\in\mfrk{sp}_{2n}$, $y \in\mfrk{so}_{a,b}$, and $x_2\in\mfrk{gl}_1$. 
The notation $[-]_n$ denotes the degree $n$ piece.
\end{thm}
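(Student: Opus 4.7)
The plan is to evaluate $P_n^{\mrm{avg}}$ on a Cartan subalgebra of $\mfrk{sp}_{2n\vert a,b}\oplus\mfrk{gl}_1$, since an ad-invariant polynomial is determined by its values on a Cartan. Using the basis $\{t_i, s_j, r_k\}$ identified in Lemmas \ref{lem-CartanSO} and \ref{lem-CartanSOPHI}, I write a general Cartan element as $x = x_1 + y + x_2$ with $x_1 = \sum_i u_i t_i$, $y = \sum_j w_j s_j + \sum_k z_k r_k$, and $x_2 \in \mfrk{gl}_1$. The prefactor $m_1\cdots m_n$ in the definition of $P_n$ immediately isolates the $\mfrk{gl}_1$-dependence, and summing the resulting power series and extracting the degree-$n$ piece gives the $\mrm{tr}(e^{x_2})$ factor.

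For the remaining $\mfrk{sp}_{2n\vert a,b}$-part I would expand the exponential defining $\omega_{2n\vert a,b}$ and keep only those monomials in $\alpha_{ij}, g_{ij}$ that survive the evaluation at $y = 0$ in $\Upsilon_{2n\vert a,b}$ and the Berezin integral against $\Theta$. Because $\Phi(t_i) = q_i p_i$, $\Phi(s_j) = \eta_j\zeta_j$, and $\Phi(-r_k) = \xi_k\mu_k$ involve pairwise disjoint coordinates, and because the bivector $\alpha+g$ of (\ref{eq-Bivector}) is block-diagonal with respect to these coordinates, the evaluation factors as a product over the three kinds of Cartan directions. Each block reduces to a sum of graph integrals over $[0,1]^n$ with edge weights $\hbar\psi(v_i - v_j)$ and vertex weights given by derivatives of the corresponding quadratic monomial, which is the setup of \cite[\S 5]{FFSh} and \cite[Lem. 2.25]{Engeli}.

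The Weyl block is the computation of \cite[\S 5]{FFSh} and assembles the graph sum into $\prod_i\frac{\hbar u_i/2}{\sinh(\hbar u_i/2)} = \det\widehat{A}(\hbar x_1)^{1/2}$. The hyperbolic Clifford block reproduces the $a=b$ calculation in \cite[Lem. 2.25]{Engeli}, yielding $\prod_j \cosh(\hbar w_j/2)\frac{\sinh(\hbar w_j)}{\hbar w_j}$. For the anisotropic block at each $r_k$, the same combinatorics applies, but the relevant entry of $H_Q^{-1}$ has sign $-1$ rather than the $+1$ of the hyperbolic off-diagonal; under the formal substitution $w \mapsto iz$ the hyperbolic characteristic series becomes its trigonometric counterpart $\cos(\hbar z_k/2)\frac{\sin(\hbar z_k)}{\hbar z_k}$. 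Putting the three families together produces $\det\bigl(\widehat{A}(\hbar x_1)\widehat{BC}(\hbar y)\bigr)^{1/2}$.

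The principal obstacle is tracking the overall sign $(-1)^{a+\z}$. It accumulates from several sources: reordering the Berezin orientation $\Theta = \zeta_1\eta_1\cdots\xi_1\mu_1\cdots\upsilon$ into the products of quadratic generators appearing in the surviving graphs, which contributes one sign per hyperbolic pair and one per anisotropic pair; the $(-1)^n$ already present in Lemma \ref{lem-ChiHitsZloc}; and the factors of $i$ absorbed during the analytic continuation $w \mapsto iz$ over the $\z$ anisotropic directions. A careful accounting of these, pair by pair, gives the prefactor $(-1)^{a+\z}$. Replacing $P_n$ by its ad-invariant average $P_n^{\mrm{avg}}$ is harmless, since by the lemma just preceding the theorem the class $\chi(P_n)$ is already ad-invariant and so equals $\chi(P_n^{\mrm{avg}})$.
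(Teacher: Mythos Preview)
Your factorization heuristic hides the main new phenomenon, and deferring to \cite[Lem.~2.25]{Engeli} actually imports an error that this paper corrects.

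The key point you miss is the role of the element $\Theta$ sitting in the zeroth tensor slot. In the FFSh computation the zeroth slot is $1$, so $\alpha_{0j}(1\otimes\cdots)=0$ and only cycle graphs among the $X$-labeled vertices survive. Here the zeroth slot is $\Theta=\zeta_1\eta_1\cdots\xi_\z\mu_\z\upsilon$, and $g_{0j}(\Theta\otimes\cdots)\neq 0$: differentiating $\Theta$ in the odd directions produces $\Theta/(\zeta_r\eta_r)$ or $\Theta/(\xi_r\mu_r)$. This generates an entirely new family of graphs---``$\Theta$-flowers'' with petals emanating from the $\Theta$-vertex---that have no analogue in \cite[\S 5]{FFSh}. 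These flowers are not a side issue: for the hyperbolic block they produce the $\cosh(\hbar\lambda_r/2)$ factor together with a \emph{non-invariant} term $\tfrac{e^{\hbar\lambda_r}-1}{\hbar\lambda_r}$, and for the anisotropic block they produce the sign $(-1)^\z$. Your claim that ``each block reduces to \ldots\ the setup of \cite[\S 5]{FFSh}'' is therefore false for the Clifford blocks.

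Relatedly, citing \cite[Lem.~2.25]{Engeli} for the hyperbolic factor is exactly what you cannot do: as recorded in Remark~\ref{rmk-CompareEngeli}, Engeli's computation under-counts the spare-vertex decorations on $\Theta$-flowers and misses the factor $\tfrac{\sinh(\hbar\lambda_r)}{\hbar\lambda_r}$ (and a sign $(-1)^a$). The paper's proof recomputes this from scratch via Lemmas~\ref{lem-Y1g12}--\ref{lem-Difference}. Your ``analytic continuation $w\mapsto iz$'' for the anisotropic block is also only a heuristic; the paper instead computes the $Y_2$ contributions directly (Lemmas~\ref{lem-g12Y2}--\ref{lem-mujiY2}) and finds that the $\Theta$-flowers there collapse to $(-1)^\z\prod_i\cos(\hbar\kappa_i/2)$. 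Finally, the averaging step is not ``harmless'' for $P_n$ itself: $P_n$ genuinely contains the odd-degree term $\tfrac{e^{\hbar\lambda_r}-1}{\hbar\lambda_r}$, and it is the passage to $P_n^{\mrm{avg}}$ that replaces it by $\tfrac{\sinh(\hbar\lambda_r)}{\hbar\lambda_r}$. What is harmless is that $\chi(P_n)=\chi(P_n^{\mrm{avg}})$, but you still have to compute $P_n$ first.
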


\noindent This is proven in \S\ref{subsec-PfThm} below. 

In the notation of \S\ref{subsec-MSeq}, Theorem \ref{thm-LocalSAIT} says that $P^\mrm{avg}(x,\dots, x)$ is 

\[(-1)^{a+\z }\left[\det\left(\frac{\hbar x_1/2}{\sinh(\hbar x_1/2)}
 \cosh(\hbar y_1/2)\cos(\hbar y_2/2)\right)^{1/2}\mrm{tr}(e^{x_2})\right]_{n}\]

We will prove Theorem \ref{thm-LocalSAIT} by computing the polynomial $P_n$, and then deducing the computation of the averaged polynomial $P_n^\mrm{avg}$.
We therefore need to compute $P_n$ on a Cartan subalgebra of $\mfrk{sp}_{2n\vert a,b}$. 
Let $x$ be in this Cartan subalgebra and $X=\Phi(x)$. 
We will prove some preliminary lemmas that will be useful in the proof of Theorem \ref{thm-LocalSAIT}. 
Before we do this, we need the general set-up for the proof. 

\subsubsection{Proof Set-Up}\label{subsec-ProofSetUp}

Note that the generators $q_lp_l,\eta_i\zeta_i,-\xi_j\mu_j$ of the Cartan subalgebra are of degree at most two in the variables $q_l,p_l,\eta_i.\zeta_i.\xi_j.\mu_j$. 
Thus, only derivatives of order at most two from the $\alpha_{ij}$ and $g_{ij}$ appearing in the exponential $\omega_{2n\vert a,b}$ contribute, 
and cross terms $\alpha_{ij}g_{i'j'}$ vanish. 
The remaining piece of $\omega_{2n\vert a,b}$ that may not vanish is 

\[\omega_{2n\vert a,b}^{\leq 2\colon}=\prod_{0\leq i\leq j\leq n}\left(1+\hbar\psi(v_i-v_j)(\alpha_{ij}+g_{ij})+\frac{1}{2}\hbar^2\psi(v_i-v_j)^2(\alpha_{ij}^2+g_{ij}^2)\right).\]

We would like a workable description of the expanded product of $\omega_{2n\vert a,b}^{\leq 2}(\Theta\otimes X^{\otimes n})$. 
Following \cite[Pg. 26]{FFSh} and \cite[Lem. 2.25]{Engeli}, 
we associate each summand in the expanded product to a labeled graph on $n+1$ vertices. 
For notational consistency, we refer to these vertices as the 0th through $n$th. 
The zeroth vertex will be labeled by $\Theta$. 
The remaining $n$ vertices are labeled by the $n$ copies of $X$. 
A summand of the product expansion of $\omega_{2n\vert a,b}^{\leq 2}(\Theta\otimes X^{\otimes n})$ is obtained by, 
for each $ij$ with $0\leq i< j\leq 2n$, 
choosing either $1$, $\hbar\psi(v_i-v_j)(\alpha_{ij}+g_{ij})$ or the quadratic term $\frac{1}{2}\hbar^2\psi(v_i-v_j)^2(\alpha_{ij}^2+g_{ij}^2)$. 
One then adds edges to the $n+1$ vertex graph according to the following rules:

For $i>0$, if for the $ij$ term, 

\begin{itemize}

\item one chose the constant term $1$, add no edges.

\item one chose the linear term $\hbar\psi(v_i-v_j)(\alpha_{ij}+g_{ij})$, 
add an edge between the $i$ and $j$th vertices (which are labeled by $X$).

\item one chose the quadratic term 
$\frac{1}{2}\hbar^2\psi(v_i-v_j)^2(\alpha_{ij}^2+g_{ij}^2)$, 
add two edges between the $i$ and $j$th vertices (which are labeled by $X$).

\end{itemize}

If for the $0j$ term, 

\begin{itemize}

\item one chose the constant term $1$, add no edges.

\item one chose the linear term $\hbar\psi(v_0-v_j)(\alpha_{0j}+g_{0j})$, 
add an edge between the $0$ vertex (labeled by $\Theta$) and the $j$th vertex (labeled by $X$). 

\item one chose the quadratic term 
$\frac{1}{2}\hbar^2\psi(v_0-v_j)^2(\alpha_{0j}^2+g_{0j}^2)$, 
add two edges between the $0$ vertex (labeled by $\Theta$) and the $j$th vertex (labeled by $X$). 

\end{itemize}

\begin{ex}

The graph that is a disjoint union of two cycles, 
one between the 0th and 1st vertices,
 and one between the 2nd, 3rd, and 4th vertices 
 corresponds to the summand 
 
 \[\frac{1}{2}\hbar^2\psi(v_0-v_1)^2(\alpha_{01}+g_{01})^2\hbar\psi(v_2-v_3)(\alpha_{23}+g_{23})\hbar\psi(v_3-v_4)(\alpha_{34}+g_{34})\hbar\psi(v_2-v_4)(\alpha_{24}+g_{24}).\]

\begin{figure}[H]
\centering
\includegraphics[scale=.6]{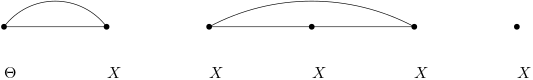}
\caption{Example Graph}
\end{figure}

\end{ex}

Note that a single graph consisting of a disjoint union of subgraphs corresponds to a product in~$\omega_{2n\vert a,b}^{\leq 2}$. 

\subsubsection{Graphs with Vanishing Terms}\label{subsec-GraphVanish}

We can rule out the following types of graphs.  

\begin{lem}
Graphs containing a loop on a single vertex as a connected component correspond to a vanishing summand of 
$\omega_{2n\vert a,b}^{\leq 2}(\Theta\otimes X^{\otimes n})$.
\end{lem}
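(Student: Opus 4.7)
The plan is to identify the operator content of a self-loop at vertex $i$ and to verify that the corresponding bidifferential operator is identically zero. A connected component consisting of a loop on a single vertex $i$ appears in the expansion of $\omega^{\leq 2}_{2n\vert a,b}$ exactly when one chooses the $i=j$ specialization of a factor $\bigl(1+\hbar\psi(v_i-v_j)(\alpha_{ij}+g_{ij})+\tfrac{1}{2}\hbar^2\psi(v_i-v_j)^2(\alpha_{ij}^2+g_{ij}^2)\bigr)$; that is, both derivatives of the bivector $\alpha_{ij}+g_{ij}$ act on the same tensor slot $a_i$ rather than on two distinct slots. The lemma therefore reduces to showing that the ``diagonal'' operators $\alpha_{ii}$ and $g_{ii}$ each act as zero on $\AQ$.

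For $\alpha_{ii}$, specialising the defining formula of $\alpha_{ij}$ from \S\ref{subsec-OurCase} to $i=j$ gives $\tfrac{1}{2}\sum_l\bigl(\tfrac{\del^2}{\del p_l\,\del q_l}-\tfrac{\del^2}{\del q_l\,\del p_l}\bigr)$ acting on a single factor, which vanishes because partial derivatives in the even variables commute. For $g_{ii}$, the analogous specialisation yields $\tfrac{1}{2}\sum_{m,l}h_Q^{ml}\,\tfrac{\del^2}{\del\theta_m\,\del\theta_l}$. The coefficients $h_Q^{ml}$ form the symmetric matrix $H_Q$ of the quadratic form $Q$, while the second odd partial derivative is antisymmetric in $(m,l)$ because odd derivatives anticommute. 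Contracting a symmetric tensor with an antisymmetric tensor gives zero, so $g_{ii}=0$.

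Any graph containing a self-loop as a connected component therefore corresponds to a summand of $\omega^{\leq 2}_{2n\vert a,b}(\Theta\otimes X^{\otimes n})$ carrying a factor of $\alpha_{ii}$ or $g_{ii}$, and this factor annihilates the whole summand. The only real obstacle is the combinatorial bookkeeping in the graph-to-operator dictionary of \S\ref{subsec-ProofSetUp}, but once one accepts the natural convention that picking an $\alpha_{ij}$ or $g_{ij}$ term with $i=j$ produces a self-loop at vertex $i$, the rest is just the (anti)commutativity of even and odd partial derivatives together with the symmetry of $H_Q$.
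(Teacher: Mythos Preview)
Your argument is correct and matches the paper's approach: identify that a self-loop at vertex $i$ contributes a factor of $\alpha_{ii}$ or $g_{ii}$, then show both operators vanish identically by (anti)commutativity of partial derivatives. Your treatment of $g_{ii}$ via ``symmetric $H_Q$ contracted with the antisymmetric operator $\partial_{\theta_m}\partial_{\theta_l}$'' is in fact a cleaner packaging than the paper's, which writes out the $\zeta,\eta$ block explicitly; your version handles all blocks of $H_Q$ (including the $\xi,\mu,\upsilon$ diagonal entries, where $\partial_\theta^2=0$) uniformly.
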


\begin{proof}
Say the loop is on the $i$th vertex. 
Then the loop subgraph corresponds to the term 

\[\frac{1}{2}\hbar^2\psi(v_i-v_i)(\alpha_{ii}+g_{ii})^2.\]
As the disjoint union of subgraphs correspond to a product in $\omega_{2n\vert a,b}^{\leq 2}$, 
it suffices to show that 

\[\left(\frac{1}{2}\hbar^2\psi(v_i-v_i)(\alpha_{ii}+g_{ii})^2\right)(\Theta\otimes X^{\otimes n})=0.\]
Since partial derivatives commute, the bidifferential operator $\alpha_{ii}$ applies 

\[\sum_{l=1}^n\frac{\del}{\del p_i}\frac{\del}{\del q_i}-\frac{\del}{\del q_i}\frac{\del }{\del p_i}=0\]
to the $i$th term in the tensor product, 
and hence vanishes. 

Similarly, using the fact that $\frac{\del}{\del\theta_i}$ is an odd degree operator, we see that the bidifferential operator $g_{ii}$ applies 

\[\sum_{j=1}^a\frac{\del}{\del \zeta_i}\frac{\del}{\del \eta_i}+\frac{\del}{\del \eta_i}\frac{\del }{\del \zeta_i}=\sum_{j=1}^a\frac{\del}{\del \zeta_i}\frac{\del}{\del \eta_i}-\frac{\del}{\del \zeta_i}\frac{\del }{\del \eta_i}=0\]
to the $i$th term in the tensor product, 
and hence vanishes. 
\end{proof}

\begin{lem}
Graphs containing an $X$ labeled vertex of valence strictly more than two correspond to a vanishing summand of 
$\omega_{2n\vert a,b}^{\leq 2}(\Theta\otimes X^{\otimes n})$.
\end{lem}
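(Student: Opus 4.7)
The plan is straightforward and essentially a derivative-counting argument. Since $x$ lies in the Cartan subalgebra described in \S\ref{subsec-MSeq}, it is a $\bb{K}$-linear combination of $q_l p_l$, $\eta_i \zeta_i$ and $-\xi_j \mu_j$. Hence $X = \Phi(x)$ is a homogeneous polynomial of total degree exactly two in the coordinate generators $p_l, q_l, \theta_m$. The idea is that a valence greater than two at an $X$-labeled vertex forces at least three first-order partial derivatives to hit $X$, which then gives zero because $X$ is quadratic.

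Concretely, I would proceed as follows. By the rules in \S\ref{subsec-ProofSetUp}, each edge between vertices $i$ and $j$ corresponds to a bidifferential operator $\alpha_{ij}$ or $g_{ij}$ (or its square, for a double edge). Inspecting the definitions of $\alpha_{ij}$ and $g_{ij}$ in \S\ref{subsec-OurCase}, each of these operators applies exactly one first-order partial derivative in $p, q$ or $\theta$ to the $i$-th factor and one to the $j$-th factor. Thus, summing contributions over all edges incident to the vertex $i$, the valence $v$ of that vertex equals the total number of first-order partial derivatives applied to the $i$-th tensor slot of $\Theta \otimes X^{\otimes n}$, where a double edge contributes two to the valence of each endpoint.

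Given this bookkeeping, the conclusion is immediate: if an $X$-labeled vertex $i$ has valence $v \geq 3$, then the corresponding summand contains a factor in which at least three first-order partial derivatives are applied to the degree-two polynomial $X$, which is identically zero. Since the full summand factors through this zero in the $i$-th slot of the tensor product, the whole summand vanishes.

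The only point requiring mild care is the edge-to-derivative dictionary, in particular confirming that squaring the operator (double edge) really does contribute two derivatives at each endpoint, not at a single endpoint as happens in the loop case of the previous lemma. This is already fixed by the conventions in \S\ref{subsec-ProofSetUp}, so I do not anticipate any genuine obstacle; the argument is a short, clean application of the fact that $\partial^3$ annihilates any quadratic polynomial.
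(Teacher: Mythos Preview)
Your proposal is correct and follows exactly the same approach as the paper's proof: valence greater than two forces more than two partial derivatives to hit the quadratic element $X$, which therefore vanishes. Your version is simply a more detailed unpacking of the same one-line argument.
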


\begin{proof}
The vertex of valence more than two corresponds to applying more than two partial derivatives to $X$. 
These vanish as each summand of $X$ has degree at most two in the basis elements. 
\end{proof}

Recall that $\psi(t)=2B_1(v)$. 
We will need the following identities for the Bernoulli polynomials $B_n(t)$ 
which can be found in \cite{Bernoulli}.

\begin{align}\label{eq-Convolution}
B_n*B_m(v)=\int_0^1 B_n(u)B_m(v-u)du=-\frac{n!m!}{(n+m)!}B_{n+m}(v).
\end{align}

In particular, for $v=0$ we have 

\[\int_0^1 B_n(u)B_m(-u)du=-\frac{n!m!}{(n+m)!}B_{n+m},\]
where $B_{n+m}$ is the $(n+m)$th Bernoulli \emph{number}. 

Moreover, we have 

\begin{align}\label{eq-Bnxn}
\int_x^{x+1}B_n(u)du=x^n.
\end{align}

\begin{lem}
Graphs containing a connected component that is a linear subgraph correspond to a vanishing summand of 
$\omega_{2n\vert a,b}^{\leq 2}(\Theta\otimes X^{\otimes n})$.
\end{lem}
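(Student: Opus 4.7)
The plan is to reduce the vanishing claim to the single identity $\int_0^1 \psi(u)\,du = 0$, applied to the unique $\psi$-factor attached to an endpoint of the linear subgraph whose coordinate is integrated.

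First, I would pin down the combinatorial structure. A linear subgraph component has no loops (already ruled out) and no vertex of valence $\geq 3$ (already ruled out), so it is a path with at least one edge, containing exactly two valence-$1$ endpoints. Only vertex $0$ carries the label $\Theta$, hence at most one of the two endpoints is $\Theta$-labeled. Consequently, at least one endpoint, call it the $j$-th vertex with $j \geq 1$, is $X$-labeled, and its coordinate $v_j$ lies among the integration variables $v_1, \dots, v_n$ appearing in the definition of $P_n$.

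Next, I would use the connected-component hypothesis to localize the dependence on $v_j$. Since $j$ has no edge outside the component and has exactly one edge inside it (to some neighbor $j'$), the only place where $v_j$ enters the summand of $\omega_{2n\vert a,b}^{\leq 2}(\Theta \otimes X^{\otimes n})$ under consideration is through the single linear factor $\hbar\,\psi(v_j - v_{j'})(\alpha_{jj'} + g_{jj'})$. All remaining scalar weights $\psi(v_i - v_k)$ and all bidifferential operators $\alpha_{ik}, g_{ik}$ involved in this summand are independent of $v_j$.

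The integration over $v_j \in [0,1]$ therefore factors off as $\int_0^1 \psi(v_j - v_{j'})\,dv_j$. Using that $\psi$ is $1$-periodic and equals $2B_1$ on a fundamental period, a translation of variables yields
\[
\int_0^1 \psi(v_j - v_{j'})\,dv_j \;=\; \int_{-v_{j'}}^{1 - v_{j'}} \psi(u)\,du \;=\; \int_0^1 \psi(u)\,du \;=\; 2\int_0^1 B_1(u)\,du \;=\; 0,
\]
where the last step is the case $n=1$, $x=0$ of (\ref{eq-Bnxn}). The entire summand, being a product in which this zero integral appears as a factor, therefore vanishes.

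The main (though mild) obstacle is simply the bookkeeping to confirm that $v_j$ genuinely appears only in the one $\psi$-factor: this is why the connected-component hypothesis is essential, and why having ruled out loops and high-valence vertices beforehand matters.
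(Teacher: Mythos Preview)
Your argument is correct and in fact more elementary than the paper's. The paper separates into two cases according to whether the $\Theta$-vertex lies on the linear subgraph, and in each case applies the Bernoulli convolution identity (\ref{eq-Convolution}) to collapse the product $\psi(v_{i_1}-v_{i_2})\cdots\psi(v_{i_{j-1}}-v_{i_j})$ into a single higher Bernoulli polynomial, whose integral (or zeroth Fourier coefficient) then vanishes by (\ref{eq-Bnxn}). You instead exploit the purely combinatorial fact that a path has two endpoints and only one vertex is $\Theta$-labeled, so some endpoint $j\geq 1$ has its integration variable $v_j$ appearing in exactly one $\psi$-factor; Fubini over the cube $[0,1]^n$ then lets you integrate that single factor first, and periodicity plus $\int_0^1 B_1=0$ kills it. This avoids the convolution identity entirely and treats both cases uniformly. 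The paper's route, on the other hand, illustrates the same convolution mechanism that is genuinely needed later for the nonvanishing cycle and petal integrals $I_j$ and $\tilde I_j$, so it is not wasted effort in context.
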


\begin{proof}
Say the linear subgraph has length $j$. 
Either the $\Theta$ labeled vertex is in the linear subgraph or not. 

In the first case, when all vertices of the linear subgraph are labeled by $X$, 
after possibly reordering vertices, 
we may assume we are dealing with the graph corresponding to the summand 

\[\Upsilon_{2n\vert a,b}\int_{[0,1]^j}\psi(v_1-v_2)\cdots \psi(v_{j-1}-v_j)dv_1\cdots dv_j \left(\hbar(\alpha_{12}-g_{12})\cdots \hbar(\alpha_{j-1j}-g_{j-1j})(\Theta\otimes X\otimes\cdots\otimes X) \right). \]

Following \cite{Engeli}, we can use the convolution identity (\ref{eq-Convolution}) and see that the integral

\[\int_{[0,1]^j}\psi(v_1-v_2)\cdots \psi(v_{j-1}-v_j)dv_1\cdots dv_j  \]
is proportional to the zeroth Fourier coefficient of the 1-periodic Bernoulli polynomial $B_{j-1}(-)$, which vanishes. 

Now assume that the $\Theta$ labeled vertex is a part of the linear subgraph. 
Say there are $i$ $X$ labeled vertices to one side of $\Theta$ and $j-i$ to the other side. 
This subgraph then corresponds to the summand with integral 

\begin{align*}
&\int_{[0,1]^j}\psi(v_0-v_1)\cdots \psi(v_{i-1}-v_i)\psi(v_0-v_{i+1})\cdots\psi(v_{j-1}-v_j)dv_1\cdots dv_j\\
&=\left(\int_{[0,1]^{j-i}}\psi(-v_1)\cdots \psi(v_{i-1}-v_i)dv_{i+1}\cdots dv_j\right)\left(\int_{[0,1]^i}\psi(-v_{i+1})\cdots\psi(v_{j-1}-v_j)dv_1\cdots dv_i\right).
\end{align*}

Using the convolution identity (\ref{eq-Convolution}), 
this integral is proportional to 

\[\left(\int_0^1 B_{j-i}(v_j)dv_j\right)\left(\int_0^1 B_i(v_i)dv_i\right).\]
By the identity (\ref{eq-Bnxn}), both these integrals vanish. 

Thus, contributions from all linear subgraphs, containing $\Theta$ or not, vanish.
\end{proof}

The remaining types of graphs are disjoint unions of cycles on the $X$ labeled vertices or flowers whose center is the $\Theta$ labeled vertex and whose petals are cycles from the $\Theta$ vertex to $X$ labeled vertices. 
\begin{figure}[H]
\centering
\includegraphics[scale=.6]{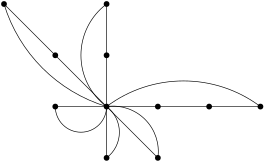}
\caption{$\Theta$-flower}
\end{figure}
\noindent Below, we will simply refer to this second type of graph as a $\Theta$-flower.

Our next step is to compute the non-vanishing contributions of cycles and $\Theta$-flowers on $\Theta\otimes X^{\otimes k}$. 
For this, it will be useful to break $X$ into a sum of three types of terms.

\begin{notation}
Say $X=X_1+Y_1+Y_2+X_2$ with
\begin{align*}
X_1&=\sum_{i=1}^n\gamma_iq_ip_i\\
Y_1&=\sum_{i=1}^a\lambda_i\eta_i\zeta_i\\
Y_2&=\sum_{i=1}^\z  -\kappa_i\xi_i\mu_i\\
X_2&=x_2
\end{align*}

\noindent for some scalars $\gamma_i,\lambda_i,\kappa_i,x_2\in\bb{K}$. 
Here $X_2\in\mfrk{gl}_1$. 
In sections \S\ref{subsec-X1}, \S\ref{subsec-Y1}, \S\ref{subsec-Y2} 
we compute the contributions of the $X_1$, $Y_1$, and $Y_2$ pieces respectively in terms corresponding to cycle and $\Theta$-flower graphs. 
\end{notation}

\subsubsection{Computations for $X_1$ Terms}\label{subsec-X1}

The following appears in \cite[Pg. 34]{Engeli}

\begin{lem}
In $\mathcal{A}_{2n\vert a,b}^{\otimes k+1}$, we have 
\[\alpha_{12}\cdots \alpha_{j-1j}\alpha_{j1}(\Theta\otimes X_1^{\otimes k})=
\frac{1}{2^{j-1}}\sum\limits_{i=1}^n \gamma_i^j(\Theta\otimes 1^{\otimes j}\otimes X_1^{\otimes k-j}) \]
if $j$ is even. 
This term vanishes if $j$ is odd.
\end{lem}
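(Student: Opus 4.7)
The plan is to expand the composition $\alpha_{12}\alpha_{23}\cdots\alpha_{j-1,j}\alpha_{j,1}$ and identify which terms survive the action on $\Theta\otimes X_1^{\otimes k}$. Writing $\partial_{p_l}^{(a)}$ for $\frac{\partial}{\partial p_l}$ acting on the $a$th tensor factor (and similarly for $q_l$), the convention $\alpha_{j,1}=-\alpha_{1,j}$ gives
\[
\alpha_{j,1} \;=\; \frac{1}{2}\sum_{l=1}^{n}\bigl(\partial_{p_l}^{(j)}\partial_{q_l}^{(1)} - \partial_{q_l}^{(j)}\partial_{p_l}^{(1)}\bigr).
\]
Expanding the full product then decorates each edge $e_m = (a_m, b_m)$ of the oriented cycle with a pair $(l_m, \epsilon_m) \in \{1,\dots,n\}\times\{+,-\}$ carrying coefficient $\epsilon_m/2$; the sign $+$ deposits $\partial_{p_{l_m}}$ at the ``first index'' vertex $a_m$ and $\partial_{q_{l_m}}$ at the ``second index'' vertex $b_m$, while $-$ deposits them with the roles of $p$ and $q$ swapped.

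I would then ask which labelings give a nonzero result on $X_1 = \sum_i \gamma_i q_i p_i$ at each of the vertices $1,\dots,j$. Because $X_1$ consists of quadratic monomials $q_i p_i$ with no cross terms, each vertex must receive the pair $\{\partial_{p_i}, \partial_{q_i}\}$ for a common $i$, and in that case contributes the scalar $\gamma_i$. This immediately forces $l_1=\cdots=l_j$ to a single index $i$. For the signs, every vertex appears exactly once as a first index and once as a second index around the cycle; matching the derivative pair at each vertex against the table above shows that the two incident signs must be equal. Propagating this equality around the cycle forces all $\epsilon_m$ to agree, leaving exactly two surviving global patterns: all $+$ or all $-$.

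Summing the two surviving patterns gives sign products $(+1)^j$ and $(-1)^j$, combined with the factor $(1/2)^j$ from the $\alpha$ prefactors and the vertex product $\gamma_i^j$, and summed over the common $i$, yielding the scalar
\[
\frac{1+(-1)^j}{2^j}\sum_{i=1}^n \gamma_i^j,
\]
which equals $\frac{1}{2^{j-1}}\sum_i \gamma_i^j$ for $j$ even and vanishes for $j$ odd. Because each of the first $j$ tensor factors has been fully differentiated down to a scalar, what remains in the tensor product is $\Theta \otimes 1^{\otimes j} \otimes X_1^{\otimes k-j}$, establishing the claim.

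The main obstacle will be sign bookkeeping: the antisymmetry $\alpha_{j,1}=-\alpha_{1,j}$ must be tracked consistently with the first-versus-second index role assignment at every vertex, and in particular at the closing edge of the cycle, where it would be easy to drop a sign. Once the sign-role dictionary is pinned down, the remaining argument is a straightforward combinatorial analysis on the cycle graph of length $j$.
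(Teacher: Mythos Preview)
Your argument is correct. The paper does not give its own proof of this lemma; it simply cites \cite[Pg.~34]{Engeli}. Your direct combinatorial expansion---tracking, for each edge of the cycle, the index $l_m$ and the sign $\epsilon_m$, then using the structure of $X_1=\sum_i\gamma_i q_ip_i$ to force all $l_m$ equal and all $\epsilon_m$ equal---is exactly the kind of argument one finds in Engeli's treatment, and your sign bookkeeping (in particular the handling of $\alpha_{j,1}=-\alpha_{1,j}$, which makes every vertex appear once as a first index and once as a second index) is correct. The two surviving patterns give the factor $(1+(-1)^j)/2^j$, yielding the stated result.
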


\begin{lem}\label{lem-muX1}
In $\bb{K}$ we have 
\[\Upsilon_{2n\vert a,b}(\Theta\otimes 1^{\otimes j}\otimes X_1^{\otimes k-j})=\delta_{jk}\]
\end{lem}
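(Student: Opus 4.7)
The plan is to unwind the definition of $\Upsilon_{2n\vert a,b}$ and use the tensor-product structure $\AQ \cong \widehat{\mrm{Weyl}}_{2n} \otimes \mrm{Cliff}_{a,b}$. First I would observe that the inputs split nicely across the two tensor factors: the chosen volume form $\Theta = \theta_1 \cdots \theta_{a+b}$ lives entirely in $\mrm{Cliff}_{a,b}$, while $X_1 = \sum_i \gamma_i q_i p_i$ lives entirely in $\widehat{\mrm{Weyl}}_{2n}$. Hence the product $\Theta \cdot 1^{\otimes j} \cdot X_1^{\otimes k-j}$, computed in $\AQ$ after concatenating the tensor factors, equals $X_1^{k-j} \otimes \Theta$.

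Next I would apply the definition of $\Upsilon_{2n\vert a,b}$: evaluate the even (Weyl) coordinates $y = (p,q)$ at zero, then perform the Berezin integral $\int (-) \, d\theta_1 \cdots d\theta_{a+b}$ on the odd part. Since $\Upsilon_{2n\vert a,b}$ is multiplicative across the tensor decomposition, this factors as $\bigl(X_1^{k-j}\bigr)(0) \cdot \int \Theta \, d\theta_1 \cdots d\theta_{a+b}$.

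The Clifford factor is immediate: by our normalization of the Berezin integral (Notation \ref{not-Orient} and the convention $\int \theta_1 \cdots \theta_{a+b}\, d\Theta = 1$), this integral is $1$. For the Weyl factor, $X_1^{k-j} = \bigl(\sum_i \gamma_i q_i p_i\bigr)^{k-j}$ is a polynomial in $p,q$ with no constant term when $k-j > 0$, so evaluating at $y = 0$ gives $0$ unless $k - j = 0$, in which case $X_1^0 = 1$. Multiplying the two factors yields $\delta_{jk}$, as claimed. No step here should present any real obstacle; the only thing to be careful about is tracking that the multiplication and partial evaluation at $y = 0$ commute with the factorization across the Weyl and Clifford tensor factors, which is immediate from the tensor-product algebra structure recorded in \cite[\S 5.1]{My2}.
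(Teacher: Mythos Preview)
Your proof is correct and takes essentially the same approach as the paper: factor across the Weyl--Clifford decomposition, note that $X_1$ has no constant term in the even variables so evaluation at $y=0$ kills the product whenever $k-j>0$, and use the normalization $\int \Theta\, d\theta_1\cdots d\theta_{a+b}=1$ for the case $k=j$. The paper's version is slightly terser but makes exactly the same two observations.
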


\begin{proof}
We have 
\[\Upsilon_{2n\vert a,b}(\Theta\otimes 1^{\otimes j}\otimes X_1^{\otimes k-j})=\int \left(\Theta X_1^{k-j}\right)\vert_{y=0}\] 
where $y$ represents the even variables. 
If $k-j$ is nonzero, then $X_1\vert_{y=0}=0$ and this term vanishes. 
When $k-j=0$, we have 
$\int\Theta=1$. 
\end{proof}

\begin{cor}\label{cor-X1}
If $k$ is even, the $X_1$ contribution of a cycle of length $k$ is

\[\Upsilon_{2n\vert a,b}(\alpha_{12}\cdots \alpha_{k-1k}\alpha_{k1}(\Theta\otimes X_1^{\otimes k}))=\frac{1}{2^{k-1}}\sum\limits_{i=1}^n \gamma_i^k.\]
This term vanishes if $k$ is odd.

\end{cor}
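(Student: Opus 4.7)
The plan is to deduce the corollary as an immediate consequence of the two preceding lemmas in the subsection, with no new computation required. The key observation is that the composition $\alpha_{12}\cdots \alpha_{k-1\,k}\alpha_{k1}$ corresponds precisely to traversing the edges of a $k$-cycle on the vertices labelled $1,\dots, k$ (the $X_1$-labelled ones), so the cycle contribution is obtained by applying $\Upsilon_{2n\vert a,b}$ to the result of the previous lemma in the special case $j=k$.

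For the odd case, I would simply note that when $k$ is odd, the preceding lemma already asserts that
\[\alpha_{12}\cdots \alpha_{k-1\,k}\alpha_{k1}(\Theta\otimes X_1^{\otimes k})=0\]
in $\mathcal{A}_{2n\vert a,b}^{\otimes k+1}$, so the image under $\Upsilon_{2n\vert a,b}$ vanishes as well, establishing the second assertion.

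For even $k$, the plan is to specialize the previous lemma to $j=k$, obtaining
\[\alpha_{12}\cdots \alpha_{k-1\,k}\alpha_{k1}(\Theta\otimes X_1^{\otimes k})
=\frac{1}{2^{k-1}}\sum_{i=1}^n \gamma_i^k\,(\Theta\otimes 1^{\otimes k}),\]
and then to apply $\Upsilon_{2n\vert a,b}$. By Lemma \ref{lem-muX1} with $j=k$, the evaluation $\Upsilon_{2n\vert a,b}(\Theta\otimes 1^{\otimes k})$ equals $\delta_{kk}=1$ (equivalently, the Berezin integral of $\Theta$ against the empty product of $X_1$'s returns $1$). Multiplying by the scalar prefactor $\frac{1}{2^{k-1}}\sum_i \gamma_i^k$ and using $\bb{K}$-linearity of $\Upsilon_{2n\vert a,b}$ yields exactly the stated formula.

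There is no genuine obstacle here, since the corollary is purely a packaging of two already-proved computations. The only point worth recording in the write-up is that one must check the indexing: the bidifferential operators $\alpha_{ij}$ in the cycle appear exactly in the pattern required by the previous lemma (cyclic with no repeats), and when $j=k$ there are no leftover $X_1$ tensor factors for $\Upsilon_{2n\vert a,b}$ to kill via the $y=0$ evaluation, which is precisely why the Kronecker $\delta_{jk}$ produces a nonzero contribution in this single case.
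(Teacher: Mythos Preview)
Your proposal is correct and matches the paper's approach exactly: the corollary is stated without proof in the paper, being an immediate consequence of specializing the preceding lemma to $j=k$ and then applying Lemma~\ref{lem-muX1}.
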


For example, if $k=6$ graph corresponding to such a cycle looks like the following.

\begin{figure}[H]
\centering
\includegraphics[scale=.6]{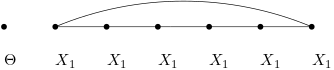}
\caption{$X_1$ Cycle}
\end{figure}

\subsubsection{Computations for $Y_1$ Terms}\label{subsec-Y1}

\begin{lem}\label{lem-Y1g12}
In $\mathcal{A}_{2n\vert a,b}^{\otimes k+1}$, for $j=2,\dots,k$, we have 

\[g_{12}\cdots g_{j-1j}g_{j1}(\Theta\otimes Y_1^{\otimes k})=
\frac{-1}{2^{j-1}}\sum\limits_{r=1}^a \lambda_r^j(\Theta\otimes 1^{\otimes j}\otimes Y_1^{\otimes k-j})
\] 
and 
\[g_{11}(\Theta\otimes Y_1^{\otimes k})=0.\]
\end{lem}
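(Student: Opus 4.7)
The plan is a direct combinatorial expansion of the bidifferential operators along the edges of the $j$-cycle, followed by careful bookkeeping of signs. First I note that each $g_{rs}$ with $r,s\geq 1$ acts trivially on the $0$th tensor slot, so $\Theta$ is carried through unchanged and the question reduces to evaluating the cyclic composition $g_{12}g_{23}\cdots g_{j-1,j}g_{j,1}$ on $Y_1^{\otimes k}$ in positions $1,\dots,k$. The cycle places exactly two $\theta$-derivatives at each of positions $1,\dots,j$, one per incident edge, while positions $j+1,\dots,k$ are untouched. Because $Y_1=\sum_i\lambda_i\eta_i\zeta_i$ has degree two in the odd variables, each of the first $j$ positions is thereby reduced to a constant.

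Next I restrict the sum in each $g_{rs}$ to its surviving contributions. The only entries of $h_Q^{ml}$ that can act nontrivially on $Y_1$ are $h_Q^{\zeta_i,\eta_i}=h_Q^{\eta_i,\zeta_i}=1$, so every edge of the cycle carries a pair $\{\partial_{\zeta_p},\partial_{\eta_p}\}$ for some index $p\in\{1,\dots,a\}$. At each vertex two such half-edges meet on a single copy of $Y_1$, and they give a nonzero contribution only when they form the complementary pair $\{\partial_{\zeta_i},\partial_{\eta_i}\}$ for a common $i$. Propagating this matching constraint around the cycle forces every edge to carry the same index $i\in\{1,\dots,a\}$, leaving a sum over $i$ together with the two consistent cyclic orientations of the $\zeta/\eta$ labels.

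Collecting coefficients, each of the $j$ operators contributes a prefactor $\tfrac12$ and each vertex contributes a value $\pm\lambda_i$ from the double derivative of $Y_1$. Careful tracking of the Koszul signs arising from moving the odd derivations $\partial_{\theta_m}$ and $\partial_{\theta_l}$ past interior tensor factors, combined with the antisymmetry $\partial_{\zeta_i}\partial_{\eta_i}=-\partial_{\eta_i}\partial_{\zeta_i}$, then yields the product $-\tfrac{1}{2^{j-1}}\lambda_i^j$. Summing over $i$ and tensoring with the untouched $\Theta$ at position $0$ and the remaining $Y_1$'s at positions $j+1,\dots,k$ reproduces the asserted formula. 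For the second identity, $g_{11}$ applies $\tfrac12\sum_{m,l}h_Q^{ml}\partial_{\theta_m}\partial_{\theta_l}$ to the single factor $Y_1$; since $h_Q^{ml}$ is symmetric in the $\{\zeta_i,\eta_i\}$ sector while the composite $\partial_{\theta_m}\partial_{\theta_l}$ is antisymmetric there, the sum cancels in pairs and vanishes.

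The main technical obstacle is the uniform sign in the cyclic case. In the purely even analogue for the Weyl algebra (Corollary \ref{cor-X1}) odd-length cycles vanish because the two cyclic orientations cancel, whereas here the Koszul signs attached to the odd derivations flip this cancellation so that even and odd $j$ both contribute with the same sign. Verifying that the combined sign is always $-1$ requires doing the half-edge bookkeeping explicitly at each vertex rather than relying on the formal notation, and this is the step where miscounts are most likely to occur.
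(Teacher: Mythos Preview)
Your overall strategy matches the paper's: expand each $g_{rs}$, observe that only the $(\partial_{\zeta_r},\partial_{\eta_r})$ pairs survive on $Y_1$, propagate a single index $r$ around the cycle, and collect the factors of $\tfrac12$ and $\pm\lambda_r$. The $g_{11}$ argument is also the paper's.

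However, your sign conclusion is wrong, and it is precisely the step you flagged as most delicate. The paper carries out the $j=3$ case explicitly and finds
\[
g_{12}g_{23}g_{31}(\Theta\otimes Y_1^{\otimes 3})
=\tfrac{1}{2^3}\sum_{r}\bigl(\Theta\otimes(-\lambda_r)\otimes\lambda_r\otimes\lambda_r+\Theta\otimes\lambda_r\otimes(-\lambda_r)\otimes(-\lambda_r)\bigr)=0.
\]
In general the two cyclic orientations contribute $-\lambda_r^j$ and $(-1)^{j-1}\lambda_r^j$, so they add for even $j$ and cancel for odd $j$. Your assertion that ``the Koszul signs attached to the odd derivations flip this cancellation so that even and odd $j$ both contribute with the same sign'' is exactly backwards relative to the paper's computation; with the paper's definition of $g_{ij}$ (which places $\partial_{\theta_m}$ on slot $i$ and $\partial_{\theta_l}$ on slot $j$ without additional Koszul signs from the intervening tensor factors), the cycle computation for $Y_1$ has the \emph{same} parity pattern as the Weyl case $\alpha_{ij}$, not the opposite one.

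Note that the lemma as stated in the paper omits the parity condition, but the paper's own proof establishes the even/odd dichotomy, and this is what is recorded and used downstream in Corollary~\ref{cor-Y1}. So the target you should be proving is the parity-dependent version, and your uniform-sign claim needs to be revisited.
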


\begin{proof}
Since $Y_1$ does not involve any $\mu_i$ or $\xi_i$ terms, 
$\frac{\del}{\del\mu_i} Y_1$ and $\frac{\del}{\del\xi_i}Y_1$ vanish. 
Thus, we can replace the bidifferential operators $g_{ij}$ by the map sending $(a_0\otimes\cdots \otimes a_k)$ to

\[\frac{1}{2} \sum_{r=1}^a a_0\otimes\cdots \frac{\del}{\del \zeta_r}a_i\otimes\cdots\otimes \frac{\del}{\del \eta_r}a_j\otimes\cdots\otimes  a_k+a_0\otimes\cdots \frac{\del}{\del \eta_r}a_i\otimes\cdots\otimes \frac{\del}{\del \zeta_r}a_j\otimes\cdots\otimes  a_k\]
for this computation.

We have 

\begin{align*}
\frac{\del}{\del\zeta_r}Y_1=\frac{\del}{\del\zeta_r}\left(\sum_{i=1}^a \lambda_i\eta_i\zeta_i\right)=-\lambda_r\eta_r\\ 
\frac{\del}{\del\eta_r}Y_1=\frac{\del}{\del\eta_r}\left(\sum_{i=1}^a \lambda_i\eta_i\zeta_i\right)=\lambda_r\zeta_r.
\end{align*}

For $j=2$, using $g_{21}=g_{12}$, we have 
\begin{align*}
&g_{12}g_{21}(\Theta\otimes Y_1^{\otimes k})\\
&=g_{12}\left(\frac{1}{2}\sum_{r=1}^a
\Theta\otimes\frac{\del}{\del\zeta_r}Y_1\otimes \frac{\del}{\del\eta_r}Y_1\otimes Y_1^{\otimes k-2}+\Theta\otimes\frac{\del}{\del\eta_r}Y_1\otimes\frac{\del}{\del\zeta_r}Y_1\otimes Y_1^{\otimes k-2}\right)\\
&-g_{12}\left(\frac{1}{2}\sum_{r=1}^a
\Theta\otimes(-\lambda_r\eta_r)\otimes\lambda_r\zeta_r\otimes Y_1^{\otimes k-2}+\Theta\otimes\lambda_r\zeta_r\otimes(-\lambda_r\eta_r)\otimes Y_1^{\otimes k-2}\right)\\
&=-\frac{1}{2}\sum_{r=1}^a\lambda_r^2g_{12}\left(\Theta\otimes \eta_r\otimes \zeta_r\otimes Y_1^{\otimes k-2}+\Theta\otimes\zeta_r\otimes\eta_r\otimes Y_1^{\otimes k-2}\right)\\
&=-\frac{1}{4}\sum_{r=1}^a\lambda_r^2(\Theta\otimes\otimes 1\otimes 1\otimes Y_1^{\otimes k-2}+\Theta\otimes 1\otimes 1\otimes Y_1^{\otimes k-2})\\
&=-\frac{1}{2}\sum_{r=1}^a\lambda_r^2(\Theta\otimes 1^{\otimes 2}\otimes Y_1^{\otimes k-2}).
\end{align*}

For $j=3$ we have 

\begin{align*}
g_{12}g_{23}g_{31}(\Theta\otimes Y_1^{\otimes 3})&=\frac{1}{2^3}\sum_{r=1}^a\Theta\otimes(-\lambda_r)\otimes \lambda_r\otimes\lambda_r+\Theta\otimes\lambda_r\otimes(-\lambda_r)\otimes(-\lambda_r)=0.
\end{align*}

This parity continues in general, with the end term either canceling or doubling depending on a sign. 
For general $j$, only the first $j$ copies of $Y_1$ in 
$\Theta\otimes Y_1^{\otimes k}$ 
are acted upon by the bidifferential operators~$g_{12}\cdots g_{j-1j}g_{j1}$. 
Each such copy of $Y_1$ is acted on twice: 
the $i$th copy for $i=2,\dots ,j-1$ is acted on by $g_{i-1i}$ and $g_{ii+1}$, 
the first copy is acted on by $g_{12}$ and $g_{j1}$, 
and the $j$th copy is acted on by $g_{j-1j}$ and $g_{j1}$. 

We therefore have

\begin{align*}
g_{12}\cdots g_{j-1j}g_{j1}(\Theta\otimes Y_1^{\otimes k})&=\frac{1}{2^j}\sum_{r=1}^a -\Theta\otimes \lambda_r^{\otimes j}\otimes Y_1^{\otimes k-j}+(-1)^{j-1}\Theta\otimes\lambda_r^{\otimes j}\otimes Y_1^{\otimes k-j}\\
&=\begin{cases}
\frac{-1}{2^{j-1}}\sum_{i=1}^a\lambda_r^j(\Theta\otimes 1^{\otimes j}\otimes Y_1^{\otimes k-j}) & j\text{ even}\\
=0 & j\text{ odd}.
\end{cases}
\end{align*}

Lastly, we compute $g_{11}(\Theta\otimes Y_1^{\otimes k})$:

\begin{align*}
g_{11}(\Theta\otimes Y_1^{\otimes k})&=\frac{1}{2}\sum_{r=1}^a\left(\Theta\otimes \frac{\del}{\del\zeta_r}\frac{\del}{\del\eta_r}Y_1\otimes Y_1^{\otimes k-1}+\Theta\otimes \frac{\del}{\del\eta_r}\frac{\del}{\del\zeta_r}Y_1\otimes Y_1^{\otimes k-1}\right)\\
&=\sum_{r=1}^a\left(\Theta\otimes (-\lambda_r)\otimes Y_1^{\otimes k-1}\right)+\left(\Theta\otimes \lambda_r\otimes Y_1^{\otimes k-1}\right)\\
&=0.
\end{align*}

\end{proof}

Recall that 
$\Theta=\zeta_1\eta_1\cdots\zeta_a\eta_a\xi_1\mu_1\cdots\xi_t\mu_t \upsilon$ 
(where the $\upsilon$ only appears if $b-a$ is odd). 

\begin{lem}\label{lem-g0jY1}
In $\mathcal{A}_{2n\vert a,b}^{\otimes k+1}$, for $j=1,\dots,k$ odd, we have 

\[g_{01}\cdots g_{j-1j}g_{j0}(\Theta\otimes Y_1^{\otimes k})=
\frac{1}{2^{j}}\sum\limits_{r=1}^a \lambda_r^j\left(\frac{\Theta}{\zeta_r\eta_r}\otimes 1^{\otimes j}\otimes Y_1^{\otimes k-j}\right).
\]
This term vanishes if $j$ is even. 
Moreover $g_{00}(\Theta\otimes Y_1^{\otimes k})=0$. 

\end{lem}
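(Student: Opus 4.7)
The argument runs parallel to Lemma \ref{lem-Y1g12}, with the essential modification that one vertex of the cycle is now labeled by $\Theta$ rather than $Y_1$. First I would reduce to the $\zeta,\eta$ part of the bidifferential operators. Since $Y_1$ depends only on the coordinates $\zeta_r,\eta_r$, the derivatives $\frac{\del}{\del\xi_i}Y_1$, $\frac{\del}{\del\mu_i}Y_1$, and $\frac{\del}{\del\upsilon}Y_1$ all vanish, so in each interior operator $g_{i,i+1}$ (with $i\geq 1$) only the $\zeta,\eta$ portion of $h_Q$ contributes, exactly as in Lemma \ref{lem-Y1g12}. In the boundary operators $g_{01}$ and $g_{j0}$, even though $\Theta$ involves $\xi_i,\mu_i,\upsilon$ as well, the partner derivative still acts on a $Y_1$-factor, so only the $\zeta,\eta$ derivatives of $\Theta$ survive. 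Because $\Theta$ contains each odd coordinate exactly once, $\frac{\del^2}{\del\zeta_r^2}\Theta = \frac{\del^2}{\del\eta_r^2}\Theta = 0$, whereas $\frac{\del}{\del\zeta_r}\frac{\del}{\del\eta_r}\Theta = \pm \Theta/(\zeta_r\eta_r)$ with a sign determined by the position of $\zeta_r,\eta_r$ in the ordered monomial defining $\Theta$.

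Next I would trace the index $r$ around the cycle. The two derivatives hitting vertex $0$ (one from $g_{01}$, one from $g_{j0}$) must be complementary with a common index $r$ to yield a nonzero $\Theta/(\zeta_r\eta_r)$; propagating this constraint around the cycle forces every interior $Y_1$-vertex to receive one $\frac{\del}{\del\zeta_r}$ and one $\frac{\del}{\del\eta_r}$ with the same $r$, so only the summand $\lambda_r\eta_r\zeta_r$ of $Y_1$ contributes at that vertex and produces a total factor $\lambda_r^j$. Each of the $j+1$ edges of the cycle carries a prefactor $\tfrac12$, giving $2^{-(j+1)}$; as in the proof of Lemma \ref{lem-Y1g12}, the two orientations (i.e., whether $g_{01}$ assigns $\frac{\del}{\del\zeta_r}$ or $\frac{\del}{\del\eta_r}$ to $\Theta$) combine, with signs coming from the odd derivatives $\frac{\del}{\del\theta_i}$ passing across tensor slots. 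A parity count shows that for $j$ even the two orientations cancel, while for $j$ odd they reinforce, doubling the prefactor to $2^{-j}$ and yielding the stated formula. Finally, $g_{00}(\Theta\otimes Y_1^{\otimes k}) = 0$ is handled exactly as at the end of Lemma \ref{lem-Y1g12}: the two symmetric cross terms in $g_{00}$ differ by an overall sign arising from interchanging the odd derivatives $\frac{\del}{\del\zeta_r}$ and $\frac{\del}{\del\eta_r}$, and so they cancel.

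The main obstacle will be the sign bookkeeping in the parity analysis. Because $\Theta$ is a fixed ordered product of anticommuting variables, each partial derivative accrues a sign depending on where $\zeta_r$ and $\eta_r$ sit within $\Theta$; and because $\frac{\del}{\del\theta_i}$ is an odd operator on tensor factors, permuting derivatives across slots produces further Koszul signs. Verifying that all of these signs combine to yield exactly the cancellation/doubling pattern governed by the parity of $j$, and that the overall sign is $+1$ in the surviving $j$ odd case (rather than the $-1$ appearing in Lemma \ref{lem-Y1g12}), is the delicate point; I expect the discrepancy from Lemma \ref{lem-Y1g12} to come from the fact that vertex $0$ carries $\Theta$ instead of an $\eta_r\zeta_r$ summand of $Y_1$, which introduces one fewer odd transposition.
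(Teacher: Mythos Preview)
Your approach is essentially the same as the paper's: reduce the bidifferential operators to their $\zeta,\eta$ part using that $Y_1$ only involves those coordinates, compute the derivatives of $\Theta$, and then run the same two-orientation parity argument as in Lemma~\ref{lem-Y1g12} to see the $j$ odd/even dichotomy. The paper carries this out by an explicit computation in the case $j=1$ and then writes down the general formula, whereas you give a more conceptual sketch; both arrive at the same place.

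One small gap in your treatment of $g_{00}$: you say it is handled ``exactly as at the end of Lemma~\ref{lem-Y1g12}'', but in that lemma the operator $g_{11}$ acts on $Y_1$, which involves only $\zeta_r,\eta_r$. Here $g_{00}$ acts on $\Theta$, which contains \emph{all} odd coordinates, so the full form of $g_{00}$ is in play, including the diagonal terms $\frac{\del^2}{\del\xi_r^2}$, $\frac{\del^2}{\del\mu_r^2}$, $\frac{\del^2}{\del\upsilon^2}$ coming from the $-\mathrm{Id}_{b-a}$ block of $H_Q$. These vanish because $\Theta$ is linear in each odd coordinate (an observation you already made for $\zeta_r,\eta_r$), and the paper checks this explicitly; once those are disposed of, your cancellation argument for the $\zeta_r,\eta_r$ cross terms finishes the job.
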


\begin{proof}
The proof is the similar to that of Lemma \ref{lem-Y1g12}, 
after noting that $\frac{\del}{\del\eta_r}\Theta=-\frac{\Theta}{\eta_r}$ and 
$\frac{\del}{\del\zeta_r}\Theta=\frac{\Theta}{\zeta_r}$. 

For example, we have

\begin{align*}
g_{01}g_{10}(\Theta\otimes Y_1)&=\frac{1}{2}g_{01}\left(\sum_{r=1}^a\frac{\Theta}{\zeta_r}\otimes\lambda_r\zeta_r+\left(-\frac{\Theta}{\eta_r}\right)\otimes(-\lambda_r\eta_r)\right)\\
&=\frac{1}{2^2}\sum_{r=1}^a\frac{\Theta}{\eta_r\zeta_r}\otimes\lambda_r+\frac{\Theta}{\zeta_r\eta_r}\otimes\lambda_r\\
&=\frac{1}{2}\sum_{r=1}^a\frac{\Theta}{\zeta_r\eta_r}\otimes\lambda_r.
\end{align*}

In general, we have 

\begin{align*}
g_{01}\cdots g_{j-1j}g_{j0}(\Theta\otimes Y_1^{\otimes k})&=\frac{1}{2^{j+1}}\sum_{r=1}^a\frac{\del}{\del\eta_r}\frac{\del}{\del\zeta_r}\Theta\otimes\lambda_r^{\otimes j}\otimes Y_1^{\otimes k-j}+(-1)^j\frac{\del}{\del\zeta_r}\frac{\del}{\del\eta_r}\Theta\otimes \lambda_r^{\otimes j}\otimes Y_1^{\otimes k-j}.
\end{align*}

As $\frac{\del}{\del\zeta_r}\frac{\del}{\del\zeta_r}=-\frac{\del}{\del\zeta_r}\frac{\del}{\del\eta_r}$, 
the right hand side vanishes if $j$ is even and becomes

\[\frac{1}{2^j}\sum_{r=1}^a\lambda_r^j\left(\frac{\Theta}{\eta_r\zeta_r}\otimes 1^{\otimes j}\otimes Y_1^{\otimes k-j}\right)\] 
if $j$ is odd.

For $g_{00}$, the bidifferential operators are only acting on $\Theta$. 
As $\Theta$ contains all basis elements $\mu_i,\eta_i,\xi_i,\zeta_i,\upsilon_i$, 
we would apriori need to consider the full form of $g_{00}$. 
However, the $\mu_i$ and $\xi_i$ terms vanish here: 

\begin{align*}
g_{00}(\Theta\otimes Y_1^{\otimes k})&=\frac{1}{2}\sum_{ml}h_Q^{ml}\frac{\del}{\del\theta_m}\frac{\del}{\del\theta_l}\Theta\otimes Y_1^{\otimes k}\\
&=\frac{1}{2}\sum_{r=1}^a\left(\frac{\del}{\del\zeta_r}\frac{\del}{\del\eta_r}\Theta\otimes Y_1^{\otimes k}+\frac{\del}{\del\eta_r}\frac{\del}{\del\zeta_r}\Theta\otimes Y_1^{\otimes k}\right)\\
&-\frac{1}{2}\sum_{r=1}^t\left(\frac{\del}{\del\xi_r}\frac{\del}{\del\xi_r}\Theta\otimes Y_1^{\otimes k}+\frac{\del}{\del\mu_r}\frac{\del}{\del\mu_r}\Theta\otimes Y_1^{\otimes k}\right)-\frac{1}{2}\frac{\del}{\del\upsilon}\frac{\del}{\del\upsilon}\Theta\otimes Y_1^{\otimes k}.
\end{align*}

Now $\frac{1}{2}\frac{\del}{\del\upsilon}\frac{\del}{\del\upsilon}\Theta$, $\frac{1}{2}\frac{\del}{\del\xi}\frac{\del}{\del\xi}\Theta$, and $\frac{1}{2}\frac{\del}{\del\mu}\frac{\del}{\del\mu}\Theta$ all vanish, 
and we are left with 

\[g_{00}(\Theta\otimes Y_1^{\otimes k})=\frac{1}{2}\sum_{r=1}^a-\frac{\Theta}{\zeta_r\eta_r}\otimes Y_1^{\otimes k}+\frac{\Theta}{\eta_r\zeta_r}\otimes Y_1^{\otimes k}=0\]
as desired.

\end{proof}

Note that the computation of $g_{00}(\Theta\otimes Y_1)$ holds for $Y_1$ replaced with any element of the Cartan subalgebra, in particular for $X_1$ or $Y_2$. 

The term computed in Lemma \ref{lem-g0jY1} corresponds to a single petal of length $j$ on a $\Theta$-flower. 
Say we have a $\Theta$-flower with $l$ petals of length $j_1,j_2,\dots,j_l$ on a total of $j$ $X$-labeled vertices. 
This graph corresponds to the following computation. 

\begin{cor}
Consider a partition $j_1+\cdots j_l=j$. 
Then 
\[(g_{01}\cdots g_{j_1-1j_1}g_{j_10})(g_{0j_1+1}\cdots g_{j_1+j_2-1j_1+j_2}g_{j_1+j_20})\cdots(g_{0j-j_l}\cdots g_{j-1k}g_{j0})(\Theta\otimes Y_1^{\otimes k})\]
vanishes unless $j_1,\dots, j_l$ are odd in which case it 
is equal to 
\[\frac{1}{2^{j_1}\cdots 2^{j_l}}\sum_{|R|=l}\lambda_{r_1}^{j_1}\cdots\lambda_{r_l}^{j_l}\left(\frac{\Theta}{\zeta_{r_1}\eta_{r_1}\cdots\zeta_{r_l}\eta_{r_l}}\otimes 1^{\otimes j}\otimes Y_1^{k-j}\right)\]
where $R=(r_1,\dots,r_l)$ ranges over ordered subsets of $\{1,\dots, a\}$ of size $l$. 
\end{cor}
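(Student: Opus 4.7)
The plan is to prove this by induction on the number of petals $l$, with Lemma \ref{lem-g0jY1} providing the base case $l=1$. The inductive step will reduce the multi-petal computation to a sequence of single-petal computations, using the fact that bidifferential operators from distinct petals act on essentially disjoint data.

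More precisely, operators from different petals act on disjoint subsets of the $X$-labeled tensor factors $\{1,\dots,k\}$ and interact only on the 0th factor through odd partial derivatives $\tfrac{\del}{\del\zeta_r}, \tfrac{\del}{\del\eta_r}$. I would first apply Lemma \ref{lem-g0jY1} to the first petal of length $j_1$; this produces $0$ unless $j_1$ is odd, in which case it yields
\[
\frac{1}{2^{j_1}}\sum_{r_1=1}^a\lambda_{r_1}^{j_1}\left(\frac{\Theta}{\zeta_{r_1}\eta_{r_1}}\otimes 1^{\otimes j_1}\otimes Y_1^{\otimes k-j_1}\right).
\]
Now the crucial observation: in $\tfrac{\Theta}{\zeta_{r_1}\eta_{r_1}}$ neither $\zeta_{r_1}$ nor $\eta_{r_1}$ appears, so both $\tfrac{\del}{\del\zeta_{r_1}}$ and $\tfrac{\del}{\del\eta_{r_1}}$ annihilate it. Consequently, when applying the next petal via Lemma \ref{lem-g0jY1}, the summation index $r_2$ effectively ranges only over $\{1,\dots,a\}\setminus\{r_1\}$. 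Inductively applying the remaining $l-1$ petals yields $0$ unless each of $j_2,\dots,j_l$ is odd, and otherwise produces $\prod_{s=2}^l \tfrac{1}{2^{j_s}}\lambda_{r_s}^{j_s}$ against $\tfrac{\Theta}{\zeta_{r_1}\eta_{r_1}\cdots\zeta_{r_l}\eta_{r_l}}\otimes 1^{\otimes j}\otimes Y_1^{\otimes k-j}$, with the sum over ordered $(l-1)$-tuples from $\{1,\dots,a\}\setminus\{r_1\}$. Reindexing the combined sum over all ordered $l$-tuples $R=(r_1,\dots,r_l)$ gives the claimed formula.

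The main obstacle is confirming that the quotient notation $\tfrac{\Theta}{\zeta_{r_1}\eta_{r_1}\cdots\zeta_{r_l}\eta_{r_l}}$ is unambiguous and that no extra signs arise in the inductive step. This relies on the fact that each $\zeta_{r_s}\eta_{r_s}$ is an \emph{even} element of the Clifford algebra, so these pairs commute past one another and past the remaining factors of $\Theta$; hence removing them from $\Theta$ one at a time agrees with the single-step formula of Lemma \ref{lem-g0jY1}. A secondary technical point is verifying that the sequential application of petals is justified: since distinct petals share only the 0th tensor factor, and since the relevant odd derivations acting on $\Theta$ produce only a single surviving term per petal (by the computation in Lemma \ref{lem-g0jY1}), there is no interference between the petal computations beyond the index-disjointness constraint already identified.
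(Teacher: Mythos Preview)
Your proposal is correct and matches the paper's intent: the paper states this result as an immediate corollary of Lemma~\ref{lem-g0jY1} without further proof, and your induction on the number of petals---iterating the single-petal lemma while tracking the index-disjointness forced by the vanishing of $\tfrac{\del}{\del\zeta_{r_1}},\tfrac{\del}{\del\eta_{r_1}}$ on $\Theta/(\zeta_{r_1}\eta_{r_1})$---is exactly the argument the paper leaves implicit. Your care with the sign issue (that each $\zeta_r\eta_r$ is even, so the derivative formulas $\tfrac{\del}{\del\eta_r}\Theta'=-\Theta'/\eta_r$ and $\tfrac{\del}{\del\zeta_r}\Theta'=\Theta'/\zeta_r$ persist for the truncated $\Theta'$) supplies the one detail a reader might otherwise pause on.
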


In particular, we may have $R=(i,j)$ and $R=(j,i)$. 

\begin{lem}
In $\bb{K}$ for $j=0,\dots, k$, we have 
\[\Upsilon_{2n\vert a,b}(\Theta\otimes 1^{\otimes j}\otimes Y_1^{\otimes k-j})=\delta_{jk}\]
vanishes unless $j=k$.
\end{lem}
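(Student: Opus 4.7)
The statement parallels Lemma \ref{lem-muX1}, and my plan is to (i) reduce the question to a computation purely inside the Clifford factor $\mrm{Cliff}_{a,b}$, (ii) dispatch the $j=k$ case using the orientation choice, and (iii) for $j<k$, show $\Theta\cdot Y_1=0$ in $\mrm{Cliff}_{a,b}$ so that $\Theta\cdot Y_1^{k-j}=0$ at once.

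For the reduction, note that $Y_1=\sum_{i=1}^a\lambda_i\eta_i\zeta_i$ lies entirely in the Clifford tensor factor of $\AQ$ and has no dependence on the even variables $y$. Consequently $(\Theta\cdot 1^j\cdot Y_1^{k-j})\vert_{y=0}=\Theta\cdot Y_1^{k-j}$, so by the definition of $\Upsilon_{2n\vert a,b}$ in \S\ref{subsec-OurCase} the value in question equals the Berezin integral $\int(\Theta\cdot Y_1^{k-j})\,d\theta_1\cdots d\theta_{a+b}$ of a Clifford element. When $k=j$ this is $\int\Theta\,d\theta_1\cdots d\theta_{a+b}=1$ by our orientation choice (Notation \ref{not-Orient}), which matches $\delta_{jk}$.

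For $k-j\geq 1$ it therefore suffices to prove $\Theta\cdot Y_1=0$ inside $\mrm{Cliff}_{a,b}$. Fix $i\in\{1,\dots,a\}$. In the basis chosen in \S\ref{sec-Quad}, the bilinear form $B_Q$ vanishes on every pair of distinct basis vectors, so in the Clifford algebra any two distinct basis vectors anticommute up to a pure sign (no $\hbar$-correction). Thus I can slide the trailing factor $\eta_i$ in $\Theta\cdot\eta_i$ leftward past every basis vector occurring to the right of the $\eta_i$ already appearing in $\Theta$, picking up only an overall sign, until it is adjacent to that $\eta_i$. This produces the factor $\eta_i\cdot\eta_i=\hbar Q(\eta_i)=0$, so $\Theta\cdot\eta_i=0$ and therefore $\Theta\cdot\eta_i\zeta_i=0$. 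Summing over $i$ gives $\Theta\cdot Y_1=0$, hence $\Theta\cdot Y_1^{k-j}=0$ and the Berezin integral vanishes.

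The only real obstacle is the bookkeeping required to recognize that the product in $\AQ$ and the Berezin integral can be interpreted via Clifford multiplication together with the symmetrization/PBW identification used in $\Upsilon_{2n\vert a,b}$; once that is in place, the argument uses nothing beyond $\eta_i^2=\hbar Q(\eta_i)=0$ and pairwise anticommutation of distinct basis vectors, and no super-signs from the outer tensor product interfere.
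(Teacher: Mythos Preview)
Your argument is correct and follows essentially the same route as the paper: both reduce to showing $\Theta\cdot(\eta_i\zeta_i)=0$ in $\mrm{Cliff}_{a,b}$ by sliding factors together until an $\eta_i^2=0$ appears, then conclude $\Theta\cdot Y_1^{k-j}=0$ for $k-j\geq 1$. One small correction: your blanket claim that $B_Q$ vanishes on every pair of \emph{distinct} basis vectors is false, since $B_Q(\zeta_i,\eta_i)=1/2$ (equivalently $[\zeta_i,\eta_i]=\hbar$); however this does not affect your argument, because you only need to slide the trailing $\eta_i$ past the basis vectors lying to the \emph{right} of the $\eta_i$ already in $\Theta$, and $\zeta_i$ sits to its left, so every commutation you actually perform is a genuine anticommutation with no $\hbar$-term.
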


\begin{proof}
Since $\zeta_i^2=0=\eta_i^2$ we have

\begin{align}\label{eq-commute}
\zeta_i\eta_i\eta_i\zeta_i=0
\end{align}

Now since $\Theta=\zeta_1\eta_1\cdots\zeta_a\eta_a\xi_1\mu_1\cdots\xi_t\mu_t\upsilon$, 
for any $i$ we have 
\[\Theta\eta_i\zeta_i=0,\]
as we can commute the $\eta_i\zeta_i$ past the $\eta_j\zeta_j$ for $i\neq j$, 
and then use the above observation (\ref{eq-commute}).

We have

\begin{align*}
\Upsilon_{2n\vert a,b}(\Theta\otimes 1^{\otimes j}\otimes Y_1^{\otimes k-j})&=\int\Theta\left(\sum_{r=1}^a\lambda_r\eta_r\zeta_r\right)^{k-j}\\
&=\sum_{|I|=k-j}\lambda_I\int\Theta(\eta_{i_1}\zeta_{i_1}\cdots\eta_{i_{k-j}}\zeta_{i_{k-j}})\\
&=0.
\end{align*}

\end{proof}

\begin{lem}\label{lem-Difference}
Consider a subset $R=(r_1,\dots,r_l)$of $\{1,\dots, a\}$. 
In $\bb{K}$, if $j\neq k$, we have 
\[\Upsilon_{2n\vert a,b}\left(\frac{\Theta}{\zeta_{r_1}\eta_{r_1}\cdots\zeta_{r_l}\eta_{r_l}}\otimes 1^{\otimes j}\otimes Y_1^{k-j}\right)=l!(-1)^l\lambda_{r_1}\cdots\lambda_{r_l}.\]
This term vanishes if $j=k$. 

In particular, for $l=1$ we have 

\[\Upsilon_{2n\vert a,b}\left(\frac{\Theta}{\zeta_r\eta_r}\otimes 1^{\otimes j}\otimes Y_1^{\otimes k-j}\right)=
\begin{cases}
-\lambda_r & k-j=1\\
0 & \text{otherwise}.
\end{cases}\]

\end{lem}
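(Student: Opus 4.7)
The plan is to directly unpack $\Upsilon_{2n\vert a,b}$ as a Berezin integral and compute the relevant Clifford product. Since the middle $1$'s are the unit and none of the remaining factors depends on the even variables $y$, setting $y=0$ is trivial, and the lemma reduces to computing
\[
\int \left(\frac{\Theta}{\zeta_{r_1}\eta_{r_1}\cdots\zeta_{r_l}\eta_{r_l}}\cdot Y_1^{\,k-j}\right) d\Theta
\]
where the product lives in $\mrm{Cliff}_{a,b}$. The $j=k$ vanishing is immediate: when $Y_1^{\,k-j}=1$ the integrand is missing the factors $\zeta_{r_i}\eta_{r_i}$, so it contains no $\Theta$ summand.

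For $j<k$, the key algebraic input is the identity $(\eta_i\zeta_i)^s = \hbar^{s-1}\eta_i\zeta_i$ for $s\geq 1$, which follows by a quick induction from $\zeta_i^2 = \eta_i^2 = 0$ and $\eta_i\zeta_i + \zeta_i\eta_i = \hbar$. Since the elements $\eta_i\zeta_i$ are even and commute amongst themselves, the multinomial theorem applies and gives
\[
Y_1^{\,k-j} = \sum_{s_1+\cdots+s_a=k-j}\frac{(k-j)!}{s_1!\cdots s_a!}\,\lambda_1^{s_1}\cdots\lambda_a^{s_a}\,(\eta_1\zeta_1)^{s_1}\cdots(\eta_a\zeta_a)^{s_a}.
\]
Next I would argue that only multi-indices with support exactly equal to $R$ survive the integration: a factor $\eta_i\zeta_i$ with $i\notin R$ annihilates $\Theta/(\zeta_{r_1}\eta_{r_1}\cdots)$ (which already contains $\zeta_i$, and $\zeta_i^2=0$), while a composition with $s_{r_i}=0$ for some $r_i\in R$ leaves $\zeta_{r_i}\eta_{r_i}$ absent and cannot produce $\Theta$. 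The surviving sum is therefore over compositions $s_{r_1},\ldots,s_{r_l}\geq 1$ with $\sum s_{r_i}=k-j$, and each term contributes
\[
\frac{(k-j)!}{s_{r_1}!\cdots s_{r_l}!}\,\hbar^{k-j-l}\,\lambda_{r_1}^{s_{r_1}}\cdots\lambda_{r_l}^{s_{r_l}}\,\eta_{r_1}\zeta_{r_1}\cdots\eta_{r_l}\zeta_{r_l}
\]
after collapsing the $\hbar$-powers via the identity above.

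To finish, I would substitute $\eta_{r_i}\zeta_{r_i}=\hbar-\zeta_{r_i}\eta_{r_i}$ and expand the product of the $l$ resulting binomials. Since $\hbar$ is central and each $\zeta_{r_i}\eta_{r_i}$ is even, every term in the expansion commutes past $\Theta/(\zeta_{r_1}\eta_{r_1}\cdots\zeta_{r_l}\eta_{r_l})$, and the unique term contributing to the coefficient of $\Theta$ is $(-1)^l\zeta_{r_1}\eta_{r_1}\cdots\zeta_{r_l}\eta_{r_l}$; all other summands of the expansion are missing at least one $\zeta_{r_i}\eta_{r_i}$ and hence cannot fill in the missing factors of $\Theta$. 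This produces the sign $(-1)^l$ and yields the claimed formula; the $l=1$ specialization follows immediately by observing there is a unique composition $s_1=k-j$.

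The main obstacle is organizational rather than conceptual: tracking the $\hbar$-powers across the multinomial expansion, confirming that only the stated compositions contribute, and carefully extracting the $\Theta$-coefficient using $\eta_i\zeta_i=\hbar-\zeta_i\eta_i$. The essential simplifications are the commutativity of distinct $\eta_i\zeta_i$ and the collapse $(\eta_i\zeta_i)^s = \hbar^{s-1}\eta_i\zeta_i$, which together reduce the problem to elementary combinatorics.
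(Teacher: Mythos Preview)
Your proposal is correct and follows essentially the same route as the paper: reduce to the Berezin integral of $\Theta_R\cdot Y_1^{\,k-j}$ in $\mrm{Cliff}_{a,b}$, argue that only indices supported on $R$ survive (using $\zeta_i^2=\eta_i^2=0$), collapse powers of $\eta_{r_i}\zeta_{r_i}$ via the relation $\eta\zeta+\zeta\eta=\hbar$, and extract the $\Theta$-coefficient using $\eta_{r_i}\zeta_{r_i}=\hbar-\zeta_{r_i}\eta_{r_i}$. Your organization is slightly cleaner---invoking the multinomial theorem and the identity $(\eta_i\zeta_i)^s=\hbar^{s-1}\eta_i\zeta_i$ upfront rather than expanding over ordered tuples and iterating the reduction $\frac{\Theta}{\zeta_r\eta_r}(\eta_r\zeta_r)^m=\hbar^m\frac{\Theta}{\zeta_r\eta_r}-\hbar^{m-1}\Theta$ as the paper does---but the substance is identical.
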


\begin{proof}
Let $\Theta_R=\frac{\Theta}{\zeta_{r_1}\eta_{r_1}\cdots\zeta_{r_l}\eta_{r_l}}$.
We have 

\[\Upsilon_{2n\vert a,b}(\Theta_R\otimes 1^{\otimes j}\otimes Y_1^{\otimes k-j})=\int\Theta_R Y_1^{k-j}=\sum_{|I|=k-j}\lambda_I\int\Theta_R(\eta_{i_1}\zeta_{i_1}\cdots\eta_{i_{k-j}}\zeta_{i_{k-j}}).\] 
If there is a term $r'\in I$ with $r'\notin R$, 
then we see a $\zeta_{r'}\eta_{r'}\eta_{r'}\zeta_{r'}=0$ in the product 

\[\Theta_R(\eta_{i_1}\zeta_{i_1}\cdots\eta_{i_{k-j}}\zeta_{i_{k-j}}).\] 
These summands therefore vanish, and so we may assume $I$ consists only of terms in $R$.  
We have
\[\frac{\Theta}{\zeta_r\eta_r}\eta_r\zeta_r=\frac{\Theta}{\zeta_r\eta_r}(-\zeta_r\eta_r)=-\Theta\]
and

\[-\Theta(\eta_r\zeta_r)=0.\] 
Thus inductively we see that, 
\[\frac{\Theta}{\zeta_r\eta_r}(\eta_r\zeta_r)^l=
\begin{cases}
-\Theta & l=1\\
0 & \text{otherwise}
\end{cases}.\]

If $j=k$, or $R\not\subset I$, 
then there is some $r\in R$ that is not in $I$. 
We would then be taking the Berezin integral of $\frac{\Theta}{\zeta_r\eta_r}$, which is zero.

Thus, for $|R|=1$, we must have $I=(r,r,\dots,r)$. We get

\[\Upsilon_{2n\vert a,b}\left(\frac{\Theta}{\zeta_r\eta_r}\otimes 1^{\otimes j}\otimes Y_1^{\otimes k-j}\right)=
\begin{cases}
-\lambda_r\int\Theta=-\lambda_r & k-j=1\\
0 & \text{otherwise}
\end{cases}.\]
More generally, given a partition $S=(s_1,\dots, s_l)$ with 
$s_1+\cdots +s_l=k-j$, 
we have 
\[\Upsilon_{2n\vert a,b}\Theta_R(\eta_{r_1}\zeta_{r_1})^{s_1}\cdots(\eta_{r_l}\zeta_{r_l})^{s_l}=
\begin{cases}
(-1)^l\Upsilon_{2n\vert a,b} \Theta=(-1)^l & (s_1,\dots,s_l)=(1,\dots,1)\\
0 & \text{otherwise}
\end{cases}.\]

We therefore obtain 

\[\Upsilon_{2n\vert a,b}\left(\frac{\Theta}{\zeta_{r_1}\eta_{r_1}\cdots\zeta_{r_l}\eta_{r_l}}\otimes 1^{\otimes j}\otimes Y_1^{k-j}\right)=l!(-1)^l\lambda_{r_1}\cdots\lambda_{r_l}.\]
\end{proof}

In summary, we have shown the following:

\begin{cor}\label{cor-Y1}
For $j=2,\dots,k$, the $Y_1$ contribution from a cycle of length $j$ is

\[\Upsilon_{2n\vert a,b}\left(g_{12}\cdots g_{j-1}g_{j1}(\Theta\otimes Y_1^{\otimes j}) \right)=\frac{-1}{2^{j-1}}\sum_{r=1}^a\lambda_r^j\]
if $j$ is even and vanishes if $j$ is odd.

Moreover, for $j=1,\dots,k-1$, a $\Theta$-flower with $l$ petals of length $j_1,j_2,\dots,j_l$ on a total of $j$ $X$-labeled vertices 
has $Y_1$ contribution 

\[\Upsilon_{2n\vert a,b}\left((g_{01}\cdots g_{j_1-1j_1}g_{j_10})(g_{0j_1+1}\cdots g_{j_1+j_2-1j_1+j_2}g_{j_1+j_20})\cdots(g_{0j-j_l}\cdots g_{j-1k}g_{j0})(\Theta\otimes Y_1^{\otimes k})\right)\]
which is equal to

\[\frac{(-1)^l}{2^{j_1}\cdots 2^{j_l}}\sum_{|R|=l}l!\lambda_{r_1}^{j_1+1}\cdots\lambda_{r_l}^{j_l+1}.\]  
Here $R=(r_1,\dots, r_l)$ is a subset of $\{1,\dots, a\}$.
\noindent This term vanishes if $j=k$.

\end{cor}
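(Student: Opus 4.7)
The first assertion is essentially just Lemma \ref{lem-Y1g12} followed by the Berezin integral computation. The plan is to set $k=j$ in Lemma \ref{lem-Y1g12}, which gives
\[
g_{12}\cdots g_{j-1\,j}g_{j1}(\Theta\otimes Y_1^{\otimes j})=\frac{-1}{2^{j-1}}\sum_{r=1}^a\lambda_r^j(\Theta\otimes 1^{\otimes j})
\]
when $j$ is even, and zero when $j$ is odd. Then apply $\Upsilon_{2n\vert a,b}$, using the lemma stating that $\Upsilon_{2n\vert a,b}(\Theta\otimes 1^{\otimes j}\otimes Y_1^{\otimes k-j})=\delta_{jk}$. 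With $k=j$ the Kronecker delta evaluates to $1$, and the first claim follows.

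For the $\Theta$-flower assertion the plan is to use the already-stated corollary to Lemma \ref{lem-g0jY1}, which computes the composite action of the $l$ disjoint petals on $\Theta\otimes Y_1^{\otimes k}$. That corollary shows the composite of the bidifferential operators
\[
(g_{01}\cdots g_{j_10})(g_{0\,j_1+1}\cdots g_{j_1+j_2\,0})\cdots(g_{0\,j-j_l}\cdots g_{j\,0})
\]
applied to $\Theta\otimes Y_1^{\otimes k}$ vanishes unless every $j_i$ is odd, in which case it equals
\[
\frac{1}{2^{j_1}\cdots 2^{j_l}}\sum_{|R|=l}\lambda_{r_1}^{j_1}\cdots\lambda_{r_l}^{j_l}\left(\frac{\Theta}{\zeta_{r_1}\eta_{r_1}\cdots\zeta_{r_l}\eta_{r_l}}\otimes 1^{\otimes j}\otimes Y_1^{\otimes k-j}\right),
\]
with the sum over ordered subsets $R=(r_1,\dots,r_l)\subseteq\{1,\dots,a\}$. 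Applying $\Upsilon_{2n\vert a,b}$ to this expression and invoking Lemma \ref{lem-Difference} produces the factor $(-1)^l\hbar^{k-j-l}$ together with the multinomial sum over partitions $S=(s_1,\dots,s_l)$ of $k-j$. Multiplying the two contributions gives the stated formula with the $(-1)^l$ sign combining naturally with the $\lambda_{r_i}^{j_i+s_i}$ exponents.

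The only real obstacle is bookkeeping rather than content: one must justify that the contributions of different petals genuinely multiply. This rests on two observations already implicit in Lemma \ref{lem-g0jY1}: distinct petals involve bidifferential operators $g_{ij}$ acting on disjoint tensor factors among $Y_1^{\otimes k}$, and the resulting differentiations on $\Theta$ involve distinct pairs $\zeta_{r_i},\eta_{r_i}$ (after summing over $r_i$), so the Grassmann signs from $\frac{\partial}{\partial\zeta_r}\frac{\partial}{\partial\eta_r}\Theta$ combine cleanly and no cross-cancellation occurs. Once this independence is recorded, the remainder of the proof is the computation in Lemma \ref{lem-Difference}, which handles the interaction between the shortened Grassmann monomial $\Theta/(\zeta_{r_1}\eta_{r_1}\cdots\zeta_{r_l}\eta_{r_l})$ and the remaining $Y_1^{\otimes k-j}$ factors through the Clifford relation $\eta_r\zeta_r=\hbar-\zeta_r\eta_r$. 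The vanishing at $j=k$ is inherited directly from Lemma \ref{lem-Difference}.
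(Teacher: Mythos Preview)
Your proposal is correct and matches the paper's approach exactly: the corollary is stated in the paper immediately after the phrase ``In summary, we have shown the following,'' and is simply the combination of Lemma~\ref{lem-Y1g12} with the $\Upsilon_{2n\vert a,b}(\Theta\otimes 1^{\otimes j}\otimes Y_1^{\otimes k-j})=\delta_{jk}$ computation for the cycle part, and the multi-petal corollary to Lemma~\ref{lem-g0jY1} together with Lemma~\ref{lem-Difference} for the $\Theta$-flower part. Your extra remark on why distinct petals contribute multiplicatively is a helpful elaboration of what the paper leaves implicit, but the logical structure is identical.
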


For example, if $l=5$, $j_1=3$, $j_2=1$, $j_3=2$, $j_4=1$, $j_5=2$ and $k=14$, 
the $\Theta$-flower considered here looks like

\begin{figure}[H]
\centering
\includegraphics[scale=.6]{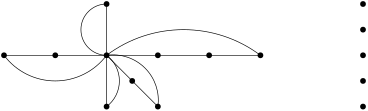}
\caption{$Y_1$ $\Theta$-flower}
\end{figure}
\noindent where all the vertices are labeled $Y_1$, except the center of the flower which is labeled $\Theta$. 
The partition $S$ of the $k-j=5$ spare vertices corresponds graphically to assigning spare vertices to the petals. 

\subsubsection{Computations for $Y_2$ Terms}\label{subsec-Y2}

\begin{lem}\label{lem-g12Y2}
In $\mathcal{A}_{2n\vert a,b}^{\otimes k+1}$, for $j=2,\dots, k$ even we have 

\[g_{12}\cdots g_{j-1j}g_{j1}(\Theta\otimes Y_2^{\otimes k})=
\frac{-(-1)^{j/2}}{2^{j-1}}\sum\limits_{r=1}^{\z } \kappa_r^j(\Theta\otimes 1^{\otimes j}\otimes Y_2^{\otimes k-j}).
\]
This term vanishes if $j$ is odd. Moreover, 
$g_{11}(\Theta\otimes Y_2^{\otimes k})=0$.
\end{lem}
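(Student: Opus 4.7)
The plan is to mimic the proof of Lemma \ref{lem-Y1g12}, adjusting for the negative-definite signature of $Q$ in the $\xi,\mu$ directions. First I compute the relevant partial derivatives
\[
\tfrac{\del}{\del\xi_r}Y_2=-\kappa_r\mu_r,\qquad \tfrac{\del}{\del\mu_r}Y_2=\kappa_r\xi_r,
\]
where the second identity uses $-\kappa_r\xi_r\mu_r=\kappa_r\mu_r\xi_r$. Since $Y_2$ involves no $\zeta,\eta,\upsilon$, on inputs from $Y_2$-positions the bidifferential operator becomes
\[
g_{ij}\vert_{Y_2}=-\tfrac12\sum_{r=1}^{\z}\left(\tfrac{\del}{\del\xi_r}\otimes\tfrac{\del}{\del\xi_r}+\tfrac{\del}{\del\mu_r}\otimes\tfrac{\del}{\del\mu_r}\right),
\]
with overall sign $-1$ reflecting $B_Q(\xi_r,\xi_r)=B_Q(\mu_r,\mu_r)=-\tfrac12$ (as opposed to $+\tfrac12$ in the $Y_1$ case).

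Expanding $g_{12}\cdots g_{j-1,j}g_{j,1}$ into monomials in $\del/\del\xi_r, \del/\del\mu_r$ produces a sum of diagrams in which each edge is labelled either $\xi$-type or $\mu$-type. Because $\del_{\xi_r}^2Y_2=\del_{\mu_r}^2Y_2=0$ while $\del_{\xi_r}\del_{\mu_r}Y_2=\pm\kappa_r$, each vertex contributes a nonzero term only if its two incident edges carry opposite types and a common index $r$. This forces the edge types to \emph{alternate} around the cycle---impossible for odd $j$, proving the vanishing in that case---and pins the summation index $r$ to a single value throughout the cycle. For even $j$, exactly two alternating patterns survive (starting with $\xi$ or with $\mu$), each contributing $j$ vertex factors of $\pm\kappa_r$. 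Combining with the $j$ copies of $-\tfrac12$ from the $g$-prefactors and tracking the signs from anticommuting odd derivatives at each position yields the coefficient $\frac{-(-1)^{j/2}}{2^{j-1}}\sum_r\kappa_r^j$. The case $j=2$ provides a direct sanity check: one computes $g_{12}g_{21}(\Theta\otimes Y_2^{\otimes k})=\tfrac12\sum_r\kappa_r^2(\Theta\otimes 1^{\otimes 2}\otimes Y_2^{\otimes k-2})$, matching $\frac{-(-1)^1}{2}$.

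The vanishing $g_{11}(\Theta\otimes Y_2^{\otimes k})=0$ follows by the same mechanism as in Lemma \ref{lem-Y1g12}: $\del_{\xi_r}^2Y_2$ and $\del_{\mu_r}^2Y_2$ vanish since $Y_2$ is linear in each odd coordinate, and the mixed terms cancel by anticommutativity of odd derivatives on a common position. The main obstacle will be the sign bookkeeping in the general-$j$ case, as anticommuting odd derivatives on a shared slot introduce signs that must reconcile cleanly with the claimed $(-1)^{j/2}$. A useful organizing principle is the formal substitution $\eta_r\zeta_r\leftrightarrow i\xi_r\mu_r$: it sends the $Y_1$-computation with coefficient $\lambda_r$ to the $Y_2$-computation with $\lambda_r=i\kappa_r$, so that $\lambda_r^j=(-1)^{j/2}\kappa_r^j$ for even $j$, which reproduces the desired sign and gives an independent check of the calculation.
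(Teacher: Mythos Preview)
Your proposal is correct and follows essentially the same approach as the paper: restrict $g_{ij}$ to the $\xi,\mu$ derivatives, compute the partial derivatives $\partial_{\xi_r}Y_2=-\kappa_r\mu_r$ and $\partial_{\mu_r}Y_2=\kappa_r\xi_r$, verify the $j=2$ case by hand, and then extract the general even-$j$ pattern while noting the odd-$j$ vanishing. Your alternating-edge-type argument and the complexification heuristic $\lambda_r\mapsto i\kappa_r$ are pleasant additions not present in the paper's proof; one minor correction on $g_{11}$: since the $\xi,\mu$ block of $H_Q$ is diagonal there are no mixed $\partial_{\xi}\partial_{\mu}$ terms at all, so the vanishing is simply $\partial_{\xi_r}^2 Y_2=\partial_{\mu_r}^2 Y_2=0$ with nothing left to cancel.
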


\begin{proof}
Since $Y_2$ does not involve any $\eta_i$, $\zeta_i$, or $\upsilon$ terms, 
$\frac{\del}{\del\eta_i} Y_2$, $\frac{\del}{\del\zeta_i}Y_2$, and $\frac{\del}{\del\upsilon}Y_2$ vanish. 
Thus, we can replace the bidifferential operators $g_{ij}$ by the map sending $(a_0\otimes\cdots \otimes a_k)$ to

\[\frac{1}{2} \sum_{r=1}^{\z } a_0\otimes\cdots \frac{\del}{\del \xi_r}a_i\otimes\cdots\otimes \frac{\del}{\del \xi_r}a_j\otimes\cdots\otimes  a_k+a_0\otimes\cdots \frac{\del}{\del \mu_r}a_i\otimes\cdots\otimes \frac{\del}{\del \mu_r}a_j\otimes\cdots\otimes  a_k\]
for this computation.

In the expression for $g_{11}(\Theta\otimes Y_2^{\otimes k})$, 
the bidifferential operators $\frac{\del}{\del\xi_r}\frac{\del}{\del\xi_r}$ and $\frac{\del}{\del\mu_r}\frac{\del}{\del\mu_r}$ act on the first copy of $Y_2$. 
Since $Y_2=\sum\limits_{i=1}^\z -\kappa_i\xi_i\mu_i$ only has one copy of each $\xi_i$ and $\mu_i$, 
these operators are zero. 
This proves the second claim that $g_{11}(\Theta\otimes Y_2^{\otimes k})$ vanishes.

We have 
$\frac{\del}{\del\xi_r}Y_2=-\kappa_r\mu_r$ and 
$\frac{\del}{\del\mu_r}Y_2=\kappa_r\xi_r$.

\noindent For $j=2$, using $g_{21}=g_{12}$, we have 
\begin{align*}
&g_{12}g_{21}(\Theta\otimes Y_2^{\otimes k})\\
&=g_{12}\left(\frac{1}{2}\sum_{r=1}^{\z }
\Theta\otimes\frac{\del}{\del\xi_r}Y_2\otimes \frac{\del}{\del\xi_r}Y_2\otimes Y_2^{\otimes k-2}+\Theta\otimes\frac{\del}{\del\mu_r}Y_2\otimes\frac{\del}{\del\mu_r}Y_2\otimes Y_2^{\otimes k-2}\right)\\
&=g_{12}\left(\frac{1}{2}\sum_{r=1}^{\z }
\Theta\otimes(-\kappa_r\mu_r)\otimes(-\kappa_r\mu_r)\otimes Y_2^{\otimes k-2}+\Theta\otimes(\kappa_r\xi_r)\otimes(\kappa_r\xi_r)\otimes Y_2^{\otimes k-2}\right)\\
&=\frac{1}{2}\sum_{r=1}^{\z }\kappa_r^2g_{12}\left(\Theta\otimes \mu_r\otimes \mu_r\otimes Y_2^{\otimes k-2}+\Theta\otimes\xi_r\otimes\xi_r\otimes Y_2^{\otimes k-2}\right)\\
&=\frac{1}{4}\sum_{r=1}^{\z }\kappa_r^2(\Theta\otimes\otimes 1\otimes 1\otimes Y_2^{\otimes k-2}+\Theta\otimes 1\otimes 1\otimes Y_2^{\otimes k-2})\\
&=\frac{1}{2}\sum_{r=1}^{\z }\kappa_r^2(\Theta\otimes 1^{\otimes 2}\otimes Y_1^{\otimes k-2}).
\end{align*}

In general, if $j$ is even we have 
\begin{align*}
g_{12}\cdots g_{j-1j}g_{j1}(\Theta\otimes Y_2^{\otimes k})
&=\frac{-(-1)^{j/2}}{2^j}\sum_{r=1}^\z \Theta\otimes\kappa_r^{\otimes j}\otimes Y_2^{\otimes k-j}+\Theta\otimes\kappa_r^{\otimes j}\otimes Y_2^{\otimes k-j}.
\end{align*}
This term vanishes if $j$ is odd.
\end{proof}

\begin{lem}
In $\mathcal{A}_{2n\vert a,b}^{\otimes k+1}$, we have, for $j=1,\dots, k$ odd
\[g_{01}\cdots g_{j-1j}g_{j0}(\Theta\otimes Y_2^{\otimes k})=
\frac{(-1)^{(j+1)/2}}{2^j}\sum\limits_{r=1}^{\z } \kappa_r^j\left(\frac{\Theta}{\xi_r\mu_r}\otimes 1^{\otimes j}\otimes Y_2^{\otimes k-j}\right). \]
This term vanishes if $j$ is even. Moreover
$g_{00}(\Theta\otimes Y_2^{\otimes k})=0$.
\end{lem}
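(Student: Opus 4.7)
The plan is to imitate the proof of Lemma \ref{lem-g0jY1}, exchanging the $\zeta$-$\eta$ piece of the bidifferential $g_{ij}$ for the $\xi$-$\xi$ and $\mu$-$\mu$ pieces relevant to $Y_2 = \sum_{r=1}^{\z} -\kappa_r \xi_r \mu_r$. Since $Y_2$ contains no $\zeta_i$, $\eta_i$, or $\upsilon$ variables, all other derivative directions annihilate $Y_2$, so on tensors of the form $\Theta \otimes Y_2^{\otimes k}$ we may replace $g_{ij}$ by the effective operator
\[
\tfrac{1}{2}\sum_{r=1}^{\z}\Bigl(a_0\otimes\cdots\otimes \tfrac{\del}{\del \xi_r}a_i\otimes\cdots\otimes \tfrac{\del}{\del \xi_r}a_j\otimes\cdots\otimes a_k + a_0\otimes\cdots\otimes \tfrac{\del}{\del \mu_r}a_i\otimes\cdots\otimes \tfrac{\del}{\del \mu_r}a_j\otimes\cdots\otimes a_k\Bigr),
\]
exactly as was used in the proof of Lemma \ref{lem-g12Y2}.

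The vanishing $g_{00}(\Theta\otimes Y_2^{\otimes k})=0$ is immediate: any surviving summand requires $(\tfrac{\del}{\del \xi_r})^2$ or $(\tfrac{\del}{\del \mu_r})^2$ to act on $\Theta$, and since $\xi_r,\mu_r$ are odd variables each appearing at most once in $\Theta$, these second derivatives vanish; the remaining mixed contributions to $g_{00}$ cancel exactly as in the corresponding calculation for $Y_1$ at the end of Lemma \ref{lem-g0jY1}. For the main identity, I would induct on $j$. In the base case $j=1$, a direct computation of $g_{01}g_{10}(\Theta\otimes Y_2^{\otimes k})$ using $\tfrac{\del}{\del \xi_r}Y_2 = -\kappa_r \mu_r$, $\tfrac{\del}{\del \mu_r}Y_2 = \kappa_r \xi_r$, together with the signed identities $\tfrac{\del}{\del \xi_r}\Theta = \pm\Theta/\xi_r$ and $\tfrac{\del}{\del \mu_r}\Theta = \pm\Theta/\mu_r$ from Notation \ref{not-Orient}, yields $-\tfrac{1}{2}\sum_r \kappa_r(\Theta/(\xi_r\mu_r))\otimes Y_2^{\otimes k-1}$, matching $(-1)^{(1+1)/2}/2^1$. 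In the inductive step, each interior operator $g_{i-1,i}$ advances a chain of odd derivatives $\xi_r\mapsto\mu_r\mapsto\xi_r\mapsto\cdots$ along successive copies of $Y_2$, contributing a factor $\kappa_r/2$ and transporting the unique surviving odd generator forward; the closing operator $g_{j,0}$ then restores the missing $\xi_r\mu_r$ to produce $\Theta/(\xi_r\mu_r)$. The two parallel chains originating from $\tfrac{\del}{\del \xi_r}$ and $\tfrac{\del}{\del \mu_r}$ either reinforce or cancel according to the parity of $j$, yielding zero when $j$ is even and the stated $(-1)^{(j+1)/2}/2^j$ when $j$ is odd.

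The main obstacle is the sign bookkeeping. Unlike the $Y_1$ case, where the antisymmetry $\zeta_r\eta_r = -\eta_r\zeta_r$ produced the even/odd cancellation in a transparent way, here the interaction of the two signed rules $\tfrac{\del}{\del \xi_r}Y_2 = -\kappa_r\mu_r$ and $\tfrac{\del}{\del \mu_r}Y_2 = +\kappa_r\xi_r$ with the position-dependent signs of $\tfrac{\del}{\del \xi_r}\Theta$ and $\tfrac{\del}{\del \mu_r}\Theta$ must be tracked through the full iterated composition to land precisely on $(-1)^{(j+1)/2}$ rather than some other parity of $\pm 1$.
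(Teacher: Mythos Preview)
Your approach is essentially the one the paper takes: restrict to the effective $\xi$--$\xi$ and $\mu$--$\mu$ pieces of $g_{ij}$ (valid whenever at least one slot carries $Y_2$), compute the base case $j=1$ directly, and then track how the two chains (one seeded by $\partial/\partial\xi_r$ on $\Theta$, the other by $\partial/\partial\mu_r$) propagate through the interior operators. The $g_{00}$ claim is likewise handled in the paper by reference to the computation at the end of Lemma~\ref{lem-g0jY1}.

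There is, however, one point where your stated mechanism is not quite right. You say the two chains ``either reinforce or cancel according to the parity of $j$, yielding zero when $j$ is even.'' In fact, for even $j$ each chain vanishes \emph{individually}, not by cancellation against the other. Trace the $\xi$-chain: the first operator applied, $g_{j0}$, hits $\Theta$ with $\partial/\partial\xi_r$, leaving $\Theta/\xi_r$ in slot $0$; the interior operators $g_{j-1,j},\dots,g_{12}$ alternate the surviving odd generator in slots $j,j-1,\dots,1$ between $\mu_r$ and $\xi_r$; when $j$ is even, slot $1$ ends up carrying a $\xi_r$, so the last operator $g_{01}$ must apply $\partial/\partial\xi_r$ to slot $0$ again --- but $\Theta/\xi_r$ has no $\xi_r$ left, so this is zero. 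The $\mu$-chain dies for the identical reason. The paper makes this explicit by writing the even-$j$ result as a sum of terms containing $\tfrac{\partial}{\partial\xi_r}\tfrac{\partial}{\partial\xi_r}\Theta$ and $\tfrac{\partial}{\partial\mu_r}\tfrac{\partial}{\partial\mu_r}\Theta$, both of which are zero. This is a genuinely different mechanism from the $Y_1$ case in Lemma~\ref{lem-g0jY1}, where the even-$j$ vanishing \emph{was} a cancellation.

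Once you correct this, your sketch matches the paper's argument; the remaining work is exactly the sign bookkeeping you flag. The paper's explicit signs $\tfrac{\partial}{\partial\xi_r}\Theta=\Theta/\xi_r$ and $\tfrac{\partial}{\partial\mu_r}\Theta=-\Theta/\mu_r$ (from the ordering in Notation~\ref{not-Orient}) are what pin down the coefficient $(-1)^{(j+1)/2}$ for odd $j$.
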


The claim for $g_{00}$ follows from the proof of Lemma \ref{lem-g0jY1}.

\begin{proof}
Note that $\frac{\del}{\del\xi_r}\Theta=\frac{\Theta}{\xi_r}$
and
$\frac{\del}{\del\mu_r}\Theta=-\frac{\Theta}{\mu_r}$. 
For $j=1$ we have 
\begin{align*}
g_{01}g_{10}(\Theta\otimes Y_2^{\otimes k})&=\frac{1}{2}g_{01}\left(\sum_{r=1}^\z \frac{\del}{\del\xi_r}\Theta\otimes\frac{\del}{\del\xi_r}Y_2\otimes Y_2^{\otimes k-1}+\frac{\del}{\del\mu_r}\Theta\otimes\frac{\del}{\del\mu_r}Y_2\otimes Y_2^{\otimes k-1}\right)\\
&=\frac{1}{2}g_{01}\left(\sum_{r=1}^\z \frac{\Theta}{\xi_r}\otimes(-\kappa_r\mu_r)\otimes Y_2^{\otimes k-1}+\left(-\frac{\Theta}{\mu_r}\right)\otimes(\kappa_r\xi_r)\otimes Y_2^{\otimes k-1}\right)\\
&=-\frac{1}{4}\sum_{r=1}^\z \frac{\Theta}{\xi_r\mu_r}\otimes\kappa_r\otimes Y_2^{\otimes k-1}+\frac{\Theta}{\xi_r\mu_r}\otimes \kappa_r\otimes Y_2^{\otimes k-1}\\
&=-\frac{1}{2}\sum_{r=1}^\z \kappa_r(\frac{\Theta}{\xi_r\mu_r}\otimes 1\otimes Y_2^{\otimes k-1}).
\end{align*}

When $j=2$, we have 

\begin{align*}
g_{01}g_{12}g_{20}(\Theta\otimes Y_2^{\otimes 2})&=\frac{1}{2^3}\sum_{r=1}^\z \frac{\del}{\del\xi_r}\frac{\del}{\del\xi_r}\Theta\otimes \kappa_r\otimes(-\kappa_r)+\frac{\del}{\del\mu_r}\frac{\del}{\del\mu_r}\Theta\otimes(-\kappa_r)\otimes\kappa_r=0.
\end{align*}

This parity pattern continues. 
If $j$ is even, then we have 
\[g_{01}\cdots g_{j0}(\Theta\otimes Y_2^{\otimes j})=\frac{1}{2^{j+1}}\sum_{r=1}^\z (-1)^{j/2}\frac{\del}{\del\xi_r}\frac{\del}{\del\xi_r}\Theta\otimes \kappa_r^{\otimes j}+(-1)^{j/2}\frac{\del}{\del\mu_r}\frac{\del}{\del\mu_r}\Theta\otimes\kappa_r^{\otimes j}=0.\]
If $j$ is odd, then we have 

\begin{align*}
g_{01}\cdots g_{j0}(\Theta\otimes Y_2^{\otimes j})&=\frac{1}{2^{j+1}}\sum_{r=1}^\z (-1)^{(j+1)/2}\frac{\del}{\del\mu_r}\frac{\del}{\del\xi_r}\Theta\otimes \kappa_r^{\otimes j}+(-1)^{(j-1)/2}\frac{\del}{\del\xi_r}\frac{\del}{\del\mu_r}\Theta\otimes\kappa_r^{\otimes j}\\
&=\frac{1}{2^{j+1}}\sum_{r=1}^\z (-1)^{(j+1)/2}\frac{\Theta}{\xi_r\mu_r}\otimes \kappa_r^{\otimes j}-(-1)^{(j-1)/2}\frac{\Theta}{\xi_r\mu_r}\otimes\kappa_r^{\otimes j}\\
&=\frac{(-1)^{(j+1)/2}}{2^j}\sum_{r=1}^\z \frac{\Theta}{\xi_r\mu_r}\otimes\kappa_r^{\otimes j}.
\end{align*}
\end{proof}

\begin{cor}
Consider a partition $j_1+\cdots j_l=j$. 
If $j_1,\dots, j_l$ are odd, then 
\[(g_{01}\cdots g_{j_1-1j_1})(g_{j_10}g_{0j_1+1}\cdots g_{j_1+j_2-1j_1+j_2}g_{j_1+j_20})\cdots(g_{0j-j_l}\cdots g_{j-1k}g_{j0})(\Theta\otimes Y_2^{\otimes k})\]
is equal to 
\[\frac{(-1)^{\frac{j_1+1}{2}}\cdots (-1)^{\frac{j_l+1}{2}}}{2^{j_1}\cdots 2^{j_l}}\sum_{|R|=l}\kappa_{r_1}^{j_1}\cdots\kappa_{r_l}^{j_l}\left(\frac{\Theta}{\xi_{r_1}\mu_{r_1}\cdots\xi_{r_l}\mu_{r_l}}\otimes 1^{\otimes j}\otimes Y_2^{k-j}\right)\]
where $R=(r_1,\dots,r_l)$ ranges over subsets of $\{1,\dots, t\}$ of size $l$. 
This term vanishes if any $j_i$ is even.
\end{cor}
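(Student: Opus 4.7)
The plan is to reduce the statement to an iterated application of the preceding single-petal lemma for $Y_2$. A $\Theta$-flower with petals of lengths $j_1,\dots,j_l$ corresponds to a disjoint union (at the level of edge sets of the graph) of cycles, each of which passes through the central vertex labeled $\Theta$ and through $j_i$ consecutive $Y_2$-labeled vertices. As noted in \S\ref{subsec-ProofSetUp}, a disjoint union of subgraphs corresponds to a product of the associated bidifferential operators in $\omega_{2n\vert a,b}^{\leq 2}$, so the contribution of the flower is literally the composition of the contributions of its petals.

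First I would observe, exactly as in the derivation of $g_{ij}$ on $Y_2$ in the proof of Lemma \ref{lem-g12Y2}, that since $Y_2$ depends only on the variables $\xi_r,\mu_r$, the only non-vanishing terms of $g_{0k}$ and $g_{ij}$ in this computation come from the $\xi,\mu$ summands in the bivector $g$. Next I would apply the single-petal formula (the lemma immediately preceding the corollary) to the first petal $(g_{01}\cdots g_{j_1-1,j_1}g_{j_1,0})$ acting on $\Theta\otimes Y_2^{\otimes k}$, yielding (when $j_1$ is odd) a factor
\[
\frac{(-1)^{(j_1+1)/2}}{2^{j_1}}\sum_{r_1=1}^{\z}\kappa_{r_1}^{j_1}\left(\frac{\Theta}{\xi_{r_1}\mu_{r_1}}\otimes 1^{\otimes j_1}\otimes Y_2^{\otimes k-j_1}\right),
\]
and vanishing if $j_1$ is even. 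Then I would iterate: the second petal $(g_{0,j_1+1}\cdots g_{j_1+j_2,0})$ acts on the first tensor factor $\frac{\Theta}{\xi_{r_1}\mu_{r_1}}$ and on the next $j_2$ copies of $Y_2$. Because these $g$-operators only involve derivatives in $\xi_r,\mu_r$ for indices $r\neq r_1$ (any term with $r=r_1$ gives zero, since $\frac{\Theta}{\xi_{r_1}\mu_{r_1}}$ already has those generators removed, just as in the computation of $g_{00}$ in the proof of Lemma \ref{lem-g0jY1}), the single-petal formula applies verbatim, producing an additional factor indexed by $r_2\neq r_1$.

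Continuing inductively through all $l$ petals produces the claimed formula: the product of signs $\prod_{i=1}^l(-1)^{(j_i+1)/2}$, the normalization $\prod_{i=1}^l 2^{-j_i}$, the monomial $\kappa_{r_1}^{j_1}\cdots\kappa_{r_l}^{j_l}$, and the residual tensor $\frac{\Theta}{\xi_{r_1}\mu_{r_1}\cdots\xi_{r_l}\mu_{r_l}}\otimes 1^{\otimes j}\otimes Y_2^{\otimes k-j}$; the sum is over ordered subsets $R=(r_1,\dots,r_l)$ because the petals are ordered by the enumeration of vertices in the graph, and distinct $r_i$'s are forced since any repetition kills the term. The vanishing claim when some $j_i$ is even is inherited directly from the single-petal lemma at that step. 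The main bookkeeping obstacle is verifying that the signs coming from the odd derivatives $\frac{\partial}{\partial\xi_r}$, $\frac{\partial}{\partial\mu_r}$ commuting past the residual $\frac{\Theta}{\xi_{r_1}\mu_{r_1}\cdots}$ and past the previously-produced $1$'s really do combine to $\prod_i (-1)^{(j_i+1)/2}$ without additional sign, which one checks just as in the parity analysis at the end of the proof of Lemma \ref{lem-g12Y2}.
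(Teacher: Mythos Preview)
Your proposal is correct and matches the paper's intent: the corollary is stated without proof precisely because it follows by iterating the preceding single-petal lemma, exactly as you outline. Your observation that the distinctness constraint $r_i\neq r_j$ is forced (since $\frac{\Theta}{\xi_{r_1}\mu_{r_1}}$ has no $\xi_{r_1},\mu_{r_1}$ left to differentiate) and your sign check (removing the \emph{pair} $\xi_{r_i}\mu_{r_i}$ preserves the parity of the position of every remaining $\xi_r,\mu_r$, so the identities $\frac{\partial}{\partial\xi_r}\Theta=\Theta/\xi_r$ and $\frac{\partial}{\partial\mu_r}\Theta=-\Theta/\mu_r$ persist verbatim) are the only points requiring care, and you handle both.
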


\begin{lem}\label{lem-mujiY2}
In $\bb{K}$ we have 
\[\Upsilon_{2n\vert a,b}(\Theta\otimes 1^{\otimes j}\otimes Y_2^{\otimes k-j})=
\delta_{k,j}.
\]
\end{lem}

\begin{proof}
We have 
\[Y_2^{k-j}=\left(\sum_{r=1}^\z -\kappa_r\xi_r\mu_r\right)^{k-j}=\sum_{|I|=k-j}(-1)^{k-j}\kappa_I\xi_{i_1}\mu_{i_1}\cdots\xi_{i_{k-j}}\mu_{i_{k-j}}\] 
where $I$ ranges over all ordered subsets $I=(i_1,\dots, i_{k-j})$ of $\{1,\dots,t\}$ 
and $\kappa_I=\kappa_{i_1}\cdots\kappa_{i_l}$. 

Since $\xi_r^2=0=\mu_r^2$ and $\xi_r\mu_r=-\mu_r\xi_r$, 
we have 
\[\xi_r\mu_r\xi_r\mu_r=0.\]
Hence, 
\[\int\Theta\xi_r\mu_r=0.\]
Thus, 
\[\int\Theta \xi_{i_1}\mu_{i_1}\cdots\xi_{i_{k-j}}\mu_{i_{k-j}}=0.\]
\end{proof}

\begin{lem}
Consider a subset $R=(r_1,\dots,r_l)$of $\{1,\dots, t\}$. 
In $\bb{K}$, if $k-j-1=2s'$ is even, then
\[\Upsilon_{2n\vert a,b}\left(\frac{\Theta}{\xi_{r_1}\mu_{r_1}\cdots\xi_{r_l}\mu_{r_l}}\otimes 1^{\otimes j}\otimes Y_2^{k-j}\right)\]
is equal to 
\[(-1)^ll!\kappa_{r_1}\cdots\kappa_{r_l}\]
if $k-j=l$ and vanishes otherwise.
\end{lem}

\begin{proof}

Let $I=(i_1,\dots, i_{k-j})$ with $i_l\in\{1,\dots, \z\}$. 
If $r$ appears in $I$, say $r=i_{k-j}$, then 
\[\frac{\Theta}{\xi_r\mu_r} \xi_{i_1}\mu_{i_1}\cdots\xi_{i_{k-j}}\mu_{i_{k-j}}=\Theta \xi_{i_1}\mu_{i_1}\cdots\xi_{i_{k-j-1}}\mu_{i_{k-j-1}}\]
and we are in the situation of Lemma \ref{lem-mujiY2}. 

If $r$ does not appear in $I$, then $\frac{\Theta}{\xi_r\mu_r} \xi_{i_1}\mu_{i_1}\cdots\xi_{i_{k-j}}\mu_{i_{k-j}}$ contains no $\xi_r\mu_r$ term, 
and its Berezin integral therefore vanishes.

We therefore must have $R\subset I$ for the term to not vanish.

Thus
\[\Upsilon_{2n\vert a,b}\left(\frac{\Theta}{\xi_r\mu_r}\otimes 1^{\otimes j}\otimes Y_2^{\otimes k-j}\right)=\begin{cases}
-\kappa_r & k-j=1\\
0 & \text{otherwise}
\end{cases}\]
and similarly for $\frac{\Theta}{\xi_{r_1}\mu_{r_1}\cdots\xi_{r_l}\mu_{r_l}}$.
\end{proof}

\begin{lem}
In $\bb{K}$, for $j=0,\dots,k$, we have 
\[\Upsilon_{2n\vert a,b}\left(\frac{\Theta}{\zeta_r\eta_r}\otimes 1^{\otimes j}\otimes Y_2^{\otimes k-j}\right)=0.\]
\end{lem}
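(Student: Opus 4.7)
The plan is to argue that the integrand in this Berezin integral is missing odd coordinates, so the integral vanishes on nose, without needing any cancellation.

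First I would unpack $Y_2 = \sum_{i=1}^{\z} -\kappa_i \xi_i \mu_i$ and note that it lives in the subalgebra of $\AQ$ generated by the $\xi_i$ and $\mu_i$ alone. Using the Clifford relations from \S\ref{sec-Quad} (namely $\xi_i^2=\mu_i^2=-\hbar/2$, together with the vanishing of brackets $[\xi_i,\zeta_r]=[\xi_i,\eta_r]=[\mu_i,\zeta_r]=[\mu_i,\eta_r]=0$ coming from the block-diagonal form of $H_Q$), any power $Y_2^{k-j}$ reduces to a $\bb{K}$-linear combination of monomials in $\xi_i,\mu_i$ (and powers of $\hbar$). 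In particular, neither $\zeta_r$ nor $\eta_r$ appears anywhere in the expansion of $Y_2^{k-j}$.

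Second, I would multiply by $\frac{\Theta}{\zeta_r\eta_r}$. By Notation \ref{not-Orient}, $\Theta$ is the product of \emph{all} odd basis coordinates, so $\frac{\Theta}{\zeta_r\eta_r}$ is a product of odd coordinates that already omits $\zeta_r$ and $\eta_r$. Multiplying by something that introduces no $\zeta_r$ or $\eta_r$ (as established in the previous step) yields a polynomial in the $\theta_i$'s that does not involve $\zeta_r$ (equivalently, some $\theta_{i_0}$). By definition of the Berezin integral (\S\ref{subsec-IntandOri}), $\int f\, d\theta_1\cdots d\theta_{a+b}$ picks out the coefficient of $\theta_1\cdots\theta_{a+b}$ and annihilates any monomial missing some $\theta_i$; hence
\[
\Upsilon_{2n\vert a,b}\!\left(\tfrac{\Theta}{\zeta_r\eta_r}\otimes 1^{\otimes j}\otimes Y_2^{\otimes k-j}\right)=\int \tfrac{\Theta}{\zeta_r\eta_r}\,Y_2^{k-j}\,d\theta_1\cdots d\theta_{a+b}=0.
\]

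I do not expect a real obstacle here; the argument is parallel to (but simpler than) the $R\not\subset I$ case in the previous lemma. The only mild care is to confirm that the Clifford product of $\xi$'s and $\mu$'s stays inside the subalgebra they generate, which follows from the block structure of $H_Q$ in our chosen basis. This observation will also be reused implicitly in \S\ref{subsec-PfThm} when the $Y_1$ and $Y_2$ contributions are assembled into separate factors of the characteristic series $\widehat{BC}$.
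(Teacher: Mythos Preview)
Your proposal is correct and follows essentially the same approach as the paper: both observe that $Y_2$ (and hence any power $Y_2^{k-j}$) contains no $\zeta_r$ or $\eta_r$ terms, so the product $\frac{\Theta}{\zeta_r\eta_r}\,Y_2^{k-j}$ is missing these odd coordinates and the Berezin integral vanishes. Your version adds a bit more care in justifying that the Clifford product of $\xi$'s and $\mu$'s stays in their own subalgebra via the block structure of $H_Q$, but this is just additional detail on the same argument.
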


\begin{proof}
We have 
\[\Upsilon_{2n\vert a,b}\left(\frac{\Theta}{\zeta_r\eta_r}\otimes 1^{\otimes j}\otimes Y_2^{\otimes k-j}\right)=\int\frac{\Theta}{\zeta_r\eta_r} Y_2^{k-j}.\]
Since $Y_2$ contains no $\eta_r$ or $\zeta_r$ terms, 
we will be taking the Berezin integral of something with no $\eta_r$ or $\zeta_r$, 
which vanishes.
\end{proof}

In summary, we have shown the following:

\begin{cor}\label{cor-Y2}
For $j=2,\dots,k$, a cycle of length $j$ has $Y_2$ contribution 
\[\Upsilon_{2n\vert a,b}\left(g_{12}\cdots g_{j-1j}g_{j1}(\Theta\otimes Y_2^{\otimes k})\right)=\left(\frac{-(-1)^{j/2}}{2^{j-1}}\sum_{r=1}^\z \kappa_r^j\right)\left(\sum_{|J|=s}(-1)^s\left(\frac{\hbar}{2}\right)^{2s}\kappa_J^2\right)\]
if $k-j=2s$ is even, and vanishes if $k$ or $j$ is odd.

Moreover, 
\[\Upsilon_{2n\vert a,b}(g_{11}(\Theta\otimes Y_2^{\otimes k}))=0.\]
Consider a partition $j_1+\cdots j_l=j$. 
Then a $\Theta$-flower with $l$ petals of length $j_1,\dots,j_l$ has $Y_2$ contribution
\[\Upsilon_{2n\vert a,b}\left((g_{01}\cdots g_{j_1-1j_1})(g_{j_10}g_{0j_1+1}\cdots g_{j_1+j_2-1j_1+j_2}g_{j_1+j_20})\cdots(g_{0j-j_l}\cdots g_{j-1k}g_{j0})(\Theta\otimes Y_2^{\otimes k})\right).\]
If $k-j=l$, this is equal to 
\begin{align}\label{eq-Kappa}
\sum_{|R|=l}\frac{(-1)^{(j+1)/2}(-1)^l}{2^{j}}l!\kappa_{r_1}^{j_1+1}\cdots\kappa_{r_l}^{j_l+1},
\end{align}
where $R=(r_1,\dots,r_l)$ ranges over subsets of $\{1,\dots, \z\}$ of size $l$. 
This vanishes if $k-j\neq l$.

Moreover, 
\[\Upsilon_{2n\vert a,b}(g_{00}(\Theta\otimes Y_2^{\otimes k}))=0.\]
\end{cor}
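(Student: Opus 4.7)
The plan is to prove the corollary by directly composing the two layers of computation already carried out in Lemmas \ref{lem-g12Y2} and \ref{lem-mujiY2}, together with the analogous pair for the flower terms. The linearity of $\Upsilon_{2n\vert a,b}$ means that the trace of the bidifferential-operator expressions factors cleanly into the purely algebraic step (applying $g_{ij}$'s to $\Theta\otimes Y_2^{\otimes k}$) followed by the Berezin-integral step.

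First, for the cycle contribution I would insert the formula from Lemma \ref{lem-g12Y2}: when $j$ is even one has
\[
g_{12}\cdots g_{j-1,j}g_{j,1}(\Theta\otimes Y_2^{\otimes k})=\frac{-(-1)^{j/2}}{2^{j-1}}\sum_{r=1}^{\z}\kappa_r^{j}\bigl(\Theta\otimes 1^{\otimes j}\otimes Y_2^{\otimes k-j}\bigr),
\]
and then apply $\Upsilon_{2n\vert a,b}$ using Lemma \ref{lem-mujiY2}, which gives $0$ unless $k-j$ is even, in which case it produces the factor $\sum_{|J|=s}(-1)^s(\hbar/2)^{2s}\kappa_J^2$ with $s=(k-j)/2$. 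If $j$ is odd the first step already vanishes; if $k$ is odd then even when $j$ is even the parity $k-j$ is odd and the Berezin step kills the result. This yields the stated cycle formula and its vanishing criteria. The vanishing $\Upsilon_{2n\vert a,b}(g_{11}(\Theta\otimes Y_2^{\otimes k}))=0$ is immediate from the second assertion of Lemma \ref{lem-g12Y2}.

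For the $\Theta$-flower contribution the same strategy applies: use the flower lemma preceding the corollary, which is nonzero only when every petal length $j_i$ is odd, to rewrite the expression as
\[
\frac{(-1)^{(j_1+1)/2}\cdots(-1)^{(j_l+1)/2}}{2^{j_1}\cdots 2^{j_l}}\sum_{|R|=l}\kappa_{r_1}^{j_1}\cdots\kappa_{r_l}^{j_l}\left(\frac{\Theta}{\xi_{r_1}\mu_{r_1}\cdots\xi_{r_l}\mu_{r_l}}\otimes 1^{\otimes j}\otimes Y_2^{\otimes k-j}\right),
\]
then apply $\Upsilon_{2n\vert a,b}$ using the lemma computing the Berezin integral of $\tfrac{\Theta}{\xi_{r_1}\mu_{r_1}\cdots\xi_{r_l}\mu_{r_l}}\otimes 1^{\otimes j}\otimes Y_2^{\otimes k-j}$. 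The latter is nonzero only when $k-j-l$ is even, and produces the partition sum over $2u_1+\cdots+2u_{\z}=k-j-l$. The main bookkeeping step is collapsing the sign $(-1)^{(j_1+1)/2}\cdots(-1)^{(j_l+1)/2}(-1)^{(k-j-l)/2}$ into the advertised $(-1)^{k/2}(-1)^l$ using $j_1+\cdots+j_l=j$ and the fact that each $j_i$ is odd (so that $(j_i+1)/2$ contributes parity $(j_i-1)/2$ modulo $2$ after a shift, with the $l$ shifts absorbed into the $(-1)^l$); together with $\kappa_{r_i}^{j_i}\cdot\kappa_{r_i}=\kappa_{r_i}^{j_i+1}$ from multiplying by the outer $\kappa_{r_1}\cdots\kappa_{r_l}$ that appears in that Berezin lemma. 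The vanishing $\Upsilon_{2n\vert a,b}(g_{00}(\Theta\otimes Y_2^{\otimes k}))=0$ follows from the vanishing of $g_{00}$ already noted (identical to the $Y_1$ case proved in Lemma \ref{lem-g0jY1}).

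The only step that requires genuine attention is the sign reconciliation in the flower formula: one must verify carefully that $\prod_i(-1)^{(j_i+1)/2}\cdot(-1)^{(k-j-l)/2}=(-1)^{k/2}(-1)^l$ under the assumptions that each $j_i$ is odd and $k-j-l$ is even (so that $k-l$ and hence $k$ have definite parity). I expect this to be routine once the substitutions are written out, and beyond that the proof is essentially concatenation: the algebraic lemma, followed by the Berezin lemma, followed by collecting the scalar prefactors. No new computational input is needed, so the corollary should be stated as following directly from the preceding lemmas.
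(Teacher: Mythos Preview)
Your approach is correct and matches the paper exactly: the corollary is stated there with no proof beyond ``In summary, we have shown the following,'' since it is precisely the composition of Lemma~\ref{lem-g12Y2} with Lemma~\ref{lem-mujiY2} for the cycle part, and of the flower-petal lemma with the Berezin lemma for $\Theta/(\xi_{r_1}\mu_{r_1}\cdots\xi_{r_l}\mu_{r_l})$ for the flower part.

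One small correction to your sign bookkeeping: the identity you wrote, $\prod_i(-1)^{(j_i+1)/2}\cdot(-1)^{(k-j-l)/2}=(-1)^{k/2}(-1)^l$, is off by a factor of $(-1)^l$. Since each $j_i$ is odd and $\sum j_i=j$, one has $\sum_i(j_i+1)/2=(j+l)/2$, so the left side equals $(-1)^{(j+l)/2}(-1)^{(k-j-l)/2}=(-1)^{k/2}$. The missing $(-1)^l$ comes from the Berezin lemma itself, which you cited for the $\kappa_{r_i}$ factors but whose explicit $(-1)^l$ prefactor you omitted from the sign product. Once that is included, the total sign is $(-1)^{k/2}(-1)^l$ as claimed.
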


\subsection{Proof of Theorem \ref{thm-LocalSAIT}}\label{subsec-PfThm}

We now combine the preliminary lemmas summarized in Corollaries \ref{cor-X1}, \ref{cor-Y1}, and \ref{cor-Y2} to prove Theorem \ref{thm-LocalSAIT}. 

Recall that we are trying to compute 
\begin{align}\label{eq-XP}
P_n(x,\dots,x)=\Upsilon_{2n\vert a,b}\int_{[0,1]^n}\omega_{2n\vert a,b}^{\leq 2}(\Theta\otimes X^{\otimes n})dv_1\cdots dv_n.
\end{align} 

\begin{proof}[Proof of Theorem \ref{thm-LocalSAIT}]

We need to classify all possible graph types and piece together the corresponding contributions from $X$. 

By \S\ref{subsec-GraphVanish}, the only terms that do not vanish after applying 
$\omega_{2n\vert a,b}^{\leq 2}$ 
are disjoint unions of cycles on $X$ labeled vertices and possibly a $\Theta$-flower.
We must also consider vertices with no edges. 
Each of the $n$ vertices labeled $X$ belongs to one of these three types: 
a cycle, a flower, or a vertex with no edges. 
Correspondingly, to each graph we have a partition of $n$ into three numbers, 
$n=n_1+n_2+n_3$ where
\begin{align*}
n_1& \text{ is the number of $X$-labeled vertices in cycles,}\\
n_2& \text{ is the number of $X$-labeled vertices in the $\Theta$-flower, and}\\
n_3& \text{ is the number of solo $X$-labeled vertices.}
\end{align*}
\noindent Note that if $n_2=0$ then the $\Theta$-labeled vertex has no edges.

The cycles part of the graph is determined by a partition of the $n_1$ vertices 
\[n_1=\sum_{j=2}^{n_1}jl_j\]
where $l_j$ denotes the number of cycles of length $j$. 
Note that this sum starts at $j=2$ since a cycle of length $1$ is a vertex with no edges. 
Let $\mathcal{P}(n_1)$ denote the set of such partitions of $n_1$.

The $\Theta$-flower part of the graph is determined by a partition of the $n_2$ $X$-labeled vertices 
\[n_2=\sum_{i=1}^{n_2}i\tilde{l}_i\]
where $\tilde{l}_i$ denotes the number of petals with $i$ $X$-labeled vertices. 
This sum starts at $i=1$ since a petal with one $X$-labeled vertex is allowed.
Let $\tilde{\mathcal{P}}(n_2)$ denote the set of such partitions of $n_2$.

The data of a partition of $n$ into $n_1+n_2+n_3$ along with the further partitions of $n_1$ and $n_2$ determine the graph. 
We will therefore be taking the sum over all such choices. 

By \S\ref{subsec-ProofSetUp}, 
each graph corresponds to a summand in the expanded product of $\omega^{\leq 2}_{2n\vert a,b}$. 
We would like to compute (\ref{eq-XP}). 
Since $\Upsilon_{2n\vert a,b}$ and integration are linear, 
we can pull out the sum over all graphs. 
Given a graph $G$, let $C_G$ denote the contribution from the corresponding summand of (\ref{eq-XP}). 
Let $G_\mrm{aut}$ be the automorphism group of $G$.
We then have 
\[P_n=\sum_{n=n_1+n_2+n_3}\sum_{\mathcal{P}(n_1)}\sum_{\mathcal{P}(n_2)} \frac{n!}{|G_\mrm{aut}|}C_G.\] 
To compute $|G_\mrm{aut}|$ and $C_G$, let us fix some notation. 
Say $G$ corresponds to the graph $n=n_1+n_2+n_3$ and the partitions 
\[n_1=\sum_{j=2}^{n_1}jl_j\]
and 
\[n_2=\sum_{i=1}^{n_2}i\tilde{l}_i.\]
Then we have 
\[|G_\mrm{aut}|=(n_3)!2^{\sum\limits^{n_1}_{i= 3} l_i}2^{\sum\limits^{n_2}_{i= 3} \tilde{l}_i}\prod_{j= 2}^{n_1}j^{l_j}\prod_{j=2}^{n_2}j^{{\tilde{l}_j}}.\] 
The $(n_3)!$ comes from permuting the vertices with no edges. 
We get a factor of $j$  from permuting the vertices within a cycle of length $j$. 
As there are $l_j$ of these cycles, we get a factor of $j^{l_j}$. 
Similarly, the term $j^{\tilde{l}_j}$ comes from permuting the vertices within petals of length $j$. 
Lastly, the powers of $2$ come from the reflection (or mirror) symmetry of cycles and petals of length $\geq 3$. 
One can compare this computation with \cite[Pg. 27]{FFSh} and \cite[Pg. 35]{Engeli}. 

The term $C_G$ is the product of the contributions of the connected subgraphs of $G$, 
\[C_G=\left(\prod_{j=2}^{n_1}\left(\text{length $j$ cycle contribution}\right)^{l_j}\right)\left(\text{$\Theta$-flower contribution}\right).\]
We first analyze the length $j$ cycle contribution. 
Recall that $X=X_1+Y_1+Y_2$ 
and let $\mathcal{X}^j_1, \mathcal{Y}^j_1,$ and $\mathcal{Y}^j_2$ denote the contribution to a length $j$ cycle from the $X_1, Y_1$, and $Y_2$ pieces, respectively. 
Expanding the product of the sum of these terms, we get the following
\[\prod_{j=2}^{n_1}(\text{length $j$ cycle contribution})^{l_j}=\sum_{l_j=l_j(X_1)+l_j(Y_1)+l_j(Y_2)}\prod_{j=2}^{n_1}C_j\left(\mathcal{X}_1^j\right)^{l_j(X_1)}\left(\mathcal{Y}_1^j\right)^{l_j(Y_1)}\left(\mathcal{Y}_2^j\right)^{l_j(Y_2)}\]
where
\[C_j=\frac{1}{\left(l_j(X_1)\right)!\left(l_j(Y_1)\right)!\left(l_j(Y_2)\right)!}\]
accounts for the additional graph automorphisms. 

In each of the terms $\mathcal{X}^j_1, \mathcal{Y}^j_1,$ and $\mathcal{Y}^j_2$ we see the same integral 
\[I_j=\int_{[0,1]^j}\psi(v_1-v_2)\cdots\psi(v_{j-1}-v_j)\psi(v_j-v_1)dv_1\cdots dv_j.\] 
By \cite[Lem. 5.4]{FFSh} or \cite[Pg. 34]{Engeli}, we have 

\begin{align}\label{eq-IB}
I_j=\begin{cases}
-\frac{(-2)^j}{j!}B_j & j\text{ even}\\
0 & j\text{ odd}
\end{cases}
\end{align}

where $B_j$ is the $j$th Bernoulli number. 
Thus, for $j\geq 2$ odd, the term $I_j$ vanishes. 
Assume $j$ is even. 

By Corollary \ref{cor-X1}, we have 
\[\mathcal{X}_1^j=\hbar^jI_j \Upsilon_{2n\vert a,b}(\alpha_{12}\cdots \alpha_{j-1j}\alpha_{j1})(\Theta\otimes X_1^{\otimes j})=\frac{\hbar^j}{2^{j-1}}I_j\sum\limits_{i=1}^n \gamma_i^j.\]
By Corollary \ref{cor-Y1}, we have 
\[\mathcal{Y}_1^j=\hbar^jI_j \Upsilon_{2n\vert a,b}\left(g_{12}\cdots g_{j-1}g_{j1}(\Theta\otimes Y_1^{\otimes j}) \right)=\frac{-\hbar^j}{2^{j-1}}I_j\sum_{r=1}^a\lambda_r^j.\]
By Corollary \ref{cor-Y2}, we have 
\[\mathcal{Y}_2^j=\hbar^jI_j \Upsilon_{2n\vert a,b}\left(g_{12}\cdots g_{j-1}g_{j1}(\Theta\otimes Y_2^{\otimes j+n_3}) \right)=\left(\frac{-(-1)^{j/2}\hbar^j}{2^{j-1}}I_j\sum_{s=1}^\z \kappa_s^j\right)\left(\sum_{\sum 2u_i}\prod_{i=1}^\z\frac{(-1)^{u_i}}{(2u_i)!}\left(\frac{\hbar\kappa_i}{2}\right)^{2u_i}\right)\]
where $\sum_{i=1}^\z 2u_i=n_3j$. 
The last term in this product is coming from the $Y_2$ contribution on the $n_3$ vertices with no edges. 
Note that since $I_j$ vanishes for $j$ odd, we may assume $j$ is even 
and the $(-2)^{j-1}$ contribution in $\mathcal{Y}_1^j$ and $\mathcal{Y}_2^j$ may be replaced with a $-2^{j-1}$. 

As in \cite{FFSh} and \cite{Engeli}, the $x_2$ contribution comes graphs with no vertices. 
On a graph of $j$ vertices, with no edges, each labeled by $x_2$, we get a contribution of $(x_2)^j$. 
Ignoring the $\Theta$-flowers for a moment, 
we can write the cycles piece $P_n$ as 
\begin{align*}
P_n^\mrm{cycle}=\sum_{\mathcal{P}(n)}\frac{n!}{(n_3)!}\prod_{j\geq 2}C_j&\left(\frac{I_j}{2^jj}\sum_{i=1}^n(\hbar\gamma_i)^j\right)^{l_j(X_1)}\left(\frac{-I_j}{2^jj}\sum_{r=1}^a(\hbar\lambda_r)^j\right)^{l_j(Y_1)}\left(-(-1)^{j/2}\frac{I_j}{2^jj}\sum_{s=1}^\z (\hbar\kappa_s)^j\right)^{l_j(Y_2)}\\&\cdot\left(\sum_{\sum 2u_i}\prod_{i=1}^\z\frac{(-1)^{u_i}}{(2u_i)!}\left(\frac{\hbar\kappa_i}{2}\right)^{2u_i}\right)(x_2)^{n_3}
\end{align*}
where $\mathcal{P}(n)$ ranges over partitions of $n$ as
\[n=\sum_{j\geq 2}j(l_j(X_1)+l_j(Y_1)+l_j(Y_2))+n_3\]
and we brought in the copies of $\frac{1}{j}$ and powers of 2 from $|G_\mrm{aut}|$.

Setting $P^\mrm{cycle}_0=1$, let 
\[P^\mrm{cycle}=\sum_{n=0}^\infty \frac{1}{n!} P^\mrm{cycle}_n.\]
Note that the terms coming from the $n_3$ vertices with no edges contribute
\begin{align*}
&\sum_{n_3=0}^\infty\frac{x_2^{n_3}}{(n_3)!}\\
&=\exp(x_2)
\end{align*}

Then, we have 
\[P^\mrm{cycle}=\exp\left(x_2+\sum_{j\geq 2}\frac{I_j}{2^j j}\sum_{i=1}^n(\hbar\gamma_i)^j-\frac{I_j}{2^j j}\sum_{r=1}^a(\hbar\lambda_r)^j-\frac{(-1)^{j/2}I_j}{2^j j}\sum_{s=1}^\z (\hbar\kappa_s)^j\right).\]
We will use the identity 
\[\sum_{j\geq 2}\frac{I_j}{2^jj}x^j=\mrm{log}\left(\frac{x/2}{\sinh(x/2)}\right)\]
found in \cite[Pg. 27]{FFSh} and \cite[Pg. 36]{Engeli} to identify the $\gamma_i$ and $\lambda_j$ terms. 
For the $\kappa_s$ term, we will use the similarly derived identity
\begin{align}\label{eq-NewId}
\sum_{j\geq 2}-\frac{(-1)^{j/2}I_j}{2^jj}x^j=\log\left(\frac{\sin(x/2)}{x/2}\right).
\end{align}
To see this identity, note that
\[\sum_{j\geq 2}-\frac{(-1)^{j/2}I_j}{2^j j}\sum_{s=1}^\z x^j=\sum_{j\geq 2}(-1)^{j/2}\frac{B_j}{j}\frac{x^j}{j!}.\]
We would like to identify this as $\log$ of some function $F(x)$.  
We have
\begin{align*}
\frac{d}{dx}\left(\sum_{j\geq 2}(-1)^{j/2}\frac{B_j}{j}\frac{x^j}{j!}\right)&=\sum_{j\geq 2}(-1)^{j/2}B_j\frac{x^{j-1}}{j!}\\
&=\left(\sum_{j=0}(-1)^{j/2}B_j\frac{x^{j-1}}{j!}\right)-\frac{1}{x}\\
&=\frac{1}{2}\cot(x/2)-\frac{1}{x}\\
&=\frac{\frac{d}{dx}F(x)}{F},
\end{align*}
when 
\[F(x)=\frac{\sin(x/2)}{x/2}.\]
This gives the identity (\ref{eq-NewId}). 
Thus, 
\[\sum_{j=0}-\frac{(-1)^{j/2}I_j}{2^j j}\sum_{s=1}^\z (\hbar\kappa_s)^j=\log\left(\frac{\sin(\hbar\kappa_s/2)}{\hbar\kappa_s/2}\right)\]
We then have
\[P^\mrm{cycle}=e^{x_2}\prod_{i=1}^n \left(\frac{\hbar\gamma_i/2}{\sinh(\hbar\gamma_i/2)}\right)\prod_{r=1}^a \left(\frac{\sinh(\hbar\lambda_r/2)}{\hbar\lambda_r/2}\right)\prod_{s=1}^\z \left(\frac{\sin(\hbar\kappa_s/2)}{\hbar\kappa_s/2}\right).\]
For the $\Theta$-flower contribution, only $Y_1$ and $Y_2$ parts contribute. 
Let $\widetilde{\mathcal{Y}}_1$ and $\widetilde{\mathcal{Y}}_2$ denote their contributions so that 
\[(\text{$\Theta$-flower contribution})=\left(\widetilde{\mathcal{Y}}_1\right)\left(\widetilde{\mathcal{Y}}_2\right).\]
To describe the $\Theta$-flower contribution term, we need to reorganize the data of our partition of $n_2$. 
Let 
\[l=\sum_{j=1}^{n_2}l_j\]
denote the number of petals of the $\Theta$-flower. 
Order the petals from smallest length to largest and let $j_i$ denote the length of the $i$th petal. 
For example, we have $j_i=1$ for $i=1,\dots,l_1$ corresponding to the $l_1$ petals of length $1$. 
Note that $j_1+\cdots+j_l=n_2$, the total number of $X$-labeled vertices in the flower.

Rewriting the $Y_1$ contribution from Corollary \ref{cor-Y1} 
we have 
\[\widetilde{\mathcal{Y}}_1=n_3!\sum_{|R|=l}\prod_{i=1}^l\frac{-\widetilde{I}_{j_i}j_i}{2^{j_i}}(\hbar\lambda_{r_i})^{j_i+1}.\]
The term $j_1\cdots j_l$ appears because each petal in the $\Theta$-flower could be connected to $\Theta$ at any one of its $j_i$ $X$-labeled vertices, 
 see the top of \cite[Pg. 34]{Engeli}.
 
Here, $\tilde{I}_j$ is the integral
\[\tilde{I}_j=\int_{[0,1]^j}\psi(v_0-v_1)\psi(v_1-v_2)\cdots\psi(v_{j-1}-v_j)\psi(v_j-v_0)dv_1\cdots dv_j.\] 
\noindent By \cite[Lem. 5.4]{FFSh} or \cite[Pg. 34]{Engeli}, we have $\tilde{I}_j=-I_{j+1}$. 

Ignoring the cycle and $\widetilde{Y}_2$ contributions for a moment, let 
\begin{align*}
P^{\Theta_1}_n&=n!\sum_{\widetilde{P}(n)}\left(\frac{1}{l!2^ll!\prod_{i=1}^l j_i}\right)l!\left(\sum_{|R|=l}\prod_{i=1}^l\frac{-\widetilde{I}_{j_i}j_j}{2^{j_i}}(\hbar\lambda_{r_i})^{j_i+1}\right)\\
&=n!\sum_{\widetilde{\mathcal{P}}(n)}\frac{1}{l!2^l}\sum_{|R|=l}\prod_{i=1}^l\frac{I_{j_i+1}}{2^{j_i}}(\hbar\lambda_{r_i})^{j_i+1}.
\end{align*}
where $\widetilde{\mathcal{P}}(n)$ ranges over all partitions 
\[n=\sum\limits_{i=1}^l j_i+l\]
where $j_i$ are all nonzero. 
We can view $\widetilde{P}(n)$ as the set of all decorated $\Theta$-flower graphs. 
Indeed, given such a partition, we have a corresponding $\Theta$-flower with 
$l$ petals of length $j_1,\dots, j_l$ 
and $l$ disjoint vertices, with $1$ spare vertex assigned to ``decorate" the each petal. 
The combinatorial term 
\[l!2^ll!\prod_{i=1}^l j_i\]
is the automorphism group of such a decorated $\Theta$-flower. 

Set $P^{\Theta_1}_0=1$ and let 
\[P^{\Theta_1}=\sum_{n=0}^\infty\frac{1}{n!}P^{\Theta_1}_n.\]
Now, 
\[\sum_{n=0}^\infty\sum_{\widetilde{\mathcal{P}}(n)}\frac{1}{l!}\sum_{|R|=l}\prod_{i=1}^a\lambda_i^{j_i+1}=\prod_{i=1}^a \left(\sum_{n'=0}^\infty\sum_{n'=j+1}\lambda_i^{j+1}\right).\]
Thus we have 

\begin{align*}
P^{\Theta_1}&=\sum_{n=0}^\infty \sum_{\widetilde{\mathcal{P}}(n)}\frac{1}{l!2^l}\sum_{|R|=l}\prod_{i=1}^l\frac{I_{j_i+1}}{2^{j_i}}(\hbar\lambda_{r_i})^{j_i+1}\\
&=\sum_{n=0}^\infty \sum_{\widetilde{\mathcal{P}}(n)}\frac{1}{l!}\sum_{|R|=l}\prod_{i=1}^l\frac{I_{j_i+1}}{2^{j_i+1}}(\hbar\lambda_{r_i})^{j_i+1}\\
&=\prod_{i=1}^a\left(\sum_{n'=0}^\infty \sum_{n'=j+1} \frac{I_{j+1}}{2^{j+1}}(\hbar\lambda_i)^{j+1}\right)\\
&=\prod_{i=1}^a\left(1+\sum_{j=0}^\infty\frac{I_{j+1}}{2^{j+1}}(\hbar\lambda_i)^{j+1}\right)\\
&=\prod_{i=1}^a\left(1+\sum_{j=0}^\infty\frac{I_{j+1}}{2^{j+1}}(\hbar\lambda_i)^{j+1}\right)\end{align*}

Since the contribution vanishes for $j+1$ odd and we have set $P_0^\Theta=1$, 
after substituting Equation (\ref{eq-IB}), 
this expression becomes

\begin{align*}
\prod_{i=1}^a\left(\sum_{j'=0}^\infty -B_{2j'}\frac{(\hbar\lambda_i)^{2j'}}{(2j')!}\right)=(-1)^a\prod_{i=1}^a(\hbar\lambda_i/2)\coth(\hbar\lambda_i/2).
\end{align*}

Lastly, we compute the $\mathcal{Y}_2$ contribution. 
Using the description (\ref{eq-Kappa}) we have
\[\widetilde{Y}_2=\sum_{|R|=l}\frac{(-1)^{(j+l)/2}n_3!}{2^{n_2+n_3-l}}\left(\prod_{i=1}^l-\widetilde{I}_{j_i}j_i(\hbar\kappa_{r_i})^{j_i+1}\right).\]

Let $P^{\Theta_2}_n$ be 

\begin{align*}
&n!\sum_{\widetilde{P}_2(n)}\left(\frac{l!\left(\frac{(-1)^{n/2}}{2^{n-l}}\right)}{l!2^ll!\prod_{i=1}^l j_i}\right)\left(\sum_{|R|=l}\left(\prod_{i=1}^l-\widetilde{I}_{j_i}j_i(\hbar\kappa_{r_i})^{j_i+1}\right)\right)\\
&=n!\sum_{\widetilde{P}_2(n)}\frac{(-1)^{n/2}}{l!2^n }\sum_{|R|=l}\left(\prod_{i=1}^l-\widetilde{I}_{j_i}(\hbar\kappa_{r_i})^{j_i+1}\right)\\
&=n!\sum_{\widetilde{P}_2(n)}\frac{1}{l!}\sum_{|R|=l}\left(\prod_{i=1}^l-\widetilde{I}_{j_i}(-1)^{\frac{j_i+1}{2}}\left(\frac{\hbar\kappa_{r_i}}{2}\right)^{j_i+1}\right)
\end{align*}

where $\widetilde{\mathcal{P}}_2(n)$ ranges over all partitions 
\[n=\sum\limits_{i=1}^l j_i+1\]
where $j_i$ are all nonzero.

Set $P^{\Theta_2}_0=1$ and let 
\[P^{\Theta_2}=\sum_{n=0}^\infty\frac{1}{n!}P^{\Theta_2}_n.\]
We have 

\begin{align*}
P^{\Theta_2}&=\sum_{n=0}^\infty \sum_{\widetilde{P}_2(n)}\frac{1}{l!}\sum_{|R|=l}\left(\prod_{i=1}^l-\widetilde{I}_{j_i}(-1)^{\frac{j_i+1}{2}}\left(\frac{\hbar\kappa_{r_i}}{2}\right)^{j_i+1}\right)\\
&=\prod_{i=1}^\z\left(\sum_{n=0}^\infty\sum_{n=j+1+2u}I_{j+1}(-1)^{\frac{j+1}{2}}\left(\frac{\hbar\kappa_i}{2}\right)^{j+1}\right)\\
&=\prod_{i=1}^\z \left(1+\sum_{j=0}^\infty I_{j+1}(-1)^{\frac{j+1}{2}}\left(\frac{\hbar\kappa_i}{2}\right)^{j+1}\right)\\
&=\prod_{i=1}^\z \left(\sum_{j'=0}^\infty -B_{2j'}(-1)^{j'}\frac{(\hbar \kappa_i)^{2j'}}{(2j')!}\right)\\
&= \prod_{i=1}^\z -(\hbar\kappa_i/2)\cot(\hbar\kappa_i/2).\\
\end{align*}
Putting the contributions from cycles and $\Theta$-flowers together, 
let 
\[P=\sum_{n=0}^\infty n! P_n.\]
Then $P$ is equal to

\begin{align*}
&(-1)^{a+\z }\prod_{i=1}^n \frac{\hbar\gamma_i/2}{\sinh(\hbar\gamma_i/2)}
\prod_{r=1}^a \sinh(\hbar\lambda_r/2)\coth(\hbar\lambda_r/2)
\prod_{s=1}^\z \sin(\hbar\kappa_s/2)\cot(\hbar\kappa_s/2)e^{x_2}\\
&=(-1)^{a+\z }\prod_{i=1}^n \frac{\hbar\gamma_i/2}{\sinh(\hbar\gamma_i/2)}
\prod_{r=1}^a \cosh(\hbar\lambda_r/2)
\prod_{s=1}^\z \cos(\hbar\kappa_s/2)e^{x_2}.
\end{align*}
\end{proof}

\begin{rmk}\label{rmk-CompareEngeli}
In type $(2n\vert a,a)$, Theorem \ref{thm-LocalSAIT} differs from Engeli's computation \cite[Lem. 2.25]{Engeli} by the sign $(-1)^a$.
The difference in sign $(-1)^a$ comes from the equality $\widetilde{I}_j=-I_{j+1}$. 
\end{rmk}

\subsection{Global Superalgebraic Index Theorem}\label{sec-GSAIT}

Consider the supertrace $\mrm{Tr}_\bb{M}$ on $\mathcal{A}_\sigma(\bb{M})$ from Theorem \ref{thm-DescendedTrace}. 
We would like to compute $\msf{Ev}_\bb{M}(\mrm{Tr}_\bb{M})$, the evaluation of $\mrm{Tr}_\bb{M}$ on the volume form from Definition \ref{def-EvalOnVol}. 

For this, we need a way of relating the characteristic class homomorphism $\mrm{char}_{(\bb{M},\sigma)}$ from \S\ref{subsec-GlobalInvariant}, the map $\chi$ from Definition \ref{def-Chi}, and  the classical Chern-Weil map \cite[Appendix C]{MilnorStasheff}.

Let $\mrm{pr}\colon\AQ\rta\mfrk{sp}_{2n}\oplus \mfrk{so}_{a,b}\oplus Z$ 
be the map used to define $\chi$ 
and $A\in\Omega^1(\Fr_\bb{M};\mfrk{g}^\hbar_{2n\vert a,b})$ the connection 1-form on $\Fr_\bb{M}$ used to define $\mrm{char}_{(\bb{M},\sigma)}$. 
Then $A$ is a flat connection by \cite[Def. 1.7]{GGW}. 

Recall from \S\ref{sec-Connections}, that we have a lift $\widetilde{A}$ of $A$ to a $\AQ$-valued connection. 
Under the identification 
\[H^\bullet_\mrm{Lie}(\mfrk{g}_{2n\vert a,b}^\hbar,\mfrk{sp}_{2n}\oplus \mfrk{so}_{a,b})\simeq H^\bullet_\mrm{Lie}(\AQ,\mfrk{sp}_{2n}\oplus \mfrk{so}_{a,b}\oplus Z),\]
the map $\mrm{char}_{(\bb{M},\sigma)}$ becomes evaluation on a wedge power of $\widetilde{A}$ rather than of $A$. 

Since $A$ is $\mrm{Sp}(2n)\times\mathrm{SO}(a,b)$ invariant and satisfies \cite[Def. 1.7(1)]{GGW}, 
the 1-form $\mrm{pr}(\widetilde{A})$ is a connection 1-form on $\Fr_\bb{M}$ valued in $\mfrk{sp}_{2n}\oplus \mfrk{so}_{a,b}\oplus Z$.

Since $\widetilde{A}$ is flat, its curvature $F_A$ is zero. 
However, since $\mrm{pr}$ is not a Lie algebra map, the connection $\mrm{pr}(\widetilde{A})$ may not be flat. 
Let $F_{\mrm{pr}(\widetilde{A})}\in\Omega^2$ denote the curvature of this connection. 
We use the notation $\mrm{CW}_\bb{M}$ for the map 
\[\mrm{CW}_\bb{M}\colon (\widehat{\mrm{Sym}}^\bullet((\mfrk{sp}_{2n}\oplus \mfrk{so}_{a,b}\oplus Z)^*))^{\mfrk{sp}_{2n}\oplus \mfrk{so}_{a,b}\oplus Z}\rta H_\mrm{dR}^{2\bullet}(\bb{M};\bb{K})\]
given by evaluating an invariant polynomial 
on $F_{\mrm{pr}(\widetilde{A})}$. 
Note that, as we are viewing $\mrm{CW}_\bb{M}$ as a map landing in cohomology, 
it is independent of the choice of connection whose curvature on which we evaluate polynomials. 

\begin{lem}\label{lem-CharCW}
The diagram
\[\begin{xymatrix}
{
H^{2\bullet}_\mrm{Lie}(\AQ,\mfrk{sp}_{2n}\oplus \mfrk{so}_{a,b}\oplus Z;\bb{K})\ar@{=}[r] & H^{2\bullet}_\mrm{Lie}(\mfrk{g}^\hbar_{2n\vert a,b},\mfrk{sp}_{2n}\oplus \mfrk{so}_{a,b};\bb{K})\arw[rrr]^-{\mrm{char}_{(\bb{M},\sigma)}(\bb{K})} & & & H^{2\bullet}_\mrm{dR}(\bb{M};\bb{K})\\
(\widehat{\mrm{Sym}}^\bullet(\mfrk{sp}_{2n}\oplus \mfrk{so}_{a,b}\oplus Z)^*))^{\mfrk{sp}_{2n}\oplus \mfrk{so}_{a,b}\oplus Z}\arw[u]^\chi\arw[urrrr]_{\mrm{CW}_\bb{M}} &  & & &
}
\end{xymatrix}\]
commutes. 
\end{lem}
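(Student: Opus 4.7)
The plan is to verify commutativity by tracing a fixed invariant polynomial $P\in\widehat{\mrm{Sym}}^m((\mfrk{sp}_{2n\vert a,b}\oplus Z)^*)^{\mfrk{sp}_{2n\vert a,b}\oplus Z}$ along both composites and showing that they produce the same de~Rham class, namely $P$ evaluated on an appropriate curvature $2$-form.

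First I would unravel the classical route: by definition $\mrm{CW}_\bb{M}(P)$ is represented by the basic form $P\bigl(F_{\mrm{pr}(\widetilde{A})}^{\wedge m}\bigr)$, where $F_{\mrm{pr}(\widetilde{A})}=d\,\mrm{pr}(\widetilde{A})+\tfrac{1}{2}[\mrm{pr}(\widetilde{A}),\mrm{pr}(\widetilde{A})]$ is the curvature of the $(\mfrk{sp}_{2n\vert a,b}\oplus Z)$-valued connection $\mrm{pr}(\widetilde{A})$. For the other route, I would unravel Definition~\ref{def-Chi}: $\chi(P)$ is the relative Lie cocycle that on $2m$ elements equals $\tfrac{1}{m!}$ times an antisymmetrized product of $m$ copies of the curvature map $C\in\mrm{Hom}(\Lambda^2\AQ,\mfrk{sp}_{2n\vert a,b}\oplus Z)$, followed by $P$. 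The description of $\mrm{char}_{(\bb{M},\sigma)}(\bb{K})$ from \S\ref{subsec-GlobalInvariant} then says that $\mrm{char}_{(\bb{M},\sigma)}(\chi(P))$ is obtained by evaluating this cocycle on $\widetilde{A}^{\wedge 2m}$; carrying out the antisymmetrization on the level of forms converts this to $P\bigl((C(\widetilde{A}\wedge\widetilde{A}))^{\wedge m}\bigr)$, where the factor of $1/m!$ in the definition of $\chi$ absorbs the multinomial coming from expanding the wedge power.

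The key identity to prove is then
\begin{align*}
C(\widetilde{A}\wedge\widetilde{A})\;=\;2\bigl(F_{\mrm{pr}(\widetilde{A})}-\mrm{pr}(F_{\widetilde{A}})\bigr),
\end{align*}
which follows from writing $C(v\wedge w)=[\mrm{pr}(v),\mrm{pr}(w)]-\mrm{pr}([v,w])$ and matching the two Maurer--Cartan expressions for $F_{\widetilde{A}}$ and $F_{\mrm{pr}(\widetilde{A})}$. By \S\ref{sec-Connections}, $F_{\widetilde{A}}$ is valued in the center $Z$, so $\mrm{pr}(F_{\widetilde{A}})=F_{\widetilde{A}}$ already lies in $\mfrk{sp}_{2n\vert a,b}\oplus Z$. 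Since $P$ is ad-invariant on this summand, the standard Chern--Weil transgression argument shows that replacing $F_{\mrm{pr}(\widetilde{A})}$ by $F_{\mrm{pr}(\widetilde{A})}-\mrm{pr}(F_{\widetilde{A}})$ changes $P\bigl(F_{\mrm{pr}(\widetilde{A})}^{\wedge m}\bigr)$ only by an exact form. Combining the two routes therefore produces equal classes in $H_\mrm{dR}^{2m}(\bb{M};\bb{K})$.

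The main obstacle I anticipate is bookkeeping rather than conceptual: lining up the $1/m!$ and the sum over $\Sigma_{2m}/(\Sigma_2)^{\times m}$ in the definition of $\chi$ with the expansion of $F_{\mrm{pr}(\widetilde{A})}^{\wedge m}$, and keeping careful track of the Koszul signs coming from the $\bb{Z}/2$-grading on $\AQ$ when antisymmetrizing. The cleanest organization is to verify the identity first upstairs on $\Fr_\bb{M}$, where everything is a tensorial computation in $\Omega^\bullet(\Fr_\bb{M};\AQ)$, and then to descend using the $\mrm{Sp}(2n\vert a,b)$-equivariance of $\mrm{pr}$ so that all forms in sight are basic.
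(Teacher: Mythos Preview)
Your approach is essentially the same as the paper's: both trace a fixed $P$ along the two composites and reduce to comparing $C(\widetilde{A},\widetilde{A})$ with $F_{\mrm{pr}(\widetilde{A})}$. The paper argues these agree on the nose as $2$-forms, using flatness of the connection so that the only obstruction to $\mrm{pr}(\widetilde{A})$ satisfying Maurer--Cartan is the failure of $\mrm{pr}$ to be a Lie algebra map; this gives commutativity at the cochain level without any transgression.

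Your route differs in that you retain the central curvature $F_{\widetilde{A}}$ of the lift and then absorb the discrepancy $\mrm{pr}(F_{\widetilde{A}})$ cohomologically. This is more cautious, but your appeal to ``the standard Chern--Weil transgression argument'' is not quite right as stated: $F_{\mrm{pr}(\widetilde{A})}-F_{\widetilde{A}}$ is not literally the curvature of another connection on the same bundle, so the usual homotopy-of-connections argument does not apply directly. A cleaner fix is to note that $\chi(P)$ is a cocycle \emph{relative to} $\mfrk{sp}_{2n\vert a,b}\oplus Z$, hence already insensitive to the $Z$-component of any input; since $F_{\widetilde{A}}$ is $Z$-valued it drops out before you ever need transgression, recovering the paper's on-the-nose identification. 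The remaining bookkeeping with the $1/m!$, the sum over $\Sigma_{2m}/(\Sigma_2)^{\times m}$, and your factor of $2$ is, as you anticipated, just normalization and not a gap.
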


\begin{rmk}
Lemma \ref{lem-CharCW} holds for any super Harish-Chandra pair $(\mfrk{g},K)$ and principal $(\mfrk{g},K)$-bundle $P$, with the same proof.
To cut down on notation, we will therefore prove this for 
$K=\mathrm{Sp}(2n\vert a,b)$ instead.
\end{rmk}

\begin{proof}
Let $P\in \mrm{Sym}^m((\mfrk{sp}_{2n\vert a,b}\oplus Z)^*)^{\mfrk{sp}_{2n\vert a,b}\oplus Z}$. 
Let $C\in\mrm{Hom}(\Lambda^2\AQ,\mfrk{sp}_{2n\vert a,b}\oplus Z)$ be the curvature of the projection $\mrm{pr}$ used to define $\chi$, 
see Definition \ref{def-Chi}. 
Then, $\mrm{char}_{(\bb{M},\sigma)}(\bb{K})(\chi(P))$ is the cohomology class of the $2m$-form 
$\chi(P)_*(\widetilde{A}^{\wedge 2m})$ 
where $\chi(P)_*$ is the map 
\[\chi(P)_*\colon\Omega^{2m}(\Fr_\bb{M};\Lambda^{2m}\mfrk{g}^\hbar_{2n\vert a,b}/\mfrk{sp}_{2n\vert a,b})\rta\Omega^{2m}(\Fr_\bb{M};\bb{K}).\] 
Using the definition of $\chi(P)$, we have
\[\chi(P)_*(\widetilde{A}^{\wedge 2m})=\frac{1}{m!}\sum_{s\in\Sigma_{2m}/(\Sigma_2)^{\times m}}\mrm{sign}(s)P(C(\widetilde{A},\widetilde{A}),\dots,C(\widetilde{A},\widetilde{A})).\]
The permutation has no effect on the term, so we may rewrite this as 
\[\chi(P)_*(\widetilde{A}^{\wedge 2m})=\frac{1}{m!}P(C(\widetilde{A},\widetilde{A}),\cdots,C(\widetilde{A},\widetilde{A})).\]
This is the definition of the Chern-Weil map $\mrm{CW}_{\bb{M}}$, 
assuming that the 2-form $F_{\mrm{pr}(\widetilde{A})}$ on $\Fr_\bb{M}$ is given by $C(\widetilde{A},\widetilde{A})$.  

To see this, note that the curvature $F_{\mrm{pr}(\widetilde{A})}$ measures the failure of $\mrm{pr}(\widetilde{A})$ to satisfy the Maurer-Cartan equation. 
Now
$\mrm{pr}(\widetilde{A})$ is a Maurer-Cartan element exactly when the corresponding morphism 
\[\mrm{pr}(\widetilde{A})\colon C^\bullet_\mrm{Lie}(\mfrk{g}^\hbar_{2n\vert a,b})\rta \Omega^\bullet(\bb{M};\bb{K})\]
is an algebra map. 
The map $\mrm{pr}$ induces a map 
\[\mrm{pr}^*\colon C^\bullet_\mrm{Lie}(\AQ)\rta C^\bullet_\mrm{Lie}(\mfrk{sp}_{2n\vert a,b}\oplus Z)\]
which is an algebra map if and only if $\mrm{pr}$ is a map of Lie algebras. 
The failure of $\mrm{pr}$ to be a Lie algebra map is measured by $C(\widetilde{A},\widetilde{A})$. 
Since the diagram
\[
\begin{xymatrix}
{
C^\bullet_\mrm{Lie}(\AQ)\arw[r]^{\widetilde{A}} & \Omega^\bullet(\bb{M};\bb{K})\\
C^\bullet_\mrm{Lie}(\mfrk{sp}_{2n\vert a,b}\oplus Z)\arw[u]^{\mrm{pr}^*}\arw[ur]_{\mrm{pr}(\widetilde{A})} & 
}
\end{xymatrix}
\]
commutes, $\mrm{pr}(\widetilde{A})$ is an algebra map if and only if $\mrm{pr}^*$ is; 
that is, $F_{\mrm{pr}(\widetilde{A})}$ and $C(\widetilde{A},\widetilde{A})$ are the same measurement.
\end{proof}

Write $F_{\mrm{pr}(A)}=R+S$ so that
 $R$ is a $\mfrk{sp}_{2n}$-valued form and 
 $S$ is a $\mfrk{so}_{a,b}$-valued form. 
 
 Let $\Omega$ denote the characteristic class of the deformation $\AQ$ as introduced in \S\ref{sec-Connections}.
 
\begin{thm}[Superalgebraic Index Theorem]\label{thm-main}
The evaluation of the unique normalized supertrace $\mrm{Tr}_\bb{M}$ on the volume form $1\otimes\Theta_\bb{M}$ is 
\[\msf{Ev}_\bb{M}(\mrm{Tr}_\bb{M})=(-1)^{n+a+\z }\hbar^n\int_\bb{M}\widehat{A}(R)\widehat{BC}(S)\exp(-\Omega/\hbar).\]
\end{thm}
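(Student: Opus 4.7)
The plan is to assemble the Superalgebraic Index Theorem by chaining together the globalization result of Theorem \ref{thm-GlobalizeInvariant}, the comparison of Chern-Weil maps in Lemma \ref{lem-CharCW}, and the explicit local computation in Theorem \ref{thm-LocalSAIT}. Since $\mrm{Tr}_\bb{M}$ is the descent of the derived relative supertrace $\tau_{2n\vert a,b}$ from Theorem \ref{thm-DescendedTrace}, Theorem \ref{thm-GlobalizeInvariant} immediately gives
\[\msf{Ev}_\bb{M}(\mrm{Tr}_\bb{M})=\int_\bb{M}\mrm{char}_{(\bb{M},\sigma)}(\bb{K})\bigl(\msf{Ev}_\mrm{loc}(\tau_{2n\vert a,b})\bigr).\]
So the problem reduces to describing the cohomology class of $\msf{Ev}_\mrm{loc}(\tau_{2n\vert a,b})$ explicitly enough that its image under $\mrm{char}_{(\bb{M},\sigma)}(\bb{K})$ can be identified with an integrand built from characteristic forms.

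First, I would use Lemma \ref{lem-ChiHitsZloc} to replace $[\msf{Ev}_\mrm{loc}(\tau_{2n\vert a,b})]$ with $(-1)^n[\chi(P_n)]$, where $P_n$ is the polynomial on $\mfrk{sp}_{2n\vert a,b}\oplus\mfrk{gl}_1$ from (\ref{eq-Pn}). Since $\chi(P_n)$ is already ad-invariant as a cocycle, the averaging observation preceding Theorem \ref{thm-LocalSAIT} lets me replace $P_n$ by its average $P_n^\mrm{avg}$ without changing the cohomology class. Applying Theorem \ref{thm-LocalSAIT}, I now have a genuine ad-invariant polynomial on $\mfrk{sp}_{2n\vert a,b}\oplus\mfrk{gl}_1$ of the form
\[P_n^\mrm{avg}(x,\ldots,x)=(-1)^{a+\z}\bigl[\det\bigl(\widehat{A}(\hbar x_1)\widehat{BC}(\hbar y)\bigr)^{1/2}\mrm{tr}(e^{x_2})\bigr]_n,\]
where $x=x_1+y+x_2$ decomposes according to $\mfrk{sp}_{2n}\oplus\mfrk{so}_{a,b}\oplus\mfrk{gl}_1$.

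Next I would apply Lemma \ref{lem-CharCW} to pass from $\mrm{char}_{(\bb{M},\sigma)}(\bb{K})\circ\chi$ to the classical Chern-Weil map $\mrm{CW}_\bb{M}$ evaluated on the curvature of the projected connection $\mrm{pr}(\widetilde{A})$. By the decomposition of the curvature as $F_{\mrm{pr}(\widetilde{A})}=R+S+(-\Omega/\hbar)$ — where $R$ is the $\mfrk{sp}_{2n}$-valued piece, $S$ is the $\mfrk{so}_{a,b}$-valued piece, and the central $\mfrk{gl}_1$-valued piece is precisely the (scaled) Weyl curvature defining the characteristic class of the deformation $\AQ$ as recalled in \S\ref{sec-Connections} — the Chern-Weil map evaluates the three tensor factors of $P_n^\mrm{avg}$ separately. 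The first factor produces $\widehat{A}(R)$ by identifying $\det(\hbar x_1/2/\sinh(\hbar x_1/2))^{1/2}$ with the $\widehat{A}$-characteristic form, the second factor produces $\widehat{BC}(S)$ via Example \ref{ex-hatBC}, and the exponential $\mrm{tr}(e^{x_2})$ on the $\mfrk{gl}_1$ direction yields $\exp(-\Omega/\hbar)$. Since we are selecting the degree-$n$ part and each invariant polynomial of degree $k$ pulls out a factor of $\hbar^k$ from its arguments, bookkeeping of the $\hbar$'s across the three factors contributes the prefactor $\hbar^n$, while the $(-1)^n$ from Lemma \ref{lem-ChiHitsZloc} combines with the $(-1)^{a+\z}$ from Theorem \ref{thm-LocalSAIT} to give $(-1)^{n+a+\z}$.

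The main obstacle will be the careful identification of the $\mfrk{gl}_1$-direction of $F_{\mrm{pr}(\widetilde{A})}$ with $-\Omega/\hbar$, so that $\mrm{tr}(e^{x_2})$ really evaluates to $\exp(-\Omega/\hbar)$ and not just to an arbitrary scalar exponential; this hinges on matching the normalization of $Z\simeq\mfrk{gl}_1$ implicit in the projection $\mrm{pr}$ used to define $\chi$ with the lift $\widetilde{A}$ of the flat connection $A$, and on remembering that in the construction of the characteristic class of $\AQ$ in \S\ref{sec-Connections} one passes from a $Z$-valued curvature to a $\bb{K}$-valued form by dividing by $\hbar$. Once that identification is made and the three multiplicative pieces of $P_n^\mrm{avg}$ are distributed across $R$, $S$, and $-\Omega/\hbar$, the theorem follows by reading off the degree-$n$ piece and integrating, yielding the stated formula.
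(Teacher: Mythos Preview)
Your proposal is essentially the paper's own proof: Theorem \ref{thm-GlobalizeInvariant} to reduce to the local invariant, Lemma \ref{lem-ChiHitsZloc} to write $[\msf{Ev}_\mrm{loc}(\tau_{2n\vert a,b})]=(-1)^n[\chi(P_n)]$, Lemma \ref{lem-CharCW} to pass to the Chern-Weil map, and Theorem \ref{thm-LocalSAIT} for the explicit integrand. The only point that needs correction is precisely the one you flagged as the ``main obstacle,'' and your proposed resolution there is off.

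The central component of the curvature is $-\Omega$, not $-\Omega/\hbar$: the paper records $C(\widetilde{A},\widetilde{A})=(R+S)-\Omega$, and in \S\ref{sec-Connections} the identification $Z\simeq\bb{K}$ involves no division by $\hbar$. Substituting into Theorem \ref{thm-LocalSAIT} therefore gives
\[(-1)^{n+a+\z}\int_\bb{M}\bigl[\widehat{A}(\hbar R)\,\widehat{BC}(\hbar S)\,\exp(-\Omega)\bigr]_n,\]
since $x_1\mapsto R$, $y\mapsto S$, and $x_2\mapsto -\Omega$ (note that $x_2$ is \emph{not} multiplied by $\hbar$ in the formula of Theorem \ref{thm-LocalSAIT}). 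The factor $\hbar^n$ and the $1/\hbar$ in the exponential then appear \emph{together} from the homogeneity rescaling: the degree-$n$ (i.e.\ $2n$-form) piece of the product collects contributions $\hbar^{k_1}\hbar^{k_2}\,\Omega^{j}/j!$ with $k_1+k_2+j=n$, and factoring out $\hbar^n$ converts $\Omega^{j}$ into $(\Omega/\hbar)^{j}$, yielding $\hbar^n\bigl[\widehat{A}(R)\widehat{BC}(S)\exp(-\Omega/\hbar)\bigr]_n$. Once you make this adjustment, your argument is complete and matches the paper line for line.
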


Note that the $\widehat{A}$ appearing in Theorem \ref{thm-main} is not the $\widehat{A}$-genus of the supermanifold $\bb{M}$ as in \cite[Def. 3.1(6)]{Taniguchi}, 
but rather closer to the $\widehat{A}$ genus of the reduced manifold $M$ of $\bb{M}$; see Example \ref{ex-1}.

\begin{proof}

Since $\mrm{Tr}_\bb{M}$ is defined by descending the supertrace $\tau_{2n\vert a,b}$,  
by Theorem \ref{thm-GlobalizeInvariant}, we have
\[\msf{Ev}_\bb{M}(\mrm{Tr}_\bb{M})=\int_\bb{M}\mrm{char}_{(\bb{M},\sigma)}(\bb{K})(\msf{Ev}_\mrm{loc}(\tau_{2n\vert a,b})).\]
We saw in Lemma \ref{lem-ChiHitsZloc} that 
\[\msf{Ev}_\mrm{loc}(\tau_{2n\vert a,b})=(-1)^n\chi(P_n).\]
By Lemma \ref{lem-CharCW}, we can relate $\mrm{char}(\chi)$ to the Chern-Weil map and obtain the following 
\begin{align*}
\int_\bb{M}\mrm{char}_{(\bb{M},\sigma)}(\bb{K})(\msf{Ev}_\mrm{loc}(\tau_{2n\vert a,b}))&=\int_\bb{M}\mrm{char}_{(\bb{M},\sigma)}(\bb{K})\left((-1)^n\chi(P_n)\right)\\
&=(-1)^n\int_\bb{M}\mrm{CW}_\bb{M}(P_n).\\
\end{align*}
The Chern-Weil map $\mrm{CW}_\bb{M}$ evaluates an ad invariant polynomial on the curvature $F_{\mrm{pr}(\widetilde{A})}$. 
By the proof of Lemma \ref{lem-CharCW}, 
we have $F_{\mrm{pr}(\widetilde{A})}=C(\widetilde{A},\widetilde{A})$. 
As in \cite[\S 5.7]{FFSh} and \cite[Thm. 2.26]{Engeli}, 
we have 
\[C(\widetilde{A},\widetilde{A})=(R+S)-\Omega.\]
See also \S \ref{sec-Connections}. 
Using the description in Theorem \ref{thm-LocalSAIT}, 
of the polynomial $P_n$, 
we have 
\[\msf{Ev}_\bb{M}(\mrm{Tr}_\bb{M})=(-1)^{n+a+\z }\int_\bb{M}\left[\widehat{A}(\hbar R)\widehat{BC}(\hbar S)\exp(-\Omega)\right]_n.\]
The degree $n$ homogeneous part is 
\[\left[\widehat{A}(\hbar R)\widehat{BC}(\hbar S)\exp(-\Omega)\right]_n=\hbar^n\left[\widehat{A}(R)\widehat{BC}(S)\exp(-\Omega/\hbar)\right]_n.\]
\end{proof}

In the purely even case, Theorem \ref{thm-main} recovers the algebraic index theorem of \cite{FFSh}. 

\subsection{Examples}

We can rephrase Theorem \ref{thm-main} in terms of the reduced (non-super) manifold $M$ of $\bb{M}$. 
Note that the $\gKH$-bundle $\Fr_\bb{M}$ determines a $\gKH$-bundle $F_M$ on $M$ given (as a space) by the reduced manifold of $\Fr_\bb{M}$. 
The connection 1-form $A$ on $\Fr_\bb{M}$ is sent to the connection 1-form $A_\mrm{red}$ on $F_M$ by the Berezin integral 
\[\int(-)d\Theta\colon\Omega^1(\Fr_\bb{M};\mfrk{g}^\hbar_{2n\vert a,b})\rta \Omega^1(F_M;\mfrk{g}^\hbar_{2n\vert a,b}).\]
Since the characteristic map $\mrm{char}_P$ from \S\ref{subsec-GlobalInvariant} is defined in terms of the connection 1-form on the principal bundle $P$, we have a commutative diagram 
\[\begin{xymatrix}
{
C^\bullet_\mrm{Lie}(\mfrk{g}^\hbar_{2n}\oplus \mfrk{so}_{a,b},\mrm{sp}_{2n\vert a,b})\arw[rr]^-{\mrm{char}_{(\bb{M},\sigma)}}\arw[drr]_-{\mrm{char}_{F_M}} && \Omega^\bullet(\bb{M};\bb{K})\arw[d]^{\int(-)d\Theta}\\
&&  \Omega^\bullet(M;\bb{K}).
}
\end{xymatrix}\]

\begin{cor}
There is an equivalence of maps $C^\bullet_\mrm{Lie}(\mfrk{g}^\hbar_{2n\vert a,b},\mrm{sp}_{2n}\oplus \mfrk{so}_{a,b})\rta \bb{K}$ by
\[\int_\bb{M}\mrm{char}_{(\bb{M},\sigma)}=\int_M\mrm{char}_{F_M}.\]
\end{cor}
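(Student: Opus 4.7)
The plan is to unwind the two definitions involved and exhibit the corollary as a direct consequence of the commutative diagram displayed just above it together with Definition \ref{def-Berezin}.

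First, I would recall that $\int_\bb{M}$ is constructed as the composite
\[\int_\bb{M}=\int_M\left(\int(-)d\Theta_\bb{M}\right),\]
so that evaluating $\int_\bb{M}$ on any differential form is obtained by first performing the fiberwise Berezin integral along the odd directions to produce a form on $M$, and then integrating the resulting form over the reduced manifold $M$ against $d[M]$. For a class $c\in C^\bullet_\mrm{Lie}(\mfrk{g}^\hbar_{2n\vert a,b},\mfrk{sp}_{2n\vert a,b})$, the input to the outer integral is therefore $\bigl(\int \mrm{char}_{(\bb{M},\sigma)}(c)\,d\Theta\bigr)\in\Omega^\bullet(M;\bb{K})$.

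Second, I would invoke the commutative diagram preceding the corollary, which identifies the Berezin integral of $\mrm{char}_{(\bb{M},\sigma)}(c)$ with $\mrm{char}_{F_M}(c)$. The content of that diagram is the naturality of the characteristic homomorphism under passage to the reduced frame bundle: since $\mrm{char}_P$ is computed by wedging with the connection 1-form on $P$ (and applying a cocycle $c$ to the result), and since Berezin integration is exactly the operation sending the connection 1-form $A\in\Omega^1(\Fr_\bb{M};\mfrk{g}^\hbar_{2n\vert a,b})$ to the connection 1-form $A_\mrm{red}\in\Omega^1(F_M;\mfrk{g}^\hbar_{2n\vert a,b})$ on the reduced principal bundle, the two maps agree. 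This step is essentially a bookkeeping check that Berezin integration commutes with wedging and with application of the $\mfrk{sp}_{2n\vert a,b}$-basic cocycle $c$.

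Combining these two observations, we obtain the chain of equalities
\[\int_\bb{M}\mrm{char}_{(\bb{M},\sigma)}(c)=\int_M\left(\int\mrm{char}_{(\bb{M},\sigma)}(c)\,d\Theta\right)=\int_M\mrm{char}_{F_M}(c),\]
which is the claimed identity of maps $C^\bullet_\mrm{Lie}(\mfrk{g}^\hbar_{2n\vert a,b},\mfrk{sp}_{2n\vert a,b})\rta\bb{K}$.

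The only non-formal step is the commutative diagram itself, so that is the main (and really the only) obstacle. It reduces to verifying that the fiberwise Berezin integral is a morphism of $\gKh$-connections in the sense that it sends the flat connection 1-form on $\Fr_\bb{M}$ to the corresponding one on $F_M$. Granting this (which is asserted in the paragraph immediately above the corollary), nothing further is needed.
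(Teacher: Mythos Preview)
Your proposal is correct and follows exactly the approach implicit in the paper: the corollary is stated there without proof, as an immediate consequence of the commutative diagram preceding it together with Definition \ref{def-Berezin}, and your argument spells out precisely this two-step deduction.
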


We can therefore interpret Theorem \ref{thm-main} in terms of characteristic classes for the bundle $F_M\rta M$. 
A particularly nice expression is obtained when the symplectic supermanifold $\bb{M}$ is ``split."  

A Theorem of Rothstein \cite{Rothstein} says that all symplectic supermanifolds are non-canonically isomorphic to one of the form $E[1]$ where $E\rta M$ is a quadratic vector bundle on a symplectic manifold $M$. 
Call $\bb{M}$ \emph{split} if we have chosen an identification $\bb{M}=E[1]$. 

\begin{ex}\label{ex-1}
Let $\bb{M}=E[1]$ be a split symplectic supermanifold. 
Then Theorem \ref{thm-main} gives the following computation:
\[\msf{Ev}_{\bb{M}}(\mrm{Tr}_\bb{M})=(-1)^{n+a+\z }\hbar^n \int_M\widehat{A}(M)\widehat{BC}(E)\exp(-\Omega/\hbar)\]
in terms of the characteristic series for th  $\widehat{A}$-genus of $M$ and the characteristic series $\widehat{BC}$ from Example \ref{ex-hatBC} of the vector bundle $E$.
\end{ex}

\begin{ex}[L-genus]
As a special case of the above example, 
consider the vector bundle $\pi\colon TM\rta M$. 
Since $M$ is a symplectic manifold, 
we get an identification $T^*M\cong TM$. 
Using this identification, we can consider the evaluation pairing 
\[TM\otimes TM\cong T^*M\otimes TM\xrta{\mrm{ev}} \bb{R}.\]
With this pairing, $TM$ becomes a quadratic vector bundle on $M$. 
We use the notation $T^*[1]M$ for the associated symplectic supermanifold. 
Note that $T^*[1]M$ has type $(2n\vert n,n)$. 
Moreover, the tangent bundle of $TM$ restricted to $M$ is 
\[\pi^*(TTM)=TM\oplus TM.\]
Example \ref{ex-1} then becomes 
\[\msf{Ev}_{T^*[1]M}(\mrm{Tr}_{T^*[1]M})=(-1)^{2n}\hbar^n\int_M\widehat{A}(M)\widehat{B}(M)\exp(-\Omega/\hbar).\]
The characteristic series for $\widehat{A}\widehat{B}$ is 
\[\frac{t_i/2}{\sinh(t_i/2)}\cosh(t_i/2)=\frac{t_i/2}{\tanh(t_i/2)},\]
which is the characteristic series for the L-genus; see Remark \ref{rmk-Lgenus}.
Thus, 
\[\msf{Ev}_{T^*[1]M}(\mrm{Tr}_{T^*[1]M})= \hbar^n \int_M L(M)\exp(-\Omega/\hbar).\] 
\end{ex}

\bibliography{vabib}
\bibliographystyle{alpha}
\end{document}